\newtheorem{theorem}{Theorem}[section]
\newaliascnt{conj}{theorem}
\newaliascnt{cor}{theorem}
\newaliascnt{lemma}{theorem}
\newaliascnt{fact}{theorem}
\newaliascnt{claim}{theorem}
\newaliascnt{prop}{theorem}
\newaliascnt{definition}{theorem}
\newaliascnt{perasum}{theorem}
\newtheorem{conj}[conj]{Conjecture}
\newtheorem{cor}[cor]{Corollary}
\newtheorem{lemma}[lemma]{Lemma}
\newtheorem{prop}[prop]{Proposition}
\newtheorem{definition}[definition]{Definition}
\newtheorem{perasum}[perasum]{Perelman's Smoothness Assumption}
\def\sek~{\S{}}
\theoremstyle{definition}
\newaliascnt{example}{theorem}
\newtheorem{example}[example]{Example}
\theoremstyle{remark}
\newtheorem{remark}[theorem]{Remark}
\numberwithin{equation}{section}
\newcommand{\curv}{{\rm curv}}
\newcommand{\vol}{{\rm Vol}}
\newcommand{\diam}{{\rm diam}}
\newcommand{\interior}{{\rm int}}
\newcommand{\dis}{{\rm d}}
\newcommand{\Ric}{{\rm Ric}}
\newcommand{\dbl}{{\rm dbl}}
\begin{document}

\title[Perelman's collapsing theorem for $3$-manifolds]
{A simple proof of Perelman's collapsing theorem for $3$-manifolds}
\author{Jianguo Cao}
\thanks{
This final version was refereed and accepted for publication in {\it ``The Journal of Geometric Analysis"}.  The first author was supported in part by Nanjing
University and an NSF grant.}
\author{Jian Ge}
\begin{abstract}
{\small We will simplify earlier proofs of Perelman's collapsing
theorem for $3$-manifolds given by Shioya-Yamaguchi
\cite{SY2000}-\cite{SY2005} and Morgan-Tian \cite{MT2008}. A version
of Perelman's collapsing theorem states: {\itshape ``Let $\{M^3_i\}$
be a sequence of compact Riemannian $3$-manifolds with curvature
bounded from below by $(-1)$ and $\diam(M^3_i) \ge c_0
> 0$. Suppose that all unit metric balls in $M^3_i$ have very small
volume at most $v_i \to 0$ as $i \to \infty$ and suppose that either
$M^3_i$ is closed or has possibly convex incompressible toral
boundary. Then $M^3_i $ must be a graph-manifold for sufficiently
large $i$"}. This result can be viewed as an extension of the implicit
function theorem. Among other things, we apply Perelman's critical
point theory (e.g., multiple conic singularity theory and his
fibration theory) to Alexandrov spaces to construct the desired
local Seifert fibration structure on collapsed $3$-manifolds.

The verification of Perelman's collapsing theorem is the last step
of Perelman's proof of Thurston's Geometrization Conjecture on the
classification of $3$-manifolds. A version of Geometrization Conjecture asserts
that any closed 3-manifold admits a  {\it piecewise locally homogeneous metric}.
Our proof of Perelman's collapsing
theorem is accessible to  advanced graduate students and non-experts.
}
\end{abstract}
\maketitle

\tableofcontents
\section{Introduction}\label{section0}

In this paper, we present a simple proof of Perelman's collapsing
theorem for $3$-manifolds (cf. Theorem 7.4 of \cite{Per2003a}),
which Perelman used to verify Thurston's Geometrization Conjecture
on the classification of $3$-manifolds.

\subsection{Statement of Perelman's collapsing theorem and
the main difficulties in its proof.}

\begin{theorem}[\textbf{Perelman's Collapsing Theorem}]\label{thm0.1}
Let $\{ (M^3_\alpha, g_{ij}^\alpha)\}_{\alpha \in \mathbb Z}$ be a
sequence of compact oriented Riemannian $3$-manifolds, closed or
with convex incompressible toral boundary, and let
$\{\omega_\alpha\}$ be a sequence of positive numbers with $
\omega_\alpha \to 0$. Suppose that

{\rm (1)} for each $x \in M^3_{\alpha} $ there exists a radius $\rho
= \rho_{\alpha}(x) $, $0 < \rho < 1$, not exceeding the diameter of
the manifold, such that the metric ball $B_{g^{\alpha}}(x, \rho)$ in
the metric $ g_{ij}^{\alpha} $ has volume at most $\omega_{\alpha}
\rho^3$ and sectional curvatures of $g^\alpha_{ij}$ at least $ -
\rho^{-2}$;

{\rm (2)} each component of toral boundary of $M^3_\alpha$ has
diameter at most $\omega_\alpha$, and has a topologically trivial
collar of length one and the sectional curvatures of $M^3_\alpha$
are between $(-\frac{1}{4} - \varepsilon) $ and $ (-\frac{1}{4} +
\varepsilon)$.

Then, for sufficiently large $\alpha$, $M^3_{\alpha}$ is
diffeomorphic to a graph-manifold.
\end{theorem}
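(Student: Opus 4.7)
The plan is to pass to a Gromov–Hausdorff limit at the local scale $\rho_\alpha(x)$, extract a low-dimensional Alexandrov space $Y$, and then pull back a Seifert/graph decomposition from $Y$ through the collapse map. First I would rescale the metric at each point $x \in M^3_\alpha$ by $\rho_\alpha(x)^{-2}$, so that on each unit ball we have sectional curvature $\ge -1$ and volume $\le \omega_\alpha \to 0$. By Gromov's precompactness theorem, after passing to a subsequence and choosing base points $x_\alpha$, the pointed sequence $(M^3_\alpha, \rho_\alpha(x_\alpha)^{-2} g^\alpha, x_\alpha)$ subconverges in the pointed Gromov–Hausdorff topology to a pointed Alexandrov space $(Y,y)$ of curvature $\ge -1$. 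Volume collapse forces $k := \dim Y \le 2$, and the argument splits on $k \in \{0,1,2\}$.

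For $k = 2$, Perelman's fibration theorem over the open stratum of $(2,\delta)$-strained points of $Y$ produces, for large $\alpha$, a locally trivial $S^1$-bundle structure on the preimage in $M^3_\alpha$. Near the remaining singular stratum of $Y$ (isolated cone points and boundary arcs) I would combine Perelman's multiple conic singularity theorem with critical point theory for distance functions to construct local Seifert fibered solid tori and solid Klein-bottle neighborhoods whose regular fibers match those of the generic bundle. For $k = 1$, the limit is an interval or circle, and each local model is a Seifert piece with regular $T^2$ or Klein-bottle fiber and at most two exceptional toral fibers lying over endpoints. For $k = 0$ the whole rescaled manifold collapses to a point at a uniform scale; then almost nonnegatively curved 3-manifold classification, or equivalently a polarized $F$-structure produced by Cheeger–Fukaya–Gromov-type arguments, gives a graph manifold decomposition directly.

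The principal obstacle will be assembling these local Seifert pieces, constructed at \emph{different} rescaled scales $\rho_\alpha(x)$, into a single coherent global graph manifold structure on $M^3_\alpha$. Two issues must be confronted: (i) because the scale $\rho_\alpha$ depends on the base point, adjacent charts see different rescaled limits and the limiting dimension $k$ can jump between neighboring regions; (ii) the regular fibers on overlapping charts may have different topological type ($S^1$, $T^2$, or Klein bottle), and the transition tori separating them must be incompressible in $M^3_\alpha$. For (i) I would prove that $\log \rho_\alpha$ is slowly varying on $M^3_\alpha$ (a standard consequence of the sectional curvature lower bound together with volume collapse) and invoke Perelman's stability theorem to obtain uniqueness of the local fibration up to small isotopy on overlaps, so that the local $k=1$ and $k=2$ pieces glue along their common boundary along compatible circle fibers. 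For (ii), the transition tori bound Seifert pieces with short fibers, and a length-versus-area comparison using the lower curvature bound shows that any compressing disk would have to be macroscopic; combined with the assumed incompressibility of the toral boundary, this rules out compressible transition tori, and the standard torus decomposition theorem of 3-manifold topology then assembles the Seifert pieces into a graph manifold.

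Finally, the toral boundary of $M^3_\alpha$ is handled using hypothesis (2): each boundary torus sits inside an almost-hyperbolic collar of length one with curvature pinched near $-\tfrac14$, modelled on the cusp end of a finite-volume hyperbolic $3$-manifold. I would use this model, together with Margulis-lemma type arguments, to extend the interior graph decomposition across the collar so that each boundary torus appears as an incompressible torus bounding a Seifert fibered collar of the form $T^2 \times [0,1]$. This completes the verification that $M^3_\alpha$ is diffeomorphic to a graph manifold for all sufficiently large $\alpha$.
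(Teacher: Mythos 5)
Your overall strategy (rescale by $\rho_\alpha(x)^{-2}$, extract a pointed Alexandrov limit of dimension $\le 2$, build local Seifert pieces via Perelman's fibration and conic-singularity theory, then glue) is the same skeleton as the paper's argument, but two of your steps have genuine gaps. First, in the $k=1$ case you assert that the regular fiber is $T^2$ or a Klein bottle; in fact the fiber can also be $S^2$ or $\mathbb{RP}^2$ (the paper's Theorem 4.3 classifies the fiber as a quotient of $S^2$ or $T^2$, and Example~3 of Section 3, $S^2\times[a,b]$, shows the spherical case really occurs inside a collapsing sequence). This is not a cosmetic omission: an $S^2\times I$ region carries no free circle action and is not by itself a graph-manifold piece, so it cannot be absorbed by ``matching circle fibers on overlaps'' or by the torus decomposition theorem. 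The paper needs a separate global mechanism for exactly this: Morgan--Tian's purely topological reduction (Theorem 3.5) together with the decomposition of $S^2\times I$ into $D^2_\pm\times I$ and $A\times I$ and the observation that the solid cylinders chain up into solid tori (Proposition 6.8), after which the pieces satisfy Cheeger--Gromov's compatibility condition and Rong's criterion (positive-rank $F$-structure $\Leftrightarrow$ graph manifold) applies. Your proposal has no counterpart to this step, and your insistence that transition surfaces be incompressible tori is both unavailable (boundaries of solid-torus pieces around exceptional fibers are compressible) and unnecessary for the graph-manifold conclusion; the ``length-versus-area'' argument you sketch does not produce it.

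Second, the gluing itself is under-justified: Perelman's stability theorem gives homeomorphisms close to Gromov--Hausdorff approximations, but it does not give uniqueness of the local circle/torus fibrations up to small isotopy on overlaps. The paper instead perturbs the defining admissible (distance-type) maps on overlaps by partitions of unity and matrix-valued corrections (Propositions 6.3--6.5) so that the resulting torus actions literally commute on finite covers, which is what the $F$-structure definition requires; some substitute for this explicit compatibility construction is needed in your plan. Finally, a smaller point: your $k=0$ branch cannot be settled by Cheeger--Fukaya--Gromov-type arguments, which require two-sided curvature bounds; classifying diameter-collapsed $3$-manifolds with only a lower curvature bound is precisely the open issue (Shioya--Yamaguchi's Conjecture) that the hypothesis $\rho_\alpha(x)\le \diam(M^3_\alpha)$ is designed to avoid, since it forces the rescaled diameter to be at least $1$ and hence $\dim Y\ge 1$; components that do admit nonnegatively curved metrics are set aside and handled by the smooth classification, not by collapsing theory.
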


If a $3$-manifold $M^3$ admits an almost free circle action, then we
say that $M^3$ admits a {\it Seifert fibration} structure or {\it
Seifert fibred}. A {\it graph-manifold} is a compact $3$-manifold
that is a connected sum of manifolds each of which is either
diffeomorphic to the solid torus or can be cut apart along a finite
collection of incompressible tori into Seifert fibred $3$-manifolds.

Perelman indeed listed an extra assumption (3) in \autoref{thm0.1}
above. However, Perelman (cf. page 20 of \cite{Per2003a}) also
pointed out that, if the proof of his stability theorem (cf.
\cite{Kap2007}) is available, then his extra smoothness assumption
(3) is in fact redundant.

The conclusion of \autoref{thm0.1} fails if the assumption of toral
boundary is removed. For instance, the product $3$-manifold $S^2
\times [0, 1]$ of a $2$-sphere and an interval can be collapsed
while keeping curvatures non-negative. However, $M^3 = S^2 \times
[0, 1]$ is not a graph-manifold.

Under an assumption on both upper and lower bound of curvatures $-
\frac{C}{\rho^{2}} \le \curv_{M^3_\alpha} \le \frac{C}{ \rho^{2}}$,
the collapsed manifold $M^3_\alpha$ above admits an $F$-structure of
positive rank, by the Cheeger-Gromov collapsing theory (cf.
\cite{CG1990}). It is well-known that a $3$-manifold $M^3_\alpha$
admits an $F$-structure of positive rank if and only if $M^3_\alpha$
is a graph-manifold, (cf. \cite{R1993}).

On page 153 of \cite{SY2005}, Shioya and Yamaguchi stated a version
of \autoref{thm0.1} above for the case of closed manifolds, but
their proof works for case of manifolds with incompressible convex
toral boundary as well. Morgan and Tian \cite{MT2008} presented a
proof of \autoref{thm0.1} without assumptions on diameters but with
Perelman's extra smoothness assumption (3) which we discuss below.

It turned out that the diameter assumption is related to the study
of diameter-collapsed $3$-manifolds with curvature bounded from
below. To see this relation, we state a local re-scaled version of
Perelman's collapsing \autoref{thm0.1}.

\medskip
\noindent\textbf{Theorem 0.1'.} (Re-scaled version of
\autoref{thm0.1}) {\itshape Let $\{ (M^3_\alpha,
g_{ij}^\alpha)\}_{\alpha \in \mathbb Z} $ be and let
$\{\omega_\alpha\}$ be a sequence of positive numbers as in
\autoref{thm0.1} above, $x_\alpha \in M^3_\alpha$ and $ \rho_\alpha=
\rho_\alpha (x_\alpha)$. Suppose that there exists a re-scaled
family of pointed Riemannian manifolds $\{ ((M^3_\alpha,
\rho^{-2}_\alpha g_{ij}^\alpha), x_\alpha)\}_{\alpha \in \mathbb Z}
$ satisfying the following conditions:
\begin{enumerate}[{\rm (i)}]
\item The re-scaled Riemannian manifold $(M^3_\alpha,
\rho^{-2}_\alpha g_{ij}^\alpha)$ has $\curv \ge -1$ on the ball
$B_{\rho^{-2}_\alpha g_{ij}^\alpha }(x_\alpha, 1)$;

\item The diameters of the re-scaled manifolds $\{( M^3_\alpha,
\rho^{-2}_\alpha g_{ij}^\alpha) \}$ are uniformly bounded from below
by $1$; i.e.;
\begin{equation}\label{eq:0.1}
\diam( M^3_\alpha, \rho^{-2}_\alpha g_{ij}^\alpha) \ge 1
\end{equation}

\item The volumes of unit metric balls collapse to zero, i.e.:
$$\vol[ B_{\rho^{-2}_\alpha g_{ij}^{\alpha}}(x_{\alpha}, 1)] \le
\omega_\alpha \to 0,$$ as $\alpha \to \infty$.
\end{enumerate}

Then, for sufficiently large $\alpha$, the collapsing $3$-manifold
$M^3_\alpha$ is a graph-manifold.}

\medskip

Without inequality \eqref{eq:0.1} above, the volume-collapsing
$3$-manifolds could be diameter-collapsing. Perelman's condition
\eqref{eq:0.1} ensures that the normalized family $\{( M^3_\alpha,
\rho^{-2}_\alpha g_{ij}^\alpha)\}$ can {\it not} collapse to a point
uniformly. By {\it collapsing to a point uniformly}, we mean that
there is an additional family of scaling constants $\{
\lambda_\alpha\} $ such that the sequence $\{ (M^3_\alpha,
\lambda_\alpha g_{ij}^\alpha)\}$ is convergent to a $3$-dimensional
(possibly singular) manifold $Y^3_\infty$ with non-negative
curvature. Professor Karsten Grove kindly pointed out  that the
study of diameter-collapsing theory for $3$-manifolds might be
related to a weak version of the Poincar\'{e} conjecture. For this
reason, Shioya and Yamaguchi made the following conjecture.

\begin{conj}[Shioya-Yamaguchi \cite{SY2000} page 4]\label{conj0.2}
Suppose that $Y^3_\infty $ is a $3$-dimensional compact,
simply-connected, non-negatively curved Alexandrov space without
boundary and that $Y^3_\infty$ is a topological manifold. Then
$Y^3_\infty$ is homeomorphic to a sphere.
\end{conj}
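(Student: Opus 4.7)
My plan is to show $Y^3_\infty \cong S^3$ by applying Perelman's critical-point and fibration theory for Alexandrov spaces to the distance function from a diameter-realising pair of points, reducing the problem to showing that this function has exactly two critical points so that $Y^3_\infty$ decomposes as a topological suspension.

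First I would establish the local model. Because $Y^3_\infty$ is by hypothesis a topological $3$-manifold without boundary, at every $y \in Y^3_\infty$ the space of directions $\Sigma_y$ is a compact $2$-dimensional Alexandrov space of curvature $\ge 1$; since $Y^3_\infty$ is a topological manifold at $y$, the link $\Sigma_y$ must be a topological $2$-sphere. Consequently a small metric ball about $y$ is homeomorphic to the Euclidean cone over $\Sigma_y \cong S^2$, and $Y^3_\infty$ contains no codimension-one extremal subsets.

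Next, choose $p, q \in Y^3_\infty$ with $d(p,q) = \diam(Y^3_\infty)$ and consider the semi-concave function $f = d(p,\cdot)$. I would argue that $f$ has no critical points in Perelman's sense other than $p$ and $q$: if $x$ were a third critical point, the critical-point condition would provide, for every $\xi \in \Sigma_x$, a direction $\eta_\xi$ toward $p$ with $\angle(\xi,\eta_\xi) \le \pi/2$; a hinge version of Toponogov applied to the triangle $\triangle pxq$, combined with the diameter-maximality of $d(p,q)$, would force an angle excess that can only be absorbed by the presence of a non-trivial loop based at $x$, contradicting $\pi_1(Y^3_\infty)=0$. Granted this, Perelman's fibration theorem trivialises $f\colon Y^3_\infty \setminus \{p,q\} \to (0, \diam(Y^3_\infty))$ as a locally trivial bundle with fibre $\Sigma_p \cong S^2$, and capping off with the two cones $f^{-1}[0,\epsilon]$ and $f^{-1}[\diam(Y^3_\infty)-\epsilon,\diam(Y^3_\infty)]$ yields $Y^3_\infty \cong \mathrm{Susp}(S^2) \cong S^3$.

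The main obstacle is the middle step. Nonnegative (as opposed to strictly positive) curvature provides no Grove-Shiohama sphere-theorem comparison, and intermediate critical points of distance functions do in general occur on nonnegatively curved Alexandrov spaces, for instance on doubles of flat convex bodies or on flat tori. The proof must therefore use simple connectivity and the absence of boundary in an essential way: any intermediate critical level would carry a codimension-at-least-one extremal-like subset, and threading the topological manifold hypothesis through this subset should produce a non-trivial element of $\pi_1(Y^3_\infty)$, yielding the desired contradiction. Should the direct Alexandrov-geometric route prove recalcitrant, one can alternatively invoke Moise's theorem to equip $Y^3_\infty$ with a (unique) smooth structure and then apply Perelman's Ricci-flow proof of the Poincar\'{e} conjecture to conclude $Y^3_\infty \cong S^3$.
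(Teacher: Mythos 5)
You should first note that the paper contains no proof of this statement: it is exactly \autoref{conj0.2}, quoted from Shioya--Yamaguchi \cite{SY2000}, and the only case the authors record as known is the one where $Y^3_\infty$ is a \emph{smooth} Riemannian manifold of non-negative curvature, which they attribute to Hamilton \cite{H1986}. In the paper the conjecture serves only as motivation (for the diameter hypothesis and for Perelman's smoothness assumption); it is never proved or used. So there is no argument of the authors to compare yours with, and the real question is whether your sketch establishes the statement on its own.

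It does not, and the failure is exactly at the step you call ``the main obstacle'': the claim that $d(p,\cdot)$ has no critical points besides $p$ and $q$ is false under curvature $\ge 0$, and simple connectivity cannot repair it. Take $Y^3$ to be the double of the unit cube $[0,1]^3$: it is a compact, simply-connected, non-negatively curved Alexandrov space without boundary, homeomorphic to $S^3$, and the diameter $\sqrt{3}$ is realized by a pair of opposite vertices $p,q$. At a vertex $x$ adjacent to $p$ the space of directions is the double of the spherical octant triangle, every point of which lies within $\pi/2$ of the direction of the edge $xp$ (the unique minimizing geodesic to $p$); hence $x$ is a critical point of $d(p,\cdot)$ at the intermediate level $1$. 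The same phenomenon occurs on the double of a flat square in dimension $2$. These examples are simply connected, boundaryless topological manifolds, so your hoped-for implication ``intermediate critical point $\Rightarrow$ nontrivial loop'' has explicit counterexamples; no Grove--Shiohama-type two-critical-point/suspension argument is available under curvature $\ge 0$ (that mechanism genuinely needs $\curv \ge 1$ together with $\diam > \pi/2$, or some substitute for it). Your fallback is a different matter: since the hypothesis already makes $Y^3_\infty$ a closed, simply-connected topological $3$-manifold, Moise's theorem plus Perelman's resolution of the Poincar\'e conjecture does give $Y^3_\infty \cong S^3$, with the Alexandrov metric playing no role at all. That derivation is logically sound, but it is presumably why the statement is still labelled a conjecture in this paper: what Shioya--Yamaguchi asked for in 2000, before the Ricci-flow proofs, and what would be of interest in collapsing theory, is an intrinsic comparison-geometry argument, which your proposal does not supply.
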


Shioya and Yamaguchi (\cite{SY2000}, page 4) commented that {\it if
\autoref{conj0.2} is true then the study of collapsed $3$-manifolds
with curvature bounded from below would be completely understood}.
They also observed that \autoref{conj0.2} is true for a special case
when the closed (possibly singular) manifold $Y^3_\infty$ above is a
{\it smooth} Riemannian manifold with non-negative curvature; this
is due to Hamilton's work on $3$-manifolds with non-negative Ricci
curvature (cf. \cite{H1986}).

Coincidentally, Perelman added the extra smoothness assumption (3)
in his collapsing theorem.

\begin{perasum}[cf. \cite{Per2003a}]\label{asum0.3}
For every $w' > 0$ there exist $r = r(w') > 0$ and constants $K_m =
K_m(w') < \infty$ for $m = 0, 1, 2, \cdots$, such that for all
$\alpha$ sufficiently large, and any $0 < r \le \bar{ r}$, if the
ball $B_{g_\alpha}(x, r)$ has volume at least $w'r^3$ and sectional
curvatures at least $-r^{-2}$, then the curvature and its $m$-th
order covariant derivatives at $x$ are bounded by $K_0 r^{-2}$ and
$K_mr^{-m-2}$ for $m = 1, 2,\cdots,$ respectively.
\end{perasum}

Let us explain how Perelman's Smoothness \autoref{asum0.3} is
related to the smooth case of \autoref{conj0.2} and let $\{
((M^3_{\alpha}, \rho^{-2}_\alpha g_{ij}^\alpha), x_\alpha)\}_
{\alpha \in \mathbb Z} $ be as in Theorem 0.1'. If we choose the new
scaling factor $\lambda^2_{\alpha}$ such that $\lambda^2_{\alpha} /
\rho^{-2}_{\alpha} \to +\infty$ as $\alpha \to \infty$, then the
newly re-scaled metric $\lambda^2_{\alpha} g_{ij}^\alpha$ will have
sectional curvature $\ge -
\frac{\rho^{-2}_{\alpha}}{\lambda^2_{\alpha}} \to 0$ as $\alpha \to
\infty$. Suppose that $(Y_{\infty}, y_{\infty}) $ is a pointed
Gromov-Hausdorff limit of a subsequence of $\{ ((M^3_{\alpha},
\lambda^2_\alpha g_{ij}^\alpha), x_\alpha)\}_{\alpha \in \mathbb Z}
$. Then the limiting metric space $Y_{\infty}$ will have
non-negative curvature and a possibly singular metric. When
$\dim[Y_\infty] = 3$, by Perelman's Smoothness \autoref{asum0.3},
the limiting metric space $Y_\infty$ is indeed a {\it smooth}
Riemannian manifold of non-negative curvature. In this smooth case,
\autoref{conj0.2} is known to be true, (see \cite{H1986}).

For simplicity, we let $\hat g_{ij}^\alpha = \rho^{-2}_\alpha
g_{ij}^\alpha$ be as in Theorem 0.1'. By Gromov's compactness
theorem, there is a subsequence of a pointed Riemannian manifolds
$\{ ((M^3_\alpha, \hat{ g}_{ij}^\alpha), x_\alpha) \}$ convergent to
a lower dimensional pointed space $(X^k, x_\infty)$ of dimension
either $1$ or $2$, i.e.:
\begin{equation}\label{eq:0.2}
1 \le \dim [ X^k] \le 2
\end{equation}
using \eqref{eq:0.1}. To establish \autoref{thm0.1}, it is
important to establish that, for sufficiently large $\alpha$, the
collapsed manifold $M^3_\alpha$ has a decomposition $M^3_\alpha =
\cup_i U_{\alpha, i}$ such that each $U_{\alpha, i}$ admits an
almost-free circle action:
$$ S^1 \to U_{\alpha, i } \to X^2_{\alpha, i}$$
We also need to show that these almost-free circle actions are
compatible (almost commute) on possible overlaps.

Let us first recall how Perelman's collapsing theorem for
$3$-manifolds plays an important role in his solution to Thurston's
Geometrization Conjecture on the classification of $3$-manifolds.

\subsection{Applications of collapsing theory to the
classification of $3$-manifolds.} \

\medskip

In 2002-2003, Perelman posted online three important but densely
written preprints on Ricci flows with surgery on compact
$3$-manifolds (\cite{Per2002}, \cite{Per2003a} and \cite{Per2003b}),
in order to solve both the Poincar\'{e} conjecture and Thurston's
conjecture on Geometrization of $3$-dimensional manifolds.
Thurston's Geometrization Conjecture states that {\it ``for any
closed, oriented and connected $3$-manifold $M^3$, there is a
decomposition $[M^3 -\bigcup \Sigma^2_j] = N^3_1 \cup N^3_2 \cdots
\cup N^3_{m}$ such that each $N^3_i$ admits a locally homogeneous
metric with possible incompressible boundaries $\Sigma^2_j$, where
$\Sigma^2_j$ is homeomorphic to a quotient of a $2$-sphere or a
$2$-torus".} There are exactly 8 homogeneous spaces in dimension 3.
The list of $3$-dimensional homogeneous spaces includes 8
geometries: $\mathbb{R}^3$, $\mathbb{H}^3$, $\mathbf{S}^3$,
$\mathbb{H}^2 \times \mathbb{R}$, $\mathbf{S}^2 \times \mathbb{R}$,
$\tilde{SL}(2, \mathbb{R})$, $Nil$ and $Sol$.

Thurston's Geometrization Conjecture suggests the existence of
especially nice metrics on $3$-manifolds and consequently, a more
analytic approach to the problem of classifying $3$-manifolds.
Richard Hamilton formalized one such approach by introducing the
Ricci flow equation on the space of Riemannian metrics:
\begin{equation}\label{eq:0.3}
\frac{\partial g(t)}{\partial t} = - 2\Ric(g(t))
\end{equation}
where $\Ric(g(t))$ is the Ricci curvature tensor of the metric
$g(t)$. Beginning with any Riemannian manifold $(M, g_0)$, there is
a solution $g(t)$ of this Ricci flow on $M$ for $t$ in some interval
such that $g(0) = g_0$. In dimension $3$, the fixed points (up to
re-scaling) of this equation include the Riemannian metrics of
constant Ricci curvature. For instance, they are quotients of
$\mathbb{R}^3$, $\mathbb{H}^3$ and $\mathbf{S}^3$ up to scaling
factors. It is easy to see that, on compact quotients of
$\mathbb{R}^3 $ or $\mathbb{H}^3$, the solution to Ricci flow
equation is either stable or expanding. Thus, on compact quotients
of $\mathbb{R}^3 $ and $\mathbb{H}^3$, the solution to Ricci flow
equation exists for all time $t \ge 0$.

However, on quotients of $\mathbf{S}^2 \times \mathbb{R}$ or
$\mathbf{S}^3$, the solution $\{g(t)\}$ to Ricci flow equation
exists only for finite time $t < T_0$. Hence, one knows that in
general the Ricci flow will develop singularities in finite time,
and thus a method for analyzing these singularities and continuing
the flow past them must be found.

These singularities may occur along proper subsets of the manifold,
not the entire manifold. Thus, Perelman introduced a more general
evolution process called Ricci flow with surgery (cf. \cite{Per2002}
and \cite{Per2003a}). In fact, a similar process was first
introduced by Hamilton in the context of four-manifolds. This
evolution process is still parameterized by an interval in time, so
that for each $t$ in the interval of definition there is a compact
Riemannian $3$-manifold $M_t$. However, there is a discrete set of
times at which the manifolds and metrics undergo topological and
metric discontinuities (surgeries). Perelman did surgery along
$2$-spheres rather than surfaces of higher genus, so that the change
in topology for $\{M^3_t\}$ turns out to be completely understood.
More precisely, Perelman's surgery on $3$-manifolds is {\it the
reverse process of taking connected sums:} cut a $3$-manifold along
a $2$-sphere and then cap-off by two disjoint $3$-balls. Perelman's
surgery processes produced exactly the topological operations needed
to cut the manifold into pieces on which the Ricci flow can produce
the metrics sufficiently controlled so that the topology can be
recognized. It was expected that each connected components of
resulting new manifold $M^3_t$ is either a graph-manifold or a
quotient of one of homogeneous spaces listed above. It is well-known
that any graph-manifold is a union of quotients of 7 (out of the 8
possible) homogeneous spaces described above. More precisely,
Perelman presented a very densely written proof of the following
result.

\begin{theorem}[Perelman \cite{Per2002},\cite{Per2003a}, \cite{Per2003b}]\label{thm0.4}
Let $(M^3, g_0)$ be a closed and oriented Riemannian $3$-manifold.
Then there is a Ricci flow with surgery, say $\{ (M_t, g(t))\}$,
defined for all $t \in [0, T)$ with initial metric $(M, g_0)$. The
set of discontinuity times for this Ricci flow with surgery is a
discrete subset of $[0,\infty)$. The topological change in the
$3$-manifold as one crosses a surgery time is a connected sum
decomposition together with removal of connected components, each of
which is diffeomorphic to one of $(S^2 \times S^1)/\Gamma_i$ or
$S^3/\Gamma_j$. Furthermore, there are two possibilities:

{\rm (1)} Either the Ricci flow with surgery terminates at a finite
time $T$. In this case, $M^3_0$ is diffeomorphic to the connect sum
of $(S^2 \times S^1)/\Gamma_i$ and $S^3/\Gamma_j$. In particular, if
$M^3_0$ is simply-connected, then $M^3_0$ is diffeomorphic to $S^3$.

{\rm (2)} Or the Ricci flow with surgery exists for all time, i.e.,
$T= \infty$.
\end{theorem}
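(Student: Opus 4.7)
The plan is to implement Hamilton--Perelman's Ricci flow with surgery. Starting from the smooth initial data $(M^3, g_0)$, Hamilton's short-time existence theorem solves \eqref{eq:0.3} on a maximal smooth interval $[0, T_1)$. If $T_1 = \infty$ we are already in case (2), so assume $T_1 < \infty$; then the curvature must blow up. The strategy is to dissect the blow-up region geometrically, surgically excise the singular part just before $T_1$, and restart the flow on a possibly disconnected new manifold.

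The two analytic pillars needed to make this precise are, first, Perelman's $\kappa$-non-collapsing theorem, proved via monotonicity of the reduced volume and equivalently of the $\mathcal{W}$-entropy functional: wherever $|{\rm Rm}| \le r^{-2}$ the $r$-ball has volume at least $\kappa r^3$. Second, the Hamilton--Ivey pinching estimate, available only in dimension three, which forces every blow-up limit to have non-negative sectional curvature. Combined with Hamilton's compactness theorem for Ricci flows with locally bounded geometry, these yield the \emph{canonical neighborhood theorem}: any spacetime point of sufficiently large scalar curvature has, after parabolic rescaling, a neighborhood that is $C^{\infty}$-close to a piece of an ancient three-dimensional $\kappa$-solution. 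Classifying such $\kappa$-solutions then shows every canonical neighborhood is either an $\epsilon$-neck $S^2 \times I$, an $\epsilon$-cap, or a closed component diffeomorphic to $S^3/\Gamma_j$ or $(S^2 \times S^1)/\Gamma_i$.

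With this structural picture, surgery is implemented as follows. As $t \nearrow T_1$, let $\Omega \subset M$ denote the set where the curvature stays bounded; its complement is covered by canonical neighborhoods. Entire components of $M \setminus \Omega$ are then diffeomorphic to the $S^3/\Gamma_j$ and $(S^2 \times S^1)/\Gamma_i$ pieces in the theorem statement, and are simply discarded. In the remaining high-curvature regions one locates $\epsilon$-horns reaching into the singular set; deep inside each horn, where the geometry is that of a very thin neck, choose a surgery $2$-sphere, cut along it, and glue in a smooth cap modeled on Perelman's standard solution. The topological effect at each surgery time is precisely the reverse of connected sum, together with removal of the aforementioned spherical components.

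The main obstacle is the mutual compatibility of the surgery parameters $r(t)$, $\delta(t)$, $h(t)$: one needs to choose them so that both the canonical-neighborhood property and $\kappa$-non-collapsing survive every surgery, and so that only finitely many surgeries occur on any finite time subinterval. This is achieved inductively, but the induction is delicate because the scale $r$ on which canonical neighborhoods are recognizable at time $t$ \emph{a priori} depends on all future surgeries; Perelman's resolution is to pick the parameters as step functions of the surgery index and verify compatibility by an a priori estimate at each step. Granting this, the flow extends past every singular time, giving case (2) when it lasts forever, and case (1) when every component is eventually extinguished; in the latter case tracing the surgeries backwards reconstructs $M^3_0$ as a connected sum of $S^3/\Gamma_j$ and $(S^2 \times S^1)/\Gamma_i$ factors, which reduces to $S^3$ when $M^3_0$ is simply connected. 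Perelman's (and Colding--Minicozzi's) finite-extinction theorem for $3$-manifolds whose prime decomposition contains no aspherical summand then identifies which initial manifolds fall in case (1) as opposed to case (2).
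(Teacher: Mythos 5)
The paper does not prove this statement at all: Theorem~\ref{thm0.4} is quoted purely as background for the Geometrization program, and immediately after stating it the authors write that the detailed proof can be found in the expositions of Cao--Zhu, Kleiner--Lott and Morgan--Tian; the collapsing theorem (Theorem~\ref{thm0.1}), not the surgery theorem, is what this paper actually establishes. Your proposal is therefore not comparable to anything in the paper; judged on its own terms it is a correct and well-organized roadmap of the Hamilton--Perelman argument (short-time existence, $\kappa$-non-collapsing via reduced volume/$\mathcal{W}$-entropy, Hamilton--Ivey pinching, the canonical neighborhood theorem through classification of ancient $\kappa$-solutions, surgery in $\varepsilon$-horns using the standard solution, and the inductive choice of the parameters $r(t),\delta(t),h(t)$ so that non-collapsing and canonical neighborhoods persist and surgeries are locally finite in time). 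But every one of these pillars is itself a deep theorem whose proof you cite rather than supply, and the step you correctly single out as the main obstacle --- the a priori dependence of the canonical-neighborhood scale on future surgeries and its resolution by a step-function choice of parameters --- is exactly where the several-hundred-page verifications in the cited monographs live. So what you have written is an accurate summary at the same level of granularity as the paper's own citation of the literature, not a proof; if the goal were to match this paper, the honest statement is that both you and the authors are deferring to Perelman and to \cite{CZ2006}, \cite{KL2008}, \cite{MT2007}. One small imprecision worth fixing in your sketch: the surgery regions and discarded components are organized inside the set $\Omega$ where the \emph{scalar} curvature remains bounded as $t\nearrow T_1$ (surgery spheres sit deep in $\varepsilon$-horns of $\Omega$ at a scale $h$ below the curvature threshold), rather than the complement of $\Omega$ being covered by canonical neighborhoods componentwise as you phrase it; also, components removed at a surgery time may include pieces such as $\mathbb{RP}^3\#\mathbb{RP}^3$, which is accounted for in the statement only because it is a quotient of $S^2\times S^1$.
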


The detailed proof of Perelman's theorem above can be found in
\cite{CZ2006}, \cite{KL2008} and \cite{MT2007}.
In fact, if $M^3$ is a simply-connected closed manifold, then using
a theorem of Hurwicz, one can find that $ \pi_3(M^3) = H_3(M^3,
\mathbb Z)=\mathbb Z$. Hence, there is a map $F: S^3 \to M^3$ of
degree $1$. One can view $S^3$ as a two-point suspension of $S^2$,
i.e., $S^3 \sim S^2 \times [0, 1]/\{0, 1\}$. Thus, for such a
manifold $M^3$ with a metric $g$, one can define the $2$-dimensional
width $W_2(M, g) = \inf_{deg(F) = 1} \max_{0\le s \le 1} \{Area_{g}
[F(S^2, s) ] \}$, (compare with [CalC92]). Colding and Minicozzi (cf.
\cite{CM2005},\cite{CM2008a},\cite{CM2008b}) established that
\begin{equation}\label{eq:0.4}
0< W_2(M^3_t, g(t)) \le (t + c_1)^{\frac 34} [c_2 - 16\pi
(t+c_3)^{\frac 14} )]
\end{equation}
for Perelman's solutions $\{M^3_t, g(t)\}$ to Ricci flow with
surgery, where $\{c_1, c_2, c_3\}$ are positive constants
independent of $t \in [0, T) $. Therefore, for a simply-connected
closed manifold $M^3_0$, it follows from \eqref{eq:0.4} that the
Ricci flow with surgery must end at a finite time $T$. Thus, by
\autoref{thm0.4}, the conclusion of the Poincar\'{e} conjecture
holds for such a simply-connected closed $3$-manifold $M^3_0$. Other
proofs of Perelman's finite time extinction theorem can be found in
\cite{Per2003b} and \cite{MT2007}.

It remains to discuss the long time behavior of Ricci curvature flow
with surgery. We will perform the so-called Margulis thick-thin
decomposition for a complete Riemannian manifold $(M^n, g)$. The
thick part is the {\it non-collapsing part} of $(M^n, g)$, while the
thin part is the {\it collapsing } portion of $(M^n, g)$.

Let $\rho(x, t) $ denote the radius $\rho$ of the metric ball
$B_{g(t)}(x, \rho)$, where we may choose $\rho(x, t) $ so that
$$
\inf\{ \sec_{g(t)}|_y \quad| \quad y \in B_{g(t)}(x,\rho)\}\ge -
\rho^{-2}
$$
and $ \frac 12 \rho(x,t) \le \rho(y, t)\le \rho(x, t) $ for all $y
\in B_{g(t)}(x, \rho/2)$. The re-scaled thin part of $M_t$ can be
defined as
\begin{equation}\label{eq:0.5}
M_-(\omega, t) = \{ x \in M^3_t \quad | \quad \vol[ B_{g(t)}(x,
\rho(x, t) )] < \omega \rho^3(x, t) \}
\end{equation}
and its complement is denoted by $M_+(\omega, t)$, which is called
the thick part for a fixed positive number $\omega$.

When there is no surgery occurring for all time $t \ge 0$, Hamilton
successfully classified the thick part $M_+(\omega, t)$.

\begin{theorem}[Hamilton \cite{H1999},
non-collapsing part]\label{thm0.5} Let $\{ (M^3_t, g(t)) \}$,
$M_-(\omega, t)$ and $M_+(\omega, t)$ be as above. Suppose that
$M^3_t$ is diffeomorphic to $M^3_0$ for all $t \ge 0$. Then there
are only two possibilities:

\smallskip
\noindent {\rm (1)} If there is no thin part (i.e., $M_-(\omega, t)
= \varnothing$), then either $(M^3_t, g(t))$ is convergent to a flat
$3$-manifold or $(M^3, \frac{2}{t} g(t))$ is convergent to a compact
quotient of hyperbolic space $\mathbb H^3$;

\smallskip
\noindent {\rm (2)} If both $M_+(\omega, t)$ and $M_-(\omega, t)$
are non-empty, then the thick part $M_+(\omega, t)$ is diffeomorphic
to a disjoint union of quotients of real hyperbolic space $\mathbb
H^3$ with finite volume and with cuspidal ends removed.
\end{theorem}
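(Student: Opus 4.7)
The plan is to rescale time by the factor natural to hyperbolic Ricci flow evolution, extract geometric limits on the thick part using Hamilton's compactness theorem for Ricci flows, and classify these limits using the special structure of the Ricci flow in dimension three. Throughout, the hypothesis that $M^3_t$ is diffeomorphic to $M^3_0$ for all $t\ge 0$ means there is no surgery time, so the flow is smooth on $[0,\infty)$ and classical compactness tools apply.

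First I would work with the rescaled flow $\tilde g(t)=\tfrac{2}{t}g(t)$, normalized so that hyperbolic metrics of constant sectional curvature $-\tfrac14$ become stationary. For any sequence $t_\alpha\to\infty$ and base points $x_\alpha\in M_+(\omega,t_\alpha)$, the definition of the thick part gives $\vol_{\tilde g(t_\alpha)}[B_{\tilde g(t_\alpha)}(x_\alpha,1)]\ge c(\omega)>0$. The Hamilton--Ivey pinching estimate in three dimensions forces the most negative sectional curvature to be dominated by any large positive curvature in the same ball, so on the rescaled thick part one obtains a two--sided curvature bound $|\sectional_{\tilde g(t_\alpha)}|\le C$ after absorbing the $\tfrac2t$ factor. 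Combining this two--sided bound with the volume lower bound and Cheeger's lemma (or Perelman's $\kappa$--noncollapsing theorem, which holds automatically in the no--surgery case) yields a uniform positive injectivity radius at $x_\alpha$.

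Second, Hamilton's compactness theorem for Ricci flows then extracts a smooth pointed Cheeger--Gromov subsequential limit $(M^3_\infty,g_\infty(t),x_\infty)$ which is a complete Ricci flow defined on an interval tending to $(-\infty,\infty)$ as $t\to\infty$. The Hamilton--Ivey pinching passes to the limit to give $\sectional_{g_\infty}\le 0$ wherever curvature is nonzero. I would next invoke a monotonicity: the functional $\int_{M^3_t} R_{g(t)}\,dV_{g(t)}$ (or, equivalently, Hamilton's monotonicity of the normalized minimum scalar curvature $\hat R(t)=t\cdot \min R_{g(t)}$) is asymptotically stationary along the limit. An eternal three--dimensional Ricci flow with $\sectional\le 0$, bounded curvature, positive injectivity radius, and asymptotically stationary reduced volume must be an Einstein metric; in three dimensions Einstein implies constant sectional curvature, and the sign plus the normalization forces the limit to be either flat or hyperbolic of sectional curvature $-\tfrac14$. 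Case (1) of the statement is then immediate: if the thin part is empty, convergence is global, and the limit is a flat quotient or $\mathbb H^3/\Gamma$.

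For case (2) the extra step is to upgrade these abstract geometric limits to a diffeomorphism of a fixed compact core of the hyperbolic limit onto a component of $M_+(\omega,t)$ with incompressible toral boundary. Here I would use Mostow rigidity (so the hyperbolic pieces are unique up to isometry) together with a harmonic--map perturbation argument: the pointed convergence provides $(1+o(1))$--bilipschitz embeddings of larger and larger compact cores of $\mathbb H^3/\Gamma_j$ into $(M^3_t,\tilde g(t))$ which, composed with themselves across two different times, must be isotopic to the identity by rigidity, and this isotopy glues the embeddings into a fixed diffeomorphism onto $M_+(\omega,t)$ for all large $t$. Incompressibility of the boundary tori at the cuspidal ends is then the final point: any compressing disk would, after a Meeks--Simon--Yau minimization and a Gauss--Bonnet computation combined with the almost $-\tfrac14$ pinching on the thin collar, yield a point of positive scalar curvature along a minimal disk inside an almost--hyperbolic region, which is a contradiction.

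The main obstacle I anticipate is the third paragraph, namely promoting the abstract Gromov--Hausdorff limits to honest embeddings with incompressible toral boundary. Extracting a limiting hyperbolic manifold is the relatively standard part; what is delicate is controlling the geometry in the transition region between thick and thin in a way strong enough to invoke Mostow rigidity and the minimal disk argument simultaneously, and to show that the number of hyperbolic pieces stabilizes as $t\to\infty$. This is exactly the technical heart of Hamilton's original argument in \cite{H1999}, and it is the reason the statement requires the quantitative $\omega$--thick/thin framework rather than a purely qualitative decomposition.
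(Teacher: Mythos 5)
A point of orientation first: the paper does not prove \autoref{thm0.5} at all. It is quoted as background and attributed to Hamilton's work on non-singular solutions \cite{H1999}, with the surgery version credited to Perelman \cite{Per2003a} and its detailed proofs to \cite{CZ2006}, \cite{KL2008}, \cite{MT2010}. So your proposal has to be measured against Hamilton's original argument rather than anything in this paper. At that level your outline does follow Hamilton's broad strategy (rescale by $\tfrac{2}{t}$, extract pointed limits on the thick part, identify them as constant-curvature via a monotone quantity, stabilize the hyperbolic pieces via Mostow rigidity and a harmonic-map parametrization, and prove incompressibility of the cuspidal tori with a minimal-disk argument), but several of the key analytic steps are asserted rather than established.

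The most serious gap is in your first paragraph: a two-sided curvature bound on the rescaled thick part does \emph{not} follow from Hamilton--Ivey pinching. Hamilton--Ivey gives only a lower control of the form $R \ge |\nu|\,(\log|\nu| + \log(1+t) - 3)$, where $\nu$ is the most negative sectional curvature; it bounds how negative the curvature can be in terms of the scalar curvature, and says nothing that caps the positive curvature on a unit ball in $M_+(\omega,t)$. In Hamilton's theorem the uniform curvature bound for the normalized flow is part of the \emph{hypothesis} of a non-singular solution, and in Perelman's extension the bounded curvature of the thick part at large times is itself a substantial theorem, obtained from the $\kappa$-non-collapsing and canonical neighborhood machinery; it cannot be absorbed in one sentence. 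A second gap: the step ``asymptotically stationary reduced volume implies the eternal limit is Einstein'' conflates Perelman's reduced volume with Hamilton's actual mechanism, which is the monotonicity of $R_{\min}\cdot V^{2/3}$ (equivalently $R_{\min}$ for the volume-normalized flow) together with the rigidity in the equality case of its evolution equation; moreover, without the case analysis that occupies most of \cite{H1999}, your limit could a priori be a positively curved or collapsing limit rather than flat or hyperbolic, so case (1) is not ``immediate.'' Finally, you correctly identify the stability of the hyperbolic pieces and the incompressibility of the boundary tori as the technical heart, but your sketch there is only directional: Hamilton's incompressibility argument tracks the evolution under the flow of the least area of a compressing disk (via Meeks--Yau existence and Gauss--Bonnet) and derives a contradiction from its forced decrease, rather than producing ``a point of positive scalar curvature'' inside the almost-hyperbolic region. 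As written, the proposal is a plausible roadmap of Hamilton's proof, not a proof.
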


\begin{figure*}[ht]
\includegraphics[width=250pt]{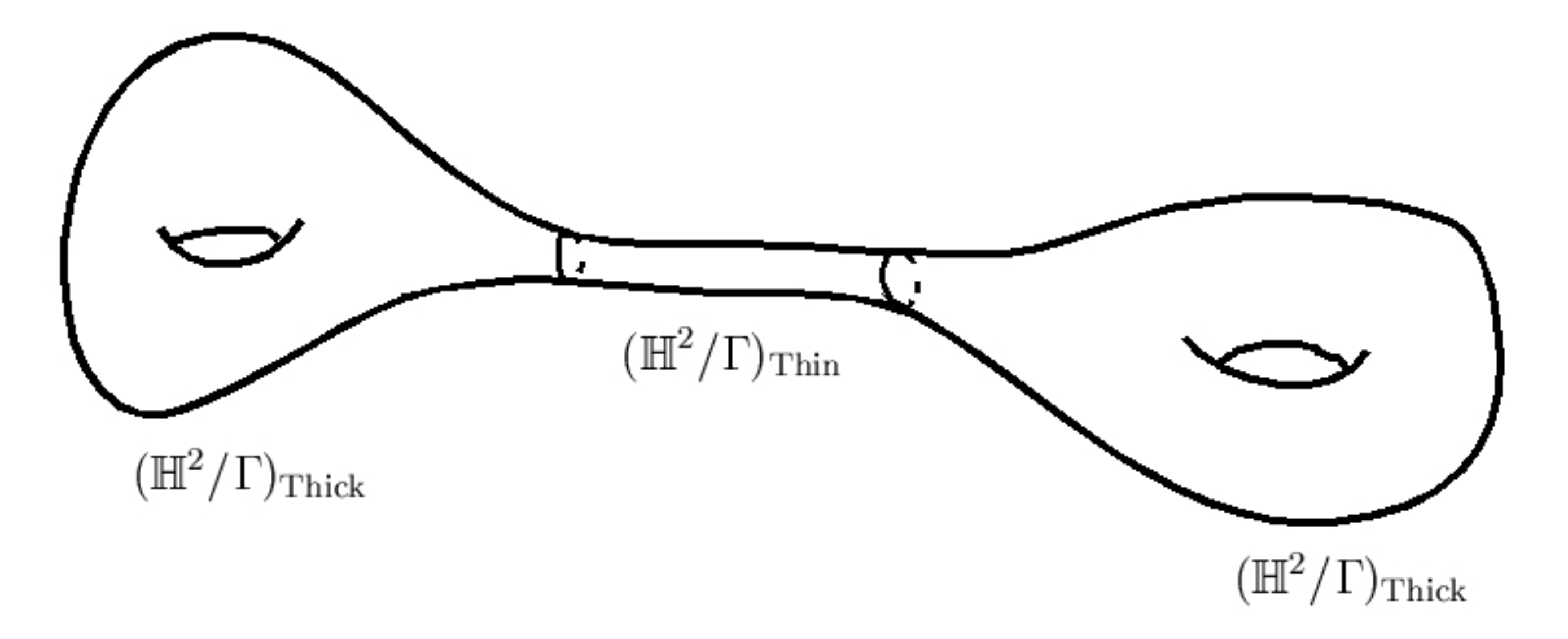}\\
\caption{2-dimensional thick-thin decomposition }\label{2dimttdec}
\end{figure*}

\begin{figure*}[ht]
\includegraphics[width=250pt]{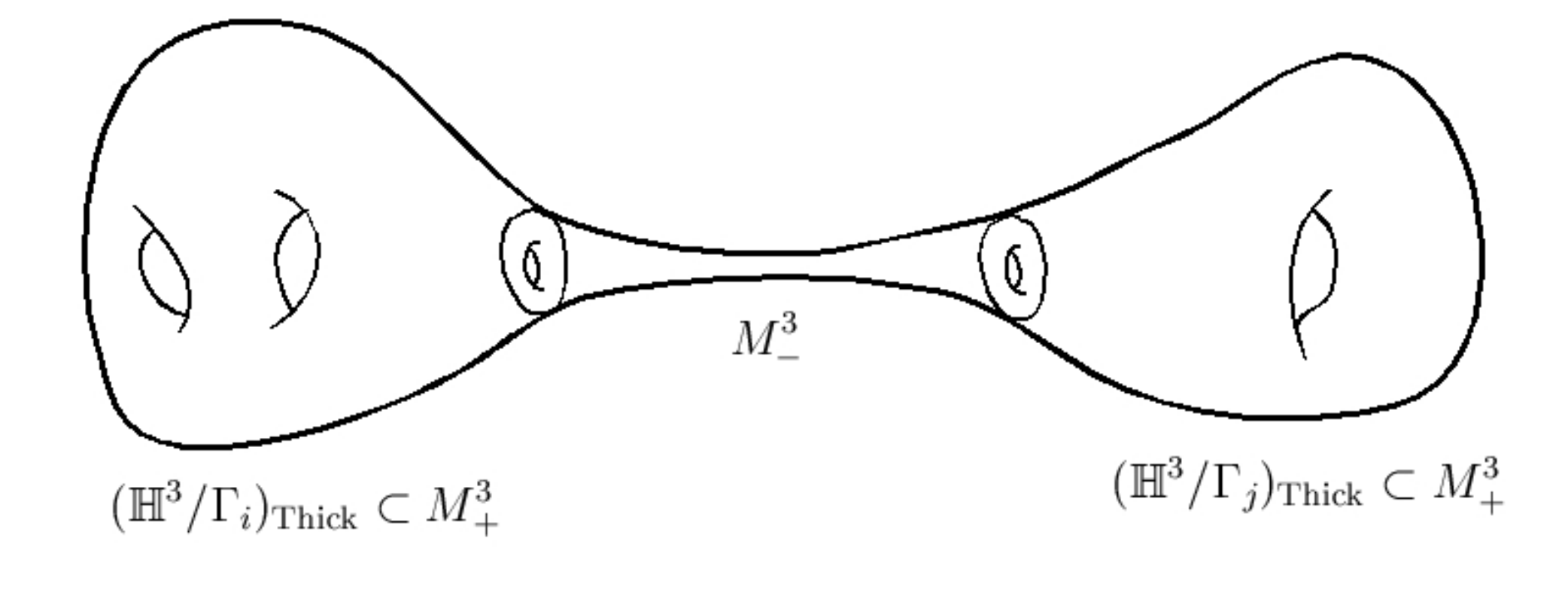}\\
\caption{3-dimensional thick-thin decomposition }\label{3dimttdec}
\end{figure*}

Perelman (cf. \cite{Per2003a}) asserted that the conclusion of
\autoref{thm0.5} holds if we replace the classical Ricci flow by
{\it the Ricci flow with surgeries}, (cf. \cite{Per2003a}).
Detailed proof of this assertion of Perelman can be found in
\cite{CZ2006}, \cite{KL2008} and \cite{MT2010}.

Suppose that $\mathbb H^3/\Gamma$ is a complete but non-compact
hyperbolic $3$-manifold with finite volume. The cuspidal ends of
$\mathbb H^3/\Gamma$ are exactly the thin parts of $M^3_\infty
=\mathbb H^3/\Gamma$. Each cuspidal end of $\mathbb H^3/\Gamma$ is
diffeomorphic to a product of a torus and half-line (i.e., $T^2 \times [0,
\infty)$). Hence, each cusp is a graph-manifold.

It should be pointed out that possibly infinitely many surgeries
took place {\it only} on thick parts of manifolds $\{M_t\}$ after
appropriate re-scalings, due to the celebrated Perelman's
$\kappa$-non-collapsing theory, (see \cite{Per2002}).

Moreover, Perelman (cf. \cite{Per2003a}) pointed out that the study
of the thin part $M_-(\omega, t)$ has nothing to do with Ricci flow,
but is related to {\it his version} of critical point theory for
distance functions. We now outline our simple proof of
\autoref{thm0.1} using Perelman's version of critical point theory
in next sub-section.

\subsection{Outline of a proof of Perelman's
collapsing theorem.} \
\medskip

In order to illustrate main strategy in the proof of Perelman's
collapsing theorem for $3$-manifolds, we make some general remarks.
Roughly speaking, Perelman's collapsing theorem can be viewed as a
generalization of the implicit function theorem. Suppose that
$\{M^3_\alpha\}$ is a sequence of collapsing $3$-manifolds with
curvature $\ge -1$ and that $\{M^3_\alpha\}$ is not a
diameter-collapsing sequence. To verify that $M^3_\alpha$ is a
graph-manifold for sufficiently large $\alpha$, it is sufficient to
construct a decomposition $M^3_\alpha = \cup^{m_\alpha}_{i=1}
U_{\alpha, i }$ and a collection of {\it regular} functions (or
maps) $F_{\alpha, i}: U_{\alpha, i } \to \mathbb R^{s_i}$, where
$s_i =1$ or $2$. We require that the collection of locally defined
functions (or maps) $\{( U_{\alpha, i }, F_{\alpha,
i})\}^{m_\alpha}_{i=1}$ satisfy two conditions:

\begin{enumerate}[{\rm (i)}]
\item Each function (or map) $F_{\alpha, i}$ is
{\it regular} enough so that Perelman's version of implicit function
theorem (cf. \autoref{thm1.2} below) is applicable;

\item The collection of locally defined {\it regular} functions
(or maps) are compatible on any possible overlaps in the sense of
Cheeger-Gromov (cf. \cite{CG1986} \cite{CG1990}). More precisely, if
$U_{\alpha, i } \cap U_{\alpha, j} \neq \varnothing$ and if $
[F_{\alpha, i}^{-1}(y) \cap F_{\alpha, j}^{-1}(z)] \neq \varnothing
$ with $\dim [ F_{\alpha, i}^{-1}(y) ] \le \dim [ F_{\alpha,
j}^{-1}(z)] $, then we require that either $F_{\alpha, i}^{-1}(y)
\subset F_{\alpha, j}^{-1}(z)$ or the union $[F_{\alpha, i}^{-1}(y)
\cup F_{\alpha, j}^{-1}(z)]$ is contained in a $2$-dimensional orbit
of an almost-free torus action.

\end{enumerate}
If the above two conditions are met, with additional efforts, we can
construct a {\it compatible} family of the locally defined Seifert
fibration structures (which is equivalent to an $F$-structure
$\mathcal F$ of positive rank in the sense of Cheeger-Gromov) on a
sufficiently collapsed $3$-manifold $M^3_\alpha$. It follows that
$M^3_\alpha$ is a graph-manifold for sufficiently large $\alpha$,
(cf. \cite{R1993}).

Perelman's choices of locally defined {\it regular} functions (or
maps) are related to distance functions $r_{A_{\alpha, i}}(x)
=\dis(x, A_{\alpha, i})$ from appropriate subsets $A_{\alpha,i}$. We
briefly illustrate the main strategy of our proof for the following
two cases.

\smallskip
\noindent
\textbf{Case 1.}{\it The metric balls $\{
B_{M^3_\alpha}(x_\alpha, r) \}$ collapse to an open interval}.

We will show that $B_{M^3_\alpha}(x_\alpha, r)$ is homeomorphic to a
slim cylinder $N^2_{\alpha}\times I$ with shrinking spherical or
toral factor $N^2_{\alpha}$ for sufficiently large $\alpha$. When a
sequence of the pointed $3$-manifolds $\{(B_{M^3_\alpha}(x_\alpha,
r), x_\alpha) \}$ with curvature $\ge -1$ are convergent to an
$1$-dimensional space $(X^1, x_\infty)$ and $x_\infty$ is an
interior point, Perelman-Yamaguchi fibration theory is applicable.
Thus, we are led to consider the fibration
$$
N^2_\alpha \to B_{M^3_\alpha}(x_\alpha, r)
\stackrel{F_{\alpha}}{\longrightarrow} (-\ell, \ell)
$$

\begin{figure*}[ht]
\includegraphics[width=250pt]{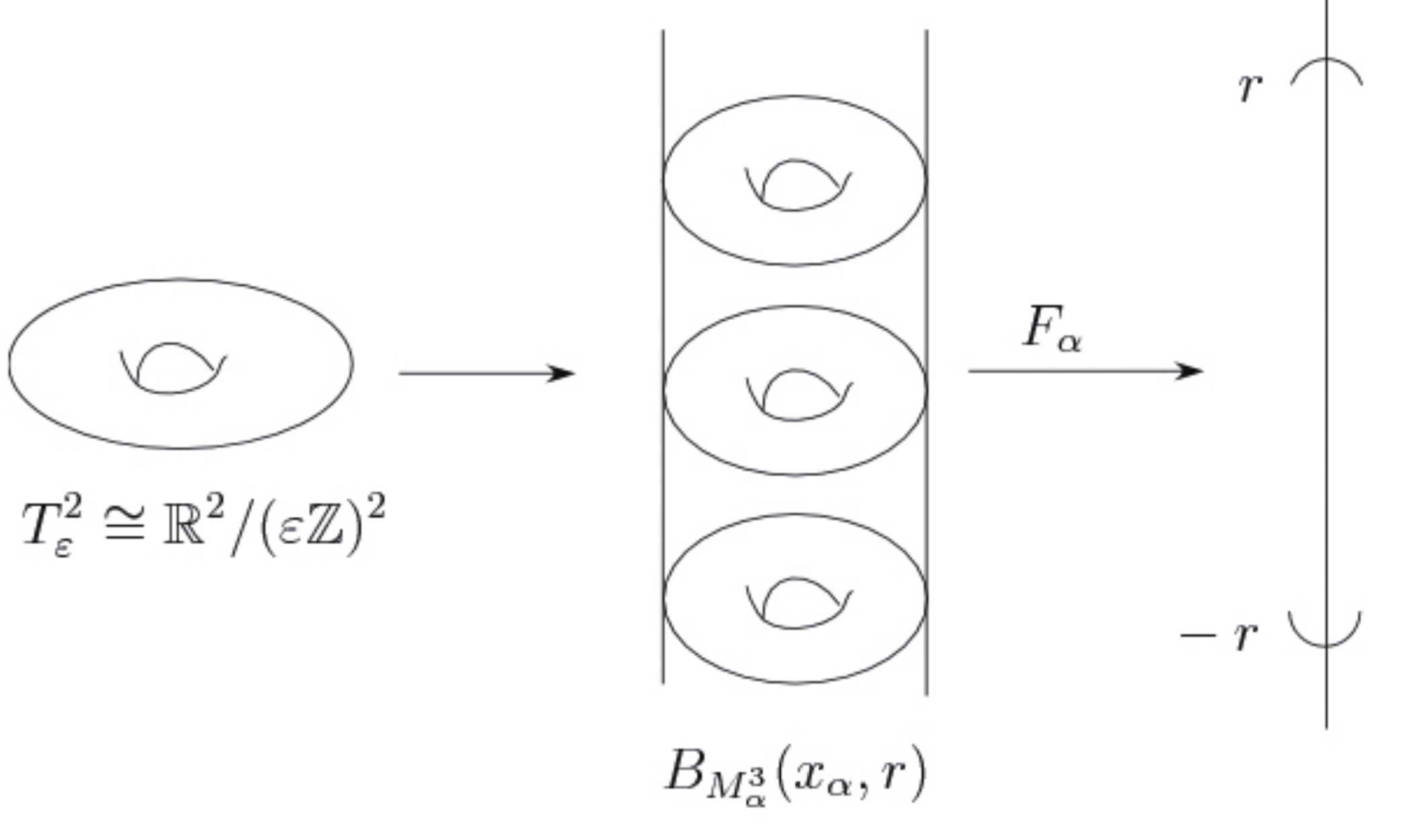}\\
\caption{Slim cylinders with collapsing fibers.}\label{fig:4.1}
\end{figure*}

We now discuss the topological type of the fiber $N^2_\alpha$. Let
$x_\infty \in (-\ell, \ell) $ and $\varepsilon_\alpha$ be the
diameter of $F_{\alpha}^{-1}(x_\infty)$ in $M^3_\alpha$. We further
consider the limiting space $Y^s_\infty$ of re-scaled spaces
$\{(B_{\varepsilon^{-1}_\alpha
M^3_\alpha}(x_\alpha,\frac{r}{\varepsilon_\alpha}), x_\alpha) \},$
as $ \varepsilon_\alpha \to 0$. There are two sub-cases: $\dim
(Y^s_\infty) = 3$ or $\dim(Y^s_\infty) = 2$. Let us consider the
subcase of $\dim (Y^s_\infty) = 3$:
$$
N^2_\infty \to Y^3_\infty \to \mathbb R
$$
where both $N^2_\infty$ and $Y^3_\infty$ are manifolds with possibly
singular metrics of non-negative curvature. To classify singular
surfaces $N^2_\infty$ with non-negative curvature, we use a
splitting theorem and the distance non-increasing property of
Perelman-Sharafutdinov retraction on the universal cover $\tilde
N^2_\infty$, when $N^2_\infty$ has non-zero genus. With some extra
efforts, we will conclude that $N^2_\infty$ must be homeomorphic to
a quotient of 2-sphere or 2-torus, (see \autoref{section2} below).
It now follows from a version of Perelman's stability theorem that
the fiber $N^2_\alpha$ is homeomorphic to $N^2_\infty$, for
sufficiently large $\alpha$. Hence, $N^2_\alpha$ is a quotient of
2-sphere or 2-torus as well, for sufficiently large $\alpha$. Our
new proof of Perelman's collapsing theorem for this subcase is much
simpler than the approach of Shioya-Yamaguchi presented in
\cite{SY2000}.

The sub-case of $\dim(Y^2_\infty) = 2$ is related to the following case:

\smallskip
\noindent \textbf{Case 2}. {\it The metric balls $\{
B_{M^3_\alpha}(x_\alpha, r)\} $ collapse to an open disk}.

We will show that $B_{M^3_\alpha}(x_\alpha, r)$ is homeomorphic to a
fat solid torus $D^2\times S^1_{\varepsilon_\alpha}$ with shrinking
core $S^1_{\varepsilon_\alpha}$ for sufficiently large $\alpha$.

\begin{figure*}[ht]
\includegraphics[width=300pt]{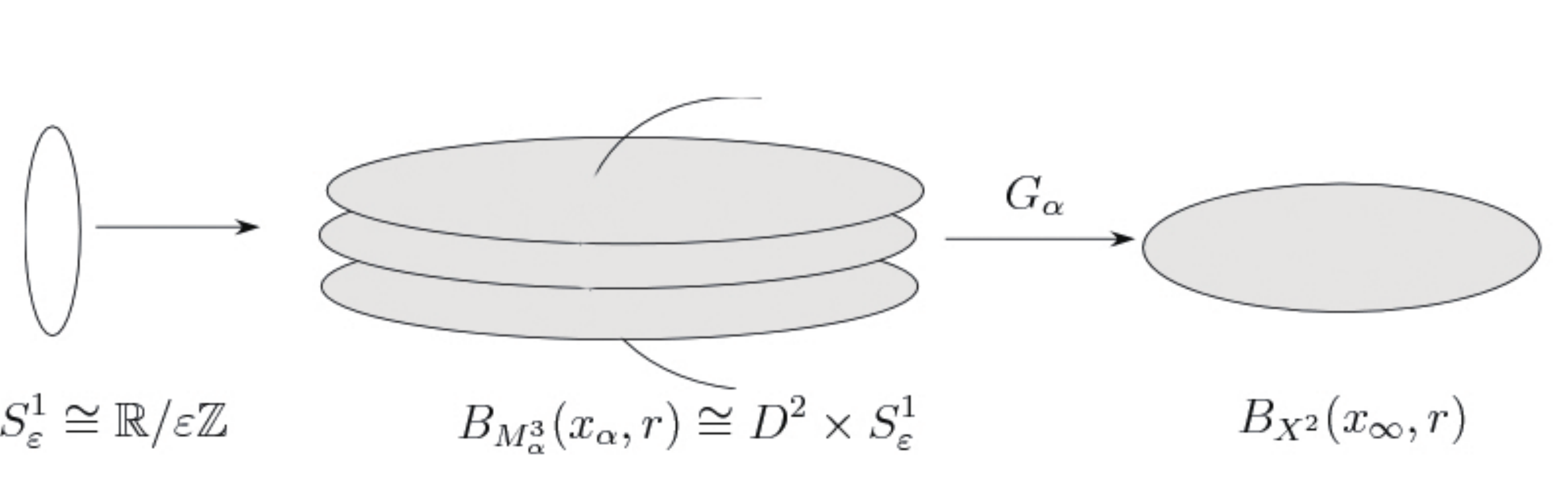}\\
\caption{Fat solid tori with shrinking cores $S^1_\varepsilon$.}\label{fig:1.1}
\end{figure*}

In this case, our strategy can be illustrated in the following
diagram
\begin{diagram}
B_{M^3_{\alpha}}(x_{\alpha},\delta)&\rTo^{F_{\alpha}}&\mathbb{R}^2\\
\dTo_{\text{G-H}}&&\dCorresponds\\
B_{X^2}(x_{\infty},\delta)&\rTo^{F_{\infty}}&\mathbb{R}^2
\end{diagram}
where the sequence of metric balls ${B_{(M^3_{\alpha},
\hat{g}^{\alpha})}(x_{\alpha}, \delta)}$ are convergent to the
metric disk $B_{X^2}(x_{\infty}, \delta)$ for $\delta \le r$. We
will construct an admissible map $F_{\infty}$ which is regular at
the punctured disk, using Perelman's multiple conic singularity
theory, (see \autoref{thm1.17} below). Among other things, we will
use the following result of Perelman to construct the desired map
$F_\infty$.

\begin{theorem}[Conic Lemma in Perelman's critical point theory,
(cf. \cite{Per1994} page 211)]\label{thm0.6} Let $X$ be an
Alexandrov space of dimension $k$, $\curv \ge -1$ and $x\in X$ be an
interior point of $X$. Then the distance function $r_x(y) = \dis(x,
y)$ has {\it no} critical points on $[B_X(x,{\delta}) -\{ x\}]$ for
sufficiently small $\delta$ depending on $x$.
\end{theorem}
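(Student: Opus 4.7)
The plan is to combine a direct computation in the tangent cone at $x$ with a compactness argument that transfers the conclusion back to the original space near $x$. Since $x$ is an interior point of the $k$-dimensional Alexandrov space $X$ of curvature $\ge -1$, the tangent cone $T_x X$ is isometric to the metric cone $C(\Sigma_x)$ over the space of directions, and the pointed rescalings $(X, t^{-1} d, x)$ Gromov--Hausdorff converge to $(T_x X, o)$ as $t \to 0^+$, where $o$ denotes the apex.

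First I would verify directly that in $T_x X = C(\Sigma_x)$ the distance function $r_o$ from the apex admits no critical points other than $o$ itself. For any $v = (\xi, t)$ with $t > 0$, the outward radial ray $s \mapsto (\xi, t+s)$ and the inward radial segment $s \mapsto (\xi, t-s)$ together trace a common geodesic line through $v$, so they define two directions $\xi_+, \xi_- \in \Sigma_v(T_x X)$ with $\angle(\xi_+, \xi_-) = \pi$. Since the unique shortest path from $v$ to $o$ is precisely the inward radial segment with initial direction $\xi_-$, the outward direction $\xi_+$ witnesses that $v$ is regular: every shortest path from $v$ to $o$ has initial direction $\xi_-$ making angle $\pi > \pi/2$ with $\xi_+$.

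Next I would argue by contradiction for the original claim. Suppose some sequence of critical points $y_n \to x$ of $r_x$ exists with $y_n \ne x$. Setting $t_n = \dist(x, y_n) \to 0$, pass to the rescaled pointed spaces $(X, t_n^{-1} d, x)$, which converge in pointed Gromov--Hausdorff sense to $(T_x X, o)$. Criticality of $y_n$ for $r_x$ is scale-invariant, so the rescaled images $\tilde y_n$ remain critical, and Gromov's precompactness yields a subsequence with $\tilde y_n \to v \in T_x X$, $\dist(v, o) = 1$, so in particular $v \ne o$. Using the upper semicontinuity of angles and the compactness of minimizing geodesics under pointed GH-convergence of Alexandrov spaces with curvature bounded below, one lifts every direction $\xi \in \Sigma_v(T_x X)$ to a nearby direction at $\tilde y_n$; the criticality of $\tilde y_n$ then supplies shortest paths $\sigma_n$ from $\tilde y_n$ to $x$ making angle $\le \pi/2$ with these lifts, and subsequential limits of the $\sigma_n$ produce a shortest path from $v$ to $o$ whose initial direction makes angle $\le \pi/2$ with $\xi$. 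Since $\xi$ was arbitrary, $v$ would be a critical point of $r_o$ in $T_x X$, contradicting the first step.

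The hardest piece is the passage from ``$\tilde y_n$ critical in the rescaled $X$'' to ``$v$ critical in $T_x X$''. It rests on two standard but delicate tools from Alexandrov geometry: an Arzel\`a--Ascoli-type statement that minimizing geodesics in pointed GH-converging sequences of spaces with a uniform lower curvature bound subconverge to minimizing geodesics in the limit, and the upper semicontinuity of angles under such convergence, both descending from the Toponogov comparison theorem. Packaging these so that \emph{every} limiting direction $\xi \in \Sigma_v(T_x X)$ can be realized as a limit of directions at $\tilde y_n$ compatible with the criticality hypothesis is the main bookkeeping step; once this is in place, the contradiction with the cone computation is immediate.
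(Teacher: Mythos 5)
Your proposal is correct, and at heart it follows the same route as the paper's own argument (the alternative proof of \autoref{prop1.7}, part (2)): blow up at $x$, so that the punctured ball is analyzed through the tangent cone $T^-_x(X)$, check that the distance from the apex has no critical points away from the apex, and transfer this regularity back by a semicontinuity principle under Gromov--Hausdorff convergence. The difference lies only in how the transfer step is implemented. The paper works with the semiconcave functions $f_\lambda=\tfrac12\,\dis_{\lambda X}(\cdot,x)^2$ and invokes Petrunin's lower semicontinuity of gradient norms (Lemma 1.3.4 of \cite{Petr2007}, restated as \autoref{prop1.14}), which gives $|\nabla f_\lambda|>0$ on the rescaled annuli $A_{\lambda X}(x,\tfrac12,1)$ for large $\lambda$, and then exhausts the punctured ball by these annuli; you instead argue by contradiction with critical points $y_n\to x$, rescale by $t_n=\dis(x,y_n)$, and pass criticality to the limit using the Grove--Shiohama angle criterion, Arzel\`a--Ascoli for shortest paths, and the comparison fact that the angle of a limit hinge is at most the liminf of the angles --- which is exactly the favorable direction for preserving the bound $\le\pi/2$, so your use of it is sound whatever one names it. Your version is more elementary (no semiconcave gradient calculus), at the cost of the bookkeeping you flag; two remarks there: for the contradiction you need not lift every direction of $\Sigma_v$ (non-geodesic directions would require an extra density-plus-diagonal argument), since it suffices to lift the single outward radial direction at $v$, which is a geodesic direction and lifts by taking shortest paths from $\tilde y_n$ to points converging to a farther point on the same ray; and the uniqueness of the radial shortest path from $v$ to the apex, which you assert, follows at once from the cone distance formula, since $d=r_1-r_2$ forces the angular component to be constant.
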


We will use \autoref{thm0.6} and Perelman's semi-flow orbit
stability theorem (cf. \autoref{prop1.14} below) to conclude that
the lifting maps $F_\alpha$ is regular on the annular region
$A_{M^3_{\alpha}} (x_{\alpha}, \varepsilon, \delta)=
[B_{M^3_{\alpha}}(x_{\alpha},\delta)- B_{M^3_{\alpha}}(x_{\alpha},
\varepsilon)]$. With extra efforts, one can construct a local
Seifert fibration structure:
$$
S^1 \to A_{M^3_{\alpha}} (x_{\alpha}, \varepsilon, \delta)
\stackrel{G_{\alpha}}{\longrightarrow} A_{X^2} (x_{\infty},
\varepsilon, \delta)
$$

In summary, Perelman's collapsing theorem for 3-manifolds can be
viewed an extension of the implicit function theorem. Our proof of
Perelman's collapsing theorem benefited from {\it his version} of
critical point theory for distance functions, including his conic
singularity theory and fibration theory. Perelman's multiple conic
singularity theory and his fibration theory are designed for
possibly singular Alexandrov spaces $X^k$. Therefore, the smoothness
of metrics on $X^k$ does {\it not} play a big role in the
applications of Perelman's critical point theory, unless we run into
the so-called essential singularities (or extremal subsets). When
essential singularities do occur on surfaces, we use the MCS theory
(e.g. \autoref{thm0.6}) and the multiple-step Perelman-Sharafutdinov
flows to handle them, (see \autoref{section5.2} below).

Without using Perelman's version of critical point theory,
Shioya-Yamaguchi's proof of the collapsing theorem for $3$-manifolds
was lengthy and involved. For instance, they use their singular
version of Gauss-Bonnet theorem to classify surfaces of non-negative
curvature, (see Chapter 14 of \cite{SY2000}). The proof of the singular
version of the Gauss-Bonnet theorem was non-trivial. In
addition, Shioya-Yamaguchi extended the Cheeger-Gromoll soul theory
to 3-dimensional singular spaces with non-negative curvature, which
was rather technical and occupied the half of their first paper
\cite{SY2000}. Using Perelman's version of critical point theory, we
will provide alternative approaches to classify non-negatively
curved surfaces and open 3-manifolds with possibly singular metrics,
(e.g., the 3-dimensional soul theory). Our arguments inspired by
Perelman are considerably shorter than Shioya-Yamaguchi's proof for
the $3$-dimensional soul theory, (see \autoref{section2.2} below).

For the readers who prefer a traditional proof of the collapsing
theorem without using Perelman's version of critical point theory,
we recommend the important papers of Morgan-Tian \cite{MT2008} and
Shioya-Yamaguchi \cite{SY2000}, \cite{SY2005}. Finally, we should
also mention the recent related work of Gerard Besson et al, (cf.
\cite{BBBMP2010}). Another proof of Perelman's collapsing theorem
for 3-manifolds has been announced by Kleiner and Lott (cf.
\cite{KL2010}).

We refer the organization of this paper to the table of contents at
the beginning. In \autoref{section1}-\ref{section4} below, we mostly
discuss interior points of Alexandrov spaces, unless otherwise
specified.

\section{Brief introduction to Perelman's MCS theory and applications to local Seifert fibration}
\label{section1}

In \S 1-2, we will discuss our proof of Theorem 0.1' for a special
case. In this special case, we assume that the sequence of metric
balls $\{ B_{(M^3_{\alpha},g^{\alpha})}(x_{\alpha},r) \} $ is
convergent to a metric ball $B_{X^2}(x_{\infty},r)$, where
$x_\infty$ is an interior point of $X^2$. Using several known
results of Perelman, we will show that there is a (possibly
singular) circle fibration:
\begin{equation}\label{eq1.01}
S^1\to B_{(M^3_{\alpha},g^{\alpha})}(x_{\alpha},\varepsilon)\to
B_{X^2}(x_{\infty},\varepsilon),
\end{equation}
for some $\varepsilon < r$. In other words, we shall show that
$\hat{B}_{M^3_{\alpha}} (x_{\alpha},\varepsilon)$ looks like a {\it
fat} solid tori with a shrinking core, i.e., $
\hat{B}_{M^3_{\alpha}} (x_{\alpha},\varepsilon) \sim [D^2 \times
S^1_{\varepsilon}] \sim [(D^2 \times (\mathbb R/ \varepsilon \mathbb
Z)]$, (see \autoref{fig:1.1} above and \autoref{ex2.0} below).

In fact, using the Conic Lemma (\autoref{thm0.6} above), Kapovitch
\cite{Kap2005} already established a circle-fibration structure over
the annular region $A_{X^2}(x_{\infty},\delta,\varepsilon)$. Let
$\Sigma_x(X)$ denote the space of unit directions of an Alexandrov
space $X$ of curvature $\ge -1$ at point $x$. When $\dim(X)=2$, it
is known (cf. \cite{BGP1992}) that $X^2$ must be a $2$-dimensional
topological manifold. Thus, $\Sigma_x(X^2)$ is a circle, and hence
an $1$-dimensional manifold.

\begin{prop}[\cite{Kap2005}, page 533]\label{prop1.1}
Suppose that $M^n_{\alpha}\xrightarrow{G-H} X$, where $M^n_{\alpha}$
is a sequence of $n$-dimensional Riemannian manifolds with sectional
curvature $\ge -1$. Suppose that there exists $x_{\infty}\in X$ such
that $\Sigma=\Sigma_{x_{\infty}}(X)$ is a closed Riemannian
manifold. Then there exists $r_0=r_0(x_{\infty})$ such that for any
$M_{\alpha}\ni x_{\alpha}\to x_{\infty}$ we have: For any
sufficiently large $\alpha$, and $r \le r_0$, there exists a
topological fiber bundle
$$
S_{\alpha}\to \partial B_{M_{\alpha}} (x_{\alpha},r)\to
\Sigma_{x_{\infty}}(X)
$$
such that
\begin{enumerate}[{\rm (1)}]
\item $S_{\alpha}$ and $\partial B_{M_{\alpha}}(x_{\alpha},r)$ are
topological manifolds;
\item Both $S_{\alpha}$ and $\partial B_{M_{\alpha}}(x_{\alpha},r)$
are connected;
\item The fundamental group $\pi_1(S_{\alpha})$ of the fiber is
almost nilpotent.
\end{enumerate}
\end{prop}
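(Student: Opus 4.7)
The plan is to combine Perelman's Conic Lemma (\autoref{thm0.6}) with his fibration theorem to produce a topological fibration of $\partial B_{M_\alpha}(x_\alpha,r)$ over $\Sigma = \Sigma_{x_\infty}(X)$ for large $\alpha$, and then to deduce the almost-nilpotency of $\pi_1(S_\alpha)$ from the collapse of the fibers in the Gromov-Hausdorff limit.

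First I would apply \autoref{thm0.6} to the distance function $r_{x_\infty}(y) = \dist(y,x_\infty)$ on $X$, obtaining $r_0>0$ so that $r_{x_\infty}$ has no critical points on $B_X(x_\infty,r_0) \setminus \{x_\infty\}$. A Perelman-Sharafutdinov-type gradient flow of $r_{x_\infty}$ then identifies the punctured ball with the open cone $\Sigma \times (0,r_0]$; in particular, for every $r \le r_0$ the metric sphere $\partial B_X(x_\infty,r)$ is homeomorphic to $\Sigma$, and there is a natural ``angular'' projection $\pi_\infty : A_X(x_\infty, r/2, r) \to \Sigma$ that is admissible and regular in the sense of Perelman's MCS theory. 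Since $\Sigma$ is by hypothesis a closed Riemannian manifold, it is a good target for local fibration arguments.

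Next I would lift this picture to $M_\alpha$. Under $M_\alpha \xrightarrow{G\text{-}H} X$, the distance functions $r_{x_\alpha}$ converge uniformly on compact sets to $r_{x_\infty}$, so for $\alpha$ large they remain Perelman-regular on $A_{M_\alpha}(x_\alpha,r/2,r)$. Covering $\Sigma$ by finitely many small charts $\{U_j\}$ and invoking Perelman's fibration theorem over each chart yields topological product structures $\pi_\alpha^{-1}(U_j) \cong U_j \times F_{\alpha,j}$; these glue via the compatibility of the lifted Perelman-Sharafutdinov semi-flows into a global topological fiber bundle $\pi_\alpha : \partial B_{M_\alpha}(x_\alpha,r) \to \Sigma$ with fiber $S_\alpha$, whose total space and fibers are topological manifolds because they arise as regular level sets and fibers of a Perelman-regular map. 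This proves (1). For (2), $\Sigma$ is connected (as the space of directions at an interior point of an Alexandrov space of dimension $\ge 2$, which is the relevant case), and the fibers $S_\alpha$ collapse in Gromov-Hausdorff distance to single points of $\Sigma$, hence are connected for large $\alpha$, so $\partial B_{M_\alpha}(x_\alpha,r)$ is connected. For (3), the closed manifold $S_\alpha$ has dimension $n - \dim(\Sigma)$ with diameter tending to zero in an ambient space of curvature $\ge -1$; rescaling $S_\alpha$ to unit diameter produces a sequence of closed manifolds whose sectional curvatures tend to zero, and Gromov's almost flat manifold theorem (equivalently, the generalized Margulis lemma of Kapovitch-Petrunin-Tuschmann) forces $\pi_1(S_\alpha)$ to be almost nilpotent.

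The main obstacle will be the third step: Perelman's fibration theorem is most cleanly stated for maps into Euclidean space, while here the target is a possibly non-Euclidean Riemannian manifold $\Sigma$. Handling this requires working in local charts on $\Sigma$ and verifying that the local trivializations produced by the fibration theorem glue coherently. The coherence is ultimately forced by the uniqueness of the Perelman-Sharafutdinov gradient semi-flow of $r_{x_\infty}$ on $X$ and the stability of its lifts to $M_\alpha$ under Gromov-Hausdorff convergence.
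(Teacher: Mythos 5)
The paper never proves this proposition --- it is quoted verbatim from Kapovitch \cite{Kap2005} --- so your proposal can only be judged on its own merits. The overall strategy (apply the Conic Lemma \autoref{thm0.6} at $x_\infty$, lift the resulting regularity to $M_\alpha$ via the semicontinuity of gradients as in \autoref{prop1.14}, then invoke a fibration theorem) is indeed the standard route, but two steps are genuinely incomplete. First, the global bundle projection onto $\Sigma$. Perelman's fibration theorem (\autoref{thm1.2}) trivializes admissible maps into $\mathbb R^k$ near regular points; it does not hand you a globally defined map $\partial B_{M_\alpha}(x_\alpha,r)\to\Sigma$, and ``coherence forced by uniqueness of the gradient semi-flow'' is not an argument: the local charts on $\Sigma$ produce different admissible maps whose trivializations need not patch into a bundle over $\Sigma$ without substantial work. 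Notice that your outline never actually uses the hypothesis that $\Sigma$ is a closed \emph{Riemannian} manifold, yet that hypothesis is exactly what makes the clean proof work: rescale by $1/r$, so that $\bigl(\tfrac1r M_\alpha, x_\alpha\bigr)$ is close to the tangent cone $C(\Sigma)$, whose annulus $A_{C(\Sigma)}(o,\tfrac12,2)$ is a smooth Riemannian manifold, and then apply Yamaguchi's fibration theorem for collapse onto a Riemannian manifold; composing with the radial projection of the cone onto $\Sigma$ and restricting to the distance sphere gives the bundle. This is how the cited proof proceeds, and it is the step your sketch replaces with hand-waving.

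Second, your justification of (3) does not work. The fiber $S_\alpha$ produced by a (topological) fibration theorem is only a topological submanifold of $M_\alpha$; it is not totally geodesic, it carries no induced lower (let alone two-sided) sectional curvature bound, so ``rescaling $S_\alpha$ to unit diameter produces manifolds whose sectional curvatures tend to zero'' is unfounded, and Gromov's almost flat theorem in any case requires two-sided bounds $|K|\cdot\diam^2\le\varepsilon(n)$. The generalized Margulis lemma of Kapovitch--Petrunin--Tuschmann is also not a drop-in substitute, since it controls the image of $\pi_1$ of a small ball inside a larger ball, not $\pi_1(S_\alpha)$ itself. The almost nilpotency of $\pi_1(S_\alpha)$ is part of the conclusion of Yamaguchi's fibration theorem (via Fukaya--Yamaguchi), which again is what Kapovitch's argument rests on. Two smaller slips: Gromov--Hausdorff closeness of $S_\alpha$ to a point gives small diameter, not connectedness (connectedness of the fiber also requires proof), and the fiber dimension is $n-\dim X$, not $n-\dim\Sigma$.
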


We will use Perelman's fibration theorem and an multiple conic
singularity theory to establish the desired circle fibration over
the annular region $A_{X^2}(x_{\infty},\delta, \varepsilon)$ for
$\delta<\varepsilon$. Our strategy can be illustrated in the
following diagram
\begin{diagram}
B_{M^3_{\alpha}}(x_{\alpha},r)&\rTo^{F_{\alpha}}&\mathbb{R}^2\\
\dTo_{\text{G-H}}&&\dCorresponds\\
B_{X^2}(x_{\infty},r)&\rTo^{F_{\infty}}&\mathbb{R}^2
\end{diagram}
where the sequence of metric balls
${B_{(M^3_{\alpha},\hat{g}^{\alpha})}(x_{\alpha},r)}$ are convergent
to the metric disk $B_{X^2}(x_{\infty},r)$.

If $F_{\alpha}$ were a ``\emph{topological submersion\/}" to its
image, then we would be able to obtain the desired topological
fibration. For this purpose, we will recall Perelman's Fibration
Theorem for non-smooth maps.

\subsection{Brief introduction to Perelman's critical point theory}\label{section1.1}\

\smallskip

We postpone the definition of admissible maps to
\autoref{section1.2}. In \autoref{section1.2}, we will also recall
the notion of {\it regular points} for a sufficiently wide class of
``\emph{admissible mappings\/}" from an Alexandrov space $X^n$ to
Euclidean space $\mathbb R^k$.

Let $F: X^n \to \mathbb R^k$ be an admissible map. The points of an
Alexandrov space $X$ that are not regular are said to be critical
points, and their images in $\mathbb R^k$ are said to be critical
values of $F: X \to \mathbb R^k$. All other points of $\mathbb R^k$
are called regular values.

\begin{theorem}[Perelman's Fibration Theorem \cite{Per1994} page
207]\label{thm1.2}\

\begin{enumerate}[{\rm (A)}]
\item An admissible mapping is open and admits a trivialization
in a neighborhood of each of its regular points.
\item If an admissible mapping has no critical points and is
proper in some domain, then its restriction to this domain is the
projection of a locally trivial fiber bundle.
\end{enumerate}
\end{theorem}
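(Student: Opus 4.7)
The plan is to follow Perelman's original strategy, which adapts the classical argument proving that a smooth submersion is locally a projection onto a factor, but in the Alexandrov setting where there are no ordinary tangent vectors. I would first reduce everything to part (A): once every regular point has a trivialization of $F$ on a neighborhood, part (B) follows almost formally. Indeed, if $F$ is admissible, proper and has no critical points on a domain $D$, then each fiber $F^{-1}(y) \cap D$ is compact (by properness) and is covered by finitely many of the local trivializations produced in (A); a standard patching argument, together with compactness of the fiber and continuity of the trivializing flows with respect to initial data, upgrades the local product structures into a locally trivial fiber bundle over a neighborhood of $y$.

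For part (A), the main work is building the trivialization at a regular point $p$ of $F=(f_1,\dots,f_k)$. Regularity should mean that one can find a direction $\xi\in\Sigma_p(X)$ which makes an angle strictly greater than $\pi/2$ with every direction of the form "$\nabla f_i$" at $p$, uniformly for $i=1,\dots,k$, and moreover that after suitable rotation one can do this with directions $\xi_1,\dots,\xi_k$ such that $\xi_i$ has strictly positive inner product with $-\nabla f_i$ and negligibly small inner product with $-\nabla f_j$ for $j\neq i$. Openness of $F$ at $p$ is then immediate: by moving a short distance in the $\xi_i$ directions one can independently increase or decrease each coordinate $f_i$, so $F$ maps every neighborhood of $p$ onto a neighborhood of $F(p)$.

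Next I would construct admissible vector-field-like flows. At each point $q$ near $p$, for $i=1,\dots,k$ one chooses by a partition-of-unity construction a direction $v_i(q)\in\Sigma_q(X)$ satisfying uniform angle bounds of the same form as at $p$ (the condition of being a regular point is open, so such choices persist on a neighborhood $U$ of $p$). Then, following Perelman, one generates a continuous one-parameter family of maps $\Phi^t_i:U\to X$ either by integrating these directions along quasi-geodesics or, more robustly, by using the gradient flow of the concave function $-f_i$ perturbed by the distance functions appearing in the definition of admissibility; the resulting flows $\Phi^t_i$ move one coordinate at the rate $1+o(1)$ while leaving the others almost constant. Standard Alexandrov-comparison estimates then give that the composite flow $\Psi^{t_1,\dots,t_k} = \Phi^{t_1}_1\circ\cdots\circ\Phi^{t_k}_k$ realizes $F$-coordinate increments $(t_1,\dots,t_k)+o(|t|)$, so that for small $|t|$ the map
\begin{equation*}
\Psi_p:(F^{-1}(F(p))\cap U)\times B_\varepsilon^k(0)\longrightarrow X,\qquad (q,t)\longmapsto \Psi^t(q),
\end{equation*}
is a homeomorphism onto a neighborhood of $p$ in $F^{-1}(B_\varepsilon^k(F(p)))$ with $F\circ\Psi_p=F(p)+t$. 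This is the desired local trivialization of $F$.

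The hard technical obstacle, and the part I would devote the most care to, is constructing the flows $\Phi^t_i$ as \emph{continuous} semi-flows on a genuinely singular Alexandrov space and proving that the composition really gives a homeomorphism rather than merely a continuous surjection. This requires the orbit stability of Perelman's semi-flows (the same mechanism invoked in Proposition~\ref{prop1.14} of the paper) together with the fact that, on the set where all $v_i$'s can be chosen with the uniform angle condition, two different admissible choices give flows differing by a reparametrization that is uniformly close to the identity. Granting these stability results, the construction above produces the trivialization, proving (A), and the compactness/patching argument then yields (B).
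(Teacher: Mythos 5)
First, a point of comparison: the paper does not prove this statement at all. Theorem \ref{thm1.2} is quoted verbatim from Perelman's Morse-theory preprint \cite{Per1994} (with \cite{Kap2007} as the modern exposition) and is used in the paper as a black box, so there is no in-paper argument for your sketch to match; it has to be judged against Perelman's actual proof, which proceeds by induction on dimension, intertwined with the stability theorem, and glues local product structures by means of Siebenmann-type deformation-of-homeomorphisms techniques.

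Measured against that, your sketch has a genuine gap at its central step: the construction of the flows $\Phi^t_i$. On a singular Alexandrov space there is no existence-and-uniqueness theory for integrating a field of chosen directions $v_i(q)$ produced by a partition of unity; the only curves one can reliably flow along are gradient curves of semiconcave functions, and these form forward semi-flows only (non-invertible, defined for $t\ge 0$), are merely distance non-expanding, and do not keep the remaining coordinates $f_j$, $j\ne i$, even approximately constant without exactly the kind of control one is trying to prove. Consequently the map $\Psi_p(q,t)=\Psi^t(q)$ cannot be shown to be a homeomorphism by this route: injectivity and continuity of the inverse are precisely where Perelman needs the stability theorem (proved simultaneously with the fibration theorem by induction on dimension) rather than an Ehresmann-style flow argument, and \autoref{prop1.14} gives semicontinuity of gradients and convergence of semi-flow orbits, not invertible trivializing isotopies. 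The same difficulty is hidden in your reduction of (B) to (A): in the topological category, patching local product structures over a compact fiber is not formal either, but is again the gluing theorem based on deformation of homeomorphisms. So while the openness claim in (A) and the overall ``move each coordinate independently'' intuition are sound, the proposal as written substitutes the smooth-case mechanism for the singular one at exactly the points where the theorem is hard.
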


There are several equivalent definitions of Alexandrov spaces of
$\curv \ge k$. Roughly speaking, a length space $X$ is said to have
curvature $\ge 0$ if and only if, for any geodesic triangle $\Delta$
in $X$, the corresponding triangle $\widetilde{\Delta}$ of the same
side-lengths in $\mathbb R^2$ is thinner than $\Delta$.

More precisely, let $M^2_k$ be a simply connected complete surface
of constant sectional curvature $k$. A triangle in a length space
$X$ consists of three vertices, say $\{a,b,c\}$ and three
length-minimizing geodesic segments $\{\overline{ab}, \overline{bc},
\overline{ac}\}$. Let $|ab|$ be the length of $\overline{ab}$. Given
a real number $k$, a comparison triangle $\widetilde{\Delta}^k_
{\tilde{a},\tilde{b},\tilde{c}}$ is a triangle in $M^2_k$ with the
same side lengths. Its angles are called the comparison angles and
denoted by $\widetilde{\measuredangle}^k_a(b,c)$, etc. A comparison
triangle exists and is unique whenever $k\le 0$ or $k>0$ and
$|ab|+|bc|+|ca|<\frac{2\pi}{\sqrt k}$.
\begin{definition}\label{def1.3}
A length space $X$ is called an Alexandrov space of curvature $\ge
k$ if any $x\in X$ has a neighborhood $U_x$ for any $\{a,b,c,d\}\in
U_{x}$, the following inequality
$$
\widetilde{\measuredangle}^k_a(b,c)+\widetilde{\measuredangle} ^
k_a(c,d)+\widetilde{\measuredangle}^k_a(d,b)\le 2\pi.
$$
\end{definition}

Alexandrov spaces with $\curv \ge \lambda$ have several nice
properties, (cf. \cite{BGP1992}). For instance, the dimension of an
Alexandrov space $X$ is either an integer or infinite. Moreover, for
any $x\in X$, there is a well defined tangent cone $T_x^-(X)$ along
with an ``inner product" on $T_x^-(X)$.

In fact, if $X$ is an Alexandrov space with the metric $d$, then we
denote by $\lambda X$ the space $(X,\lambda d)$. Let
$i_{\lambda}:\lambda X \to X$ be the canonical map. The
Gromov-Hausdorff limit of pointed spaces $\{(\lambda X, x)\}$ for
$\lambda \to \infty$ is the tangent cone $T_x^-(X)$ at $x$, (see
$\S$7.8.1 of \cite{BGP1992}).

For any function $f: X\to \mathbb R$, the function $d_xf:
T_x^-(X)\to \mathbb R$ such that
$$
d_xf=\lim_{\lambda\to+\infty} \frac{f\circ
i_{\lambda}-f(x)}{1/\lambda}
$$
is called the differential of $f$ at $x$.

Let us now recall the notion of regular points for distance
functions.
\begin{definition}[\cite{GS1977}, \cite{Gro1981}]\label{def1.4}
Let $A\subset X$ be a closed subset of an Alexandrov space $X$ and
$f_A(x)=\dis(x,A)$ be the corresponding distance function from $A$.
A point $x\not\in A$ is said to be a regular point of $f_A$ if there
exists a non-zero direction $\vec{\xi}\in T_x^-(X)$ such that
\begin{equation}\label{eq1.1}
d_xf(\vec{\xi})>0.
\end{equation}
\end{definition}

It is well-known that if $X$ has $\curv \ge0$ then $f(x) =
\frac{1}{2} [\dis(x,p)]^2$ has the property that $\text{Hess}(f)\le
I$, (see \cite{Petr2007}). To explain such an inequality, we recall
the notion of semi-concave functions.

\begin{definition}[\cite{Per1994} page 210]\label{def1.5}
A function $f: X\to \mathbb R$ is said to be $\lambda$-concave in an
open domain $U$ if for any length-minimizing geodesic segment
$\sigma: [a,b]\to U$ of unit speed, the function
$$
f\circ \sigma (t) - \lambda t^2 /2
$$
is concave.
\end{definition}

When $f$ is 1-concave, we say that $\text{Hess}(f)\le I$. It is
clear that if $f: U\to \mathbb R$ is a semi-concave function, then
$$d_xf: T_x^-(X)\to \mathbb R$$
is a concave function.

In order to introduce the notion of semi-gradient vector for a
semi-concave function $f$, we need to recall the notion of
``\emph{inner product\/}" on $T_x^-(X)$. For any pair of vectors
$\overrightarrow{u}$ and $\vec{v}$ in $T_x^-(X)$, we define
$$
\langle\vec{u},\vec{v}\rangle=\frac12 (|\vec{u}|^2 + |\vec{v}|^2 -
|\vec{u}\vec{v}|^2) = |\vec{u}||\vec{v}|\cos\theta
$$
where $\theta$ is the angle between $\vec{u}$ and $\vec{v}$,
$|\vec{u}\vec{v}|=\dis_{ T_x^-(X)}(\vec u, \vec v)$, $|\vec
u|=\dis_{ T_x^-(X)}(\vec u, o)$ and $o$ denotes the origin of the
tangent cone.

\begin{definition}[\cite{Petr2007}]\label{def1.6}
For any given semi-concave function $f$ on $X$, a vector
$\vec{\eta}\in T_x^-(X)$ is called a gradient of $f$ at $x$ (in
short $\vec{\eta}=\nabla f$) if
\begin{enumerate}[{\rm (i)}]
\item
$d_xf(\vec v) \le \langle\vec{\eta},\vec v\rangle$ for any $\vec v
\in T_x^-(X) $ ;
\item
$d_xf(\vec{\eta})=|\vec{\eta}|^2$.
\end{enumerate}
\end{definition}

It is easy to see that any semi-concave function has a uniquely
defined gradient vector field. Moreover, if $d_xf(\vec v)\le 0$ for
all $\vec v\in T_x^-(X)$, then $\nabla f|_x=0$. In this case, $x$ is
called a critical point of $f$. Otherwise, we set
$$
\nabla f= d_x f(\vec{\xi})\vec{\xi}
$$
where $\vec{\xi}$ is the (necessarily unique) unit vector for which
$d_xf$ attains its positive maximum on $\Sigma_x(X)$, where
$\Sigma_x(X)$ is the space of direction of $X$ at $x$.

\begin{prop}[\cite{Per1994}, \cite{PP1994}]\label{prop1.7}
Let $X^n$ be a metric space with curvature $\ge -1$ and $\hat x $ be
an interior point of $X^n$. Then there exists a strictly concave
function $h: B(\hat x, r) \to (-\infty, 0]$ such that (1) $h(\hat x)
= 0$ and $B(\hat x, \frac{s}{\lambda}) \subset h^{-1}( (-s, 0])
\subset B(\hat x, \lambda s) $ for $s \le \frac{r}{4\lambda}$; (2)
the distance function $r_{\hat x}(y)$ has no critical point in
punctured ball $[B_X(\hat x, \varepsilon) - \{\hat x\}]$, for some
$\{\varepsilon, r, \lambda\}$ depending on $\hat x$.
\end{prop}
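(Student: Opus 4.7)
Part (2) is an immediate restatement of the Conic Lemma (\autoref{thm0.6}, already established earlier in the excerpt): taking $\varepsilon > 0$ as guaranteed there, the distance function $r_{\hat x}$ has no critical points on $B_X(\hat x, \varepsilon) \setminus \{\hat x\}$. So only part (1) demands an argument; for it, my plan is Perelman's construction of a strictly concave ``bump'' at an interior point.

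Because $\hat x$ is an interior point of the $n$-dimensional Alexandrov space $X^n$, the space of unit directions $\Sigma_{\hat x}(X)$ is a compact Alexandrov space of curvature $\ge 1$ and dimension $(n-1)$, so it admits finite $\delta$-nets for every $\delta > 0$. I would choose a small $\rho > 0$ and a $\delta$-net $\{\vec \xi_i\}_{i=1}^N \subset \Sigma_{\hat x}$, and set $z_i := \gamma_{\vec \xi_i}(\rho)$ where $\gamma_{\vec \xi_i}$ is a unit-speed geodesic issuing from $\hat x$ with initial direction $\vec\xi_i$. Define
\[
h(y) \;:=\; -\sum_{i=1}^N \phi\bigl(\dist(y, z_i)\bigr) + C,
\]
where $\phi$ is a fixed smooth, strictly increasing, strictly convex function (for example $\phi(t) = \cosh t - 1$, or a suitable quadratic on the range where distances live), and $C$ is chosen so that $h(\hat x) = 0$. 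By construction $h$ attains a local maximum at $\hat x$ with value $0$.

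The main obstacle is verifying that $h$ is strictly (i.e.\ $(-c)$-) concave on a neighborhood of $\hat x$. Each term $\phi \circ \dist(\cdot, z_i)$ is semi-concave via Toponogov comparison in curvature $\ge -1$, giving an upper bound on the one-sided second derivative along a unit-speed geodesic $\sigma$ through $y$ of the form $(\phi \circ r_{z_i})'' \circ \sigma \le \phi''(r_{z_i}) \cos^2 \theta_i + \phi'(r_{z_i}) \cdot (\text{hyperbolic model term})$, where $\theta_i$ is the angle between $\dot\sigma(0)$ and $\uparrow_y^{z_i}$. The gain comes from the density of the net: for any unit $\dot\sigma(0) \in \Sigma_y$ with $y$ close to $\hat x$, at least one $\uparrow_y^{z_i}$ lies within $2\delta$ of $\pm \dot\sigma(0)$, so $\cos^2 \theta_i \ge 1 - O(\delta^2)$ for that index. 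The resulting strictly negative term $-\phi''(\rho)(1 - O(\delta^2))$ from this index dominates the (bounded) contributions of the remaining indices once $\delta$ is small enough and $\rho$ is tuned. Summing, one gets $(h \circ \sigma)''(0) \le -c < 0$ uniformly in $\dot\sigma(0)$ and in $y$ ranging over a small ball $B(\hat x, r)$.

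The level-set inclusion in claim (1) then follows from the standard quadratic comparison at a strict maximum of a strictly concave function: combining the uniform bound $(h\circ \sigma)'' \le -c$ with the Lipschitz property of each $\phi \circ \dist(\cdot, z_i)$, one obtains $c_1 \dist(\hat x, y)^2 \le -h(y) \le c_2 \dist(\hat x, y)^2$ for $y \in B(\hat x, r)$. Setting $\lambda := \max(\sqrt{c_2},\, 1/\sqrt{c_1})$ and restricting to $s \le r/(4\lambda)$ then yields $B(\hat x, s/\lambda) \subset h^{-1}((-s, 0]) \subset B(\hat x, \lambda s)$, as required.
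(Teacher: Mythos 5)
Your treatment of part (2) is logically fine: \autoref{thm0.6} is quoted from Perelman, so citing it is legitimate (the paper instead gives a self-contained alternative argument, rescaling to the tangent cone and using Petrunin's lower semicontinuity of $|\nabla f|$, but that is a presentational difference). The genuine gap is in part (1), in the concavity of your $h$. You set $h=-\sum_i\phi(\dis(\cdot,z_i))+C$ with $\phi$ \emph{convex} increasing, and then invoke the Toponogov estimate, which is an \emph{upper} bound $(\phi\circ r_{z_i}\circ\sigma)''\le\phi''\cos^2\theta_i+\phi'\cdot(\mathrm{model\ term})$. After the outer minus sign this only yields a \emph{lower} bound on $(h\circ\sigma)''$, which is the wrong direction: to conclude $(h\circ\sigma)''\le -c$ you would need a lower bound on $(\phi\circ r_{z_i}\circ\sigma)''$, i.e.\ semiconvexity of distance functions, and a lower curvature bound never provides that. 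The failure is real in the singular setting the proposition addresses: if $\hat x$ is the apex of a cone of small total angle, the cut locus of each $z_i$ is a ray emanating from $\hat x$, so arbitrarily close to $\hat x$ each $\dis(\cdot,z_i)$ has concave kinks, hence $-\phi(\dis(\cdot,z_i))$ has convex kinks which the remaining smooth terms cannot absorb, and your $h$ is not concave on any punctured neighborhood of $\hat x$. The construction the paper relies on (Perelman, Grove--Wilhelm, Kapovitch; in the paper $h=f_{\delta'}-1$ with $f_{\delta'}$ as on page 129 of \cite{Kap2002}) keeps the plus sign: it uses averages $\frac{1}{N}\sum_i\phi(\dis(\cdot,q_{i}))$ with $\phi$ \emph{concave} and $|\phi''|$ large relative to $N\phi'/\rho$, so every summand is semiconcave by comparison, and the directional density of the net converts the averaged upper bounds into strict concavity.

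Even granting a strictly concave $h$ maximized at $\hat x$, your last step does not give the stated inclusions. A two-sided quadratic bound $c_1\,\dis(\hat x,y)^2\le -h(y)\le c_2\,\dis(\hat x,y)^2$ makes $h^{-1}((-s,0])$ comparable to a ball of radius of order $\sqrt{s}$, so the inclusion $h^{-1}((-s,0])\subset B(\hat x,\lambda s)$ fails for all small $s$, no matter how $\lambda$ is chosen. The proposition's sandwich is linear in $s$ on both sides, which forces $-h$ to decay \emph{linearly} in $\dis(\cdot,\hat x)$; this is precisely Kapovitch's estimate $1\le \dis(x,\hat x)/t\le 1/\cos(3\delta)$ for $x\in h^{-1}(-t)$, which is what the paper quotes to produce $\lambda$. (Also note that with a smooth strictly convex $\phi$ your $h$ need not attain its maximum exactly at $\hat x$; compare the proof of \autoref{prop2.2}, where the maximum point of the averaged distance function is only close to the chosen center.)
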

\begin{proof}
(1) The construction of the strictly concave function $h$ described
above is available in literature (see \cite{GW1997},
\cite{Kap2002}). In fact, let $f_{\delta'}: X \to \mathbb{R}$ be
defined as on page 129 of \cite{Kap2002} for $\delta' < \delta$. We
choose $h(x) = f_{\delta'}(x) - 1$. Kapovitch showed that the
inequality
$$
1 \le \frac{\dis(x, \hat x) }{t} \le \frac{1}{\cos (3\delta)}
$$
holds for $x \in h^{-1}(-t)$ and $t < < \delta' < \delta$, (see page
132 of \cite{Kap2002}). Thus, there exists a $\lambda$ such that
$B(\hat x, \frac{s}{\lambda}) \subset h^{-1}( (-s, 0])
\subset B(\hat x, \lambda s) $ for $s \le \frac{r}{4\lambda}$.

(2) For the convenience of readers, we add the following alternative
proof of the second assertion. Let us recall that the tangent cone
$(T^-_{\hat x}(X^n), O)$ is the Gromov-Hausdorff limit of the
pointed re-scaled spaces $\{(\lambda X, \hat x)\}_{\lambda\ge 0}$ as
$\lambda \to +\infty$, i.e.
$$(\lambda X, \hat x)\to (T^-_{\hat x}X, O)$$
as $\lambda \to +\infty$, where $O$ is the apex of the tangent cone
$T^-_{\hat x}X$. Let $\dis_{O, T^-_{\hat x}X}(\eta)=\dis_{T^-_{\hat
x}X}(O, \eta)$ and $\dis_{\hat x, \lambda X}(y)=\dis_{\lambda X}(y,
\hat x)=\lambda \dis_{X}(y, \hat x)$. We consider
$f_{\lambda}(y)=\frac12 (\dis_{\hat x, \lambda X}(y, \hat x))^2$. By
an equivalent definition of curvature $\ge -1$,
$\{f_{\lambda}\}_{\lambda \ge 1}$ and $f_{\infty}$ are semi-concave
functions.

Lemma 1.3.4 of \cite{Petr2007} implies that if $p_\lambda \to
p_\infty$ as $\lambda \to p_\infty$ then $\liminf_{\lambda \to
\infty} |\nabla f_\lambda|(p_\lambda) \ge |\nabla f_{\infty}|(
p_\infty)|$.

Let $A_M(x, r, R) = [\overline{B_M(x,R)}-B_M(x,r)]$ be an annular
region. Our energy function $f_{\infty}(\eta)=\frac12 |\eta|^2$ has
property $|\nabla f_{\infty}| \ge \frac 18 $ on $A_{T_{\hat
x}(X)}(0, \frac12, 1)$. It follows from Lemma 1.3.4 of
\cite{Petr2007} that, for sufficiently large $\lambda\ge \lambda_0
>1$, the function $f_{\lambda}$ has no critical point on the annual
region
$$
A_{\lambda X}(\hat x, \frac12, 1) =A_{X}(\hat x,
\frac{1}{2\lambda},\frac {1}{\lambda}).
$$
 Since we have
$$
[B_{X}(\hat x, \frac {1}{\lambda_0})-\{\hat x\}] = \cup_{\lambda\ge
\lambda_0}A_{X} (\hat x, \frac{1}{2\lambda},\frac {1}{\lambda}),
$$
we conclude that the radial distance function has no critical point
on the punctured ball $[B_X(\hat x, \varepsilon) - \{\hat x\}]$.
\end{proof}

\subsection{Regular values of admissible maps}\label{section1.2}\

\smallskip

In this subsection, we recall explicit definitions of admissible
mappings and their regular points introduced by Perelman.

\begin{definition}[\cite{Per1994},\cite{Per1997} admissible maps]\label{def1.9}\ \,
{\rm (1)} Let $X^n$ be a complete Alexandrov space of dimension $n$
and $\curv_{X^n} \ge c$ and $U \subset X^n$. A function $f: U \to
\mathbb R$ is called admissible if $f(x)=\sum_{i=1}^m \phi_i
(\dis_{A_i}(x))$ where $A_i\subset M$ is a closed subset and
$\phi_i: \mathbb R \to \mathbb R$ is continuous.

{\rm (2)} A map $\hat F: X^n \to \mathbb R^k$ is said to be
admissible in a domain $U\subset M$ if it can be represented as
$\hat F=G\circ F$, where $G:\mathbb R^k\to \mathbb R^k$ is
bi-Lipschitz homeomorphism and each component $f_i$ of
$F=(f_1,f_2,\dots,f_k)$ is admissible.

\end{definition}

The definition of regular points for admissible maps $\hat F: X^n
\to \mathbb R^k$ on general Alexandrov spaces is rather technical.
For the purpose of this paper, we only need to consider two lower
dimensional cases of $X^n$: either $X^3$ is a smooth Riemannian
$3$-manifold or $X^2$ is a surface with curvature $\ge c$.

\begin{definition}[Regular points of admissible maps for $\dim \le
3$]\label{def1.10} \

{\rm (1)} Suppose that $\hat F: M^3 \to \mathbb R^2$ is an
admissible map from a smooth Riemannian $3$-manifold $M^3$ to
$\mathbb R^2$ on a domain $U\subset M$ and $\hat F=G\circ F$, where
$G:\mathbb R^2\to \mathbb R^2$ is bi-Lipschitz homeomorphism and
each component $f_i$ of $F=(f_1,f_2)$ is admissible. If $\{ \nabla
f_1, \nabla f_2 \}$ are linearly independent at $x \in U$, then $x$
is said to be a regular point of $\hat F$.

{\rm (2)} (\cite{Per1994} page 210). Suppose that $\hat F: X^2 \to
\mathbb R^2$ is an admissible map from an Alexandrov surface $X^2$
of curvature $\curv \ge c$ to $\mathbb R^2$ on a domain $U\subset
X^2$ and $\hat F=G\circ F$, where $G:\mathbb R^2\to \mathbb R^2$ is
bi-Lipschitz homeomorphism and each component $f_i$ of $F=(f_1,f_2)$
is admissible. Suppose that $f_1$ and $f_2$ satisfy the following
conditions:
\begin{enumerate}[{\rm (2.a)}]
\item $\langle\nabla f_1,\nabla f_2\rangle _q<-\varepsilon<0$;
\item There exits $\vec{\xi}\in T^-_q(X^2)$ such that
$\min\{d_qf_1(\vec{\xi}),d_qf_2(\vec{\xi})\}>\varepsilon>0$.
\end{enumerate}

Then $q$ is called a regular point of $\hat F |_U$.

\end{definition}

\begin{remark}
It is clear that Perelman's condition (2.a) implies that
$\diam(\Sigma_q(X^2))>\frac{\pi}{2}$. This together with (2.b)
implies that
$$
\frac{\pi}{2} < \measuredangle_q(\nabla f_1,\nabla f_2)<\pi.
$$
Conversely, we would like to point out that if
$\diam(\Sigma_q(X^2))>\frac{\pi}{2}$, then there exists an
admissible map $F=(f_1,f_2): U_q \to\mathbb R^2$ satisfying
Perelman's condition (2.a) and (2.b) mentioned above, where $U_q$ is
a small neighborhood of $p$ in $X^2$. \qed
\end{remark}

We need to single out ``\emph{bad points\/}" (i.e., essential
singularities) for which the condition
$$
\diam(\Sigma_q(X^2))>\frac{\pi}{2}
$$
fails. These bad points are related to the so-called extremal
subsets (or essential singularities) of and Alexandrov space with
curvature $\ge c$.

\begin{definition}[Extremal points of an Alexandrov surface]\label{def1.12}
Let $X^2$ be an Alexandrov surface and $z $ be an interior point of
$X^2$. If the diameter of space of unit tangent directions
$\Sigma_z(X^2)$ has diameter less than or equal to $\frac{\pi}{2}$,
i.e.
\begin{equation}\label{eq1.10}
\diam(\Sigma_z(X^2))\le\frac{\pi}{2},
\end{equation}
then $z$ is called an extremal point of the Alexandrov surface
$X^2$. If $\diam(\Sigma_z(X^2))>\frac{\pi}{2}$, then we say that $z$
is a regular point of $X^2$.
\end{definition}

A direct consequence of \autoref{thm0.6} (i.e., \autoref{prop1.7})
is the regularity of sufficiently small punctured disk in an
Alexandrov surface.

\begin{cor}\label{cor1.12}
Let $X^2$ be an Alexandrov space of curvature $\ge -1$ and $\delta$
be as in \autoref{thm0.6}. Then each point $y\in [B_{X^2}(\hat
x,\delta)-\{\hat x\}]$ in punctured disk is regular.
\end{cor}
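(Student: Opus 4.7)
The plan is to deduce the angular condition $\diam(\Sigma_y(X^2)) > \pi/2$ directly from the fact, guaranteed by \autoref{thm0.6} (equivalently \autoref{prop1.7}(2)), that the radial distance function $r_{\hat x}(y) = \dist(\hat x, y)$ has no critical point on the punctured ball $[B_{X^2}(\hat x,\delta) - \{\hat x\}]$. The conclusion about $\Sigma_y(X^2)$ will then match \autoref{def1.12} of a regular point of the surface.

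First, I would fix a point $y \in B_{X^2}(\hat x,\delta) - \{\hat x\}$ and record the first variation formula on an Alexandrov space: for the distance function $r_{\hat x}$ and any $\vec\xi \in T_y^-(X^2)$,
\begin{equation*}
d_y r_{\hat x}(\vec\xi) \;=\; -\max_{\eta \in \uparrow_{\hat x}^y} \cos\angle(\vec\xi,\eta),
\end{equation*}
where $\uparrow_{\hat x}^y \subset \Sigma_y(X^2)$ denotes the (possibly multi-valued) set of unit initial directions of minimizing geodesics from $y$ to $\hat x$. From \autoref{def1.4} applied to $A = \{\hat x\}$, regularity of $y$ with respect to $r_{\hat x}$ means there is a direction $\vec\xi \in \Sigma_y(X^2)$ with $d_y r_{\hat x}(\vec\xi) > 0$; combined with the first variation formula this says $\angle(\vec\xi,\eta) > \pi/2$ for every $\eta \in \uparrow_{\hat x}^y$.

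Next I would invoke \autoref{thm0.6} (or \autoref{prop1.7}(2)): since $\hat x$ is an interior point of the $2$-dimensional Alexandrov space $X^2$ with $\curv \ge -1$, the distance function $r_{\hat x}$ has no critical point on $[B_{X^2}(\hat x,\delta) - \{\hat x\}]$. Thus for the chosen $y$ there exists $\vec\xi \in \Sigma_y(X^2)$ and at least one $\eta \in \uparrow_{\hat x}^y$ with $\angle(\vec\xi,\eta) > \pi/2$. Consequently
\begin{equation*}
\diam(\Sigma_y(X^2)) \;\ge\; \angle(\vec\xi,\eta) \;>\; \frac{\pi}{2},
\end{equation*}
which is precisely the definition (\autoref{def1.12}) of $y$ being a regular point of the Alexandrov surface $X^2$.

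Since the argument works uniformly for every $y$ in the punctured disk, the corollary follows. The only step that requires any care is the first variation formula on an Alexandrov space (so that ``no critical point of $r_{\hat x}$'' translates into an angular statement about $\Sigma_y(X^2)$); this is standard in the Burago--Gromov--Perelman framework, and I would cite \cite{BGP1992} or the analogous fact in \cite{Petr2007} rather than rederive it. Once that formula is in hand, the corollary is essentially an immediate translation of \autoref{thm0.6} into the language of \autoref{def1.12}.
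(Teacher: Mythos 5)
Your argument is correct and is exactly the translation the paper intends: the paper states \autoref{cor1.12} without proof as a direct consequence of \autoref{thm0.6}, and your use of the first variation formula to convert ``no critical point of $r_{\hat x}$'' (in the sense of \autoref{def1.4}) into $\diam(\Sigma_y(X^2))>\frac{\pi}{2}$, which is precisely regularity in the sense of \autoref{def1.12}, is the standard way to make that step explicit. Only the comparison inequality $d_y r_{\hat x}(\vec\xi)\le -\cos\angle(\vec\xi,\eta)$ for each $\eta$ pointing from $y$ toward $\hat x$ is actually needed (not the full equality), and in the paper's notation your set $\uparrow_{\hat x}^{y}$ would be written $\Uparrow_{y}^{\hat x}$.
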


We recall the Perelman-Sharafutdinov gradient semi-flows for
semi-concave functions.

\begin{definition}\label{def1.13}
A curve $\phi:[a,b]\to X$ is called an $f$-gradient curve if for any
$t\in [a,b]$
\begin{equation}\label{eq1.10(2)}
\frac{d^+\phi}{dt}=\nabla f|_{\phi(t)}.
\end{equation}
\end{definition}

It is known that if $f:X\to \mathbb R$ is a semi-concave function
then there exists a unique $f$-gradient curve $\phi:[a,+\infty) \to
X$ with a given initial point $\phi(0)=p$, (cf. Prop 2.3.3 of
\cite{KPT2009}). We will frequently use the following result of
Perelman (cf. \cite{Per1994}) and Perelman-Petrunin (cf.
\cite{PP1994}).

\begin{prop}[Lemma 2.4.2 of \cite{KPT2009}, Lemma 1.3.4 of \cite{Petr2007}]\label{prop1.14}
Let $X_{\alpha}\to X_{\infty}$ be a sequence of Alexandrov space of
curvature $\ge -1$ which converges to an Alexandrov space
$X_{\infty}$. Suppose that $f_{\alpha}\to f_{\infty}$ where
$f_{\alpha}: X_{\alpha}\to \mathbb R$ is a sequence of
$\lambda$-concave functions and $f_{\infty}: X_{\infty}\to \mathbb
R$. Assume that $\psi_\alpha:[0,+\infty)\to X_{\alpha}$ is a
sequence of $f_{\alpha}$-gradient curves with
$\psi_{\alpha}(0)=p_{\alpha}\to p_{\infty}$ and
$\psi_{\infty}:[0,+\infty)\to X_{\infty}$ be the $f$-gradient curve
with $\psi_{\infty}(0)=p_{\infty}$. Then the following is true

{\rm (1)} for each $t \ge 0$, we have
$$\psi_{\alpha}(t)\to \psi_{\infty}(t)$$
as $\alpha \to \infty$;

{\rm (2)} $\liminf_{\alpha \to \infty} |\nabla f_\alpha|(p_\alpha)
\ge |\nabla f_{\infty}|( p_\infty)$. Consequently, if $\{q_\alpha\}$
is a bounded sequence of critical points of $f_\alpha$, then
$\{q_\alpha\} $ has a subsequence converging to a critical point
$q_\infty$ of $f_\infty$.
\end{prop}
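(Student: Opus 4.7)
The plan is to establish statement (2) first, since the lower semicontinuity of the gradient norm is the essential analytic input, and then bootstrap to (1) using a two-curve contraction estimate on a single space combined with a compactness/uniqueness argument to identify the limit. Both halves rely on the variational characterization of $|\nabla f|$ coming from semi-concavity.

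\textbf{Proof of (2).} If $f$ is $\lambda$-concave and $\gamma$ is a unit-speed geodesic with $\gamma(0)=p$, then by \autoref{def1.5} the function $t \mapsto f(\gamma(t)) - \lambda t^2/2$ is concave, so its difference quotient $(f(\gamma(t))-f(p))/t - \lambda t/2$ is non-increasing in $t$; taking $t \to 0^+$ and invoking \autoref{def1.6} gives
\[
|\nabla f|(p) \;\ge\; d_p f(\gamma'(0)) \;\ge\; \frac{f(\gamma(t_0)) - f(p)}{t_0} - \frac{\lambda t_0}{2}
\qquad \text{for every } t_0 > 0.
\]
Given $\epsilon>0$, I choose a unit direction $\xi_\infty \in \Sigma_{p_\infty}(X_\infty)$ with $d_{p_\infty} f_\infty(\xi_\infty) \ge |\nabla f_\infty|(p_\infty) - \epsilon$ and set $q_\infty = \gamma_\infty(t_0)$ for a short geodesic $\gamma_\infty$ in that direction. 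Using the Gromov--Hausdorff convergence $X_\alpha \to X_\infty$ together with the convergence $f_\alpha \to f_\infty$, lift $q_\infty$ to points $q_\alpha \in X_\alpha$ with $q_\alpha \to q_\infty$, $d(p_\alpha, q_\alpha) \to t_0$, and $f_\alpha(q_\alpha) \to f_\infty(q_\infty)$. Applying the same bound on $X_\alpha$ along a minimizing geodesic from $p_\alpha$ to $q_\alpha$ yields
\[
|\nabla f_\alpha|(p_\alpha) \;\ge\; \frac{f_\alpha(q_\alpha) - f_\alpha(p_\alpha)}{d(p_\alpha, q_\alpha)} - \frac{\lambda\,d(p_\alpha, q_\alpha)}{2}.
\]
Taking $\liminf_\alpha$, then $t_0 \to 0^+$, then $\epsilon \to 0$ proves the asserted lower semicontinuity. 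For the critical-point consequence, extract a convergent subsequence $q_\alpha \to q_\infty$ from the boundedness assumption; since $|\nabla f_\alpha|(q_\alpha)=0$, semicontinuity forces $|\nabla f_\infty|(q_\infty)=0$.

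\textbf{Proof of (1).} The main ingredient on a fixed Alexandrov space is the two-curve contraction
\[
\frac{d^+}{dt} d(\phi_1(t),\phi_2(t))^2 \;\le\; 2\lambda\, d(\phi_1(t),\phi_2(t))^2,
\]
obtained from the first variation formula along the minimizing geodesic joining $\phi_1(t)$ and $\phi_2(t)$ together with $\lambda$-concavity of $f$ along that geodesic. On a fixed time interval $[0,T]$, the curves $\psi_\alpha$ are uniformly Lipschitz (their speed equals $|\nabla f_\alpha|$, which is locally bounded by $\lambda$-concavity and the value of $|\nabla f_\alpha|(p_\alpha)$), so Arzel\`a--Ascoli in the ambient pseudo-metric encoding the convergence $X_\alpha \to X_\infty$ produces, after extraction, a uniform limit $\tilde\psi:[0,T]\to X_\infty$ with $\tilde\psi(0)=p_\infty$. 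To identify $\tilde\psi$ with $\psi_\infty$, pass the energy identity
\[
f_\alpha(\psi_\alpha(t)) - f_\alpha(p_\alpha) = \int_0^t |\nabla f_\alpha|^2(\psi_\alpha(s))\,ds
\]
to the limit: the left-hand side converges to $f_\infty(\tilde\psi(t)) - f_\infty(p_\infty)$, while Fatou combined with part (2) yields the corresponding inequality for the right-hand side with $f_\infty$ and $\tilde\psi$. Together with the Lipschitz control on $\tilde\psi$, this forces equality and characterizes $\tilde\psi$ as an $f_\infty$-gradient curve. Uniqueness of gradient curves for $\lambda$-concave functions (Prop.~2.3.3 of \cite{KPT2009}) then gives $\tilde\psi = \psi_\infty$, hence convergence of the full sequence.

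\textbf{Main obstacle.} The delicate step is the identification in (1): because $\psi_\alpha$ and $\psi_\infty$ live on different spaces, the Gronwall argument from the contraction estimate cannot be applied directly between them, and one must instead route through Arzel\`a--Ascoli plus an a posteriori characterization of $\psi_\infty$. The lower semicontinuity of $|\nabla f|$ proved in (2) is precisely what reconciles Fatou's inequality with the energy identity, forcing equality throughout and identifying the subsequential limit as the unique $f_\infty$-gradient curve through $p_\infty$.
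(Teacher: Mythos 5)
A preliminary remark: the paper does not prove this proposition at all — it is quoted verbatim from Lemma 2.4.2 of \cite{KPT2009} and Lemma 1.3.4 of \cite{Petr2007} — so your attempt can only be judged on its own terms. Your part (2) is essentially the standard semicontinuity argument and is correct up to a small technical point: the near-maximizing direction $\xi_\infty$ must be (or be approximated by) a geodesic direction before you can set $q_\infty=\gamma_\infty(t_0)$; since geodesic directions are dense and $d_{p_\infty}f_\infty$ behaves well on $\Sigma_{p_\infty}$, this is easily repaired, e.g.\ by working directly with difference quotients $(f_\infty(q)-f_\infty(p_\infty))/d(p_\infty,q)$.

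Part (1) has two genuine gaps. First, the equicontinuity step: you claim the $\psi_\alpha$ are uniformly Lipschitz because their speed $|\nabla f_\alpha|(\psi_\alpha(t))$ is controlled by $\lambda$-concavity and $|\nabla f_\alpha|(p_\alpha)$. But nothing in the hypotheses bounds $|\nabla f_\alpha|(p_\alpha)$: uniform convergence $f_\alpha\to f_\infty$ of $\lambda$-concave functions gives only \emph{lower} semicontinuity of the gradient norm (that is exactly part (2)); for instance, concave functions on an interval converging uniformly to $-x$ can have slope $\alpha\to\infty$ on a shrinking region containing $p_\alpha$, so the initial speeds blow up while the conclusion of the proposition still holds. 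The robust substitute is the H\"older estimate $d(\psi_\alpha(s),\psi_\alpha(t))\le |t-s|^{1/2}\,\bigl(f_\alpha(\psi_\alpha(t))-f_\alpha(\psi_\alpha(s))\bigr)^{1/2}$ coming from your energy identity, combined with uniform bounds on $f_\alpha$ on compact sets and a confinement argument keeping $\psi_\alpha([0,T])$ in a fixed ball; that yields the equicontinuity needed for Arzel\`a--Ascoli. Second, the identification step is incomplete as written: Fatou plus part (2) gives only $f_\infty(\tilde\psi(t))-f_\infty(p_\infty)\ge \int_0^t|\nabla f_\infty|^2(\tilde\psi)\,ds$, and ``Lipschitz control'' cannot force equality, because the chain-rule upper bound is $\int_0^t|\nabla f_\infty|(\tilde\psi)\,|\tilde\psi'|\,ds$ and the missing pointwise bound $|\tilde\psi'|\le|\nabla f_\infty|(\tilde\psi)$ is precisely what \emph{upper} semicontinuity of $|\nabla f|$ would provide — and that fails. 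The argument does close, but only if you keep the kinetic term: write the dissipation as $\tfrac12\int|\psi_\alpha'|^2+\tfrac12\int|\nabla f_\alpha|^2(\psi_\alpha)$, use lower semicontinuity of $\int|\tilde\psi'|^2$ under uniform convergence together with Fatou for the slope term, and then Cauchy--Schwarz against the chain-rule bound to force equality a.e.; finally you must justify that a curve satisfying this metric energy-dissipation characterization is a gradient curve in the Perelman--Petrunin sense, since the uniqueness you invoke (Prop.~2.3.3 of \cite{KPT2009}) is stated for that notion. With these repairs your route (an Ambrosio--Gigli--Savar\'e-style stability argument) is viable, but as written the two steps above would not survive scrutiny.
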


As we pointed out earlier, the pointed spaces $\{ (\frac{1}
{\varepsilon} X, x)\}$ converge to the tangent cone of $X$ at $x$,
i.e., $(\frac{1}{\varepsilon}X, x)\to(T_x^-(X), 0)$ as
$\varepsilon\to 0$, where $0$ is the origin of tangent cone.

When $X^2$ is an Alexandrov surface of curvature $\ge -1$, it is
known that $X^2$ is a $2$-dimensional manifold. Moreover we have the
following observation.

\begin{prop}[\cite{Per1994}]\label{prop1.15}
Let $X$ be an Alexandrov space of curvature $\ge -1$. Suppose that
$\hat x$ is an interior point of $X$. Then $B_{X}(\hat x,
\varepsilon)$ is homeomorphic to $B_{T^-_{\hat x}X}(0,\varepsilon)$,
where $\varepsilon$ is given by \autoref{prop1.7}. Furthermore,
there exits an admissible map
$$G: T^-_{\hat x}(X^2)\to \mathbb R^2$$
such that $G$ is bi-Lipschitz homeomorphism and $G$ is regular at
$\vec v \ne 0$.
\end{prop}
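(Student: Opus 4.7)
The plan is to prove the two assertions separately. For the homeomorphism $B_X(\hat x,\varepsilon)\cong B_{T^-_{\hat x}X}(0,\varepsilon)$, I apply Perelman's fibration theorem \autoref{thm1.2}(B) to the radial distance function $r_{\hat x}(y)=\dis(y,\hat x)$ on the punctured ball $[B_X(\hat x,\varepsilon)\setminus\{\hat x\}]$. The hypothesis is provided by conclusion (2) of \autoref{prop1.7}: $r_{\hat x}$ has no critical points there, and is a regular admissible map in the sense of \autoref{section1.2}. Thus $r_{\hat x}:[B_X(\hat x,\varepsilon)\setminus\{\hat x\}]\to(0,\varepsilon]$ is a locally trivial fiber bundle with all fibers mutually homeomorphic to a common level set, which I identify with $\Sigma_{\hat x}(X)$ via the rescaling $(\lambda X,\hat x)\to(T^-_{\hat x}X,O)$ as $\lambda\to\infty$. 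The Perelman--Sharafutdinov gradient semi-flow of the strictly concave function $h$ from conclusion (1) of \autoref{prop1.7} supplies a global radial trivialization
\[
[B_X(\hat x,\varepsilon)\setminus\{\hat x\}]\ \cong\ \Sigma_{\hat x}(X)\times(0,\varepsilon],
\]
which extends continuously across the apex by one-point compactification, yielding a homeomorphism onto $B_{T^-_{\hat x}X}(0,\varepsilon)$.

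For the second assertion I specialize to $X=X^2$: then $\Sigma_{\hat x}(X^2)$ is a circle of length $L\in(0,2\pi]$ and $T^-_{\hat x}(X^2)=C(\Sigma_{\hat x}(X^2))$ is a non-negatively curved Euclidean cone. I choose unit directions $\xi_1,\xi_2\in\Sigma_{\hat x}(X^2)$ with $\theta_0:=\measuredangle(\xi_1,\xi_2)\in(\pi/2,\pi)$, set $A_i=s\xi_i$ for a small fixed $s>0$, and define
\[
F(\vec v) := \bigl(-\dis(\vec v,A_1),\ -\dis(\vec v,A_2)\bigr),\qquad G := G_0\circ F,
\]
where $G_0$ is a fixed bi-Lipschitz homeomorphism of $\mathbb R^2$ used to normalize the target. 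Admissibility of $G$ follows at once from \autoref{def1.9}. To verify regularity at an arbitrary $\vec v\neq O$ I appeal to the first variation formula: $\nabla(-\dis(\cdot,A_i))|_{\vec v}$ is the initial unit tangent direction $\vec\eta_i$ at $\vec v$ of the minimizing geodesic to $A_i$. Homogeneity $\vec v\mapsto t\vec v$ of the cone reduces the verification to the unit sphere $|\vec v|=1$; for $s$ small, each geodesic $\vec v\to A_i$ passes close to the apex, so $\measuredangle(\vec\eta_1,\vec\eta_2)$ differs from $\theta_0$ by $O(s)$ and remains obtuse, which is Perelman's condition (2.a) of \autoref{def1.10}. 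Condition (2.b) is witnessed by the angular bisector $\vec\xi$ of $\vec\eta_1,\vec\eta_2$, for which $\measuredangle(\vec\xi,\vec\eta_i)=\tfrac12\measuredangle(\vec\eta_1,\vec\eta_2)<\pi/2$ and hence $d_{\vec v}(-\dis(\cdot,A_i))(\vec\xi)=\langle\vec\eta_i,\vec\xi\rangle>0$. The bi-Lipschitz property then follows by comparison with the ``unrolling'' homeomorphism $(r,\theta)\mapsto(r\cos(2\pi\theta/L),\,r\sin(2\pi\theta/L))$ of $C(\Sigma_{\hat x}(X^2))$ onto $\mathbb R^2$, which is itself bi-Lipschitz precisely because $L\in(0,2\pi]$.

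The principal difficulty I anticipate is securing the regularity conditions \emph{uniformly} across all of $T^-_{\hat x}(X^2)\setminus\{O\}$, together with the degenerate boundary case $L\le\pi/2$. Homogeneity reduces the uniformity check to the compact annular region $\{\tfrac12\le|\vec v|\le 2\}$, where a single choice of $(s,\xi_1,\xi_2)$ suffices by continuity. When $L\le\pi/2$, however, no pair $\xi_1,\xi_2\in\Sigma_{\hat x}(X^2)$ subtends an obtuse angle at $O$, and I would handle this case by replacing one reference point, say $A_1$, with the apex itself so that $-\dis(\cdot,O)=-|\cdot|$; its gradient is the outward radial direction, which makes an obtuse angle with $\vec\eta_2$ pointing toward $A_2=s\xi_2$, and condition (2.b) is then verified with a test direction slightly perturbed from the angular bisector. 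Once this case analysis is in place, the bi-Lipschitz bound and the full proposition follow.
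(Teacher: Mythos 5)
Your construction of $G$ from distances to two points $A_1=s\xi_1$, $A_2=s\xi_2$ fails in two ways, and this is the genuine gap. First, injectivity: the cone $T^-_{\hat x}(X^2)$ carries an isometric reflection fixing the two rays through $\xi_1$ and $\xi_2$ (in the model case of $\mathbb R^2$ this is reflection across the line through $A_1,A_2$), and this reflection preserves both $\dis(\cdot,A_1)$ and $\dis(\cdot,A_2)$; hence $F$, and therefore $G$, is two-to-one off the fixed set and cannot be a homeomorphism onto its image, let alone a bi-Lipschitz homeomorphism onto $\mathbb R^2$ (it is also not surjective in any controlled way). Second, your regularity estimate is backwards: for $|\vec v|=1$ and $s\to 0$, the minimizing geodesics from $\vec v$ to $A_1$ and to $A_2$ both leave $\vec v$ in a direction converging to the direction toward the apex, so $\measuredangle(\vec\eta_1,\vec\eta_2)=O(s)$, not $\theta_0+O(s)$ — the angle $\theta_0$ is subtended at the apex, not at $\vec v$. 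Thus the two gradients become nearly parallel, condition (2.a) of \autoref{def1.10} fails, and the map degenerates rather than regularizes as $s\to0$. The degenerate case is also misidentified: a pair of directions with obtuse angle exists iff $\diam\Sigma_{\hat x}(X^2)>\frac{\pi}{2}$, i.e. $L>\pi$ (not $L>\pi/2$), and your proposed fix of replacing $A_1$ by the apex makes $f_1$ and $f_2$ essentially functions of $|\vec v|$ alone, so condition (2.b) fails and injectivity is again lost. This is exactly why the paper does not use distances to two points: it cuts the cone into six flat sectors of angle $\theta=\frac13\diam\Sigma_{\hat x}(X^2)\le\pi/3$, maps each sector onto a standard Euclidean sector of angle $\pi/3$ by an affine map whose components are scaled distance functions to the sector's two boundary rays (height functions), alternates the roles of the two components on adjacent sectors so the glued map stays admissible, and glues the six pieces into a global bi-Lipschitz homeomorphism onto $\mathbb R^2$, regular away from the apex — a construction that works uniformly for every cone angle, including $\diam\Sigma\le\frac{\pi}{2}$.

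On the first half: your fibration-plus-one-point-compactification argument is workable, but the step where you ``identify'' the fiber of $r_{\hat x}$ with $\Sigma_{\hat x}(X)$ via the rescaling $(\lambda X,\hat x)\to(T^-_{\hat x}X,O)$ is not free — Gromov--Hausdorff convergence alone does not give a homeomorphism of level sets. The needed input is Perelman's stability theorem (this convergence is non-collapsed), which is precisely how the paper proceeds: stability gives $B_X(\hat x,\delta)\cong B_{T^-_{\hat x}X}(0,\delta)$ for small $\delta$, and the fibration theorem together with \autoref{prop1.7} is then applied only on the annulus $A_{X^2}(\hat x,\delta/2,\varepsilon)$. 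Cite stability explicitly and that part of your argument closes.
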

\begin{proof}
This is an established result of Perelman, (cf. \cite{Per1994},
\cite{Kap2007}). We provide a short proof here only for the
convenience of readers. Let us first prove that $B_{X}(\hat x,
\varepsilon)$ is homeomorphic to $B_{T^-_{\hat x}X}(0,\varepsilon)$,
where $\varepsilon$ is given by \autoref{prop1.7}. Recall that
$\{(\frac{1}{\delta} X, \hat x)\}$ is convergent to $ (T^-_{\hat
x}X, O)$, as $\delta \to 0$. By Perelman's stability theorem (cf.
Theorem 7.11 of \cite{Kap2007}), $B_{\frac{1}{\delta}X}(\hat x, 1)$
is homeomorphic to $B_{T^-_{\hat x}X}(0, 1)$ for sufficiently small
$\delta$. Thus, $B_{X}(\hat x, \delta)$ is homeomorphic to
$B_{T^-_{\hat x}X}(0,\delta)$.

By \autoref{prop1.7}, the function $r_{\hat x}(y) = \dis_X(y, \hat
x)$ has no critical point in punctured ball $[B_{X}(\hat x,
\varepsilon) -\{ x\}]$. Thus, we can apply Perelman's fibration
theorem to the following diagram:
$$
\Sigma_{\hat x}(X) \to A_{X^2}(\hat x, \delta/2, \varepsilon )
\stackrel{r_{\hat x}}{\longrightarrow} (\delta/2, \varepsilon).
$$

Consequently, we see that $A_{X^2}(\hat x, \frac{\delta}{2},
\varepsilon )$ is homeomorphic to a cylinder $C=(\partial
B)\times(\frac{\delta}{2},\varepsilon)$. Furthermore, the metric
sphere $(\partial B)$ is homeomorphic to $\Sigma_{\hat x}(X)$. It
follows that the metric ball $B_{X}(\hat x, \varepsilon) \sim [
B_{T^-_{\hat x}X}(0,\delta) \cup C]$ is homeomorphic to
$B_{T^-_{\hat x}X}(0,\varepsilon)$, where $\varepsilon$ is given by
\autoref{prop1.7}.

It remains to construct the desired map $G$. If
$\theta=\frac{1}{3}\diam(\Sigma_{\hat x}(X^2))$, then $\theta\le
\pi/3$. Let us choose six vectors $\{\vec{\xi}_1,\vec{\xi}_2,
\vec{\xi}_3, \vec{\xi}_4, \vec{\xi}_5, \vec{\xi}_6\}\subset
\Sigma_{\hat x}(X^2)$ such that $\measuredangle (\vec{\xi}_i,
\vec{\xi}_j)=\theta$ for $i=j+1$ or $\{i=1, j=6 \}$. It is easy to
construct affine map $H$ from an Euclidean sector of angle $\theta$
to an Euclidean sector of angle $\pi/3$. In fact we can first
isometrically embed an Euclidean sector as $\{(x,y)| x\ge 0, y\ge 0,
0 < \varepsilon_1 \le \arctan ( \frac{y}{x}) \le \theta +
\varepsilon_1 \} \subset \mathbb R^2 $. We could choose $h_1(x,y)=x,
h_2(x,y)=\lambda y$, (see \autoref{lifted_regular_graph}).

Each affine map $H=(h_1,h_2)$ has height functions (distance
functions from axes) up to scaling factors as its components. We can
arrange the Euclidean sectors in an appropriate order so that $H$ is
admissible. For instance, we could change the role of $h_1$ and
$h_2$ for adjunct sector such that, by gluing six Euclidean sectors
together, we can recover $T_{\hat x}(X^2)$ and construct an
admissible map $G: T_{\hat x}(X^2)\to \mathbb R^2$.
\end{proof}

\begin{figure*}[ht]
\includegraphics[width=200pt]{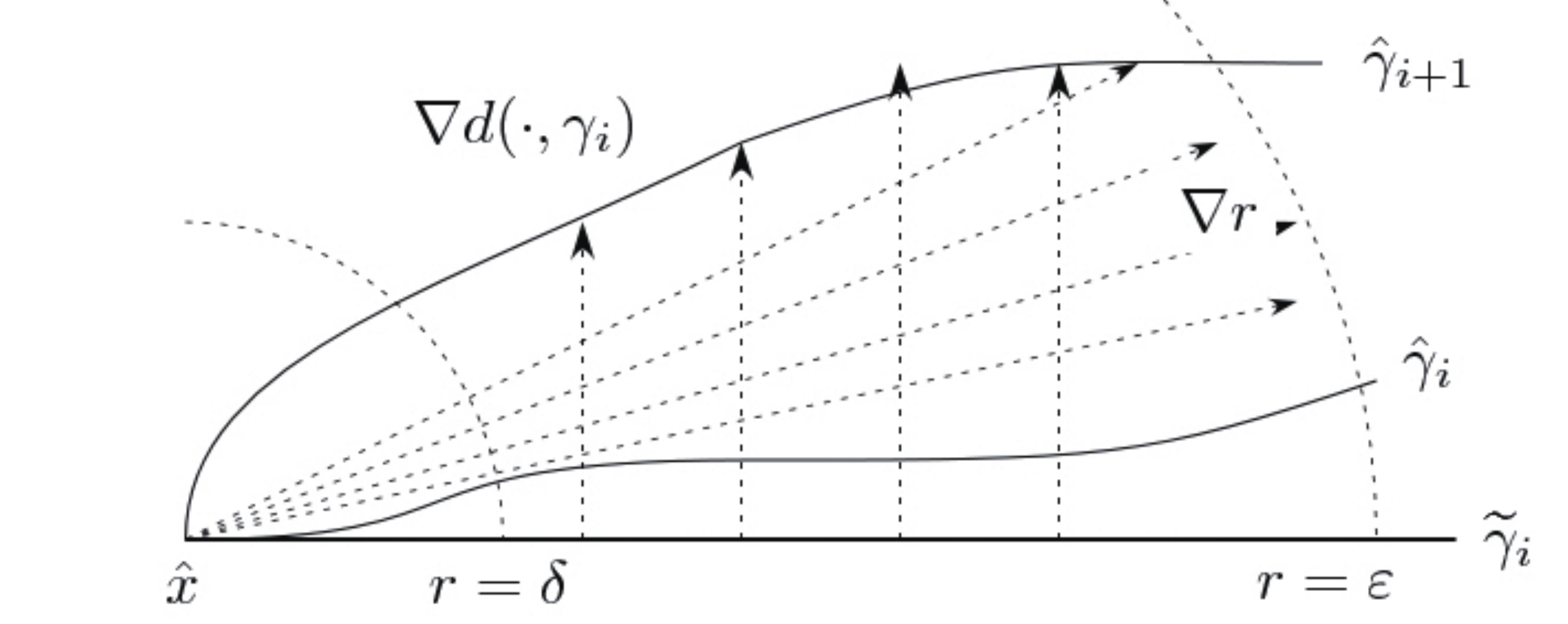}\\
\caption{The construction of a regular map form $S_i\cap
A_{X^2}(x_{\infty},\delta,\varepsilon)$.}\label{lifted_regular_graph}
\end{figure*}

Recall that $\lim_{\lambda \to \infty} (\lambda X, x) = (T_x^-(X),
O)$. By lifting the admissible map $G: T_x(X^2) \to \mathbb{R}^2 $
to $F: \lambda X^2 \to \mathbb{R}^2$, we have the following result.

\begin{cor}\label{cor1.16}
Let $X^2$ be an Alexandrov surface of curvature $\ge -1$ and $\hat
x$ be an interior point. Then there exist sufficiently small
$\varepsilon>\delta>0$ and admissible maps
$$
F_{\infty, i}: X^2 \to \mathbb R^2
$$
such that $F_{\infty, i}$ is regular on $S_i\cap A_{X^2} (\hat
x,\delta, \varepsilon)$ for $i = 1, 2, \cdots, 6$, where $S_i
\subset X^2 $ is the lift of the Euclidean sector bounded by
$\{\vec\xi_i, \vec \xi_{i+1}\}$ from $T^-_{\hat x}X^2$ to $X^2$.
\end{cor}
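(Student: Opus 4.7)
The plan is to lift the admissible bi-Lipschitz map $G:T^-_{\hat x}(X^2)\to\mathbb R^2$ furnished by \autoref{prop1.15} from the tangent cone down to the surface $X^2$ via distance functions from judiciously chosen geodesic segments, and then to transfer regularity by combining the rescaling $(\lambda X^2,\hat x)\to(T^-_{\hat x}X^2,O)$ with the gradient lower semicontinuity of \autoref{prop1.14}.

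First I would fix the six unit tangent directions $\vec\xi_1,\dots,\vec\xi_6\in\Sigma_{\hat x}(X^2)$ chosen in the proof of \autoref{prop1.15}. For each $i$, pick a unit-speed minimizing geodesic $\gamma_i:[0,r_0]\to X^2$ with $\gamma_i(0)=\hat x$ and $\gamma_i'(0)=\vec\xi_i$; such a geodesic exists because on a $2$-dimensional Alexandrov surface every direction is realized by a minimizer (cf.~\cite{BGP1992}). Set $\ell_i=\gamma_i([0,r_0])$ and define the Lipschitz function $f_i(y)=\dist_{X^2}(y,\ell_i)$. From the explicit construction in \autoref{prop1.15}, the restriction of $G$ to the Euclidean sector $S_i^T\subset T^-_{\hat x}X^2$ bounded by $\vec\xi_i$ and $\vec\xi_{i+1}$ is, after a bi-Lipschitz post-composition $\tilde G_i:\mathbb R^2\to\mathbb R^2$, the pair of Euclidean distance functions from the two bounding rays of $S_i^T$. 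Accordingly I put
\[
F_{\infty,i}(y)\;=\;\tilde G_i\bigl(f_i(y),\,f_{i+1}(y)\bigr),
\]
which is admissible in the sense of \autoref{def1.9}.

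To establish regularity of $F_{\infty,i}$ on $S_i\cap A_{X^2}(\hat x,\delta,\varepsilon)$ I would argue by contradiction using rescaling. The rescaled subsets $\lambda\ell_i\subset \lambda X^2$ converge in pointed Gromov--Hausdorff sense to the ray $\{t\vec\xi_i:t\ge 0\}\subset T^-_{\hat x}X^2$, and since the tangent cone is by definition the pointed limit of $(\lambda X^2,\hat x)$, the $1$-Lipschitz distance functions $\lambda f_i$ converge locally uniformly to the Euclidean distances from those rays. Consequently the rescaled maps $\lambda F_{\infty,i}$ converge to the admissible map $G$. By \autoref{prop1.15}, $G$ is regular on the compact annular sector $S_i^T\cap A_{T^-_{\hat x}X^2}(O,1/2,1)$, and hence its defining components admit a uniform positive lower bound on their gradient norms there. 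If $F_{\infty,i}$ possessed critical points $q_k\in S_i\cap A_{X^2}(\hat x,1/(2\lambda_k),1/\lambda_k)$ for some $\lambda_k\to\infty$, then, viewing $q_k\in\lambda_k X^2$ and extracting a subsequential limit $q_\infty\in S_i^T\cap A_{T^-_{\hat x}X^2}(O,1/2,1)$, part (2) of \autoref{prop1.14} would yield
\[
0\;=\;\liminf_{k\to\infty}|\nabla(\lambda_k F_{\infty,i})|(q_k)\;\ge\;|\nabla G|(q_\infty),
\]
forcing $q_\infty$ to be a critical point of $G$ and contradicting \autoref{prop1.15}.

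The main technical obstacle is to give a precise meaning to the \emph{lift} $S_i\subset X^2$ of the tangent cone sector $S_i^T$ and to justify the Gromov--Hausdorff convergence $\lambda f_i\to\dist_{T^-_{\hat x}X^2}(\cdot,\{t\vec\xi_i:t\ge 0\})$ on compact subsets of the tangent cone. The natural definition of $S_i$ is the union of those minimizing geodesics emanating from $\hat x$ whose initial directions lie in the closed interval $[\vec\xi_i,\vec\xi_{i+1}]\subset\Sigma_{\hat x}(X^2)$, which is well-defined because $X^2$ is a topological surface near $\hat x$; the convergence of the distance functions then follows from the pointed Gromov--Hausdorff convergence $(\lambda X^2,\hat x)\to(T^-_{\hat x}X^2,O)$ applied to the $1$-Lipschitz functions $f_i$. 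Once these identifications are secured, the contradiction argument above produces the required $\delta<\varepsilon$ and the six admissible maps $F_{\infty,i}$, each regular on $S_i\cap A_{X^2}(\hat x,\delta,\varepsilon)$.
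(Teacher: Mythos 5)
Your construction takes the two components of $F_{\infty,i}$ to be the distance functions from the two \emph{bounding} geodesics $\ell_i,\ell_{i+1}$ of the sector itself, and this is where the argument breaks down. The corollary (and its later use in \autoref{thm1.17} and the gluing of \autoref{section6}) needs regularity on $S_i\cap A_{X^2}(\hat x,\delta,\varepsilon)$ including the two seams, since only the six \emph{closed} sectors cover the annulus. But at a point $q\in \ell_i\cap A_{X^2}(\hat x,\delta,\varepsilon)$ your first component $f_i=\dis(\cdot,\ell_i)$ attains its minimum value $0$: $d_qf_i\ge 0$ in every direction, it vanishes along $\ell_i$, and it attains its maximum $1$ in the two opposite directions perpendicular to $\ell_i$. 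Consequently no gradient vector in the sense of \autoref{def1.6} exists at $q$, and Perelman's regularity criterion of \autoref{def1.10}(2) — in particular the strict obtuseness condition (2.a), $\langle\nabla f_1,\nabla f_2\rangle_q<-\varepsilon$ — cannot be verified (indeed a point of the reference set $A_i$ is never a regular point of a map having $\dis(A_i,\cdot)$ as a component). So your maps are regular at best on the \emph{open} sectors, which do not cover the annulus, and the blow-up/contradiction argument via \autoref{prop1.14} cannot repair this, because the failure already occurs in the Euclidean model on the boundary rays. This is exactly why the paper's proof never uses the bounding geodesics as reference sets: its components are $\dis(\hat x,\cdot)$ together with $\dis(\tilde\gamma_i,\cdot)$, where $\tilde\gamma_i$ is an auxiliary geodesic segment placed \emph{outside} the hinge at angle between $\varepsilon_1/2$ and $\varepsilon_1$ from $\hat\gamma_i$, so that both reference sets stay uniformly away (in angle) from every point of the closed sector--annulus and the regularity constants survive up to and across the seams. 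Relatedly, your identification of the model map $G$ of \autoref{prop1.15} with ``the pair of distances from the two bounding rays'' is not what that proof builds: there the sector is embedded with an $\varepsilon_1$ offset and the components are height functions with respect to the coordinate axes, i.e.\ distances from rays lying strictly outside the sector.

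A secondary gap: you assume a minimizing geodesic exists in each exact direction $\vec\xi_i\in\Sigma_{\hat x}(X^2)$. In an Alexandrov space geodesic directions are only dense in $\Sigma_{\hat x}$, and the $\vec\xi_i$ of \autoref{prop1.15} are chosen by angle conditions, not as geodesic directions; so the segments $\ell_i$ need not exist. The paper sidesteps this by choosing geodesic segments $\hat\gamma_{i,\lambda}$ in the rescaled spaces $\lambda X^2$ that merely converge to $\vec\xi_i$, and by basing the map on $\hat x$ and on the nearby auxiliary segments $\tilde\gamma_{i,\lambda}$ rather than on geodesics realizing the $\vec\xi_i$ themselves. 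If you reroute your construction accordingly (reference sets kept at a definite angular distance outside the closed sector, directions only approximated by geodesics), the rescaling-plus-\autoref{prop1.14} transfer you propose is essentially the paper's argument.
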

\begin{proof}
Let $\{h_1, h_2\}$ and $\{\vec{\xi}_1,\vec{\xi}_2, \vec{\xi}_3,
\vec{\xi}_4, \vec{\xi}_5, \vec{\xi}_6\} \subset \Sigma_{\hat
x}(X^2)$ be as in the proof of \autoref{prop1.15}. We choose an
1-parameter family of length-minimizing geodesic segments $\hat
\gamma_{i, \lambda}: [0, 2] \to \lambda X^2$ such that $\{ \hat
\gamma_{i, \lambda} \}$ are converging to $\xi_i $ in $T_{\hat
x}(X^2)$, as $\lambda \to \infty$. We also choose another
length-minimizing geodesic segment $\tilde \gamma_{i, \lambda}: [0,
2\delta] \to \lambda X^2$ outside the geodesic hinge $\{\hat
\gamma_{i, \lambda}, \hat \gamma_{i+1, \lambda}\}$ such that $ 0 <
\varepsilon_1/2 \le \angle_{\hat x} ( \tilde \gamma_{i,
\lambda}'(0), \hat \gamma_{i, \lambda}'(0)) \le \varepsilon_1$. (see
\autoref{lifted_regular_graph} above). We consider lifting distance
functions as follows:
$$r_{ \lambda X^2}(y ) =\dis_{ \lambda X^2}(\hat x, y)$$
and
$$f_{\lambda,i}(y) =\dis_{ \lambda X^2}(\tilde \gamma_{i, \lambda},y)$$
It follows from \autoref{prop1.14} that two functions $\{r_{\lambda
X^2}, f_{\lambda,i}\}$ are regular on $S_i\cap A_{\lambda X^2}(\hat
x, \frac{1}{2}, 1)$ for sufficiently large $\lambda$.

Choosing a sufficiently large $\lambda_0$, we consider the map
$$F_{\infty, i}(\cdot) =(\lambda_0 \dis_{X^2}(\hat x, \cdot),
\lambda_0 \dis_{X^2}(\tilde \gamma_i, \cdot))$$ It follows from
\autoref{prop1.14} that $F_{\infty, i}$ is regular on the curved
trapezoid-like region $S_i\cap A_{X^2}(\hat x,\delta, \varepsilon)$,
(see \autoref{lifted_regular_graph}).
\end{proof}

Recall that there is a sequence $\{M^3_{\alpha}\}$ convergent to
$X_{\infty}^2$. We conclude \autoref{section1} by the following
circle-fibration theorem.

\begin{theorem}\label{thm1.17}
Suppose that a sequence of pointed $3$-manifolds
$\{(M^3_{\alpha},x_{\alpha})\}$ is convergent to a $2$-dimensional
Alexandrov surface $(X_{\infty}, x_{\infty})$ such that $x_{\infty}$
is an interior point of $X^2_{\infty}$. Suppose that
$$ F_{\infty}:
A_{X^2}(x_{\infty},\delta,\varepsilon) \to A_{\mathbb R^2}(0,\delta,
\varepsilon)
$$
is a regular map as above. Then there exist maps
$$F_{\alpha}: A^3_{(M^3_{\alpha},\hat g^{\alpha})}(x_{\alpha},
\delta, \varepsilon) \to A_{\mathbb R^2}(0,\delta, \varepsilon)$$
 such that $F_{\alpha}$ is regular for sufficiently large $\alpha$.
Moreover, the map $F_\alpha$ gives rise to a circle fibration:
$$S^1\hookrightarrow A^3_{(M^3_{\alpha},\hat g^{\alpha})}(x_{\alpha},
\delta, \varepsilon) \to A_{X^2}(x_{\infty},\delta,\varepsilon)$$
for sufficiently large $\alpha$.
\end{theorem}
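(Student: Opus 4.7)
The plan is to lift the sector-by-sector regular admissible maps $F_{\infty,i}$ produced in \autoref{cor1.16} from the surface $X^2$ up to the collapsing 3-manifolds $M^3_\alpha$, and then to invoke Perelman's Fibration Theorem (\autoref{thm1.2}) to obtain the desired circle bundle structure. Recall that on the sector $S_i\cap A_{X^2}(x_\infty,\delta,\varepsilon)$ the map $F_{\infty,i}$ has the form
\[
F_{\infty,i}(y)=\bigl(\lambda_0\,\dis_{X^2}(x_\infty,y),\ \lambda_0\,\dis_{X^2}(\tilde\gamma_i,y)\bigr),
\]
built from the distance function to $x_\infty$ and to an auxiliary minimizing geodesic segment $\tilde\gamma_i$. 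On the 3-manifold side I would choose a sequence of minimizing geodesic segments $\tilde\gamma_{i,\alpha}\subset M^3_\alpha$ with $\tilde\gamma_{i,\alpha}\to\tilde\gamma_i$ under Gromov--Hausdorff convergence, and define
\[
F_{\alpha,i}(y)=\bigl(\lambda_0\,\dis_{M^3_\alpha}(x_\alpha,y),\ \lambda_0\,\dis_{M^3_\alpha}(\tilde\gamma_{i,\alpha},y)\bigr),
\]
which is admissible in the sense of \autoref{def1.9}. Gluing the six maps $F_{\alpha,i}$ across adjacent sectors, via the bi-Lipschitz changes of coordinates on $\mathbb R^2$ supplied by the construction in \autoref{prop1.15}, produces the global map $F_\alpha$.

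The first main step is to establish regularity of $F_{\alpha,i}$ on the lift of $S_i\cap A_{X^2}(x_\infty,\delta,\varepsilon)$ for all sufficiently large $\alpha$. Since $M^3_\alpha$ is smooth, \autoref{def1.10}(1) reduces this to checking linear independence of the gradients of the two components of $F_{\alpha,i}$. Both components are $\lambda_0$-concave because $\curv \ge -1$, and by \autoref{cor1.16} the limit gradients $\nabla f_{\infty,1},\nabla f_{\infty,2}$ satisfy Perelman's regularity conditions (2.a)--(2.b) everywhere on the sector. The stability assertion in \autoref{prop1.14}, together with the lower semi-continuity $\liminf_{\alpha\to\infty}|\nabla f_\alpha|(p_\alpha)\ge|\nabla f_\infty|(p_\infty)$, then propagates the strict inequalities controlling $\measuredangle(\nabla f_{\infty,1},\nabla f_{\infty,2})$ down to the approximating sequence; a standard compactness argument over a closed sub-annulus promotes this to uniform regularity of $F_{\alpha,i}$.

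With regularity in hand, \autoref{thm1.2}(B) applied to $F_\alpha$ on the open annular domain $A^3_{M^3_\alpha}(x_\alpha,\delta',\varepsilon')$ (with $\delta<\delta'<\varepsilon'<\varepsilon$) delivers a locally trivial fiber bundle onto its image in $\mathbb R^2$. The fibers are closed topological $1$-manifolds, hence disjoint unions of circles. To rule out the disconnected case and identify the fiber as $S^1$, I would invoke Kapovitch's \autoref{prop1.1}: its hypotheses apply since $\Sigma_{x_\infty}(X^2)$ is a circle, so the topological fiber over the corresponding points is connected with almost nilpotent fundamental group, which forces it to be $S^1$. Combined with the bundle structure, this gives that every fiber of $F_\alpha$ is a single circle.

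The main obstacle will be ensuring that the local circle fibrations produced sector by sector actually assemble into one global circle fibration over $A_{X^2}(x_\infty,\delta,\varepsilon)$. On the overlap $S_i\cap S_{i+1}$ the two admissible maps $F_{\alpha,i}$ and $F_{\alpha,i+1}$ differ by a bi-Lipschitz change of coordinates on $\mathbb R^2$, so their level sets coincide as subsets of $M^3_\alpha$ and the fibers agree set-theoretically; one must then check that the two local bundle structures induce the same $S^1$-fibration on the overlap. This follows from the fact that, by the connectedness argument of the previous paragraph, each fiber over an overlap point is already determined as a single circle in $M^3_\alpha$, so the two local trivializations must describe the same topological circle fibration, yielding the global statement.
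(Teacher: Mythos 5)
Your construction is the same as the paper's: you lift the two distance functions $\dis(x_\alpha,\cdot)$ and $\dis(\tilde\gamma_{i,\alpha},\cdot)$ sector by sector, use the semicontinuity/convergence statement of \autoref{prop1.14} to transfer regularity from $F_{\infty,i}$ to $F_{\alpha,i}$ for large $\alpha$, and then apply Perelman's fibration theorem (\autoref{thm1.2}) to get a local circle fibration on each $S_{i,\alpha}\cap A^3_{(M^3_\alpha,\hat g^\alpha)}(x_\alpha,\delta,\varepsilon)$. Up to that point the proposal is sound and matches the paper.

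The gap is in your last paragraph, where you glue the six local fibrations. You claim that on an overlap $S_i\cap S_{i+1}$ the maps $F_{\alpha,i}$ and $F_{\alpha,i+1}$ ``differ by a bi-Lipschitz change of coordinates on $\mathbb R^2$, so their level sets coincide as subsets of $M^3_\alpha$.'' That relation holds (approximately) only downstairs on $X^2$, where both maps are local homeomorphisms so one can form $F_{\infty,i+1}\circ F_{\infty,i}^{-1}$; it does not lift to $M^3_\alpha$. Upstairs the second components are distances to \emph{different} reference segments $\tilde\gamma_{i,\alpha}$ and $\tilde\gamma_{i+1,\alpha}$, neither map is injective, and there is no exact identity $F_{\alpha,i+1}=G\circ F_{\alpha,i}$; consequently the circle fibers of the two maps over a common point of the overlap are Gromov--Hausdorff close but in general \emph{not} equal as subsets of $M^3_\alpha$, so the two local bundle structures do not automatically agree. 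This is precisely why the paper does not close the argument inside the proof of \autoref{thm1.17} but defers the gluing to \autoref{section6}, where the local charts are perturbed --- e.g.\ by partitions of unity and matrix-valued weights as in \autoref{prop6.4} --- so that the modified admissible maps, and hence the local circle fibrations, satisfy the Cheeger--Gromov compatibility condition on overlaps. Your proof needs such a perturbation/averaging step (or an equivalent argument that the nearby circle fibrations are isotopic and can be matched) before you can assert a single global $S^1$-fibration over $A_{X^2}(x_\infty,\delta,\varepsilon)$; the connectedness of fibers via \autoref{prop1.1} does not substitute for it.
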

\begin{proof}
We will use the same notations as in proofs of \autoref{prop1.15}
and \autoref{cor1.16}. For each $F_{\infty, i}$ constructed in
\autoref{cor1.16}, we consider the map
$$
F_{\infty, i}=(\dis_{X^2}(\hat x, \cdot), \dis_{X^2}(\tilde
\gamma_i, \cdot))
$$
up to re-scaling factors, which is regular on $S_i\cap
A_{X^2}(x_{\infty},\delta,\varepsilon)$.  Since
$B_{M^3_\alpha}(x_\alpha, r) \to B_{X^2}(x_{\infty}, r)$ as $\alpha \to \infty$,
 we can
choose geodesic segment $\tilde \gamma_{i,\alpha}$ in $M^3_{\alpha}$
with $\tilde \gamma_{i,\alpha}\to \tilde \gamma_i$. Then we can
define a  map of $F_{i,\infty}$ by
$$F_{i,\alpha}=(\dis_{M^3_{\alpha}}(x_{\alpha},\cdot),
\dis_{M^3_{\alpha}}(\tilde \gamma_{i,\alpha}, \cdot)).$$
We further choose $S_{i,\alpha} \sim F_{i,\alpha}^{-1}(S^*_i)
\subset M^3_{\alpha}$ such that $ S_{i,\alpha}\to S_i$ as $\alpha
\to \infty$, where $S^*_i $ is an Euclidean sector of $\mathbb R^2$
described in the proof of \autoref{prop1.15} and \autoref{cor1.16},
(see \autoref{lifted_regular_graph} and \autoref{fig1_3}).
\begin{figure*}[ht]
\includegraphics[width=180pt]{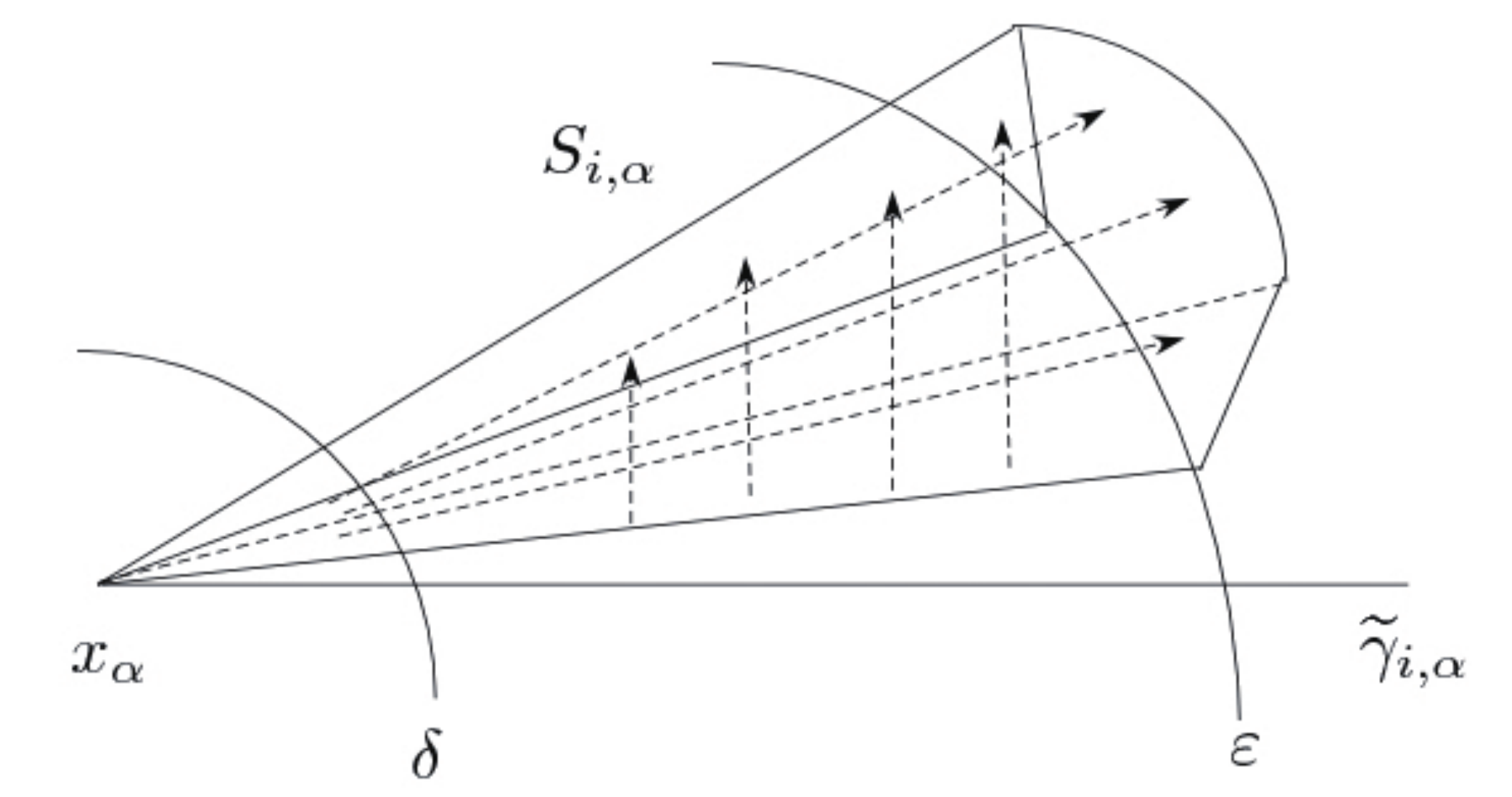}\\
\caption{A regular map from $S_{i,\alpha}\cap
A^3_{(M^3_{\alpha},\hat g^{\alpha})}(x_{\alpha}, \delta,
\varepsilon)$.}\label{fig1_3}
\end{figure*}
It follows from \autoref{prop1.14} that  $F_{i,\alpha}$ is
also regular in $S_{i,\alpha}\cap A^3_{(M^3_{\alpha},\hat
g^{\alpha})}(x_{\alpha}, \delta, \varepsilon)$, for sufficiently
large $\alpha$. Using Perelman's fibration theorem (\autoref{thm1.2}
above), we obtain that $F_{\alpha, i}$ defines a $S^1$ fibration on
$S_{i,\alpha}\cap A^3_{(M^3_{\alpha},\hat g^{\alpha})}(x_{\alpha},
\delta, \varepsilon)$. To get a global $S^1$-fibration on
$A^3_{(M^3_{\alpha},\hat g^{\alpha})}(x_{\alpha}, \delta,
\varepsilon)$, we need to glue these local fibration structures
together. We will discuss the detail of the gluing procedure in
\autoref{section6} below.
\end{proof}

\section{Exceptional orbits, Cheeger-Gromoll-Perelman soul
theory and Perelman-Sharafutdinov flows}\label{section2}
\setcounter{theorem}{-1}

We begin with an example of collapsing manifolds with exceptional
orbits of a circle action on a solid tori.

\begin{example}[\cite{CG72}]\label{ex2.0}
Let $\mathbb R\times D^2=\{(x,y,z)| y^2+z^2\le 1)\}$ be an infinite
long $3$-dimensional cylinder. We consider an isometry
$\psi_{\varepsilon,m_0}: \mathbb R^3\to \mathbb R^3$ given by
\begin{equation*}
\psi_{\varepsilon,m_0}: \quad \left(
\begin{array}{c}
    x \\
    y \\
    z \\
  \end{array}
\right) \to \left(
  \begin{array}{c}
    \varepsilon \\
    0\\
    0 \\
  \end{array}
\right)+ \left(
  \begin{array}{ccc}
    1 & 0 & 0 \\
    0 & \cos\frac{2\pi}{m_0} & -\sin\frac{2\pi}{m_0} \\
    0 & \sin\frac{2\pi}{m_0} & \cos\frac{2\pi}{m_0}\\
  \end{array}
\right) \left(
  \begin{array}{c}
    x \\
    y \\
    z \\
  \end{array}
\right)
\end{equation*}
where $m_0$ is a fixed integer $\ge 2$. Let
$\Gamma_{\varepsilon,m_0}=\langle\psi_{\varepsilon,m_0}\rangle$ be
an sub-group generated by $\psi_{\varepsilon,m_0}$. It is clear that
the following equation
\begin{equation*}
\psi_{\varepsilon,m_0}^{m_0} \left(
  \begin{array}{c}
    x \\
    y \\
    z \\
  \end{array}
\right)= \left(
  \begin{array}{c}
    x+m_0\varepsilon \\
    y \\
    z \\
  \end{array}
\right)
\end{equation*}
holds. The quotient space $M^3_{\varepsilon,m_0}= (\mathbb R\times
D)/ \Gamma_{\varepsilon, m_0}$ is a solid torus. Let
$$
G_{\varepsilon,m_0}: \mathbb R\times D^2 \to M^3_{\varepsilon,m_0}
$$
be the corresponding quotient map. The orbit $\mathscr
{O}_{0,\varepsilon}=G_{\varepsilon,m_0}(\mathbb R\times\{0\})$ is an
exceptional orbit in $M^3_{\varepsilon,m_0}$. It is clear that such
an exceptional orbit $\mathscr{O}_{0,\varepsilon}$ has non-zero
Euler number. Let $\varepsilon\to 0$, the solid tori
$M^3_{\varepsilon,m_0}$ is convergent to $X^2=D^2/\mathbb Z_{m_0}$,
where $\mathbb Z_{m_0}=\langle\hat {\psi}_{m_0}\rangle$ is a
subgroup generated by $\hat{\psi}_{m_0}: \left(
  \begin{array}{c}
    y \\
    z \\
  \end{array}
\right)\to \left(
  \begin{array}{cc}
    \cos\frac{2\pi}{m_0} & -\sin \frac{2\pi}{m_0} \\
    \sin \frac{2\pi}{m_0} & \cos\frac{2\pi}{m_0} \\
  \end{array}
\right) \left(
  \begin{array}{c}
    y \\
    z \\
  \end{array}
\right). $
\end{example}

Let us now return to the diagram constructed in the previous section:
\begin{diagram}
A^3_{(M^3_{\alpha},\hat g^{\alpha})}(x_{\alpha},
\delta, \varepsilon) &\rTo^{F_{\alpha}}&\mathbb{R}^2\\
\dTo_{\text{G-H}}&&\dCorresponds\\
A_{X^2}(x_{\infty},\delta,\varepsilon) &\rTo^{F_{\infty}}&\mathbb{R}^2
\end{diagram}
Among other things, we shall derive the following theorem.

\begin{theorem}[Solid tori around exceptional circle orbits]\label{thm2.1}
Let $F_\alpha$, $F_\infty$, $A^3_{(M^3_{\alpha},\hat
g^{\alpha})}(x_{\alpha}, \delta, \varepsilon)$ and $A_{X^2}
(x_{\infty}, \delta,\varepsilon)$ as in \autoref{section1} and the
diagram above. Then there is a $\delta^* > 0$ such that $
B_{M^3_{\alpha}}(x_\alpha, \delta^*) $ is homeomorphic to a solid
torus for sufficiently large $ \alpha$. Moreover, a finite normal
cover of $ B_{M^3_{\alpha}}(x_\alpha, \delta^*)$ admits a free
circle action.
\end{theorem}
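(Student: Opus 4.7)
The plan is to extend the $S^1$-fibration furnished by \autoref{thm1.17} across the collapsing apex $x_\alpha$ and then identify the resulting full ball with a solid torus carrying a Seifert structure modeled on \autoref{ex2.0}.

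First I would combine \autoref{prop1.15} with the hypotheses of \autoref{section1} to conclude that $B_{X^2}(x_\infty,\varepsilon)$ is homeomorphic to a $2$-disk and the annulus $A_{X^2}(x_\infty,\delta,\varepsilon)$ is topologically $S^1\times(\delta,\varepsilon)$. Applying \autoref{thm1.17} then gives, for all large $\alpha$, a locally trivial circle bundle
\[
S^1\hookrightarrow A^3_{(M^3_\alpha,\hat g^\alpha)}(x_\alpha,\delta,\varepsilon)\xrightarrow{F_\alpha}S^1\times(\delta,\varepsilon),
\]
whose total space, being orientable, is homeomorphic to $T^2\times(\delta,\varepsilon)$. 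In parallel, \autoref{prop1.1} of Kapovitch applied at the scale $r=\delta^*$ shows that $\partial B_{M^3_\alpha}(x_\alpha,\delta^*)$ is an $S^1$-bundle over $\Sigma_{x_\infty}(X^2)\cong S^1$, and orientability forces it to be a $2$-torus. Thus $B_{M^3_\alpha}(x_\alpha,\delta^*)$ is a compact oriented $3$-manifold with torus boundary.

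Next, to identify this cap with a solid torus, I would rescale by the intrinsic diameter $\varepsilon_\alpha$ of a central fiber $F_\alpha^{-1}(q)$ for some $q$ near $x_\infty$, and pass to a Gromov--Hausdorff limit of $(\varepsilon_\alpha^{-1}M^3_\alpha,x_\alpha)$. The circle-fibration of the annular region prevents further collapse in the fiber direction, so after passing to a subsequence this limit is a $3$-dimensional non-negatively curved Alexandrov space in which the rescaled ball is a disk bundle over $S^1$ obtained by extending the annular $S^1$-fibration across the apex. Perelman's stability theorem (Theorem 7.11 of \cite{Kap2007}) then pulls this back to a homeomorphism $B_{M^3_\alpha}(x_\alpha,\delta^*)\cong D^2\times S^1$, i.e.\ a solid torus.

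Finally, the annular circle action extends to a Seifert fibration on $B_{M^3_\alpha}(x_\alpha,\delta^*)\cong D^2\times S^1$ with at most one exceptional fiber along the core circle. By the classification of Seifert fibrations on a solid torus, this fibration is modeled exactly on \autoref{ex2.0} for some multiplicity $m_0\ge 1$; the $m_0$-fold cyclic cover corresponding to the subgroup $\Gamma_{\varepsilon,m_0}/\langle\psi_{\varepsilon,m_0}^{m_0}\rangle\cong\mathbb Z_{m_0}$ of \autoref{ex2.0} is then a finite normal cover on which the induced circle action is free. The main obstacle is the middle step: guaranteeing that the rescaled blow-up at $x_\alpha$ is genuinely $3$-dimensional with the expected disk-bundle topology. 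A priori further collapse at $x_\alpha$ (e.g.\ if the central fiber itself shrinks at a different rate than a generic nearby fiber) could obstruct a direct application of stability, so one must choose the rescaling factor to be precisely the fiber diameter and use \autoref{prop1.14} to transport the annular fibration to the limit.
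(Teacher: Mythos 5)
Your outline reaches the right conclusion in its first and last steps (torus boundary from the annular fibration; once you have a solid torus, the Seifert/covering statement is standard), but the middle step --- which you yourself flag as ``the main obstacle'' --- is a genuine gap, and it is exactly the part the paper spends most of \autoref{section2} proving. Rescaling by the diameter $\varepsilon_\alpha$ of a fiber $F_\alpha^{-1}(q)$ over the annulus does not give you what you assert. After rescaling, the annulus $A^3(x_\alpha,\delta,\varepsilon)$ sits at distance $\approx\delta/\varepsilon_\alpha\to\infty$ from the basepoint, so the circle fibration you constructed there says nothing, via \autoref{prop1.14} or otherwise, about the pointed limit at $x_\alpha$; the assertion that the fibration ``prevents further collapse in the fiber direction'' near the apex is precisely the claim that needs proof, since a priori the cap could collapse at a strictly faster rate (this is what would make the blow-up limit $2$-dimensional). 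Likewise, saying the limit is ``a disk bundle over $S^1$ obtained by extending the annular $S^1$-fibration across the apex'' assumes the conclusion: the whole content of the theorem is to rule out other caps with torus boundary.

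The paper closes this gap differently. It first re-chooses the center $x'_\alpha$ as a maximum point of an averaged distance function and takes the rescaling factor $\lambda_\alpha$ to be the distance to the furthest critical point of $r_{x'_\alpha}$ (\autoref{prop2.2}); this choice guarantees both that the annular product structure persists down to scale $\lambda_\alpha$ and that the rescaled limit $(Y_\infty,y_\infty)$ contains a critical point at distance $1$. Then \autoref{prop2.3} gives the nontrivial fact that $\dim Y_\infty=3$ (via Busemann functions, total convexity, and counting perpendicular directions), and the cap is identified not by extending the fibration but by the soul theory: the torus boundary plus orientability forces the soul of $Y_\infty$ to be a circle, \autoref{thm2.11} gives the splitting of (the universal cover of) $Y_\infty$ as $\tilde X^2\times\mathbb R$, and Perelman's stability theorem then yields that $B_{M^3_\alpha}(x_\alpha,\delta^*)$ is a solid torus foliated by orbits of a free circle action on a finite cover. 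To repair your argument you would need to replace the fiber-diameter rescaling by a critical-point scale of this kind and supply an argument for non-collapse of the blow-up (the analogue of \autoref{prop2.2} and \autoref{prop2.3}); as written, the stability theorem cannot be applied because the hypotheses on the limit space have not been established.
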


We will establish \autoref{thm2.1} by using the
Cheeger-Gromoll-Perelman's soul theory for {\it singular metrics} on
open $3$-dimensional manifolds with non-negative curvature,
(comparing with \cite{SY2000}). It will take several steps.

\subsection{A scaling argument and critical points in collapsed
regions}\label{section2.1}\

\smallskip

We start with the following observation.
\begin{prop}[cf.  \cite{SY2000}, \cite{Yama2009}]\label{prop2.2}
Let $\{(B_{(M^3_{\alpha},\hat
g^{\alpha})}(x_{\alpha},\varepsilon),x_{\alpha})\}$ be a sequence of
metric balls convergent to $(B_{X^2}(x_{\infty}, \varepsilon),
x_{\infty})$ as above. Then, there is another sequence of points
$\{x'_\alpha\}$ such that $ x'_\alpha \to x_\infty$ as $M^3_\alpha
\to X^2$ and for sufficiently large $\alpha$, the following is true:
\begin{enumerate}[{\rm (1)}]
\item
$\partial B_{(M^3_{\alpha},\hat
g^{\alpha})}(x'_{\alpha},\varepsilon)$ is homeomorphic to a quotient of  torus
$T^2$;

\item
There exists $0<\delta_{\alpha}<\varepsilon$ such that there is no
critical point of $r_{x'_{\alpha}}(z)=\dis(z,x'_{\alpha})$ for $z\in
A_{(M^3_{\alpha},\hat g^{\alpha})}(x'_{\alpha},\delta_{\alpha},
\varepsilon)$;

\item There is the furthest critical point $z_{\alpha}$ of the
distance function $r_{x'_{\alpha}}$ in $B_{(M^3_{\alpha},\hat
g^{\alpha})}(x'_{\alpha},\delta_{\alpha})$ with
$\lambda_{\alpha}=r_{x'_{\alpha}}(z_{\alpha}) \to 0$ as $\alpha \to
\infty$.
\end{enumerate}
\end{prop}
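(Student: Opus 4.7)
The plan is to combine Kapovitch's boundary fibration (\autoref{prop1.1}) with the stability of critical points (\autoref{prop1.14}) and the absence of critical points on the punctured limit ball (\autoref{prop1.7}). Choosing $x'_\alpha$ as a (possibly trivial) perturbation of $x_\alpha$ lets us arrange $\varepsilon$ to be a regular value of the radial function $r_{x'_\alpha}$, which makes $\partial B_{M^3_\alpha}(x'_\alpha,\varepsilon)$ automatically a topological $2$-manifold; the main subtleties will be (i) carrying out this perturbation uniformly in $\alpha$ and (ii) identifying the Kapovitch fiber as honestly $S^1$ rather than some more exotic $1$-dimensional object.

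For (1), I would pick $x'_\alpha$ with $\dis(x_\alpha,x'_\alpha)\to 0$ (so $x'_\alpha\to x_\infty$) and $\varepsilon$ a regular value of $r_{x'_\alpha}$, which is possible by the density of regular values of the semi-concave squared distance. Then \autoref{prop1.1} applied to $(M^3_\alpha,x'_\alpha)\to (X^2,x_\infty)$ produces a topological fiber bundle
\[
S_\alpha \longrightarrow \partial B_{M^3_\alpha}(x'_\alpha,\varepsilon) \longrightarrow \Sigma_{x_\infty}(X^2).
\]
Because $x_\infty$ is an interior point of the $2$-dimensional Alexandrov surface $X^2$, the link $\Sigma_{x_\infty}(X^2)$ is a circle, and since $\dim M^3_\alpha-\dim X^2=1$, the connected closed fiber $S_\alpha$ is forced to be a circle as well. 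Hence $\partial B_{M^3_\alpha}(x'_\alpha,\varepsilon)$ is an $S^1$-bundle over $S^1$ and is therefore either $T^2$ or a Klein bottle, both of which are finite free quotients of $T^2$.

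For (2) and (3), by \autoref{prop1.7} the limit distance function $r_{x_\infty}$ has no critical points on $B_{X^2}(x_\infty,\varepsilon)\setminus\{x_\infty\}$, so on each compact annulus $A_{X^2}(x_\infty,\eta,\varepsilon)$ one has a uniform lower bound $|\nabla r_{x_\infty}|\ge c(\eta)>0$. If along a subsequence there existed critical points $z_\alpha$ of $r_{x'_\alpha}$ with $r_{x'_\alpha}(z_\alpha)\ge \eta>0$, a subsequential limit $z_\infty\in X^2$ with $\dis(z_\infty,x_\infty)\ge \eta$ together with \autoref{prop1.14} would yield
\[
0 \;=\; \liminf_{\alpha\to\infty}|\nabla r_{x'_\alpha}|(z_\alpha) \;\ge\; |\nabla r_{x_\infty}|(z_\infty) \;\ge\; c(\eta)\;>\;0,
\]
a contradiction. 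So every critical point of $r_{x'_\alpha}$ in $B(x'_\alpha,\varepsilon)$ sits within a radius $\lambda_\alpha\to 0$ of $x'_\alpha$, and taking $\delta_\alpha$ just above $\lambda_\alpha$ gives (2). The existence of at least one such critical point (making the ``furthest'' one in (3) meaningful) is forced by (1): if $r_{x'_\alpha}$ were regular on $B(x'_\alpha,\varepsilon)\setminus\{x'_\alpha\}$, \autoref{thm1.2}(B) would make the punctured ball a fiber bundle over $(0,\varepsilon)$ with constant fiber, and since the fibers near the smooth manifold point $x'_\alpha$ are small geodesic spheres $S^2$, the boundary $\partial B(x'_\alpha,\varepsilon)$ would also have to be $S^2$, contradicting (1).
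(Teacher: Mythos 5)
Your argument is essentially correct as a proof of the proposition as stated, but it takes a genuinely different route from the paper's. You take $x'_\alpha$ to be an (almost) trivial perturbation of $x_\alpha$ and obtain (1) from Kapovitch's fibration (\autoref{prop1.1}): the link $\Sigma_{x_\infty}(X^2)$ is a circle, the connected closed fiber is a $1$-manifold, hence $S^1$, so the distance sphere is an $S^1$-bundle over $S^1$; then (2) and (3) follow from the conic lemma (\autoref{prop1.7}), the lower semicontinuity/stability of critical points (\autoref{prop1.14}), and the topology change from small geodesic spheres $S^2$ to the $T^2$-type boundary, exactly the mechanism the paper also uses for (2) and (3). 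The paper, by contrast, does not take $x'_\alpha$ arbitrary: it constructs $h_\infty=\min_i f_i$ from averaged distance functions to $(\delta' r)$-nets $q_{i,j}$ on $\partial B_{X^2}(x_\infty,r)$, shows $x_\infty$ is its unique maximum, lifts this to $h_\alpha$ on $M^3_\alpha$, and takes $x'_\alpha$ in the (shrinking) maximum set of $h_\alpha$; part (1) is then deduced from its own \autoref{thm1.17} rather than from \autoref{prop1.1}. What your shortcut loses is precisely this extra structure: the data $\{q_{i,j,\alpha}, f_{i,\alpha}, h_\alpha\}$ and the fact that $x'_\alpha$ maximizes $h_\alpha$ are reused verbatim in the proof of \autoref{prop2.3} (after rescaling, $y_\infty$ becomes a critical point of the limiting convex function $-\hat h$, which drives the $\dim Y_\infty=3$ argument), so with your choice of $x'_\alpha$ that later proof would have to be reworked. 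Two small cosmetic points in your write-up: the applicability of \autoref{prop1.1} needs $\varepsilon\le r_0(x_\infty)$ (harmless under the standing smallness of $\varepsilon$ from \S 1), and in the final contradiction you should compare the constant fiber of the bundle over $(0,\varepsilon)$ with the $T^2$-type sphere at an interior radius (which \autoref{prop1.1} supplies for all $r\le r_0$), since radius exactly $\varepsilon$ is not covered by the open-interval fibration; also the device of moving the center to make the fixed radius $\varepsilon$ a regular value is unnecessary (and not obviously achievable), but nothing in your proof depends on it.
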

\begin{proof}
This result can be found in Shioya-Yamaguch's paper \cite{SY2000}.
 For convenience of
readers, we reproduce a proof inspired by Perelman  and Yamaguchi (cf. \cite{Per1994}, \cite{Yama2009}) with appropriate modifications.

 Our choices of the desired points $\{ x'_{\alpha}\}$
are related to  certain averaging distance functions $\{h_\alpha\}$ described below.
Let us first construct the limit function $h_\infty$ of the sequence $\{h_\alpha\}$.
Similar constructions related to $h_\infty$ can be found in
the work of Perelman, Grove and others  (see \cite{Per1994} page 211, \cite{PP1994} page 223, \cite{GW1997} page 210, \cite{Kap2002}, \cite{Yama2009}).
 Let $\Uparrow_p^A$ denote the set of directions of geodesics from
$p$ to $A$ in $\Sigma_p$. It is clear that if we choose $Y^2 = T_{
x_\infty}(X^2 )$ and if $o= o_{ x_\infty}$ is the apex of $T_{
x_\infty}(X^2 )$, then $\Uparrow_{o}^{\partial B_{Y}(o, R)} =
\Sigma_o(Y^2)$ for any $R >0$. Recall that $(\lambda X^2, x_\infty)
\to (Y^2, o_{ x_\infty})$ as $\lambda \to \infty$.  Suppose that
$X^2$ has curvature $\ge -1$. Applying \autoref{prop1.14}, we see
that, for any $\delta >0$, there is a sufficiently small $r$ such
that

\smallskip
\noindent (2.2.4) {\it The minimal set
$\Uparrow_{x_\infty}^{\partial B_{X^2}(x_\infty, 2r)}$ is
$\delta$-dense in $\Sigma_{x_\infty}(X^2)$.}

We now choose $\delta' = \frac{\delta}{800}$ and $\delta =
\frac{2\pi }{1000}$. Let $\{ q_i\}_{1 \le i \le m}$ be a maximal
$(\delta r)$-separated subset in $ \partial B_{X^2}(x_\infty, r)$
and $ \{ q_{i, j}\}_{1 \le j \le N_i}$ be a $(\delta' r) $-net in $
B_{X^2}(q_i, \delta r) \cap \partial B_{X^2}(x_\infty, r)$.
Yamaguchi (\cite{Yama2009})  considered
$$
f_i (x) = \frac{1}{N_i}\sum_{j= 1}^{N_i} \dis(x, q_{i, j})
$$
for $i = 1, \cdots, m$. Our choice of $h_\infty$ is given by
$$
h_\infty(x) = \min_{1 \le i \le m} \{ f_i(x)\}.
$$
 We now
verify that $h_\infty$ has a unique maximal point $ x_\infty$. It is
sufficient to establish
$
h_\infty(x)  \le [r - \frac{1}{2} \dis(x, x_\infty)]
$
for all $ x \in B_{X^2}(x_\infty, \frac r2)$. This can be done as
follows. For each $ x \in B_{X^2}(x_\infty, \frac r2) - \{x_\infty\}$, we
choose $i_0$ such that $\angle_{x_\infty}(x, q_{i_0,j }) <
5\delta$ for $j = 1, \cdots, N_{i_0}$. Suppose that $\sigma: [0, d]
\to X^2$ is a length-minimizing geodesic segment of unit speed from
$x_\infty$ to $x$. By comparison triangle comparison theorems, one can show
that if $ 0 < t < \dis(x_\infty, x)$ then $
\angle_{\sigma(t)}(q_{i_0,j}, \sigma'(t)) < \frac{\pi}{4}$ for $j = 1, \cdots, N_{i_0}$. It follows
from the first variational formula that
$$
\dis(\sigma(t), q_{i_0,j }) \le r - (\cos \frac{\pi}{4}) t,
$$
for $j = 1, \cdots, N_{i_0}$. In fact, Kapovitch \cite{Kap2002} observed that $\dis(\sigma(t),
q_{i_0,j }) \le [r - t \cos (3 \delta)] $, (see \cite{Kap2002} page
129 or \cite{GW1997}). It follows that $h_\infty(x)_{B_{X^2}(x_\infty, \frac r2)}$ has the unique maximum point
$x_\infty$ with $h_\infty(x_\infty) =r$,  because $h_\infty(x) = \min_{1 \le i \le m} \{ f_i(x) \}
\le [r - \frac{1}{2} \dis(x, x_\infty)]$ for $x \in B_{X^2}(x_\infty, \frac r2)$.

Since $ B_{M^3_{\alpha}}(x_{\alpha}, r) \to B_{X^2}(x_{\infty}, r) $ as $\alpha \to \infty$,
we can construct a $ \mu'_\alpha$-approximation of $\{ q_{i, j,
\alpha}\} \subset M^3_\alpha$ of $ \{ q_{i, j} \}\subset X^2 $ with $ \mu'_\alpha \to 0$. Let
$$
f_{i, \alpha}(y) = \frac{1}{N_i}\sum_{j= 1}^{N_i} \dis(y, q_{i, j,
\alpha}) \quad \text{ and} \quad  h_\alpha(y) = \min_{1 \le i \le m}
\{ f_{i, \alpha}(y)\}.
$$
Let $A_\alpha$ be a local maximum set of $h_\alpha|_{B_{M^3_{\alpha}}(x_{\alpha}, \frac r4)}$ and $x'_\alpha
\in A_\alpha$. Applying  \autoref{prop1.14} to
the sequence $\{h_\alpha\} \to h_\infty$, we see that
$\diam(A_\alpha) \to 0$ and $x'_\alpha \to x_\infty$, as
$B_{M^3_{\alpha}}(x_{\alpha}, r) \to B_{X^2}(x_{\infty}, r) $ and $\alpha \to \infty$.

Let $r_\alpha (y) = d(y, x'_\alpha)$.
 Using \autoref{prop1.14} again, we can show
that there exists a sequence $\delta_\alpha \to 0 $ such that
neither   the function $h_\alpha$ nor  $r_\alpha$ has any
critical points in the annual region $A_{(M^3_{\alpha},\hat g^{\alpha})}(x'_{\alpha},
\delta_{\alpha}, \varepsilon)$, as $B_{M^3_{\alpha}}(x'_{\alpha}, r) \to B_{X^2}(x_{\infty}, r) $. It follows
from \autoref{thm1.17} that the boundary $\partial B_{(M^3_{\alpha},\hat
g^{\alpha})} (x'_{\alpha},t)$ is homeomorphic to $T^2$ or Klein
bottle $T^2/ \mathbb Z^2$ for $t>\delta_{\alpha}$. However,
$(M^3_{\alpha},\hat g^{\alpha})$ is a Riemannian $3$-manifold
$\partial B_{(M^3_{\alpha},\hat g^{\alpha})} (x'_{\alpha}, s)$ is
homeomorphic to a $2$-sphere for $s$ less than the injectivity
radius $ \eta_\alpha$ at $x'_{\alpha}$. Thus, we let $\mu_{\alpha} = \min\{\mu'_\alpha, \eta_\alpha \} $ and
$$
\lambda_{\alpha}=\max\{\dis(x'_{\alpha},z_{\alpha})|z_{\alpha}
\text{ is a critical point of } r_{x'_{\alpha}} \text{ in }
B_{(M^3_{\alpha}, \hat g^{\alpha})}(x'_{\alpha},\delta_{\alpha})\}.
$$
Clearly, we have $\lambda_{\alpha}\in
[\mu_{\alpha},\delta_{\alpha}]$. As $\alpha\to+\infty$, we have
$\lambda_{\alpha} \to 0$. This completes the proof of
\autoref{prop2.2}.
\end{proof}

In what follows, we re-choose $ x_\alpha = x'_\alpha$ as in the proof of
\autoref{prop2.2} for each $\alpha$. We now would like to study the
sequence of re-scaled metrics
\begin{equation}\label{eq:2.1}
\tilde{g}^{\alpha}=\frac{1}{\lambda^2_{\alpha}}\hat g^{\alpha}.
\end{equation}
Clearly, the curvature of $\tilde{g}^{\alpha}$ satisfies $
\curv_{\tilde{g}^{\alpha}} \ge -\lambda^2_{\alpha} \to 0$, as
$\alpha \to \infty$. By passing to a subsequence, we may assume that
the pointed Riemannian $3$-manifolds $\{((M^3_{\alpha},\tilde
g^{\alpha}),x'_{\alpha})\}$ converge to a pointed Alexandrov space
$(Y_{\infty}, y_{\infty})$ with non-negative curvature.

\begin{prop}[Lemma 3.6 of \cite{SY2000}, Theorem 3.2 of \cite{Yama2009}]\label{prop2.3}
Let $\{((M^3_{\alpha},\tilde g^{\alpha}),x'_{\alpha})\}$, $X^2$,
$\{\delta_{\alpha}\},\{\lambda_{\alpha}\}$ and $(Y_{\infty},
y_{\infty})$ be as above. Then $Y_{\infty}$ is a complete,
non-compact Alexandrov space of non-negative curvature. Furthermore,
we have
\begin{enumerate}[{\rm (1)}]
\item $\dim Y_{\infty}=3$;
\item $Y_{\infty}$ has no boundary.
\end{enumerate}
\end{prop}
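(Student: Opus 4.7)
The plan is to deduce the four properties of $Y_\infty$ (completeness with non-negative curvature, non-compactness, $\dim Y_\infty = 3$, and absence of boundary) in order, concentrating effort on the dimension claim.

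First I would verify non-negative curvature and completeness. Since $\tilde g^\alpha = \lambda_\alpha^{-2}\hat g^\alpha$ with $\lambda_\alpha \to 0$, sectional curvatures rescale as $\curv(\tilde g^\alpha) \ge -\lambda_\alpha^2 \to 0$. Lower curvature bounds persist under pointed Gromov-Hausdorff convergence, and Gromov's precompactness theorem supplies completeness, so $Y_\infty$ is a complete Alexandrov space with $\curv \ge 0$. For non-compactness, hypothesis (ii) of Theorem 0.1' gives $\diam(M^3_\alpha, \hat g^\alpha) \ge 1$, hence $\diam(M^3_\alpha, \tilde g^\alpha) \ge \lambda_\alpha^{-1} \to \infty$. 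For any fixed $R$, once $\alpha$ is large enough there is a minimizing geodesic from $x'_\alpha$ of length $R$, furnishing a point at $\tilde g^\alpha$-distance exactly $R$ from $x'_\alpha$; this passes to the pointed Gromov-Hausdorff limit to give a point at distance $R$ from $y_\infty$ in $Y_\infty$. Letting $R \to \infty$ shows $Y_\infty$ is unbounded.

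The main step is $\dim Y_\infty = 3$. Since three is the ambient dimension of the approximating manifolds, one only needs to prevent further collapse at the rescaled unit scale. The geometric lever is the critical point $z_\alpha$ from \autoref{prop2.2}, which satisfies $d_{\tilde g^\alpha}(x'_\alpha, z_\alpha) = 1$ and is critical for $r_{x'_\alpha}$, so the incoming-direction set $\Uparrow_{z_\alpha}^{x'_\alpha}$ is $(\pi/2)$-dense in $\Sigma_{z_\alpha}(M^3_\alpha) \cong S^2$. I would argue by contradiction: if $\dim Y_\infty \le 2$, the rescaled sequence collapses on balls of bounded radius, and a standard application of Yamaguchi's fibration theorem near $z_\alpha$ produces an almost-Lipschitz submersion with fibers of diameter tending to $0$, whose tangents concentrate on a proper closed subset of $\Sigma_{z_\alpha}(M^3_\alpha)$. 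Invoking the stability assertion \autoref{prop1.14}, every limit of directions in $\Uparrow_{z_\alpha}^{x'_\alpha}$ must lie in this concentrated subset, which cannot be $(\pi/2)$-dense in $S^2$---a contradiction.

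The chief obstacle will be executing this ``fiber-concentration versus $(\pi/2)$-density'' contradiction rigorously: one must combine Yamaguchi's fibration theorem for collapsed Alexandrov spaces with Perelman's stability of gradients carefully, and in particular prevent $\Uparrow_{z_\alpha}^{x'_\alpha}$ from becoming spuriously $(\pi/2)$-dense through short loops wrapping the collapsing fiber rather than through genuine geometric spread. Once $\dim Y_\infty = 3$ is secured, the absence of boundary follows comparatively easily: Perelman's stability theorem identifies $B_{\tilde g^\alpha}(x'_\alpha, R)$ topologically with $B_{Y_\infty}(y_\infty, R)$ for large $\alpha$, and the toral boundary of $M^3_\alpha$, by virtue of its unit-length collar in $g^\alpha$ (hypothesis (2) of \autoref{thm0.1}), sits at $\tilde g^\alpha$-distance going to infinity from $x'_\alpha$. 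Hence $B_{\tilde g^\alpha}(x'_\alpha, R)$ is boundary-free for large $\alpha$, so $B_{Y_\infty}(y_\infty, R)$ inherits this; letting $R \to \infty$ gives that $Y_\infty$ has no boundary.
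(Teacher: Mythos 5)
Your curvature/completeness and non-compactness arguments are fine and essentially what the paper does, and your no-boundary argument is acceptable once $\dim Y_\infty=3$ is known. The problem is the dimension step, which is the heart of the proposition, and there your proposal has a genuine gap. The claimed contradiction --- that if $\dim Y_\infty\le 2$ a fibration with shrinking fibers near $z_\alpha$ forces the directions $\Uparrow_{z_\alpha}^{x'_\alpha}$ to concentrate on a set that cannot be $(\pi/2)$-dense in $\Sigma_{z_\alpha}\cong S^2$ --- is not only unproved, it is false as a general principle. Concentration does not preclude $(\pi/2)$-density: even a pair of antipodal directions $\{v,-v\}$ is $(\pi/2)$-dense in $S^2$. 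Concretely, in $S^1_\epsilon\times S^1_1\times\mathbb R$ a point $z$ antipodal to $x'$ in the $S^1_1$-factor is critical for $r_{x'}$ via exactly two nearly opposite directions, while at that scale the space is Gromov--Hausdorff close to the $2$-dimensional space $S^1_1\times\mathbb R$; so ``critical point at unit rescaled distance'' plus ``$2$-dimensional limit at that scale'' is perfectly consistent, and no soft fibration-versus-criticality argument can rule it out. (This example does not contradict the proposition itself, because there $\lambda_\alpha$ would be the thin-fiber scale $\sim\epsilon$, not the $S^1_1$ scale; but it kills the mechanism you propose.)

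What actually makes the proposition true --- and what your proposal never uses --- is the special choice of $x'_\alpha$ in \autoref{prop2.2} as a near-maximum point of the averaged distance functions $h_\alpha=\min_i\frac1{N_i}\sum_j \dis(\cdot,q_{i,j,\alpha})$ built from $\delta$-dense families of reference points at the macroscopic scale $r$. The paper's proof works in the limit space: by \autoref{prop1.14} the rescaled critical points $\bar z_\alpha$ converge to a critical point $q_\infty$ of $r_{y_\infty}$ with $r_{y_\infty}(q_\infty)=1$, and the $(N_1+\cdots+N_m)$ geodesics to the $q_{i,j,\alpha}$ become rays from $y_\infty$ in $Y_\infty$; averaging their Busemann functions gives a convex function $f=-\hat h$ whose minimum set $A$ contains $y_\infty$ and, by \autoref{prop2.5}, must also contain $q_\infty$. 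A support-vector/first-variation computation along a geodesic $\sigma\subset A$ from $y_\infty$ to $q_\infty$ then forces at least $N_{i_0}\ge 100$ ray directions to make angle exactly $\pi/2$ with $\vec v^*=\sigma'(0)$, which is impossible when $\dim Y_\infty=2$ since $\Sigma_{y_\infty}$ is then one-dimensional and carries at most two directions at distance exactly $\pi/2$ from a given one. Your argument would need either this many-rays rigidity input or some substitute for it; as written, the ``fiber-concentration versus $(\pi/2)$-density'' step cannot be completed.
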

\begin{proof}
This is an established result of \cite{SY2000} and \cite{Yama2009}.
We outline a proof here only for convenience of readers, using our proofs of Proposition 2.2 above and Theorem 2.4 and Proposition 2.5 below.   The metric $\tilde
g^{\alpha}=\frac{1} {\lambda^2_{\alpha}} \hat g^{\alpha}$ defined above has
curvature $\ge -\lambda^2_{\alpha} \to 0$, as $\alpha \to +\infty$.
By our construction,
the diameter of $Y_{\infty}$ is infinite. Moreover, our Alexandrov
space $Y_{\infty}$ has no finite boundary.

It remains to show that $\dim Y_{\infty}=3.$ Suppose contrary,
$\dim(Y^s_\infty) = 2$. Then, for each $\vec v^* \in \Sigma^1_{y_\infty}(
Y^2_\infty)$, the subset
$$
\Lambda^\perp_{\vec v^*} = \{ \vec w \in \Sigma^1_{y_\infty}(
Y^2_\infty) \, | \angle_{y_\infty}(\vec w, \vec v^*) = \frac{\pi}{2}
\}
$$
has at most two elements. We will find  a vector $\vec
v^*$ such that $\Lambda^\perp_{\vec v^*}$ has at least $N_{i_0} \ge 100$
elements for some $i_0$, a contradiction to $\dim(Y_\infty) =2$.

Our choice of $\vec v^*$ will be related to a tangent vector to the minimum set $A$ of a convex function
$f: Y^s_\infty \to \mathbb R$, which we now describe. We will retain the same notations as in
the proof of Proposition 2.2 above. Let $r_{x'_\alpha}(z) = d_{M^3_{\alpha}}(z, x'_\alpha)$,   $z_\alpha$ be a critical point of
$r_{x'_\alpha}|_{B_{M^3_{\alpha}}(x'_{\alpha}, \frac r4)}$ be as above and $\bar{z}_\alpha$ be its image in
the scaled manifold $\frac{1}{\lambda_\alpha} M^3_\alpha$. Suppose
that $q_\infty$ is the limit point of a subsequence of $\{
\bar{z}_\alpha \}$. It follows from \autoref{prop1.14} that the
limiting point $q_\infty$ must be a critical point of the distance
function $r_{y_\infty}$ with $r_{y_\infty}(q_\infty)= 1$, where
$r_{y_\infty} (z) = \dis_{Y_\infty}(z, y_\infty)$.

In addition, there  exist $(N_1 +\cdots+ N_m)$-many geodesic segments $\{\tilde
\gamma_{i, j, \alpha} \}_{1 \le j \le N_i, 1\le i \le m}$ in
$M^3_{\alpha}$ from $x'_\alpha$ to $q_{i,j, \alpha}$, where $\tilde \gamma_{i,j, \alpha}\to \tilde
\gamma_{i, j, \infty}$ as $B_{M^3_\alpha}(x'_\alpha, r)  \to B_{X^2}(x_\infty, r)$ with $\alpha \to \infty$. Let $\bar\gamma_{i,
j,  \alpha}: [0, \frac{r }{2\lambda_\alpha}] \to
\frac{1}{\lambda_\alpha} M^3_\alpha$ be the re-scaled geodesic with
starting point $\bar{x}'_\alpha$ in the re-scaled manifold, for $1
\le j \le N_i, 1\le i \le m.$ It can be shown that $\bar\gamma_{i, j, \alpha} \to \bar\gamma_{i,j,
\infty}$ as $\frac{1}{\lambda_\alpha} M^3_\alpha \to Y_\infty$,
after passing to appropriate subsequences of $\{ \alpha\}$.
Therefore, we have $(N_1 + \cdots + N_m)$-many distinct geodesic
rays starting from $y_\infty$ in $Y_\infty$. Let us now consider limiting
Busemann functions:
$$
\tilde{h}_{i, j}(y) = \lim_{t \to \infty}[\dis (y, \bar\gamma_{i,j,
\infty}(t)) - t] \quad \text{and } \quad  \hat {h}_i (y) =
\frac{1}{N_i}\sum_{j= 1}^{N_i}\tilde{h}_{i, j}(y).
$$
Since $Y_\infty$ has non-negative curvature, each Busemann function
$(-\tilde{h}_{i, j})$ is a convex function, (see \cite{CG72}, \cite{Wu79} or Theorem 2.4 below). If

$$
\hat h (y) = \min_{ 1\le i \le m}\{ \hat {h}_i \} \quad \text{ and }
\quad f(y) = - \hat h(y),
$$
then $f$ is  convex.
Choose  $\tilde{h}_{i, j, \alpha}(x) = [\bar{d}(x,
\bar{q}_{i, j, \alpha}) - \bar{d}( \bar{x}'_\alpha,
\bar{q}_{i, j, \alpha}) ] $, $\hat {h}_{i,
\alpha} (x) = \frac{1}{N_i}\sum_{j= 1}^{N_i}\tilde{h}_{i, j,
\alpha}(x)$ and  $ \hat h_\alpha (x) = \min _{ 1\le i \le m}\{  \hat
{h}_i(x)\}$ defined on $B_{\frac{1}{\lambda_\alpha} M^3_\alpha} (
\bar{x}'_\alpha, \frac{r}{4\lambda_\alpha}  )$. It follows  that
$\hat h = \lim_{\alpha \to \infty }
\hat h_\alpha $.  Because $\bar{x}'_\alpha$ is a maximum point of $\hat h_\alpha$ with $ \hat h_\alpha (\bar{x}'_\alpha ) = 0 $ and $\bar{x}'_\alpha \to
y_\infty$ as $\alpha \to \infty$, the point
$y_\infty$ is a critical  point   of the limiting function $\hat h = \lim_{\alpha \to \infty }
\hat h_\alpha $ with $\hat h(y_\infty) = 0$.

 Thus, $0 = \hat h (y_\infty)$ is a
critical value of the {\it convex } function $f = (- \hat h)$   with $\inf_{y \in Y^s_\infty}\{ f(y) \} = 0$.
 There are two cases for $A = f^{-1} (0)$.

\smallskip
{\bf Case 1.} If $A = \{ y_\infty \}$, then it is known (cf.
\autoref{prop2.5} below) that the distance function $r_{y_\infty}$
does not have any critical point in $[Y_\infty - \{ y_\infty \}]$,
which contracts to the existence of critical points $q_\infty$ of
$r_{y_\infty}$ mentioned above. Thus, this case can not happen.

\smallskip
{\bf Case 2.} $\dim(A) \ge 1$. In this case, our proof becomes more
involved. If $q_\infty \notin A = f^{-1}(0)$, then
$f(q_\infty) = a > a_0 =0$. Using the proof of \autoref{prop2.5}
below, we see that $q_\infty$ can {\it not} be a critical
point of $r_{y_\infty}(z) = \dis(z, y_\infty)$ either, a contradiction.
Thus, $q_\infty \in A$ holds.

If, for any quasi-geodesic segment $\sigma: [a, b] \to Y_\infty$
with ending points $\{ \sigma(a), \sigma(b)\} \subset \Omega$, the
inclusion relation $\sigma([a, b]) \subset \Omega$ holds,  then
$\Omega$ is called a totally convex subset of $Y_\infty$. It follows from
the proof of \autoref{prop2.5} below that  the sub-level set
$f^{-1}((-\infty, a]) $ is totally convex. Let us choose $\vec{v}^* \in \Uparrow_{y_\infty}^{q_\infty} $, where
$\Uparrow_p^x$ denotes the set of directions of geodesics from $p$
to $x$ in $\Sigma_p$. Because  $\{ y_\infty, q_\infty\}$
are contained in the totally convex  minimal set $A = f^{-1}(0)$ of
a convex function $f$, one has
$$
\angle_{y_\infty}(\vec v^*, \bar \gamma_{i,j,  \infty}'(0)) \ge
\frac{\pi}{2}
$$
holds for all $i, j$ by our construction of $f = - \hat h$, because $\bar
\gamma_{i,j,  \infty}'(0) = \vec s_{i, j}$ is a support vector of
$d_{y_\infty}(-f)$, (see the proof of \autoref{prop2.5} below).
Let $\sigma: [0, \ell] \to Y$ be a geodesic segment from $y_\infty$
to $q_\infty$. Since $A$ is totally convex and $f$ is convex, we have $\sigma( [0,
\ell]) \subset A$ and $f( \sigma (t)) = 0$ for all $t \in [0,
\ell]$. Hence, $\hat h ( \sigma (t)) = 0$ for all $t \in [0,
\ell]$.

Recall that $\hat h (z) = \min_{ 1\le i \le m}\{ \hat {h}_i
(z)\} $.  We choose $i_0 $ such that $  \hat h_{i_0} (\sigma(\frac{\ell}{2})) = \min_{ 1\le i \le m}\{ \hat {h}_i
(\sigma(\frac{\ell}{2}))\} = \hat h ( \sigma (\frac{\ell}{2})) =0 $.    Because $\hat
h_{i_0}(\sigma(t))$ is a concave function of $t$ with  $ 0 = \hat h (\sigma(0))= \hat h (\sigma(\ell))  \le  \min\{ \hat
h_{i_0} (\sigma(0)), \hat
h_{i_0} (\sigma(\ell)) \} $ and  $ \hat
h_{i_0} (\sigma(\frac{\ell}{2})) = 0$, one concludes that  $\hat h_{i_0} ( \sigma (t)) = 0$ for all $t \in [0,
\ell]$. Since $\hat h_{i_0} ( \sigma (t)) = 0$ for all $t \in [0,
\ell]$, choosing $\vec{v^*}= \sigma'(0) $ one  has
$$
0 = (d_{y_\infty} \hat h_{i_0}) (\vec{v^*} ) = \frac{1}{N_{i_0}}
\sum^{N_{i_0}}_{j=1} [ - \cos ( \angle_{y_\infty}(\vec v^*,
\bar\gamma_{i_0, j, \infty}'(0)))],
$$
  This together with inequalities
$\angle_{y_\infty}(\vec v^*, \bar\gamma_{i_0, j, \infty}'(0)) \ge
\frac{\pi}{2}$ implies that
$$
\angle_{y_\infty}(\vec v^*, \bar\gamma_{i_0, j, \infty}'(0)) =
\frac{\pi}{2}
$$
holds for $j = 1, 2,\cdots, {N_{i_0}}.$ Hence, we conclude that
$\bar\gamma_{i_0,j, \infty}'(0) \in \Lambda^\perp_{\vec v^*}$, for
$j= 1, 2, \cdots, {N_{i_0}}$, where ${N_{i_0}} \ge 100$. Therefore, we
demonstrated that $\# |\Lambda^\perp_{\vec v^*}| \ge N_{i_0} \ge
100$. This contradicts to $\# |\Lambda^\perp_{\vec v^*}| \le 2$ when
$\dim(Y_\infty) = 2$. This completes the proof of the assertion $\dim ( Y_{\infty}) =3$.
\end{proof}

\subsection{The classification of non-negatively curved
surfaces and 3-dimensional soul theory}\label{section2.2}\

\smallskip

In what follows, if $X$ is an open Alexandrov space of non-negative
curvature, then we let $X(\infty)$ be the boundary (or called the
ideal boundary) of $X$ at infinity. For more information about the
ideal boundary $X(\infty)$, one can consult with work of Shioya, (cf. \cite{Shio94}).

In this sub-section, we briefly review the soul theory for
non-negatively curved space $Y^k_{\infty}$ of dimension $\le 3$.
The soul theory and the splitting theorem are two important tools
in the study of low dimensional collapsing manifolds.

Let $X$ be an $n$-dimensional non-negatively curved Alexandrov
space. Suppose that $X$ is a non-compact complete space and that $X$
has no boundary. Fix a point $x_0\in X$, we consider the
Cheeger-Gromoll type function
$$f(x)=\lim_{t\to+\infty}[t-\dis(x,\partial B(x_0,t))].$$

Let us consider the sub-level sets $\Omega_c=f^{-1}((-\infty,c])$.
We will show that $\Omega_c$ is a totally convex subset for any $c$
in \autoref{thm2.4} below.

H.Wu \cite{Wu79} and Z. Shen \cite{Shen1996} further observed that
\begin{equation}\label{eq2.21}
f(x)=\sup_{\sigma\in \Lambda}\{h_{\sigma}(x)\}
\end{equation}
where $\Lambda=\{\sigma:[0,+\infty)\to X| \sigma(0)=x_0,
\dis(\sigma(s), \sigma(t))=|s-t|\}$ and $h_{\sigma}$ is a Busemann
function associated with a ray $\sigma$ by
\begin{equation}\label{eq2.22}
h_{\sigma}(x)=\lim_{t\to \infty}[t-\dis(x,\sigma(t))].
\end{equation}
Since $\Omega_c=f^{-1}((-\infty,c])$ is convex, by \eqref{eq2.21} we
see that $\Omega_c$ contains no geodesic ray starting from $x_0$.
Choose $\hat c = \max\{c, f(x_0) \}$. Since $\Omega_{\hat c}$ is
totally convex and contains no geodesic rays, $\Omega_{\hat c}$ must
be compact. It follows that $\Omega_c \subset \Omega_{\hat c}$ is
compact as well. Thus the Cheeger-Gromoll function $f(x)$ has a
lower bounded
$$a_0=\inf_{x\in M^n}\{f(x)\}=\inf_{x\in\Omega_0}\{f(x)\}>-\infty.$$
If $\Omega_{a_0}=f^{-1}(a_0)$ is a space without boundary,
$\Omega_{a_0}$ is called a soul of $X$. Otherwise, $\partial
\Omega_{a_0}\ne \varnothing$ we further consider
$$
\Omega_{a_0-\varepsilon}=\{x\in\Omega_{a_0}|
\dis(x,\partial\Omega_{a_0})\ge\varepsilon\}
$$
When $X$ is a smooth Riemannian manifold of non-negative
curvature, Cheeger-Gromoll \cite{CG72} showed that
$\Omega_{a_0-\varepsilon}$ remains to be convex. For more general
case when $X$ is an Alexandrov space of non-negative curvature,
Perelman \cite{Per1991} also showed that
$\Omega_{a_0-\varepsilon}$ remains to be convex, (see
\cite{Petr2007} and \cite{CMD2009} as well).

Let $l_1=\max_{x\in\Omega_{a_0}}\{\dis(x,\partial\Omega_{a_0})\}$
and $a_1=a_0-l_1$. If $\Omega_{a_1}=\Omega_{a_0-l_1}$ has no
boundary, then we call $\Omega_{a_1}$ a soul of $X$. Otherwise, we
repeat above procedure by setting
$$
\Omega_{a_1-\varepsilon}=\{x \in\Omega_{a_1}|
\dis(x,\partial\Omega_{a_1})\ge\varepsilon\}
$$
for $0 \le \varepsilon \le l_1$. Observe that
$$n=\dim(X)>\dim(\Omega_{a_0})>\dim(\Omega_{a_1})>\cdots$$
Because $X$ has finite dimension, after finitely many steps we will
eventually get a sequence
$$a_0>a_1>a_2>\cdots>a_m$$
such that
$\Omega_{a_i-s}=\{x\in\Omega_{a_i}|\dis(x,\partial\Omega_{a_i})\ge
s\}$ for $0\le s\le a_i-a_{i+1}$ and $i=0,1,2,\cdots,m-1$. Moreover,
$\Omega_{a_m}$ is a convex subset without boundary, which is called
a soul of $X$.

A subset $\Omega$ is said to be {\it totally convex} in $X$ if for
any quasi-geodesic segment $\sigma:[a,b]\to X$ with endpoints
$\{\sigma(a),\sigma(b)\}\subset \Omega$, we must have
$\sigma([a,b])\subset \Omega$. The definition of quasi-geodesic can
be found in \cite{Petr2007}.

\begin{theorem}[Cheeger-Gromoll \cite{CG72}, Perelman \cite{Per1991}, \cite{SY2000}]\label{thm2.4}
Let $X$ be an $n$-dimensional open complete Alexandrov space of
curvature $\ge 0$, $f(x)=\lim_{t\to+\infty}[t-\dis(x,\partial B(\hat
x,t))]$, $\{\Omega_s\}_{s\ge a_m}$ and $a_0>a_1>\cdots>a_m$ be as
above. Then the following is true.

{\rm (1)} For each $s\ge a_0$, $\Omega_s=f^{-1}((-\infty,s])$ is a
totally convex and compact subset of $X$;

{\rm (2)} If $a_i\le s<t\le a_{i-1}$, then
$$\Omega_s=\{x\in \Omega_t|\dis(x,\partial \Omega_t)\ge t-s\}$$
remains to be totally convex;

{\rm (3)} The soul $N^k=\Omega_{a_m}$ is a deformation retract of
$X$ via multiple-step Perelman-Sharafutdinov semi-flows, which are
distance non-increasing.
\end{theorem}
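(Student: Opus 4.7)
The plan is to address the three parts in order: (1) establishes the convexity framework via Busemann functions, (2) handles the iterative shrinking through a Perelman inward-equidistant result, and (3) produces the retraction via gradient semi-flows.

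For (1), I would first invoke the Wu--Shen representation $f(x) = \sup_{\sigma \in \Lambda} h_\sigma(x)$ already cited in \eqref{eq2.21}, which is verified by the standard diagonal-ray argument: for each $t_k \to \infty$ pick a minimizer $p_k \in \partial B(x_0, t_k)$ realizing $\dis(x, \partial B(x_0, t_k))$; a subsequence of minimizing geodesics $x_0 \to p_k$ converges to a ray $\sigma \in \Lambda$ attaining the supremum. On an Alexandrov space of curvature $\ge 0$, each Busemann function $h_\sigma$ is concave by Toponogov comparison (for any geodesic $\gamma$ the map $s \mapsto t - \dis(\gamma(s), \sigma(t))$ is concave up to $o(1)$, and concavity passes to the limit). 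Writing
$$\Omega_s \;=\; \bigcap_{\sigma \in \Lambda} \{\, x : h_\sigma(x) \le s \,\},$$
each factor is a sublevel set of a concave function, hence totally convex (with respect to quasi-geodesics) by \cite{PP1994}; total convexity is preserved under arbitrary intersections. Compactness follows from the standard observation that a ray $\tau: [0, \infty) \to \Omega_s$ would satisfy $h_\tau(\tau(r)) \ge r + h_\tau(\tau(0))$, forcing $f(\tau(r)) \to \infty$ and contradicting $\Omega_s \subset f^{-1}((-\infty, s])$.

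For (2), I would apply Perelman's theorem from \cite{Per1991}: for any totally convex compact subset $\Omega$ of an Alexandrov space of curvature $\ge 0$ with nonempty boundary, the inward-equidistant family $\Omega^{(r)} = \{x \in \Omega : \dis(x, \partial \Omega) \ge r\}$ remains totally convex for $r \in [0, \max\dis(\cdot, \partial \Omega)]$. Combined with the fact (again in \cite{Per1991}) that $\dis(\cdot, \partial \Omega)$ is concave on $\Omega$, induction on the chain $\Omega_{a_0} \supset \Omega_{a_1} \supset \cdots \supset \Omega_{a_m}$ using part (1) as the base case gives the stated convexity at each layer.

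For (3), I would construct the Perelman--Sharafutdinov semi-flow $\Psi_t$ on $X$ in $m+1$ successive stages. On $X \setminus \interior(\Omega_{a_0})$ use the gradient semi-flow of the concave function $f$ to reach $\Omega_{a_0}$ in finite time; on each annular layer $\Omega_{a_i} \setminus \interior(\Omega_{a_{i+1}})$ use the gradient semi-flow of the concave function $\dis(\cdot, \partial \Omega_{a_i})$ to reach $\Omega_{a_{i+1}}$. At each stage the flow is distance non-increasing by Petrunin's theorem on gradient flows of semi-concave functions on non-negatively curved Alexandrov spaces (\autoref{prop1.14} and the surrounding machinery of \cite{Petr2007}). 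Concatenating the $m+1$ stages produces the desired deformation retract of $X$ onto $N^k = \Omega_{a_m}$. The main obstacle is the non-smoothness of the intermediate boundaries $\partial \Omega_{a_i}$: at each interface the gradient vector fields need not be continuous, so one must verify that the first-variation inequality for quasi-geodesics crossing $\partial \Omega_{a_i}$ matches the one-sided derivatives coming from the adjacent layer, so that the concatenated semi-flow remains a continuous (and still distance non-increasing) retraction. This is the technical heart of \cite{Per1991} and is where the full Alexandrov-geometric machinery, rather than the classical smooth Cheeger--Gromoll argument, is essential.
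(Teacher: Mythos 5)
Your plan for parts (2) and (3) matches the paper's proof (both rest on Perelman's results in \cite{Per1991} on inward equidistant sets and concavity of $\dis(\cdot,\partial\Omega)$, and on the staged Perelman--Sharafutdinov flow), and for part (1) you take the classical Cheeger--Gromoll/Wu route through the representation \eqref{eq2.21}, whereas the paper proves total convexity of $\Omega_s$ directly: it picks nearest points $y_i\in\partial B(x_0,i)$ to an interior point $\sigma(c)$ of a quasi-geodesic, uses $\cos\alpha_i+\cos\beta_i\ge 0$ and the law of cosines to force $f(\sigma(b))\ge f(\sigma(c))$, a contradiction. Your alternative is viable in principle, but as written it has a sign error that invalidates the key step. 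With the convention of \eqref{eq2.22}, $h_\sigma(x)=\lim_{t\to\infty}[t-\dis(x,\sigma(t))]$ is \emph{convex} on a space of curvature $\ge 0$ (equivalently $\lim_t[\dis(x,\sigma(t))-t]$ is concave, because the Hessian-type upper bound $1/\dis(\cdot,q)$ for distance functions tends to $0$ as $q\to\infty$ along the ray); this is exactly how the paper uses Busemann functions in the proof of \autoref{prop2.3}, where $-\tilde h_{i,j}$ with $\tilde h_{i,j}=\lim(\dis-t)$ is stated to be convex. You assert instead that $h_\sigma$ is concave and then conclude that $\{h_\sigma\le s\}$ is totally convex because it is a sublevel set of a concave function; but sublevel sets of concave functions are not convex (their superlevel sets are), so that inference fails. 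With the correct sign the argument does go through: each $h_\sigma$ is convex along quasi-geodesics, $f=\sup_\sigma h_\sigma$ is convex, and $\Omega_s=\bigcap_\sigma\{h_\sigma\le s\}$ is totally convex as an intersection of sublevel sets of convex functions. A further small point in your compactness step: the ray $\tau$ you produce inside $\Omega_s$ need not issue from $x_0$, so \eqref{eq2.21} by itself does not give $f\ge h_\tau$; either extract a ray from $x_0$ inside $\Omega_{\hat c}$ (as the paper does in the discussion preceding the theorem) or note the elementary bound $f\ge h_\tau-\dis(x_0,\tau(0))$.

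The same confusion reappears in your step (3): you propose to flow along the gradient of ``the concave function $f$''. In fact $f$ is convex (and note that a supremum of concave functions would not be concave anyway, so your framework is internally inconsistent at this point); the flow actually used is the Perelman--Sharafutdinov flow of the semiconcave function $-f$, namely $\frac{d^+\psi}{dt}=\frac{\nabla(-f)}{|\nabla(-f)|^2}\big|_{\psi(t)}$, exactly as in the paper, and it is the concavity of $-f$ and of $r_{\partial\Omega_{a_i}}$ that makes the gradient-flow machinery of \cite{Petr2007} and \cite{KPT2009} applicable and yields the distance non-increasing ($1$-Lipschitz) property; \autoref{prop1.14} itself is a convergence statement, not the Lipschitz bound. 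Once these signs are corrected, your stages for (2) and (3) coincide with the paper's argument, and your remark about matching one-sided derivatives across the interfaces $\partial\Omega_{a_i}$ is a fair description of the technical content delegated to \cite{Per1991}.
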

\begin{proof}
(1) For $s\ge a_0$, we would like to show that
$\Omega_s=f^{-1}((-\infty,s])$ is totally convex. Suppose contrary,
there were a quasi-geodesic $\sigma:[a,b]\to X$ with
$$\max\{f(\sigma(a)),f(\sigma(b))\}\le s$$
and $c$ with $a<c<b$ and
$$f(\sigma(c))>s\ge\max\{f(\sigma(a)),f(\sigma(b))\}.$$
\begin{figure*}[ht]
\includegraphics[width=180pt]{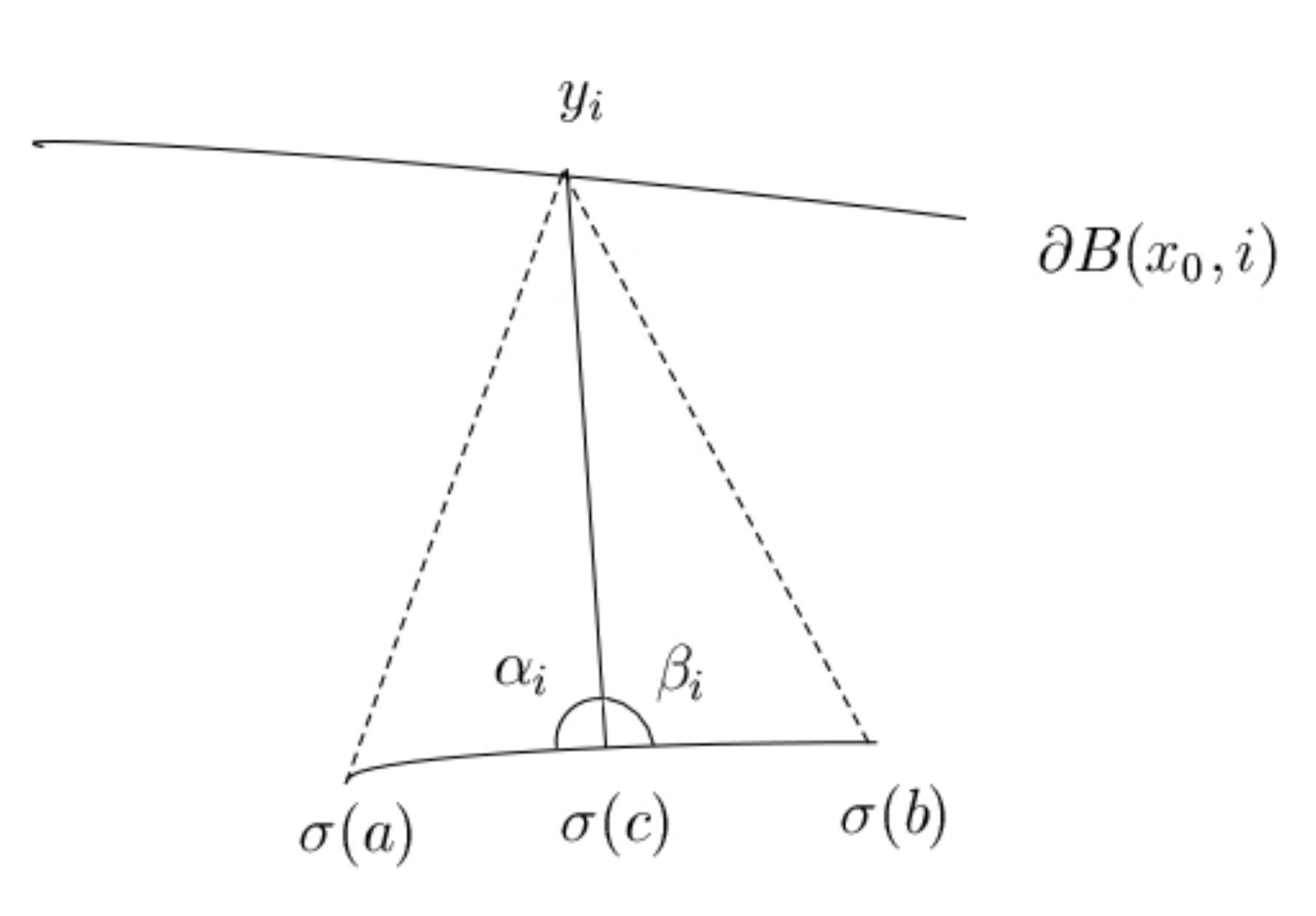}\\
\end{figure*}
For each integer $i\gg 1$, we choose $y_i\in\partial B(x_i,i)$ such
that
$$\dis(y_i,\sigma(c))=\dis(\partial B(x_0,i),\sigma(c)).$$
Let $\alpha_i=\measuredangle_{\sigma(c)}(y_i,\sigma(a))$ and
$\beta_i=\measuredangle_{\sigma(c)}(y_i,\sigma(b))$. Since $X$ has
non-negative curvature and $\sigma:[a,b]\to X$ is a quasi-geodesic,
it is well-known (\cite{Petr2007}) that
$$\cos\alpha_i + \cos\beta_i\ge 0.$$
It follows that
$$\min\{\alpha_i,\beta_i\}\le\frac{\pi}{2}.$$
After passing a sub-sequence and re-indexing, we may assume that
$$\beta_{i_j}\le\frac{\pi}{2}$$
for all $j\ge 1$. By law of cosine, we have
$$[\dis(y_{i_j},\sigma(b))]^2\le[\dis(\sigma(c),y_{i_j})]^2+|b-c|^2.$$
Therefore, we have
\begin{align*}
f(\sigma(b))&\ge\lim_{j\to+\infty}[i_j-\dis(\sigma(b),y_{i_j})]\\
&\ge \lim_{j\to+\infty}[i_j- \sqrt{[\dis(\sigma(c),y_{i_j})]^2+|b-c|^2}]\\
&=\lim_{j\to+\infty}\frac{i_j^2-[\dis(\sigma(c),y_{i_j})]^2
-|b-c|^2}{i_j+ \sqrt{[\dis(\sigma(c),y_{i_j})]^2+|b-c|^2}}\\
& = \lim_{j\to+\infty} [i_j - \dis( \sigma(c),y_{i_j} )]+ 0 \\
& = \lim_{j\to+\infty} [i_j - \dis( \sigma(c), \partial B(\hat x, i_j))] \\
&=f(\sigma(c))
\end{align*}
which is contracting to
$$f(\sigma(c))>f(\sigma(b)).$$
Hence, $\Omega_c$ is a totally convex subset of $X$.

(2) Perelman \cite{Per1991} showed that if $\Omega_c$ is a convex
subset of $X$ with non-empty boundary, then the distance function
$$r_{\partial\Omega_c}(x)=\dis(x,\partial\Omega_c)$$
is concave for $x\in \Omega_c$, (see \cite{Petr2007} and
\cite{CMD2009} as well).

(3) Because our function $(-f(x))$ and $r_{\partial\Omega_{a_i}}$
are concave in $ \Omega_{a_i}$, the corresponding semi-flows are
distance non-increasing, (see Chapter 6 of \cite{Per1991}, section 2
of \cite{KPT2009} or \cite{Petr2007}). Using the
Perelman-Sharafutdinov flow $\frac{d^+ \psi}{dt} = \frac{\nabla
(-f)}{|\nabla (-f)|^2}|_{\psi (t)} $, Perelman (cf. \cite{Per1991})
showed that $X$ is contractible to $\Omega_{a_0}$. Let $r_i:
\Omega_i \to \mathbb R$ be the distance function $r_i(z) = \dis(z,
\partial \Omega_{a_i})$ if $\partial \Omega_{a_i} \neq \emptyset$
for $i = 0, 1, \cdots , m-1$. For the same reasons, $\Omega_{a_0}$
is contractible to $\Omega_{a_1}$ via the Perelman-Sharafutdinov
flow $ \frac{d^+ \psi}{dt} = \frac{\nabla r_0}{|\nabla r_0
|^2}|_{\psi (t)} $. Using the $m$-step Perelman-Sharafutdinov flows,
we can see that the soul $N^k = \Omega_{a_m}$ is a deformation
retract of $X$.
\end{proof}

\begin{prop}\label{prop2.5}
Let $f(y) $ be a {\it convex} function on $Y$ with
$a_0=\inf_{w\in Y}\{ f(w)\} > - \infty $ and $A = f^{-1}( a_0)$ as
in the proofs of \autoref{prop2.3} and \autoref{thm2.4} above.
Suppose that $Y$ is an open and complete Alexandrov space with
non-negative curvature and $A' \subset A$ is a closed subset of $A$.
Then the distance function $r_{A'}(y) = \dis(y, A')$ from $A'$ has
no critical points in the complement $[Y - A]$ of $A$.
\end{prop}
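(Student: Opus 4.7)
The plan is to exploit the semi-concavity of $g:=-f$ together with the convexity of $f$ along geodesics in order to produce a regular direction for $r_{A'}$ at any $y_0\in Y\setminus A$. Since $g$ is concave on $Y$, its set of critical points coincides with its set of global maxima, which is exactly $A$; because $y_0\notin A$, the gradient $\vec{w}_0:=\nabla g|_{y_0}\in T^-_{y_0}Y$ is therefore nonzero.

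Next, for any minimizing geodesic $\sigma:[0,\ell]\to Y$ from $y_0$ to a point $z\in A'$ with $\ell=r_{A'}(y_0)$, convexity of $f\circ\sigma$ forces
\begin{equation*}
d_{y_0}f(\sigma'(0))\le\frac{a_0-f(y_0)}{\ell}<0,
\end{equation*}
and the defining gradient inequality $d_{y_0}g(\sigma'(0))\le\langle\vec{w}_0,\sigma'(0)\rangle$ yields the uniform estimate
\begin{equation*}
\cos\angle(\vec{w}_0,\sigma'(0))\ge\frac{f(y_0)-a_0}{\ell\,|\vec{w}_0|}>0.
\end{equation*}
Hence $\Uparrow_{y_0}^{A'}$ lies in a strict open hemisphere of $\Sigma_{y_0}$ centered at $\vec{w}_0/|\vec{w}_0|$, with an angular gap that is independent of the chosen minimizer.

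To convert this into regularity in the sense of \autoref{def1.4}, I would combine the first variation formula $d_{y}r_{A'}(\vec v)=-\cos\angle(\vec v,\Uparrow_y^{A'})$ (which identifies \autoref{def1.4} with the Grove--Shiohama non-criticality condition) with the Perelman--Sharafutdinov gradient semi-flow $\phi_t$ of $g$. By \autoref{thm2.4}(3) applied to $g$, this flow is distance non-increasing and fixes $A$ pointwise; hence $r_{A'}(\phi_t(y_0))\le r_{A'}(y_0)$, with strict decrease at $t=0$ following from the first variation and the angular estimate above. The uniform angular gap keeps $\Uparrow_{y_0}^{A'}$ positively separated from the antipode of $\vec{w}_0/|\vec{w}_0|$, so any direction $\vec v\in\Sigma_{y_0}$ near this antipode satisfies $\angle(\vec v,\vec\eta)>\pi/2$ for every $\vec\eta\in\Uparrow_{y_0}^{A'}$, giving $d_{y_0}r_{A'}(\vec v)>0$ and witnessing regularity.

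The main obstacle is producing the required $\vec v$ at a singular point $y_0$, where $\Sigma_{y_0}$ may have diameter strictly less than $\pi$ so that a true antipode of $\vec{w}_0/|\vec{w}_0|$ need not exist as an actual unit direction. I would handle this using Petrunin's quasi-geodesic extension theorem together with the total convexity of the sub-level sets $f^{-1}((-\infty,c])$ (itself a consequence of Petrunin's result that semi-convex functions on an Alexandrov space with curvature bounded below are convex along quasi-geodesics), which supplies a quasi-geodesic starting at $y_0$ opposite to $\vec{w}_0$ along which $f$ strictly increases; the non-compactness of $Y$ (open and complete) provides the room for such a quasi-geodesic to exist and furnishes the desired $\vec v$.
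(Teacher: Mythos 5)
Your first half is sound and parallels part of the paper's argument: convexity of $f$ along a minimizing geodesic to $A'$ gives $d_{y_0}f(\vec\eta)\le (a_0-f(y_0))/r_{A'}(y_0)<0$ for every $\vec\eta\in\Uparrow_{y_0}^{A'}$, and the defining inequality for $\nabla(-f)$ then places $\Uparrow_{y_0}^{A'}$ strictly inside the hemisphere around $\vec w_0$. The gap is in the step that actually certifies regularity, namely producing one direction $\vec v\in\Sigma_{y_0}$ with $\angle(\vec v,\vec\eta)>\pi/2$ for \emph{all} $\vec\eta\in\Uparrow_{y_0}^{A'}$. You take $\vec v$ ``near the antipode of $\vec w_0/|\vec w_0|$'', and your proposed repair does not supply it: Petrunin's extension theorem lets you prolong the minimizing geodesic from $A'$ past $y_0$ as a quasi-geodesic (this uses that $Y$ has no boundary, not non-compactness), and convexity of $f$ along quasi-geodesics then yields a direction $\vec\xi$ with $d_{y_0}f(\vec\xi)\ge (f(y_0)-a_0)/r_{A'}(y_0)>0$; but the theorem says nothing about the angle of $\vec\xi$ with $\vec w_0$ or with $\Uparrow_{y_0}^{A'}$, and at a singular point $\Sigma_{y_0}$ may a priori contain no direction at large angle from $\vec w_0$ at all. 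Nor does ``$f$ increases along $\vec\xi$'' by itself give the needed obtuse angles: already for a linear function on $\mathbb R^2$ one can have $df(\vec\xi)>0>df(\vec\eta)$ with $\angle(\vec\xi,\vec\eta)$ arbitrarily small. (The gradient-flow paragraph is also a detour: showing $r_{A'}$ decreases along the flow of $-f$ cannot witness regularity, since \autoref{def1.4} asks for a direction of increase.)

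The missing ingredient, and the device the paper uses, is Petrunin's supporting-vector inequality, i.e. the concavity of $d_{y_0}(-f)$ on the nonnegatively curved tangent cone, applied at the \emph{extremal} direction: let $\vec\xi_{\min}$ be a minimum direction of $d_{y_0}(-f)$ on $\Sigma_{y_0}$ (a steepest-ascent direction of $f$). The quasi-geodesic extension guarantees $d_{y_0}(-f)(\vec\xi_{\min})\le -\,(f(y_0)-a_0)/r_{A'}(y_0)<0$, so $\vec s=[-d_{y_0}(-f)(\vec\xi_{\min})]\,\vec\xi_{\min}\ne 0$, and the supporting inequality $d_{y_0}(-f)(\vec u)\le -\langle\vec s,\vec u\rangle$ for all $\vec u$, combined with $d_{y_0}(-f)(\vec w)\ge (f(y_0)-a_0)/r_{A'}(y_0)>0$ for $\vec w\in\Uparrow_{y_0}^{A'}$, forces $\angle(\vec s,\vec w)>\pi/2$ for every such $\vec w$; the first variation formula then gives $d_{y_0}r_{A'}(\vec s)>0$. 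It is essential that one works with the steepest-ascent direction (which exists by compactness of $\Sigma_{y_0}$), not with an assumed near-antipode of $\vec w_0$ and not with an arbitrary ascent direction; once this lemma is inserted, your hemisphere estimate is no longer needed.
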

\begin{proof}
 For each $y \notin A$
and $a = f(y) > a_0$, we observe that $A \subset \interior
(\Omega_a) = f^{-1}( ( -\infty, a))$. Let $\sigma: [0, \ell] \to Y$ be a length-minimizing
geodesic segment of unit speed from $A'$ to $y$ with $\sigma(0) \in A'$ and $\sigma(\ell) =
y$. Since $Y$ has no boundary, any geodesic $\sigma$ can be extended
to a longer quasi-geodesic of unit speed $\tilde \sigma: [0, \ell +
\varepsilon] \to Y$, (see \cite{Petr2007}). Since $f$ is convex, the
composition of function $t \to f( \tilde \sigma (t))$ remains
convex for any quasi-geodesics $ \tilde \sigma (t) $ (see \cite{Petr2007}). It follows that
$$
     \frac{d^+ (f \circ \tilde \sigma)}{dt} (\ell)
     \ge \frac{a - a_0}{\ell} > 0.
$$
Let us consider a minimum direction $\vec \xi_{min} \in \Sigma_y(Y)$
of $d_y (-f)$ and $ \vec s = [ - d_y(-f)( \vec \xi_{min})] \vec
\xi_{min} $, where we used the fact that
$$
[ - d_y(-f)( \vec \xi_{min})] \ge  \frac{d^+ (f \circ \tilde \sigma)}{dt} (\ell) > 0.
$$
Hence we have $ \vec s = [ - d_y(-f)( \vec \xi_{min})] \vec\xi_{min}
\neq 0 $. The vector $\vec s$ is called a support vector of $d_y(-f)$.
 For any support vector $\vec s$,
one has (cf. \cite{Petr2007} page 143) that inequality
$$
  d_y( -f) ( \vec u) \le - \langle \vec s, \vec u \rangle
$$
holds for all $\vec u \in \Sigma_y(Y)$, where $(-f)$ is a semi-concave
function.
Let $\Omega_c = f^{-1}((-\infty, c])$. When $f$ is convex, one has
$$
f( \tilde \sigma(t) ) \le \max \{f( \tilde \sigma(a) ), f( \tilde
\sigma(b) ) \}
$$
for any quasi-geodesic $\tilde \sigma: [ a, b] \to Y$ and $t \in [a,
b]$. Thus, $\Omega_c$ is totally convex. It follows that, for any
direction $\vec w \in \Uparrow_y^{A'}$, we have $\vec w \in T_y(
\Omega_c)$. Moreover, we have
$$
d_y(-f) (\vec w) \ge \frac{ - a_0 - (-a) }{ \ell } > 0
$$
for all $\vec w \in \Uparrow_y^{A'}$, since $(-f)$ is a concave
function. Because $\vec s$ is a support vector of $d_y(-f)$, one also has
$$
   0 < d_y(-f) (\vec w) \le - \langle \vec s, \vec w \rangle
$$
holds for $\vec w \in \Uparrow_y^{A'}$, (cf. \cite{Petr2007} page 143).
Thus, we have
$$
\angle_y( \vec w, \vec s) > \frac{\pi}{2}
$$
for all $\vec w \in \Uparrow_y^{A'}$. It follows that $d_y(r_{A'})(
\vec s) > 0$ and $y $ is not a critical point of the distance
function $r_{A'}(.) $, where $y \notin A = f^{-1}(a_0)$, (compare
with \cite{Grv1993}). This completes the proof of \autoref{prop2.5}.
\end{proof}

\begin{figure*}[ht]
\includegraphics[width=100pt]{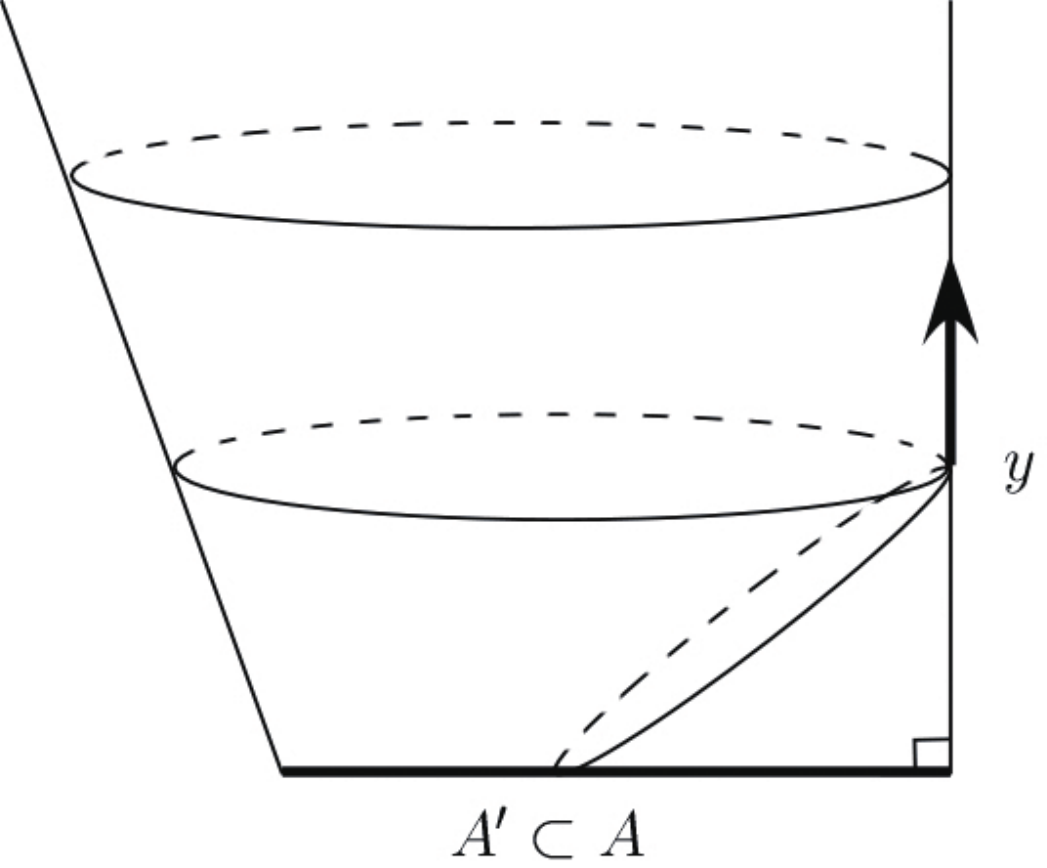}\\
\caption{The minimum set $A$ of a convex function.}\label{fig2_4}
\end{figure*}

Using soul theory and splitting theorem, we can classify
non-negatively curved surfaces with possibly singular metrics.

\begin{theorem}\label{thm2.6}
Let $X^2$ be an oriented, complete and open surface of non-negative
curvature. Then $X^2$ is either homeomorphic to $\mathbb R^2$ or
isometric to a flat cylinder.
\end{theorem}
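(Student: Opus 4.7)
The plan is to apply the Cheeger--Gromoll--Perelman soul theory (\autoref{thm2.4}) to $X^2$ and analyze the possible dimensions of the soul. Since $X^2$ is an open (i.e., non-compact) complete Alexandrov space of non-negative curvature, \autoref{thm2.4} produces a soul $N^k \subset X^2$ with $0 \le k \le \dim(X^2) = 2$. The case $k=2$ is impossible, since then $N^2$ would be a compact totally convex subset without boundary containing all of $X^2$ (via the deformation retraction in \autoref{thm2.4}(3)), contradicting non-compactness. So the analysis splits into $k=0$ and $k=1$.

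First I would handle the case $k=0$. Here the soul is a single point $p_0$, and \autoref{thm2.4}(3) provides a deformation retraction of $X^2$ onto $\{p_0\}$ via the multi-step Perelman--Sharafutdinov flow, so $X^2$ is contractible. Since a $2$-dimensional Alexandrov space of curvature $\ge 0$ is a topological $2$-manifold (a standard fact from \cite{BGP1992}, already invoked in \autoref{section1}), the classification of non-compact oriented surfaces without boundary yields $X^2 \cong \mathbb R^2$.

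Next I would treat $k=1$. Since the soul $N^1$ is a compact, totally convex, $1$-dimensional Alexandrov space without boundary, it must be isometric to a circle $S^1_\ell$ of some length $\ell > 0$, and the inclusion $N^1 \hookrightarrow X^2$ realizes $N^1$ as a simple closed geodesic $\gamma$. Pass to the universal cover $\pi: \widetilde{X}^2 \to X^2$. Because $\gamma$ is a closed geodesic representing an element of infinite order in $\pi_1(X^2)$ (the soul of a non-compact surface cannot have finite fundamental group image), the lift $\widetilde{\gamma} \subset \widetilde{X}^2$ is an infinite geodesic line. Since $\widetilde{X}^2$ inherits non-negative curvature and has no boundary, the Toponogov splitting theorem for Alexandrov spaces gives an isometric splitting $\widetilde{X}^2 = \mathbb R \times Y^1$. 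Dimension count and simple connectivity force $Y^1 \cong \mathbb R$, so $\widetilde{X}^2 \cong \mathbb R^2$ flat. Then $X^2 = \mathbb R^2 / \Gamma$ for a group $\Gamma$ of orientation-preserving flat isometries acting freely and properly discontinuously, and the presence of the compact soul $S^1$ together with non-compactness of $X^2$ forces $\Gamma \cong \mathbb Z$ generated by a single translation. Hence $X^2$ is isometric to a flat cylinder $S^1_\ell \times \mathbb R$.

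The main obstacle is the $k=1$ case, specifically justifying the splitting step in the possibly singular setting: one must verify that the soul $\gamma$, which is constructed abstractly as a totally convex subset, really lifts to a genuine geodesic line and that the Toponogov splitting theorem applies to $\widetilde{X}^2$ as a (possibly singular) $2$-dimensional Alexandrov space, together with the identification of $\Gamma$ from the requirement that $\gamma$ be a soul. The $k=0$ case is straightforward once one invokes manifold-ness of $2$-dimensional Alexandrov spaces and the classification of surfaces.
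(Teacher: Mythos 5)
Your overall strategy coincides with the paper's: apply the soul construction of \autoref{thm2.4}, split into the cases of a point soul and a circle soul, and in the circle case pass to the universal cover and invoke the splitting theorem. The point-soul case and the final identification of $X^2$ as $\mathbb{R}^2/\Gamma$ with $\Gamma\cong\mathbb{Z}$ generated by a translation are fine.

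The gap sits exactly at the step you flag as the main obstacle, and the justification you give in the body of the argument does not close it. You assert that because the soul circle $\gamma$ represents an element of infinite order in $\pi_1(X^2)$, its lift $\tilde\gamma\subset\tilde X^2$ is a geodesic line. Infinite order only guarantees that $\tilde\gamma$ is a complete, non-closed local geodesic; a line in the sense of the splitting theorem must be globally length-minimizing between any two of its points, and that does not follow from the homotopy-theoretic statement. (In nonnegative curvature a closed geodesic of infinite order need not lift to a line: on $S^2\times S^1$ a geodesic winding simultaneously around a great circle and around the $S^1$ factor has infinite order in $\pi_1$, yet its lift to $S^2\times\mathbb{R}$ is a helix, not a minimizing line. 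Surfaces admit no such example only as a consequence of the very theorem being proved, so you cannot appeal to that.) What makes the step work --- and what the paper's proof actually uses --- is that $\gamma$ is not an arbitrary closed geodesic but the soul: the multi-step Perelman--Sharafutdinov retraction $P:X^2\to N^1$ of \autoref{thm2.4}(3) is distance non-increasing, it lifts to a distance non-increasing retraction $\tilde P:\tilde X^2\to\tilde\gamma$ which fixes $\tilde\gamma$ pointwise, and hence $\dis(\tilde\gamma(s),\tilde\gamma(t))\ge|s-t|$, so $\tilde\gamma$ is minimizing and is a line. Once this ingredient is inserted, your remaining steps --- the splitting $\tilde X^2\cong\mathbb{R}\times Y^1$, the conclusion $Y^1\cong\mathbb{R}$ from simple connectivity, and the deck-group analysis giving a flat cylinder --- go through as in the paper.
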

\begin{proof}
It is known that $X^2$ is a manifold. Let $N^s = \Omega_{a_m}$ be a
soul of $X^2$. If the soul $N^s = \Omega_{a_m}$ is a single point,
then $X^2$ is homeomorphic to $\mathbb R^2$. When $N^s =
\Omega_{a_m}$ has dimension $1$, then $N^1 = \Omega_{a_m}$ is
isometric to embedded closed geodesic $\sigma: S^1 \to X^2$, (i.e.,
$N^1=\sigma(S^1)$).

Let $\tilde X^2$ be the universal cover of $X^2$ with lifted metric
and $\tilde{\sigma}: \mathbb R\to \tilde X^2$ be a lift of
$N^1 = \Omega_{a_m}$ in $X^2$. We observe that
\begin{diagram}
\tilde X^2 &\rTo^{\tilde P}&\tilde \sigma\\
\dTo &&\dTo\\
X^2&\rTo^{P}& N^1
\end{diagram}
Suppose that $P: X^2\to N^1$ is the Perelman-Sharafutdinov distance
non-increasing projection from open space $X^2$ to its soul $N^1$.
Such a distance non-increasing map $P: X^2\to N^1$ can be lifted to
a distance non-increasing map $\tilde P: \tilde X^2\to \tilde
\sigma$. Thus $\tilde \sigma: \mathbb R\to \tilde X^2$ is a line in
an open surface $\tilde X^2$ of non-negative curvature. Applying the
splitting theorem, we see that $\tilde X^2$ is isometric to $\mathbb
R^2$. It follows that $X^2$ is a flat cylinder.
\end{proof}

Let us now turn our attention to closed surfaces of curvature.

\begin{cor}\label{cor2.7}
Let $X^2$ be a closed $2$-dimensional Alexandrov space of
non-negative curvature. Then the following holds:

{\rm (1)} If the fundamental group $\pi_1(X^2)$ is finite, then
$X^2$ is homeomorphic to $S^2$ or $\mathbb{RP}^2$.

{\rm (2)} If the fundamental group $\pi_1(X^2)$ is an infinite
group, then $X^2$ is isometric to a flat tours or flat Klein bottle.
\end{cor}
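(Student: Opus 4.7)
The plan is to pass to the universal cover $\tilde X^2$ with the lifted metric and reduce the statement to the classification of closed $2$-manifolds together with \autoref{thm2.6} and a splitting theorem. Since $\dim X^2 = 2$, the Alexandrov space $X^2$ is automatically a topological $2$-manifold (\cite{BGP1992}), so $\tilde X^2$ is a complete, simply-connected, non-negatively curved $2$-dimensional Alexandrov space which is topologically a surface.

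For part (1), if $\pi_1(X^2)$ is finite then $\tilde X^2$ is a compact, simply-connected topological $2$-manifold, so by the classification of closed surfaces $\tilde X^2 \cong S^2$. Writing $X^2 = S^2/G$ for the free finite-group action $G = \pi_1(X^2)$, the Euler characteristic relation $\chi(X^2) = 2/|G|$ forces $|G| \in \{1,2\}$, yielding $X^2 \cong S^2$ or $X^2 \cong \mathbb{RP}^2$ respectively.

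For part (2), if $\pi_1(X^2)$ is infinite then $\tilde X^2$ is non-compact, and \autoref{thm2.6} together with simple-connectivity forces $\tilde X^2$ to be homeomorphic to $\mathbb R^2$ (the flat cylinder case is ruled out). I next build a line in $\tilde X^2$ by a Cheeger--Gromoll-type argument: fix $\tilde p_0$, and pick $g_n \in \pi_1(X^2)$ with $L_n := \dis(\tilde p_0, g_n \tilde p_0) \to \infty$ (these exist because the isometric cocompact action of an infinite group has orbits of unbounded diameter). Choose minimizing segments $\sigma_n$ from $\tilde p_0$ to $g_n \tilde p_0$; using cocompactness, translate $\sigma_n$ by suitable $h_n \in \pi_1(X^2)$ so that each midpoint lands in a fixed compact fundamental domain; then extract, via Arzel\`a--Ascoli, a limit geodesic $\tilde\ell : \mathbb R \to \tilde X^2$ which is minimizing on every finite subinterval, i.e., a line.

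Applying the Cheeger--Gromoll splitting theorem for non-negatively curved Alexandrov spaces, $\tilde X^2$ splits isometrically as $\mathbb R \times Z$ with $\dim Z = 1$; since $\tilde X^2$ is simply connected and without boundary, $Z \cong \mathbb R$, and $\tilde X^2$ is isometric to flat $\mathbb R^2$. Consequently $X^2$ is a closed flat $2$-manifold and the Bieberbach classification identifies it as a flat torus or flat Klein bottle. The main obstacle is the line construction in $\tilde X^2$: the underlying idea is classical, but one must verify it carefully in the Alexandrov setting, leveraging the cocompact $\pi_1$-action and subsequential limits of minimizing segments; the Cheeger--Gromoll splitting itself is standard for non-negatively curved Alexandrov spaces and may be cited.
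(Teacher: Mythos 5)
Your proposal is correct, and for the infinite fundamental group case it follows a genuinely different route from the paper. The paper first passes to the oriented double cover, then picks a free homotopy class $[\hat\sigma]$ with $[\hat\sigma^n]\ne 0$ for all $n\ne 0$, takes a length-minimizing closed geodesic in that class, lifts it to a geodesic line in $\tilde X^2$, and applies the splitting theorem to get flat $\mathbb R^2$, concluding flat torus in the oriented case (Klein bottle via the double cover); this mirrors the closed-geodesic-lifting argument already used in \autoref{thm2.6} and \autoref{thm2.11}, so it is short in context, but it leans on the existence of a non-torsion class and on the fact that the lift of such a shortest closed geodesic is minimizing. You instead manufacture the line directly in the universal cover by the classical Cheeger--Gromoll/\v{S}varc--Milnor device: unboundedness of the deck-group orbit gives minimizing segments of length $L_n\to\infty$, translating their midpoints into a fixed compact set and applying Arzel\`a--Ascoli in the proper space $\tilde X^2$ yields a line, after which the Alexandrov splitting theorem gives $\tilde X^2\cong\mathbb R\times Z$ with $Z\cong\mathbb R$ (no boundary, simply connected), hence flat $\mathbb R^2$, and Bieberbach identifies the quotient as a flat torus or flat Klein bottle in one stroke, with no orientability reduction and no appeal to a distinguished free homotopy class. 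In case (1) both arguments are essentially the same ($\tilde X^2\cong S^2$), though your Euler characteristic count $2=|G|\,\chi(X^2)$ makes explicit what the paper leaves implicit. The trade-off: the paper's route reuses machinery it has already set up, while yours is more self-contained on the group-theoretic side at the cost of verifying the (standard) properness and limit-of-minimizers details in the Alexandrov setting, which you have correctly flagged and which do go through since $\tilde X^2$ is a complete, locally compact length space with curvature $\ge 0$ on which the deck group acts properly discontinuously by isometries.
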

\begin{proof}
After passing through to its double when needed, we may assume that
$X^2$ is oriented.

When $|\pi_1(X^2)|<\infty$, $X^2$ is covered by $S^2$.

When $|\pi_1(X^2)|=+\infty$ and $X^2$ is oriented, for a non-trivial
free homotopy class of a closed curve $[\hat{\sigma}]\ne 0$ in
$\pi_1(X^2)$ with $[\hat{\sigma}^n]\ne 0$ for all $n\ne 0$, we
choose a length minimizing closed geodesic $\sigma: S^1\to X^2$.
Suppose that $\tilde X^2$ is a universal cover of $X^2$ and $\tilde
\sigma: \mathbb R\to \tilde X^2$ is a lift of $\sigma$ in $X^2$.
Then we can check that $\tilde \sigma$ is a geodesic line of $\tilde
X^2$. Thus, $\tilde X^2$ is isometric to $\mathbb R^2$. It follows
that $X^2$ is isometric to a flat torus, whenever $X^2$ is oriented
with $|\pi_1(X^2)|=+\infty$.
\end{proof}

\begin{example}\label{ex2.9}
When $X^2$ is an open surface of non-negative curvature, it might
happen that $\Omega_{a_0}$ is an interval. For instance, let $\hat
Y^2=[0,1]\times [0,+\infty)$ be a flat half-strip in $\mathbb R^2$.
If we take two copies of $\hat Y^2$ and glue them along the
boundary, the resulting surface $X^2= {\rm dbl}(\hat Y^2)$ is
homeomorphic to $\mathbb R^2$. A result of Petrunin implies that
$X^2= {\rm dbl}(\hat Y^2)$ still have non-negative curvature (e.g.,
\cite{Petr2007} or \cite{BBI2001}). In this case, we have
$\Omega_{a_0}$ is an interval. Of course, the soul $N^s =
\Omega_{a_1}$ of $X^2$ is a single point.
\end{example}

We now say a few words for non-negatively curved surfaces $X^2$ with
non-empty convex boundary. By definition of surface $X^2$ with
curvature $\ge k$, its possibly non-empty boundary $\partial X^2$
must be convex.

\begin{cor}\label{cor2.9}
Let $X^2$ be a surface with non-negative curvature and non-empty
boundary. Then

{\rm (1)} If $X^2$ is compact, then $X^2$ is either homeomorphic to
$D^2$ or isometric to $S^1\times [0,l]$ or a flat M\"{o}bius band;

{\rm (2)} If $X^2$ is non-compact and oriented, then $X^2$ is either
homeomorphic to $[0,+\infty)\times \mathbb R$ or isometric to one of
three types: $S^1\times [0,+\infty)$, a half flat strip or
$[0,l]\times (-\infty,+\infty)$.
\end{cor}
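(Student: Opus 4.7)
The strategy is to reduce the classification to the boundary-free results \autoref{thm2.6} and \autoref{cor2.7} by the metric doubling construction. Set $\widehat X^2 := \dbl(X^2)$, obtained by isometrically gluing two copies of $X^2$ along the convex boundary $\partial X^2$. By Petrunin's doubling theorem (cf.\ \autoref{ex2.9} and \cite{Petr2007, BBI2001}), $\widehat X^2$ is an Alexandrov surface of non-negative curvature without boundary, and the canonical swap of the two copies is an isometric involution $\tau : \widehat X^2 \to \widehat X^2$ with $\fix(\tau) = \partial X^2$. In particular $X^2 = \widehat X^2 / \langle \tau \rangle$, and since $\partial X^2 \neq \varnothing$, the fixed locus of $\tau$ is one-dimensional.

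For part (1), $\widehat X^2$ is closed, so \autoref{cor2.7} leaves four possibilities. A direct classification of isometric involutions with one-dimensional fixed set in each case yields the desired list: when $\widehat X^2 \approx S^2$ or $\mathbb{RP}^2$ the quotient satisfies $X^2 \approx D^2$; when $\widehat X^2$ is a flat torus, the quotient is isometric to a flat annulus $S^1 \times [0, l]$; and when $\widehat X^2$ is a flat Klein bottle, the quotient is isometric to a flat M\"obius band.

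For part (2), the hypothesis that $X^2$ is oriented implies that $\widehat X^2$ is orientable, and applying \autoref{thm2.6} gives two cases. If $\widehat X^2$ is isometric to a flat cylinder $S^1_c \times \mathbb R$, then the orientation-reversing isometric involutions with one-dimensional fixed locus split, up to isometric conjugation, into reflection in a horizontal slice (yielding $X^2 \cong S^1 \times [0, +\infty)$ isometrically) and reflection across a pair of parallel vertical lines (yielding $X^2 \cong [0, l] \times \mathbb R$ isometrically). If instead $\widehat X^2$ is only homeomorphic to $\mathbb R^2$, the quotient $X^2$ is homeomorphic to the closed half-plane $[0, +\infty) \times \mathbb R$. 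The half flat strip $[0, l] \times [0, +\infty)$ arises as an isometric refinement of the last case, detected by running the Perelman-Sharafutdinov semi-flow of the concave function $\dis(\cdot, \partial X^2)$ on $X^2$ (whose concavity comes from the convexity of $\partial X^2$, cf.\ \autoref{thm2.4}) and invoking the splitting theorem on $\widehat X^2$ when this flow produces a line transverse to the reflection axis.

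The main obstacle is the case-by-case classification of isometric involutions on the flat models ($S^2$, $\mathbb{RP}^2$, flat torus, flat Klein bottle, $\mathbb R^2$, flat cylinder), each individually elementary but requiring enough care to match the three named isometric types exactly. A subsidiary difficulty is recognizing the half flat strip as a distinct isometric rigidity beyond the topological closed half-plane case; this is handled by combining soul-type arguments for $X^2$ with boundary (via the concave function $\dis(\cdot, \partial X^2)$) with the splitting theorem for Alexandrov spaces of non-negative curvature applied to $\widehat X^2$.
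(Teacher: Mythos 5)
Your proposal follows essentially the same route as the paper: double $X^2$ along its convex boundary, invoke Petrunin's doubling theorem to get a boundary-free non-negatively curved surface, apply \autoref{thm2.6} and \autoref{cor2.7} to the double, and recover $X^2$ as the quotient by (equivalently, one half under) the reflection involution, with the half flat strip singled out inside the case where the double is only homeomorphic to $\mathbb R^2$. Your extra detail on classifying the involutions is fine (note only that the $\mathbb{RP}^2$ case is vacuous, since a double has even Euler characteristic), so this is correct and matches the paper's argument.
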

\begin{proof}
If we take two copies of $X^2$ and glue them together along their
boundaries, the resulting surface ${\rm dbl}(X^2)$ still has
curvature $\ge 0$, due to a result of Petrunin \cite{Petr2007}.
Clearly, ${\rm dbl}(X^2)$ has no boundary.

(1) When ${\rm dbl}(X^2)$ is compact and oriented, then ${\rm
dbl}(X^2)$ is homeomorphic to the unit $2$-sphere or is isometric to
a flat strip. Hence, $X^2$ is either homeomorphic to $D^2$ or
isometric to $S^1\times [0,l]$ or a flat M\"{o}bius band.

(2) When ${\rm dbl}(X^2)$ is non-compact, then ${\rm dbl}(X^2)$ is
homeomorphic to $\mathbb R^2$ or isometric to $S^1\times \mathbb R$
or $X^2$ is isometric to $[0, \ell] \times [0, \infty)$.

To verify this assertion, we consider the soul $N^s$ of ${\rm
dbl}(X^2)$. If $N^s$ is a circle, then ${\rm dbl}(X^2)$ is isometric
to an infinite flat cylinder: $ S^1(r) \times \mathbb R$. If the
soul $N^s$ is a point, then ${\rm dbl}(X^2)$ is homeomorphic to
$\mathbb R^2$.

There is a special case which we need to single out: $X^2$ is
isometric to $[0, \ell] \times [0, \infty)$. We will elaborate this
special case in \autoref{section5} below.
\end{proof}

\begin{remark}\label{remark2.11} In \autoref{section5} below, we will
estimate the number of extremal points, i.e. essential
singularities, on surfaces with non-negative curvature, using
multi-step Perelman-Sharafutdinov flows associated with the
Cheeger-Gromoll convex exhaustion.
\end{remark}

Finally, we would like to classify all non-negatively curved open
$3$-manifolds with possibly singular metrics.

\begin{theorem}[\cite{SY2000}]\label{thm2.11}
Let $Y^3_{\infty}$ be an open complete $3$-manifold with a possibly
singular metric of non-negative curvature. Suppose that
$Y^3_{\infty}$ is oriented and $N^s$ is a soul of $Y^3_\infty$. Then
the following is true.
\begin{enumerate}[{\rm (1)}]
\item
When $\dim(N^s)= 1$, then the soul of $Y^3_{\infty}$ is isometric to
a circle. Moreover, its universal cover $\tilde Y^3_{\infty}$ is
isometric to $\tilde X^2\times \mathbb R$, where $\tilde X^2$ is
homeomorphic to $\Bbb R^2$;
\item
When $\dim(N^s) = 2$, then the soul of $Y^3_{\infty}$ is
homeomorphic to $S^2/\Gamma$ or $T^2/\Gamma$. Furthermore,
$Y^3_{\infty}$ is isometric to one of four spaces: $S^2 \times
\mathbb R$, $\mathbb R P^2 \ltimes \mathbb R = (S^2 \times \mathbb
R)/ \mathbb Z_2$, $T^2 \times \mathbb R$ or $K^2 \ltimes \mathbb R =
(T^2 \times \mathbb R)/\mathbb Z_2$, where $K^2$ is the flat Klein
bottle and $\mathbb R P^2 \ltimes \mathbb R$ is homeomorphic to $
[\mathbb {RP}^3 - \bar{B}^3(x_0, \varepsilon)]$;

\item
When $\dim(N^s) = 0$, then the soul of $Y^3_{\infty}$ is a single
point and $Y^3_{\infty}$ must be homeomorphic to $\mathbb R^3$.

\end{enumerate}
\end{theorem}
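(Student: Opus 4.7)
The plan is to treat the three cases according to the dimension of the soul $N^s$ in turn, combining Theorem 2.4 (the Perelman-Sharafutdinov retraction), the Toponogov splitting theorem for Alexandrov spaces, and the surface classifications of Theorem 2.6 and Corollary 2.7.

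\smallskip
\noindent\emph{Case} $\dim(N^s)=1$. Such a soul is a $1$-dimensional non-negatively curved compact Alexandrov space without boundary, hence isometric to a round circle $S^1(r)$. Lift to the universal cover $\tilde Y^3_\infty$: the soul $N^s$ lifts to a complete geodesic line $\tilde\sigma\colon\mathbb R\to\tilde Y^3_\infty$, and the lifted multi-step Perelman-Sharafutdinov retraction $\tilde P\colon\tilde Y^3_\infty\to\tilde\sigma(\mathbb R)$ is distance non-increasing, confirming that $\tilde\sigma$ is an actual line in a non-negatively curved Alexandrov space. The splitting theorem then gives $\tilde Y^3_\infty=\tilde X^2\times\mathbb R$ isometrically; since $\tilde Y^3_\infty$ is simply connected so is $\tilde X^2$, and Theorem 2.6 forces $\tilde X^2$ to be homeomorphic to $\mathbb R^2$ (the flat cylinder alternative is ruled out by simple connectivity).

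\smallskip
\noindent\emph{Case} $\dim(N^s)=2$. Corollary 2.7 shows that $N^s$ is homeomorphic to $S^2/\Gamma$ or $T^2/\Gamma$. Pass to the universal cover $\tilde Y^3_\infty$: the preimage $\tilde N$ of $N^s$ is a simply-connected closed non-negatively curved surface, hence $\tilde N=S^2$ or $\tilde N$ is a flat $\mathbb R^2$, and it remains a soul of $\tilde Y^3_\infty$. Since $Y^3_\infty$ is oriented, $\tilde N$ is a two-sided codimension-one totally convex subset, so $\tilde Y^3_\infty\setminus\tilde N$ has two non-compact components. Using the concavity of $r_{\tilde N}$ from the proof of Theorem 2.4, one finds perpendicular rays from each $p\in\tilde N$ into each component; pairing them at a common foot produces a line through every point of $\tilde N$. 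The splitting theorem then yields the isometric splitting $\tilde Y^3_\infty=\tilde N\times\mathbb R$ (one checks that the $2$-dimensional factor must coincide with $\tilde N$ because the soul of a non-compact $2$-dimensional non-negatively curved space has dimension at most one unless the space itself is the soul). The deck group $\pi_1(Y^3_\infty)\cong\pi_1(N^s)$ acts by isometries preserving the $\mathbb R$-factor, thus as identity or reflection on $\mathbb R$; enumerating the four possibilities for $\tilde N$ mod $\pi_1(N^s)$ gives precisely $S^2\times\mathbb R$, $\mathbb{RP}^2\ltimes\mathbb R$, $T^2\times\mathbb R$, and $K^2\ltimes\mathbb R$.

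\smallskip
\noindent\emph{Case} $\dim(N^s)=0$. Write $N^s=\{p\}$. Applying Proposition 2.5 iteratively through the Cheeger-Gromoll hierarchy $a_0>a_1>\cdots>a_m$ of Theorem 2.4, one concludes that the distance function $r_p(x)=\dis(x,p)$ has no critical point on $Y^3_\infty\setminus\{p\}$. Perelman's fibration theorem (Theorem 1.2) applied to the proper admissible map $r_p\colon Y^3_\infty\setminus\{p\}\to(0,\infty)$ yields a locally trivial fiber bundle with fiber $\Sigma_p(Y^3_\infty)$; triviality over the contractible base $(0,\infty)$ together with the local cone homeomorphism $B_{Y^3_\infty}(p,\varepsilon)\approx T_p^-(Y^3_\infty)$ of Proposition 1.15 identifies $Y^3_\infty$ with the open cone over $\Sigma_p$. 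Since $Y^3_\infty$ is a topological $3$-manifold, $\Sigma_p$ is a topological $2$-sphere, and hence $Y^3_\infty$ is homeomorphic to $\mathbb R^3$.

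\smallskip
The main obstacle is Case $\dim(N^s)=2$: one must upgrade the topological classification of the soul to a global \emph{isometric} splitting in the possibly singular Alexandrov setting. The delicate points are constructing genuine perpendicular lines at every point of $\tilde N$, including points that may a priori lie in extremal subsets, and verifying that $\pi_1(N^s)$ acts as an isometric action on $\tilde N$ combined with the identity or a reflection on the $\mathbb R$-factor; both rely on the splitting theorem, on the concavity of $r_{\tilde N}$, and on the distance non-increasing property of the Perelman-Sharafutdinov flow.
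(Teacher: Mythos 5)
Your cases (1) and (2) run along essentially the same lines as the paper (the paper argues directly in $Y^3_\infty$ and passes to a double cover rather than the universal cover, and it carries out the key tangent-cone computation $\ell_\pm=\tfrac{\pi}{2}$ at points of the soul via a comparison argument in $\Sigma_x$ together with the first variation formula -- the step you only sketch but correctly flag as the delicate one). The problem is case (3).

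There you claim that ``applying Proposition 2.5 iteratively through the Cheeger--Gromoll hierarchy'' shows $r_p$ has no critical point on $Y^3_\infty\setminus\{p\}$. Proposition 2.5 does not give this: it only asserts that $r_{A'}$ has no critical points in the complement of $A=f^{-1}(a_0)$, the \emph{full minimum set of the first convex function}, not in the complement of $A'$. When the soul is a point but the hierarchy has more than one step -- i.e.\ $A=f^{-1}(a_0)$ is a geodesic segment or a two-dimensional totally convex disk -- the proposition says nothing about points of $A\setminus\{p\}$, and iterating it inside $A$ only controls distance functions of the compact space $A$, not of $Y^3_\infty$. In fact critical points of $r_p$ in $A\setminus\{p\}$ do occur in the singular setting: any point whose space of directions in $Y^3_\infty$ has diameter at most $\tfrac{\pi}{2}$ is critical for \emph{every} distance function, and such points typically sit on $A$ (think of the corner points of ${\dbl}\bigl([0,1]^2\times[0,\infty)\bigr)$, the three-dimensional analogue of Example~\ref{ex2.9}, where the soul is the center of the square $A=f^{-1}(a_0)$). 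So the cone-off argument ``no critical points $\Rightarrow$ $Y^3_\infty$ is the cone over $\Sigma_p\cong S^2$'' is only valid when $A=\{p\}$, which is exactly the paper's Subcase~3.0.

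This is why the paper's proof of case (3) splits into Subcases 3.0--3.2 and spends most of its effort showing that a tubular neighborhood $U_\varepsilon(A)$ is homeomorphic to $D^3$ when $A$ is a segment (a $D^2$-fibration over the segment via $r_{\sigma(0)}$, capped by ball neighborhoods of the endpoints) or a $2$-disk (a local isometric splitting $V=\interior(A_0)\times\mathbb R$ over the interior, plus an analysis of the finitely many extremal points of $\dbl(A_0)$ on $\partial A_0$ yielding a solid-torus neighborhood of $\partial A_0$); only after that does it apply Perelman's fibration theorem to $r_A$ on $Y^3_\infty\setminus U_{\varepsilon/2}(A)$, where Proposition 2.5 legitimately applies, to conclude $Y^3_\infty\cong D^3\cup(S^2\times\mathbb R)\cong\mathbb R^3$. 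To repair your argument you must either prove the much stronger (and in general false) non-criticality statement, or supply the $U_\varepsilon(A)\cong D^3$ analysis for the one- and two-dimensional minimum sets as the paper does.
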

\begin{proof}
This theorem is entirely due to Shioya-Yamaguchi \cite{SY2000}. A
special case of \autoref{thm2.11} for smooth open $3$-manifold with
non-negative curvature was stated as Theorem 8.1 in Cheeger-Gromoll's
paper \cite{CG72}.

Shioya-Yamaguchi's proof is quiet technical, which occupied the half
of their paper \cite{SY2000}. For convenience of readers, we present
an alternative shorter proof of Shioya-Yamaguchi's soul theorem for
3-manifolds with possible singular metrics.

\smallskip
\noindent {\bf Case 1.} When the soul $N^1$ of $Y^3_\infty$ is a
closed geodesic $\sigma^1$, there are distance non-increasing
multi-step Perelman-Sharafutdinov retractions from $Y^3_\infty$ to
$\sigma^1$. Thus, $\sigma^1$ is length-minimizing in its free
homotopy class. It follows that the lifting geodesic $\tilde
\sigma^1$ is a geodesic line in the universal covering space $\tilde
Y^3_\infty$ of $Y^3_\infty$. Using the splitting theorem (cf.
\cite{BBI2001}) for non-negatively curved space $\tilde Y^3_\infty$,
we see that $\tilde Y^3_\infty$ is isometric to $\tilde X^2\times
\mathbb R$, where $\tilde X^2$ is a contractible surface with
non-negative curvature. Hence, $\tilde{X}^2$ is homeomorphic to
$\Bbb R^2$.

\smallskip
\noindent {\bf Case 2.} When the soul $N^2$ of $Y^3_\infty$ is a
surface $X^2$, we observe that $X^2 = f^{-1}(a_0)$ is a convex
subspace of $Y^3_\infty$, where $f(x) = \lim_{t \to \infty} [ t -
\dis(x, \partial B_{Y^3_\infty} (\hat x, t))]$ and $a_0 = \inf_{x
\in Y^3_\infty}\{f(x)\}$. Since $f$ is convex and $Y^3$ has
non-negative curvature, $X^2$ has non-negative curvature as well. By
\autoref{cor2.9}, we see that $X^2$ is either homeomorphic to a
quotient of $S^2$ or isometric to a quotient of a flat torus.

For this case, our strategy goes as follows. We will show that there
is a {\it ``normal line bundle"} over the soul $X^2$. After passing
its double cover if needed, we may assume that such a {\it ``normal
line bundle"} is topologically trivial in $Y^3_\infty$. In this
case, with some extra efforts, one can show that there is a geodesic
line $\hat \sigma^1$ orthogonal to $X^2$ in $Y^3_\infty$. Thus, the
space $Y^3_\infty$ (or its double cover) splits isometrically to
$X^2\times \mathbb R$.

Here is the detail of our {\it ``normal line bundle"} argument. For
each point $x$ in the soul $X^2$, its unit tangent space
$\Sigma_x^1(X^2)$ is homeomorphic to $S^1$. Recall that the space of
unit tangent directions $\Sigma^2_{y_\infty}(Y^3_\infty)$ of
$Y^3_\infty$ at $y$ is homeomorphic to the sphere $S^2$, because
$Y^3_\infty$ is a $3$-manifold. Observe that $\Sigma_x^1(X^2)$ is a
convex subset of $\Sigma^2_{y_\infty}(Y^3_\infty)$. Moreover we see
that $\Sigma_x^1(X^2)$ divides $\Sigma^2_{y_\infty}(Y^3_\infty)$
into exactly two parts:
$$
[\Sigma^2_{y_\infty}(Y^3_\infty) - \Sigma_x^1(X^2)] = \Omega^2_{x,
+} \cup \Omega^2_{x, +}
$$
Since the curvature of $ \Sigma^2_{y_\infty}(Y^3_\infty)$ is greater
than or equal to 1, using Theorem 6.1 of \cite{Per1991} (cf.
\cite{Petr2007}), we obtain that there is a unique unit vector $\xi_\pm
\in \Omega^2_{x, \pm}$ such that
$$
\ell_\pm = \angle_{x}(\xi_\pm, \Sigma_x^1(X^2)) = \max\{
\angle_{x}(w_\pm, \Sigma_x^1(X^2)) \, | \, w_\pm \in \Omega^2_{x,
\pm}\}.
$$
We claim that $\ell_\pm \le \frac{\pi}{2}$. Suppose contrary,
$\ell_\pm > \frac{\pi}{2}$ were true. We derive a contradiction as
follows. Let $\psi: [0, \ell] \to \Sigma^2_{y_\infty}(Y^3_\infty)$
be a length-minimizing geodesic segment of unit speed from
$\Sigma_x^1(X^2)$ of length $\ell \ge \frac{\pi}{2}$ with $\psi(0) =
u \in \Sigma_x^1(X^2)$ and $\dis(\psi(\ell), \Sigma_x^1(X^2)) =
\ell$. We now choose another geodesic segment $\eta: [0, \delta] \to
\Sigma_x^1(X^2)$ be a geodesic segment of unit speed with $\eta(0) =
u$. Since $\curv_{\Sigma^2} \ge 1$, applying the triangle comparison
theorem (cf. \cite{BBI2001}) to the our geodesic hinge at $u$ in
$\Sigma^2_{y_\infty}(Y^3_\infty)$, we see that $\dis_{\Sigma^2}
(\psi(\frac{\pi}{2}), \eta(\delta) ) \le \frac{\pi}{2}$. Thus, for
the point $w = \psi(\frac{\pi}{2})$, there are at least two points
$\{ u, \eta(\delta)\} \subset \Sigma_x^1(X^2)$ with angular distance
$\dis_{\Sigma^2}(w, u) = \dis_{\Sigma^2}(w, \eta(\delta) ) =
\frac{\pi}{2}$. In another words, there were at least two distinct
length-minimizing geodesic segments from $w = \psi(\frac{\pi}{2})$
to $\Sigma_x^1(X^2)$. Hence, $\psi|_{[0, \frac{\pi}{2} +
\varepsilon] }$ is no longer length-minimizing for any $\varepsilon
>0$, a contradiction. It follows that $\ell_\pm \le \frac{\pi}{2}$.
Moreover, the equality $\ell_\pm = \frac{\pi}{2}$ holds if and only
if $\Sigma^2_{y_\infty}(Y^3_\infty)$ is isometric to the two point
spherical suspension of $\Sigma_x^1(X^2)$. In this case,
$T_x(Y^3_\infty) $ is isometric to $T_x(X^2) \times \mathbb R$.

Recall that $X^2 = f^{-1}(a_0)$ is a level set of the Busemann
function. We can write $f(x) = [c - \dis(x, f^{-1}(c))]$ for $x \in
f^{-1}(-\infty, c])$. By the first variational formula (cf.
\cite{BBI2001} page 125), we see that
$$
\ell_\pm \ge \frac{\pi}{2}.
$$
Combining our earlier inequality $\ell_\pm \le \frac{\pi}{2}$, we
see that $\ell_\pm = \frac{\pi}{2}$. Therefore, we conclude that
$T_x(Y^3_\infty)$ is isometric to $T_x(X^2) \times \mathbb R$.
Hence, there is a {\it ``normal line bundle"} over the soul $X^2$.
After passing its double cover if necessary, such a {\it ``normal
line bundle"} of $X^2$ in $Y^3_\infty$ is topologically trivial.
Thus, we assume that $[Y^3_\infty - X^2] = \Omega^3_+ \cup
\Omega^3_-$ has exactly two ends, where we replace $Y^3_\infty$ by
its double cover $\hat Y^3$ if needed. For each end and each $x \in
X^2$, there exists a ray $\sigma_{x, \pm}: (0, \infty) \to
\Omega^3_\pm$ with starting point $x$. One can verify that
$\sigma_{x, -} \cup \sigma_{x, +}$ is a geodesic line in
$Y^3_\infty$ (or in its double cover). By the splitting theorem, we
conclude that $Y^3_\infty$ (or its double cover) is isometric to
$X^2 \times \mathbb R$.

\smallskip

(3) When the soul $N^k$ of $Y^3_\infty$ is a single point $\{
y_\infty\}$, our proof becomes more involved. Let $f(x) = \lim_{t
\to \infty}[ t - \dis(x, \partial B_{Y^3_\infty}(\hat x, t))]$ and
$a_0 = \inf_{x \in Y^3_\infty}\{f(x)\}$ be as above. There are three
possibilities for $\dim[ f^{-1}(a_0)] = 0, 1, 2$.

\smallskip
\noindent {\bf Subcase 3.0.} {\it $\dim[ f^{-1}(a_0)] = 0$ and $A =
f^{-1}(a_0) = \{ y_\infty\}$}.

In this subcase, the space of unit tangent directions
$\Sigma^2_{y_\infty} (Y^3_\infty)$ at $y$ is homeomorphic to the
sphere $S^2$ and its tangent cone $T_{y_\infty}(Y^3_\infty)$ is
homeomorphic to $\mathbb R^3$. Recall that the pointed spaces
$(\lambda Y^3_\infty, y_\infty) $ is convergent to the tangent cone
$(T_{y_\infty}(Y^3_\infty), O)$ as $\lambda \to \infty$, where $O$
is the origin of $T_{y_\infty}(Y^3_\infty)$.

By the pointed version of Perelman's stability theorem (cf. Theorem 7.11
of \cite{Kap2007}), we see that for sufficiently small
$\varepsilon$, $(B_{\frac{1}{\varepsilon} Y^3_\infty}(y_\infty, 1),
y_\infty) $ is homeomorphic to $(B_{T_{y_\infty}(Y^3_\infty)}(O, 1),
O)$. It follows that $B_{Y^3_\infty}(y_\infty, \varepsilon)$ is
homeomorphic to the unit ball $D^3$ for sufficiently small
$\varepsilon >0$, because $Y^3_\infty$ is a $3$-manifold.

We now use Perelman's fibration theorem to complete our proof for
this subcase. It follows from \autoref{prop2.5} that $r_A$ has no
critical value in $(0, \infty)$. Perelman's fibration theorem (our
\autoref{thm1.2} above) implies that there is a fibration structure
$$
(\partial D^3) \to [Y^3_\infty - U_{\frac{\varepsilon}{2}}(A) ]
\stackrel{r_A}{\longrightarrow} (\frac{\varepsilon}{2}, \infty)
$$
It follows that $ [Y^3_\infty - U_{\frac{\varepsilon}{2}}(A) ] $ is
homeomorphic to $S^2 \times (\frac{\varepsilon}{2}, \infty)$ and
that $Y^3_\infty $ is homeomorphic to $D^3 \cup [S^2 \times \mathbb
R] $. Thus, $Y^3_\infty $ is homeomorphic to $\mathbb R^3$, for this
subcase.

\smallskip
\noindent {\bf Subcase 3.1.} {\it $\dim[ f^{-1}(a_0)] = 1$ and $A =
f^{-1}(a_0) = \sigma$ is a geodesic segment.}

It follows from \autoref{prop2.5} that $r_A$ has no critical value
in $(0, \infty)$. For the same reasons as above, it remains
important to verify that $U_\varepsilon(A)$ is homeomorphic to
$D^3$.

Let $\sigma: [0, \ell] \to Y^3$ be as above and $\sigma([0, \ell]) $
be the minimal set of $f$. We denote a $\varepsilon$-neighborhood of
$A$ by $U_\varepsilon(A)$. Let $A_s = \sigma([s, \ell-s])$ for some
$s > 0$. We observe that
$$
U_\varepsilon(A_0) = B_\varepsilon(\sigma(0)) \cup U_\varepsilon(A_{
s} ) \cup B_\varepsilon(\sigma( \ell))
$$
for $s >0$. For the same reason as in Subcase 3.0 above, both
$B_\varepsilon(\sigma(0))$ and $B_\varepsilon(\sigma( \ell))$ are
homeomorphic to $D^3$, because $Y^3_\infty$ is a $3$-manifold. It is
sufficient to show that $U_\varepsilon(A_{ s} )$ is homeomorphic to
a finite cylinder $C = [s, \ell-s] \times D^2$ for sufficient small
$s$.

Let $p= \sigma(0)$. We consider the distance function $r_p(y) =
\dis_{Y^3_\infty}(y, p)$. We observe that the distance function has
no critical point on geodesic sub-segment $\sigma([s, \ell-s])$. A
result of Petrunin (cf. \cite{Petr2007} page 142) asserts that if
$x_n \to x$ as $n \to \infty$, then $\liminf_{n\to \infty} |
\nabla_{x_n} r_p | \ge | \nabla_{x} r_p|$. Hence, there exists a
sufficiently small $\varepsilon >0$ such that $r_p$ has no critical
point in $U_{\varepsilon}(A_{ s} )$. For the same reason as in
Subcase 3.0, we can apply Perelman's fibration theorem to our case:
$$
D^2 \to U_{\varepsilon}(A_{ s} ) \stackrel{r_p}{\longrightarrow} (s,
\ell-s),
$$
where we used the fact that $ [\partial B_\varepsilon(\sigma(0))]
\cap U_{\varepsilon}(A_{ s/2} )$ is homeomorphic to $D^2$. It
follows that $U_\varepsilon(A_{ s} )$ is homeomorphic to a finite
cylinder $C = (s, \ell-s) \times D^2$. Therefore, $U_\varepsilon(A_{
0})$ is homeomorphic to $D^3$. It follows that $Y^3_\infty \sim [
D^3 \cup (S^2 \times \mathbb R)]$ is homeomorphic to $\mathbb R^3$.

\smallskip
\noindent {\bf Subcase 3.2.} {\it $\dim[ f^{-1}(a_0)] = 2$ and $A =
f^{-1}(a_0) = \Omega^2_0 \sim D^2$ is a totally convex surface with
boundary.}

For the same reason as the two subcases above, it is sufficient to
establish that $ U_\varepsilon(A)$ is homeomorphic to the unit
$3$-ball $D^3$:
$$
U_\varepsilon(A) \sim D^3.
$$

Let $A_s = \{ x \in \Omega^2_0 \, | \, \dis(x, \partial \Omega^2_0)
\ge s \}$. By our discussion in Case 2 above, we see that for each
interior point $x \in \Omega^2_0$, there is a unique {\it normal
line} orthogonal to $\Omega^2_0$ at $x$. Thus, the interior
$\interior(\Omega^2_0)$ has a normal line bundle $\mathbb R$.
Because $\interior(\Omega^2_0)$ is contractible to a soul point
$y_0$, any line bundle over $\interior(\Omega^2_0)$ is topologically
trivial.

In this subcase, our technical goals are to show the following:

\smallskip
\noindent (3.2a) $U_\varepsilon(A_s)$ is homeomorphic to $D^2 \times
(-\varepsilon, \varepsilon)$;

\smallskip
\noindent (3.2b) $U_\varepsilon(\partial A_0)$ is homeomorphic to a
solid tori $(\partial D^2) \times D^2_\varepsilon = S^1 \times
D^2_\varepsilon$.

\smallskip

To establish (3.2a), we use a theorem of Perelman (cf. Theorem 6.1
of \cite{Per1991}) to show that there is a product metric on a
subset $U_\varepsilon(A_s)$ of $Y^3_\infty$. Inspired by Perelman,
we consider the distance function $r_{A_0} (y) = \dis_{Y^3_\infty}
(y, A_0)$. Since $Y^3_\infty$ has non-negative curvature and
$\interior(A_s)$ is {\it weakly concave} towards its complement $[
U_{s/4}(A_s) - A_s]$, Perelman observed that $r_{A_0}$ is {\it
concave} on $[ U_{s/4}(A_s) - A_s]$, (see the proof of Theorem 6.1
in \cite{Per1991}, \cite{Petr2007} or \cite{CMD2009}). We already
showed that for each interior point $x \in A_0$, there is a unique
{\it normal line} orthogonal to $A_0$ at $x$. With extra efforts, we
can show that, for each interior point $x \in \interior(A_0)$ and
each unit normal direction $\xi_\pm \perp_x (\interior A_0)$, there
is a unique ray $\sigma_{x, \pm}: [0, \infty) \to Y^3_\infty$ with
$\sigma_{x, \pm}(0) = x$ and $\sigma_{x, \pm}'(0) = \xi_\pm$.
Moreover, we have
$$
f(\sigma_{x, \pm}(t)) = a_0 + t.
$$
Therefore each $y \in [ U_{s/4}(A_s) - A_s]$ with $s>0$, we have
$$
\nabla (-f) |_y = - \nabla r_{A_0} |_y.
$$
Hence, our Busemann function $f$ is both convex and concave on the
subset $[ U_{s/4}(A_s) - A_s]$. Thus, for any geodesic segment
$\varphi: [a, b] \to [ U_{s/4}(A_s) - A_s]$, the function
$f(\varphi(t))$ is a linear function in $t$. Using the fact that
$f(\varphi(t))$ is a linear function in $t$ and the sharp version of
triangle comparison theorem (cf. \cite{BGP1992}), we can show that
there is a sub-domain $V$ of $Y^3_\infty$ such that the metric of
$Y^3_\infty$ on $V$ splits isometrically as
$$
V = \interior(A_0) \times \mathbb R.
$$
Since $\interior(A_0)$ is homeomorphic to $D^2$, we conclude that
$U_\varepsilon(A_s)$ is homeomorphic to $D^2 \times (-\varepsilon,
\varepsilon)$ (compare with the proof of \autoref{thm5.4} below).
Hence, our Assertion (3.2a) holds.

It remains to verify (3.2b). We consider the doubling surface
$\dbl(A_0) = A_0 \cup_{\partial A_0} A_0$. It follows from a result
of Petrunin that $\dbl(A_0)$ has non-negative curvature. By
\autoref{prop1.7}, we see that the essential singularities (extremal
points) in $\dbl(A_0)$ are isolated. Thus, there are only finitely
many points $\{ x_1, \cdots, x_k\}$ on $\partial A_0$ such that
$$
\diam[\Sigma_{x_j}(A_0) ] \le \frac{\pi}{2}
$$
for $j =1,\cdots, k$. We
can divide our boundary curve $\partial A_0$ into $k$-many arcs, say
$[
\partial A_0 - \{ x_1, \cdots, x_k\}] = \cup \gamma_j$. Using a similar
argument as in Subcase 3.1, we can show that, for each $\gamma_j$,
its $\varepsilon$-neighborhood $U_\varepsilon(\gamma_j)$ is
homeomorphic to a finite cylinder $C_j \sim [D^2 \times (0,
\ell_j)]$. Since $Y^3_\infty$ is a $3$-manifold, by the proof of
\autoref{prop1.15}, we know that $B_{Y^3_\infty}(x_j, \varepsilon)$
is homeomorphic to $D^3$. Consequently, we have
$$
U_\varepsilon(\partial A_0) = [\cup C_j] \bigcup [\cup
B_{Y^3_\infty}(x_j, \varepsilon)],
$$
which is homeomorphic to a solid tori $D^2 \times S^1$. This
completes our proof of the assertion that $ U_\varepsilon(A_0) \sim
\{U_\varepsilon(\partial A_0) \cup [ A_0 \times ( - \varepsilon,
\varepsilon)] \} $ is homeomorphic to $D^3$. Therefore, $Y^3_\infty
\sim [D^3 \cup (S^2 \times \mathbb R)]$ is homeomorphic to $\mathbb
R^3$.

We now finished the proof of our soul theorem for all cases.
\end{proof}

\subsection{Applications of the soul theory to proof of \autoref{thm2.1}.}\label{section2.3} \

\smallskip

\smallskip

Using \autoref{thm2.11}, we can complete the proof of
\autoref{thm2.1}.

\begin{proof}[Proof of \autoref{thm2.1}]
Let $\lambda_\alpha$ and $\tilde g^\alpha$ be defined by
\eqref{eq:2.1}. We may assume that $\{((M^3_{\alpha},\tilde
g^{\alpha}),x_{\alpha})\}$ is convergent to a pointed Alexandrov
space $(Y^s_{\infty}, y_{\infty})$ of non-negative curvature, by
replacing $x_\alpha$ with $x'_\alpha$ in \autoref{prop2.2} if
needed. By \autoref{prop2.3}, we see that the limiting space
$Y^s_\infty$ is a non-compact and complete space of $\dim Y^3_\infty
=3$. Furthermore, $Y^3_\infty$ has no boundary. By Perelman's
stability theorem (cf. \cite{Kap2007}), we see that the limit space
$Y^3_\infty$ is a topological $3$-manifold.

By \autoref{thm1.17}, we see that $\partial B_{(M^3_\alpha, \tilde
g^{\alpha})} (x_\alpha, r)$ is homeomorphic to a quotient of the
$2$-torus $T^2$. The notion of ideal boundary $Y^3_\infty(\infty)$
of $Y^3_\infty$ can be found in \cite{Shio94}. In our case, the
ideal boundary $Y^3_\infty(\infty)$ at infinity of $Y^3_\infty$ is
homeomorphic to a circle $\partial B_{X^2}(x_\infty, r)$. We will
verify that $N^k \sim S^1$ as follows. Let $U_\varepsilon(N^k)$ be a
$\varepsilon$-tubular neighborhood of the soul $N^k$ in $Y^3_\infty
$. By Perelman's stability theorem and our assumption that
$M^3_\alpha $ is oriented, we observe that, for sufficiently large
$\alpha$, the boundary $\partial U_r(N^k)$ is homoeomorphic to
$\partial B_{(M^3_\alpha, \tilde g^{\alpha} )}(x_\alpha, r') \sim
T^2$. Thus, the soul $N^k$ of $Y^3$ must be a circle $S^1$. In this
case, it follows from \autoref{thm2.11} that the metric on $Y^3$ (or
on its universal cover) splits. Therefore, it follows from
Perelman's stability theorem (cf. \cite{Kap2007}) that
$B_{(M^3_\alpha, \tilde g^{\alpha} )}(x_\alpha, r')$ is homeomorphic
to a solid tori, which is foliated by orbits of a free circle action
for sufficiently large $\alpha$. We discuss more about gluing and
perturbing our local circle actions in \autoref{section6}.
\end{proof}
We will use \autoref{thm2.1}-\ref{thm2.11} to derive more refined
results for collapsing $3$-manifolds with curvature $\ge -1$ in
upcoming sections.

\section{Admissible decompositions for collapsed
$3$-manifolds}\label{section3}

Let $S^2(\varepsilon)$ be a round sphere of constant curvature
$\frac{1} {\varepsilon^2}$. It is clear that $S^2(\varepsilon)\times
[a,b]$ is convergent to $[a,b]$ with non-negative curvature as
$\varepsilon\to 0$. The product space $W_{\varepsilon} =
S^2(\varepsilon) \times [a,b]$ is not a graph manifold. However, if
$W_{\varepsilon}$ is contained in the interior of collapsed
$3$-manifold $M^3_{\alpha}$ with boundary, then for topological
reasons, $W_{\alpha}$ still has a chance to become a part of
graph-manifold $M^3_{\alpha}$.

Let us now use the language of Cheeger-Gromov's $F$-structure theory
to describe $3$-dimensional graph-manifold. It is known that a
$3$-manifold $M^3$ is a graph-manifold if and only if $M^3$ admits
an $F$-structure of positive rank, which we now describe.

An F-structure, $\mathscr{F}$, is a topological structure which
extends the notion of torus action on a manifold, (see \cite{CG1986}
and \cite{CG1990}). In fact, the more significant concept is that of
atlas (of charts) for an F-structure.

An atlas for an F-structure on a manifold $M^n$ is defined by a
collection of triples $\{(U_i, V_i, T^{k_i})\}$, called charts,
where $\{U_i\}$ is an open cover of $M^n$ and the torus, $T^{k_i}$,
acts effectively on a finite normal covering, $\pi_i: V_i\to U_i$,
such that the following conditions hold:
\begin{enumerate}[(3.1)]
\item There is a homomorphism, $\rho_i: \Gamma_i=\pi_1(U_i)\to
Aut(T^{k_i})$, such that the action of $T^{k_i}$ extends to an
action of the semi-direct product $T^{k_i}\ltimes _{\rho_i}
\Gamma_i$, where $\pi_1(U)$ is the fundamental group of $U$;

\item If $U_{i_1}\cap U_{i_2}\ne\varnothing$, then $U_{i_1}\cap
U_{i_2}$ is connected. If $k_{i_1} \le k_{i_2}$, then on a suitable
finite covering of $U_{i_1}\cap U_{i_2}$, their lifted tori-actions
commute after appropriate re-parametrization.
\end{enumerate}

The compatibility condition (3.2) on lifted actions implies that
$M^n$ decomposes as a disjoint union of orbits, $\mathcal {O}$, each
of which carries a natural flat affine structure. The orbit
containing $x\in M^n$ is denoted by $\mathcal {O}_x$. The dimension
of an orbit of minimal dimension is called the rank of the
structure.

\begin{prop}[\cite{CG1986}, \cite{CG1990}]\label{prop3.1}
A $3$-dimensional manifold $M^3$ with possible non-empty boundary is
a graph-manifold if and only if $M^3$ admits an F-structure of
positive rank.
\end{prop}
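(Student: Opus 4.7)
The plan is to establish the two implications separately, using the rank stratification of the F-structure on one side and the canonical torus actions carried by Seifert fibered pieces on the other.

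For the reverse implication, suppose $M^3$ admits an F-structure $\mathscr{F}$ of positive rank. I would partition $M^3$ according to the orbit-rank stratification: let $U_{=1}$ be the open locus where the local orbits of $\mathscr{F}$ are one-dimensional and let $U_{\geq 2}$ be the locus where they are two-dimensional (the rank-3 locus, if non-empty, is an isolated open $T^3$-orbit and falls in as a flat summand). The compatibility condition (3.2) forces the frontier $\partial \overline{U}_{\geq 2}$ to be a disjoint union of 2-torus orbits. These tori are incompressible, because a compressing disk would have boundary a simple loop on such a torus, but the locally free rank-2 action on a finite cover $V_i$ of the adjacent side cannot admit a null-homotopic orbit. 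Each component of $\overline{U}_{\geq 2}$ then carries a (covering-space) $T^2$-action fibering over a 1-manifold, hence is Seifert fibered; each component of $\overline{U}_{=1}$ carries a locally free $S^1$-action on each $V_i$, and so is also Seifert fibered. Cutting along the boundary tori therefore expresses $M^3$ as a union of Seifert fibered pieces glued along incompressible tori, i.e., a graph-manifold.

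For the forward implication, I would install F-charts piece by piece. On each Seifert fibered block the regular circle fibers give a natural rank-1 chart $(U_i, V_i, T^1)$; near an exceptional fiber of multiplicity $m$ one passes to the $m$-fold cyclic cover $V_i \to U_i$ to obtain a free $S^1$-action with $\Gamma_i = \mathbb{Z}_m$ acting by the rotation homomorphism $\rho_i$. On each solid torus summand use $S^1$-rotation about the core. In a collar neighborhood $T^2 \times [0,1]$ of each cutting incompressible torus, install a rank-2 $T^2$-chart whose restriction on each side extends the corresponding Seifert $S^1$-fiber, possible because any embedded $S^1$-subgroup of $T^2$ extends to the full $T^2$. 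For connected-sum summands diffeomorphic to $(S^2 \times S^1)/\Gamma$ or $S^3/\Gamma$, use their standard Seifert fibrations and glue them in along $S^2$-necks lying in rank-2 regions. The compatibility condition (3.2) on overlaps then reduces to showing that two toral actions on a common chart commute after passage to a finite cover, which is automatic once both preserve the same orbit foliation.

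The main obstacle will be making the incompressibility step in the reverse direction fully rigorous: one must track how the torus orbits in each $V_i$ descend via the finite normal covering $\pi_i: V_i \to U_i$, and verify using the holonomy representations $\rho_i: \Gamma_i \to \mathrm{Aut}(T^{k_i})$ that the resulting quotient tori in $M^3$ are genuinely 2-sided and incompressible rather than one-sided Klein-bottle-like obstructions. A secondary technical point is matching F-charts across the $S^2$-necks of the connected-sum construction in the forward direction; this requires exploiting that the standard Seifert fibrations on $(S^2 \times S^1)/\Gamma$ and $S^3/\Gamma$ can be arranged so that their circle orbits coincide with those of the ambient F-structure in a neighborhood of the neck.
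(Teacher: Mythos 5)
The paper offers no proof of this proposition at all---it is quoted from Cheeger--Gromov and Rong \cite{CG1986}, \cite{CG1990}, \cite{R1993}---so your sketch has to stand on its own, and as written both directions have genuine gaps. In the reverse direction, the incompressibility step fails. The tori along which the rank stratification of an F-structure cuts are in general compressible: $S^3$ carries the F-structure consisting of its two Heegaard solid tori, each with the circle action rotating about its core, and the interface torus is compressible; the same happens for lens spaces and for the solid-torus summands explicitly allowed in the definition of graph-manifold used here. Your argument that a compressing disk is impossible because the rank-2 action "cannot admit a null-homotopic orbit" conflates the boundary of a compressing disk with an orbit: that boundary curve need not be, or even be homotopic to, an orbit of the structure, and nothing in the compatibility condition (3.2) prevents a torus orbit from bounding a solid torus on the far side. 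So one cannot expect the cutting tori to be incompressible; what is actually needed is the Waldhausen-type reduction showing that a $3$-manifold assembled from Seifert pieces along \emph{arbitrary} boundary tori is a connected sum of solid tori, lens-space-type pieces, $S^2\times S^1$ summands and manifolds cut along incompressible tori into Seifert pieces (cut along each compressible cutting torus, cap off with solid tori, split off the resulting summands, and induct). That reduction is the substantive content of the "only if" direction and is missing from your sketch.

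In the forward direction the treatment of connected sums is the weak point, and the specific device you propose cannot work: a rank-2 region is a union of $2$-dimensional flat orbits (tori or Klein bottles), so no $S^2$-neck can lie in it, and $S^2\times I$ admits no positive-rank structure compatible with spherical slices---this is precisely why $S^2\times[0,1]$ appears in the paper as the basic non-example of a collapsed piece that is not by itself graph-like. The standard repair, which this paper itself uses in Section~\ref{section3} and in \autoref{cor6.8}, is to decompose each neck $S^2\times I$ as $\bigl(D^2_+\times I\bigr)\cup\bigl(A\times I\bigr)\cup\bigl(D^2_-\times I\bigr)$, put the circle chart on the annular piece $A\times I$, and then chain the leftover solid cylinders $D^2\times I$ from the various necks into solid tori $D^2\times S^1$, each carrying its own rotational circle chart compatible with the adjacent annular charts. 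Your remaining forward steps---circle charts on Seifert blocks with finite cyclic covers at exceptional fibers, rank-2 charts on collars of the cutting tori containing the boundary circle fibers as subtori---are the standard construction and are fine, modulo the one-sided/Klein-bottle bookkeeping you already flagged.
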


For $3$-dimensional manifolds, we will see that 7 out of 8
geometries admits F-structure. Therefore, there seven types of
locally homogeneous spaces are graph-manifolds.

\begin{example}\label{ex3.2}
Let $M^3$ be a closed locally homogeneous space of dimension 3, such
that its universal covering spaces $\tilde M^3$ is isometric to
seven geometries: $\mathbb R^3, S^3, \mathbb H^2\times\mathbb R,
S^2\times \mathbb R, \tilde{SL}(2,\mathbb R), Nil$ and $Sol$. Then
$M^3$ admits an F-structure and hence it is a graph-manifold.

Let us elaborate this issue in detail as follows.
\begin{enumerate}[{\rm (i)}]
\item
If $M^3=\mathbb R^3/\Gamma$ is a flat $3$-manifold, then it is
covered by $3$-dimensional torus. Hence it is a graph-manifold.

\item
If $M^3=S^3/\Gamma$ is a lens space, then its universal cover $S^3$
admits the classical Hopf fibration:
$$S^1\to S^3\to S^2.$$
It follows that $M^3$ is a graph-manifold.

\item If $M^3= (\mathbb H^2\times\mathbb R)/\Gamma$ is
a closed $3$-manifold, then a theorem of Eberlein implies that a
finite normal cover $\hat M^3$ of $M^3$ is diffeomorphic to
$N^2\times S^1$, where $N^2$ is a closed surface of genus $\ge 1$,
(see Proposition 5.11 of \cite{CCR2001}, \cite{CCR2004}).

\item
If $M^3= (S^2\times\mathbb R)/\Gamma$, then a finite cover is
isometric to $S^2\times S^1$. Clearly, $M^3$ is a graph-manifold. We
should point out that a quotient space $(S^2\times S^1)/\mathbb Z_2$
may be homeomorphic to $\mathbb {RP}^3 \# \mathbb {RP}^3$.

\item
If $M^3= \tilde{SL}(2,\mathbb R)/\Gamma$, then a finite cover $\hat
M^3$ of $M^3$ is diffeomorphic to the unit tangent bundle of a
closed surface $N_k^2$ of genus $k\ge 2$. Thus, we may assume that
$\hat M^3=SN^2_k=\{(x,\vec v)| x\in N_k^2, \vec v\in T_x(N_k^2),
|\vec v|=1\}$. Clearly, there is a circle fibration
$$S^1\to \hat M^3\to N_k^2.$$
It follows that $M^3$ is a graph-manifold.

\item If $M^3= Nil/\Gamma$, then the universal cover
$$\tilde
M^3=Nil=\left\{\left(\left.
            \begin{array}{ccc}
              1 & x & z \\
              0 & 1 & y \\
              0 & 0 & 1 \\
            \end{array}
          \right) \right| x,y,z \in \mathbb R\right\}
$$

is a $3$-dimensional Heisenberg group. Let

$$\hat \Gamma=\left\{\left(\left.
                                \begin{array}{ccc}
                                  1 & m & k \\
                                  0 & 1 & n \\
                                  0 & 0 & 1 \\
                                \end{array}
                              \right)
          \right|k,m,n\in \mathbb Z\right\}
$$
be the integer lattice
group of $Nil$. A finite cover $\hat M^3$ of $M^3$ is a circle
bundle over a $2$-torus. Therefore $M^3$ is a graph-manifold, which
can be a diameter-collapsing manifold.

\item If $M^3= Sol/\Gamma$, then $M^3$ is foliated
by tori, M\"{o}bius bands or Klein bottles, which is a
graph-manifold.
\end{enumerate}
\end{example}

Let us consider a graph-manifold which is not a compact quotient of
a homogeneous space.
\begin{example}
Let $N^2_i$ be a surface of genus $\ge 2$ and with a boundary
circle, for $i=1,2$. Clearly , $\partial (N^2_i\times
S^1)=S^1\times S^1$. We glue $N^2_1\times S^1$ to $S^1\times
N^2_2$ along their boundaries with $S^1$-factor switched. The
resulting manifold $M^3=(N^2_1\times S^1)\cup(S^1\times
N^2_2)$ does not admit a global circle fibration, but $M^3$ is
a graph-manifold.
\end{example}

As we pointed out above, $S^2\times [a,b]$ can be collapsed to an
interval $[a,b]$ with non-negative curvature. Suppose that $W$ is a
portion of collapsed $3$-manifold $M^3$ such that $W$ is
diffeomorphic to $S^2\times[a,b]$. We need to glue extra solid
handles to $W$ so that our collapsed $3$-manifold under
consideration becomes a graph-manifold. For this purpose, we divided
$S^2\times [a,b]$ into three parts. In fact, $S^2$ with two disks
removed, $S^2-(D^2_1\sqcup D^2_2)$, is diffeomorphic to an annulus
$A$. Thus, $S^2\times [a,b]$ has a decomposition
$$
S^2\times [a,b]=\big(D^2_1\times [a,b]\big) \sqcup (D^2_2\times
[a,b]) \sqcup (A\times [a,b]).
$$

The product space $A\times [a,b]$ clearly admits a free circle
action, and hence is a graph-manifold. For solid cylinder part
$D^2_i\times [a,b]$, if one can glue two solid cylinders together,
then one might end up with a solid torus $D^2\times S^1$ which is
again a graph-manifold.

We will decompose a collapsed $3$-manifold $M^3_{\alpha}$ with
curvature $\ge -1$ into four major parts according to the dimension
$k$ of limiting set $X^k$:
$$
M^3_{\alpha}=V_{\alpha,X^0}\cup V_{\alpha, \interior (X^1)}\cup
V_{\alpha,\interior (X^2)}\cup W_{\alpha}
$$
where $\interior (X^s)$ denotes the interior of the space $X^s$.

The portion $V_{\alpha,X^0}$ of $M^3_{\alpha}$ consists of union of
closed, connected components of $M^3_{\alpha}$ which admit
Riemannian metric of non-negative sectional curvature.

\begin{prop}\label{prop3.4}
Let $V_{\alpha,X^0}$ be a union of one of following:
\begin{enumerate}[{\rm (1)}]
\item a spherical $3$-dimensional space form;
\item a manifold double covered by $S^2\times S^1$;
\item a closed flat $3$-manifold.
\end{enumerate}
Then $V_{\alpha,X^0}$ can be collapsed to a $0$-dimensional manifold
with non-negative curvature. Moreover, $V_{\alpha,X^0}$ is a
graph-manifold.
\end{prop}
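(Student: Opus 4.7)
The plan is to dispatch the two claims of \autoref{prop3.4} separately, treating each of cases (1)--(3) in turn. For the graph-manifold claim I will simply appeal to \autoref{ex3.2}: case (3) matches \autoref{ex3.2}(i) (a closed flat $3$-manifold is finitely covered by $T^3$, whose free $T^3$-action is an $F$-structure of rank $3$); case (1) matches \autoref{ex3.2}(ii), the Hopf fibration $S^1\to S^3\to S^2$ descending to a Seifert fibration on $S^3/\Gamma$ because $\Gamma\subset O(4)$ normalizes the Hopf circle up to conjugation; and case (2) follows from \autoref{ex3.2}(iv). Each case yields an $F$-structure of positive rank, so by \autoref{prop3.1} the component is a graph-manifold.

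For the collapsing assertion, my plan is a uniform rescaling argument. In each case $V_{\alpha,X^0}$ carries a Riemannian metric $g_0$ of non-negative sectional curvature: the descended round metric on $S^3/\Gamma$, the product metric on $S^2\times S^1$ descended to the quotient, or a flat metric. Set $g_\lambda=\lambda^2 g_0$ for $\lambda\in(0,1]$. Then $\sectional_{g_\lambda}=\lambda^{-2}\sectional_{g_0}\ge 0$ while $\diam(V_{\alpha,X^0},g_\lambda)=\lambda\cdot\diam(V_{\alpha,X^0},g_0)\to 0$, so the pointed sequence $(V_{\alpha,X^0},g_\lambda)$ Gromov--Hausdorff converges to a single point as $\lambda\to 0$. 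The limit is trivially a $0$-dimensional Alexandrov space of non-negative curvature, which is exactly the content of the ``collapsed to a $0$-dimensional manifold with non-negative curvature" clause.

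The one step needing a little bookkeeping is case (2), where $V_{\alpha,X^0}$ itself need not admit a globally defined free $S^1$-action (e.g.\ $\mathbb{RP}^3\#\mathbb{RP}^3\cong (S^2\times S^1)/\mathbb{Z}_2$, whose deck involution inverts the $S^1$-factor). The right viewpoint is the chart axiom (3.1) of the $F$-structure definition: take $U_1=V_{\alpha,X^0}$, $V_1=S^2\times S^1$ the given double cover, $T^{k_1}=S^1$ acting freely on the $S^1$-factor, and $\rho_1\colon \mathbb{Z}_2\to \mathrm{Aut}(S^1)$ the inversion automorphism. Then $S^1\ltimes_{\rho_1}\mathbb{Z}_2\cong O(2)$ acts on $V_1$ extending the free circle action, producing a bona fide chart of positive rank in the sense of Cheeger--Gromov. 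I expect no serious obstacle beyond this bookkeeping point, since the collapsing claim reduces to the elementary rescaling described above and the graph-manifold claim is a direct invocation of the catalog in \autoref{ex3.2}.
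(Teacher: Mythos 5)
Your proposal is correct, and it follows the route the paper itself implicitly intends: the paper states \autoref{prop3.4} without proof, relying on the catalog in \autoref{ex3.2} together with the $F$-structure criterion of \autoref{prop3.1}, plus the evident fact that a non-negatively curved metric can be rescaled ($g_\lambda=\lambda^2 g_0$) to collapse the component to a point while keeping $\curv\ge 0$. Your treatment of case (2) via a chart $(U_1,V_1,S^1)$ on the double cover with $\rho_1$ the inversion automorphism is a reasonable way to make the Cheeger--Gromov condition (3.1) explicit for quotients such as $\mathbb{RP}^3\#\mathbb{RP}^3$, and the rest is exactly the paper's intended argument.
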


We denote the regular part of $X^2$ by $X^2_{reg}$. Let us now
recall a reduction for the proof of Perelman's collapsing theorem
due to Morgan-Tian. Earlier related work on 3-dimensional collapsing
theory was done by Xiaochun Rong in his thesis, (cf. \cite{R1993}).

\begin{theorem}[Morgan-Tian \cite{MT2008}, Compare \cite{R1993}]\label{thm3.5}
Let $\{M^3_{\alpha}\}$ be a sequence of compact $3$-manifolds
satisfying the hypothesis of Theorem 0.1' and $V_{\alpha,X^0}$ be as
in \autoref{prop3.4} above. If, for sufficiently large $\alpha$,
there exist compact, co-dimension 0 submanifolds
$V_{\alpha,X^1}\subset M^3_{\alpha}$ and $V_{\alpha, X^2_{reg}}
\subset M^3_{\alpha}$ with $\partial M^3_{\alpha}\subset
V_{\alpha,X^1}$ satisfying six conditions listed below, then
\autoref{thm0.1} holds, where six conditions are:
\begin{enumerate}[{\rm (1)}]
\item Each connected component of $V_{\alpha,X^1}$ is diffeomorphic
to the following.
\begin{enumerate}[{\rm (a)}]
\item a $T^2$-bundle over $S^1$ or a union of two twisted
I-bundle over the Klein bottle along their common boundary;
\item $T^2\times I$ or $S^2\times I$, where $I=[a,b]$ is a closed
interval;

\item a compact $3$-ball or the complement of an open $3$-ball in
$\mathbb{RP}^3$ which is homeomorphic to $\mathbb {RP}^2 \ltimes
[0, \frac 12]$;
\item
a twisted I-bundle over the Klein bottle, or a solid torus.
\end{enumerate}
In particular, every boundary component of $V_{\alpha,X^1}$ is
either a $2$-sphere or a $2$-torus.

\item
$V_{\alpha,X^1} \cap V_{\alpha,X^2_{reg}}=(\partial
V_{\alpha,X^1})\cap(\partial V_{\alpha,X^2_{reg}})$;

\item If $N^2_0$ is a $2$-torus component of $\partial
V_{\alpha,X^1}$, then $N^2_0 \subset \partial
V_{\alpha,X^2_{reg}}$ if and only if $N_0^2$ is not boundary of
$\partial M^3_{\alpha}$;

\item If $N^2_0$ is a $2$-sphere component of $\partial
V_{\alpha,X^1}$, then $N^2_0 \cap \partial
V_{\alpha,X^2_{reg}}$ is diffeomorphic to an annulus;

\item $V_{\alpha,X^2_{reg}}$ is the total space of a locally trivial
$S^1$-bundle and the intersection $V_{\alpha,X^1}\cap
V_{\alpha,X^2_{reg}}$ is saturated under this fibration;

\item The complement $W_{\alpha}=[M^3_{\alpha}
-(V_{\alpha,X^0}\cup V_{\alpha,X^1}\cup V_{\alpha,X^2_{reg}})]$
is a disjoint union of solid tori and solid cylinders. The boundary
of each solid torus is a boundary component of
$V_{\alpha,X^2_{reg}}$, and each solid cylinder $D^2\times I$ in
$W_\alpha$ meets $V_{\alpha,X^1}$ exactly in $D^2\times \partial I$.
\end{enumerate}
\end{theorem}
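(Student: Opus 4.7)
The proof strategy is to assemble an $F$-structure of positive rank on $M^3_\alpha$ from the given decomposition, so that $M^3_\alpha$ is a graph-manifold by \autoref{prop3.1}. The plan is to treat each piece of the partition $M^3_\alpha = V_{\alpha, X^0} \cup V_{\alpha, X^1} \cup V_{\alpha, X^2_{reg}} \cup W_\alpha$ separately, then verify that the local torus actions can be chosen to agree (after passing to finite covers if necessary) on the overlaps dictated by conditions (2)--(6). The piece $V_{\alpha, X^0}$ already carries an $F$-structure by \autoref{prop3.4}. The piece $V_{\alpha, X^2_{reg}}$ is a principal $S^1$-bundle by condition (5), hence supports the obvious circle action that defines an $F$-structure of rank one.

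The components of $V_{\alpha, X^1}$ listed in (1a)--(1d) are either fibred over a circle (the $T^2$-bundles over $S^1$ and $T^2 \times I$), carry a natural $T^2$-action on a double cover (the twisted $I$-bundle over the Klein bottle, solid tori), or carry an $S^1$-action coming from the product factor ($S^2 \times I$); the genuine exceptional cases are the $3$-ball and $\mathbb{RP}^2 \ltimes [0,\tfrac12]$, which do not support $F$-structures of positive rank in isolation. These exceptional components must be absorbed into adjacent pieces of $W_\alpha$ via condition (6): the solid cylinders $D^2 \times I$ in $W_\alpha$ meet $V_{\alpha, X^1}$ precisely in $D^2 \times \partial I$, so capping the $S^2$-boundary components described in (1b)--(1c) with these cylinders produces either $S^3$, $\mathbb{RP}^3$, or $S^2 \times S^1$-type pieces, each of which is a graph-manifold by \autoref{ex3.2}. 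Similarly, the solid-torus components of $W_\alpha$ are glued onto the torus boundary components of $V_{\alpha, X^2_{reg}}$ by condition (6), extending the $S^1$-bundle structure over an additional disk base and producing a Seifert fibred piece (possibly with exceptional fibres, exactly as in \autoref{ex2.0}).

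The remaining work is to check compatibility across $V_{\alpha, X^1} \cap V_{\alpha, X^2_{reg}}$, which by condition (2) is contained in the common boundary and by condition (5) is saturated by the $S^1$-fibration on $V_{\alpha, X^2_{reg}}$. On the $T^2$-components of this intersection, condition (3) ensures the fibration has circle fibres lying in the boundary torus, and these can be matched with one of the commuting circle factors of the torus action on the adjacent component of $V_{\alpha, X^1}$ by a slight change of generator on a finite cover; this is exactly the compatibility condition (3.2) in the definition of an $F$-structure. On the $S^2$-components, condition (4) says the intersection is an annulus, i.e.\ the $S^1$-fibres of $V_{\alpha, X^2_{reg}}$ approach the sphere through a collar region which matches with the $S^1$-fibration of the $S^2 \times I$ or $\mathbb{RP}^2 \ltimes I$ factor of $V_{\alpha, X^1}$ coming from the round symmetry of $S^2$; again, after a possible finite cover, these fibrations coincide.

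The main obstacle is the last step: ensuring that the local torus actions on the components of $V_{\alpha, X^1}$, the $S^1$-action on $V_{\alpha, X^2_{reg}}$, and the locally homogeneous actions on the components of $V_{\alpha, X^0}$ can be chosen simultaneously so that on each overlap the lifted actions commute (condition (3.2) in the $F$-structure definition). This requires showing that the slopes of the $S^1$-fibres of $V_{\alpha, X^2_{reg}}$ on a boundary torus are consistent with at least one circle subgroup of the $T^2$-action on the adjacent component of $V_{\alpha, X^1}$, which is automatic because every $T^2$-action on a $T^2$-boundary has all slopes available; and that the circle action extending across each solid-cylinder gluing in $W_\alpha$ actually extends as a smooth circle action on the union, which reduces to the standard fact that any fixed-point-free $S^1$-action on $\partial D^2 \times I$ extends to $D^2 \times I$. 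Once these compatibilities are verified, the charts $\{(U_i, V_i, T^{k_i})\}$ defined piece-by-piece assemble into an atlas for an $F$-structure of positive rank on $M^3_\alpha$, and \autoref{prop3.1} delivers the graph-manifold conclusion of \autoref{thm0.1}.
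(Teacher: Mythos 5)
The paper itself proves \autoref{thm3.5} only by citation: it is Morgan--Tian's purely topological reduction, which they establish first under the extra hypotheses that $V_{\alpha,X^1}$ has no closed components, that every $2$-sphere component of $\partial V_{\alpha,X^1}$ bounds a $3$-ball component of $V_{\alpha,X^1}$, and that every compressible $2$-torus component bounds a solid torus component, and the general case is then reduced to this special case by a topological cut-and-paste argument having nothing to do with collapsing. Your proposal instead tries to assemble an F-structure of positive rank chart by chart and invoke \autoref{prop3.1}; that is closer in spirit to what the paper does in \autoref{section6} for Theorem 0.1' than to its treatment of \autoref{thm3.5}, and as written it contains a genuine gap exactly at the pieces with spherical boundary.

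Concretely: $S^2\times I$, the compact $3$-ball, $\mathbb{RP}^2\ltimes[0,\frac{1}{2}]$ and the solid cylinders of $W_\alpha$ admit no F-structure of positive rank by themselves, and both of your patches fail. The $S^1$-action on $S^2\times I$ coming from the product factor fixes the two polar arcs, so it cannot serve as a chart of a positive-rank structure; and there is no fixed-point-free circle action on $D^2\times I$ at all (its boundary is a $2$-sphere, and any circle action restricted to an $S^2$ boundary component has fixed points since $\chi(S^2)\neq 0$), so the asserted extension of the fibrewise rotation from $\partial D^2\times I$ over the solid cylinder does not exist. Moreover, the solid cylinders do not cap off the sphere boundary components: by conditions (4) and (6) each such sphere meets $W_\alpha$ only in two disks, its middle annulus being glued to the circle-fibred part $V_{\alpha,X^2_{reg}}$, so no closed $S^3$-, $\mathbb{RP}^3$- or $S^2\times S^1$-summand is produced at that step, and the connected-sum decomposition you implicitly appeal to is never actually carried out. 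The missing idea is the one the paper supplies elsewhere (cf.\ \autoref{cor6.8}(3)): follow the core arcs of the solid cylinders through the whole chain of cylinders and caps until they close up into circles, so that each chain together with its caps forms a solid torus whose boundary torus is saturated by the $S^1$-fibration of $V_{\alpha,X^2_{reg}}$; these solid tori, carrying the circle direction along their cores rather than the rotation about them, are the pieces that make the graph-manifold (or F-structure) assembly work, exactly as in \autoref{ex2.0}. Without this step -- or without Morgan--Tian's reduction to the special case above -- your proposed atlas has rank-zero orbits on the spherical pieces and the appeal to \autoref{prop3.1} does not go through.
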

\begin{proof}
(\cite{MT2008}) The proof of \autoref{thm3.5} is purely topological,
which has noting to do with the collapsing theory.

Morgan and Tian \cite{MT2008} first verified \autoref{thm3.5} for
special cases under additional assumption on $V_{\alpha,X^1}$:
\begin{enumerate}[(i)]
\item $V_{\alpha,X^1}$ has no closed components;
\item Each $2$-sphere component of $\partial V_{\alpha,X^1}$ bounds
a $3$-ball component of $V_{\alpha,X^1}$;
\item Each $2$-torus component of $\partial V_{\alpha,X^1}$ that is
compressible in $M^3_{\alpha}$ bounds a solid torus component of
$V_{\alpha,X^1}$.
\end{enumerate}

The general case can be reduced to a special case by a purely
topological argument.
\end{proof}

\begin{definition}\label{def3.6}
If a collapsed $3$-manifold $M^3_{\alpha}$ has a decomposition
$$M^3_{\alpha}= V_{\alpha,X^0}\cup V_{\alpha,X^1}\cup
V_{\alpha,X^2_{reg}}\cup W_{\alpha}$$ satisfying six properties
listed in \autoref{thm3.5} and if $V_{\alpha,X^0}$ is a union of
closed $3$-manifolds which admit smooth Riemannian metrics of
non-negative sectional curvature, then such a decomposition is
called an admissible decomposition of $M^3_{\alpha}$.
\end{definition}

In \autoref{section1}-\ref{section2}, we already discussed the part
$V_{\alpha, \interior (X^2)}$ and a portion of $W_{\alpha}$. In next
section, we discuss the collapsing part $V_{\alpha, \interior
(X^1)}$ with spherical or toral fibers for our $3$-manifold
$M^3_{\alpha}$, where $\interior (X^s)$ is the interior of $X^s$.

\section{Collapsing with spherical and toral fibers }\label{section4}

In this section, we discuss the case when a sequence of metric balls
$\{(B_{(M^3_{\alpha}, \tilde g^{\alpha}_{ij})}(x_\alpha, r),
x_{\alpha})\}$ collapse to $1$-dimensional space $(X^1,x_{\infty})$.
There are only two choices of $X^1$, either diffeomorphic to a
circle or an interval $[0,l]$.

By Perelman's fibration theorem \cite{Per1994} or Yamaguchi's
fibration theorem, we can find an open neighborhood $U_{\alpha}$ of
$x_{\alpha}$ such that, for sufficiently large $\alpha$, there is a
fibration
$$ N^2_{\alpha}\to U_{\alpha}\to \interior (X^1)$$
where $\interior (X^1)$ is isometric to a circle $S^1$ or an open
interval $(0,l)$.

We will use the soul theory (e.g., \autoref{thm2.11}) to verify that
a finite cover of the collapsing fiber $N^2_\alpha$ must be
homeomorphic to either a $2$-sphere $S^2$ or a $2$-dimensional torus
$T^2$, (see \autoref{fig:4.1} in \S 0)

Let us begin with two examples of collapsing $3$-manifold with toral
fibers

\begin{example}\label{ex4.1}
Let $M^3=\mathbb H^3/\Gamma$ be an oriented and non-compact quotient
of $\mathbb H^3$ such that $M^3$ has finite volume and $M^3$ has
exactly one end. Suppose that $\sigma: [0,\infty)\to \mathbb
H^3/\Gamma$ be a geodesic ray. We consider the corresponding
Busemann function $h_{\sigma}(x)=\lim_{t\to
+\infty}[t-\dis(x,\sigma(t))]$. For sufficiently large $c$, the
sup-level subset $V_{c, X^1}=h^{-1}_{\sigma}([c,+\infty))$ has
special properties. It is well known that, in this case, the cusp
end $V_{c,X^1}$ is diffeomorphic to $T^2\times [c,+\infty)$. Of
course, the component $V_{c, X^1}\cong T^2\times [c,+\infty)$ admits
a collapsing family of metric $\{g_{\varepsilon}\}$, such that
$(V_{c, X^1},g_{\varepsilon})$ is convergent to half line
$[c,+\infty)$. \qed
\end{example}

We would like to point out that $2$-dimensional collapsing fibers
can be collapsed at two different speeds.

\begin{example}
Let $M^3_{\varepsilon}=(\mathbb R/\varepsilon\mathbb
Z)\times(\mathbb R/\varepsilon^2\mathbb Z)\times[0,1]$ be the
product of rectangle torus $T_{\varepsilon,\varepsilon^2}$ and an
interval. Let us fix a parametrization of $M^3_{1}=\{(e^{2\pi
si},e^{2\pi t i}, u)|u\in [0,1], s,t\in \mathbb R\}$ and
$g_{\varepsilon}=\varepsilon^2ds^2+\varepsilon^4dt^2+du^2$. Then the
re-scaled pointed spaces $\{
((M^3_{1},\frac{1}{\varepsilon^2}g_{\varepsilon}),(1,1,\frac12))\}$
are convergent to the limiting space $(Y^2_{\infty}, y_{\infty})$,
where $Y^2_{\infty}$ is isometric to $S^1\times (-\infty,+\infty)$.
\end{example}

Similarly, when the collapsing fiber is homeomorphic to a 2-sphere
$S^2$, the collapsing speeds could be different along longitudes and
latitudes. We may assume that latitudes shrink at speed
$\varepsilon^2$ and longitudes shrink at speed $\varepsilon$. For
the same reason, after re-scaling, the limit space $Y^2_\infty$
could be isometric to $ [0,1] \times (-\infty, +\infty)$. Thus, the
non-compact limiting space $ Y^2_\infty$ could have boundary. \qed

\medskip

Let us return to the proof of Perelman's collapsing theorem.
According to the second condition of \autoref{thm0.1}, we consider a
boundary component $N^2_{\alpha, X^1} \subset
\partial M^2_{\alpha} $, where the diameter of $N^2_{\alpha,
X^1}$ is at most $\omega_\alpha \to 0$ as $\alpha \to \infty$.
Moreover, there exists a topologically trivial collar $\hat
V_{\alpha, X^1}$ of length one and sectional curvatures of
$M^3_{\alpha}$ are between $(-\frac14-\varepsilon)$ and
$(-\frac14+\varepsilon)$. In this case, we have a trivial fibration:
\begin{equation}\label{eq4.1}
T^2_{\alpha}\to \hat V_{\alpha, X^1}\xrightarrow{\pi_{\alpha}}[0,1]
\end{equation}
such that the diameter of each fiber $\pi^{-1}_{\alpha}(t)$ is at
most $[\frac{e+e^{-1}}{2}w_{\alpha}]$ by standard comparison
theorem. As $\alpha \to +\infty$, the sequence $\{\hat V_{\alpha,
X^1}\}$ converge to an interval $X^1=[0,1]$.

We are ready to work on the main result of this subsection.
\begin{theorem}\label{thm4.3}
Let
$\{((M^3_{\alpha},\rho^{-2}_{\alpha}g^{\alpha}_{ij}),x_{\alpha})\}$
be as in Theorem 0.1'. Suppose that an $1$-dimensional space $X^1$
is contained in the $1$-dimensional limiting space and $x_{\alpha}\to x_{\infty}$ is
an interior point of $X^1$. Then, for sufficiently large $\alpha$,
there exists a sequence of subsets $\hat V_{\alpha, X^1}\subset
M^3_{\alpha}$ such that $\hat V_{\alpha, X^1}$ is fibering over
$\interior (X^1)$ with spherical or toral fibers.
$$N^2_\alpha\to \hat V_{\alpha, \interior (X^1)}\to \interior (X^1)$$
where $N^2_\alpha$ is homeomorphic to a quotient of a $2$-sphere
$S^2$ or a $2$-torus $T^2$.

When $M^3_\alpha$ is oriented, $N^2_\alpha$ is either $S^2$ or
$T^2$.
\end{theorem}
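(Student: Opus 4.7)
\medskip

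\noindent\textbf{Proof proposal for Theorem 4.3.}
The plan is to first produce a fiber bundle structure over the interior of $X^1$ by applying Perelman's fibration theorem (\autoref{thm1.2}) to a suitable admissible function on $M^3_\alpha$, and then to classify the topology of the resulting surface fiber by rescaling, taking a non-negatively curved Alexandrov limit, and applying the soul theory and surface classification developed in \autoref{section2}. Since $x_\infty$ is an interior point of the $1$-dimensional limit $X^1$, a small neighborhood of $x_\infty$ in the ambient limit space is isometric to an open interval $(-\ell,\ell)$; the signed distance $F_\infty$ from a nearby auxiliary point is regular away from $x_\infty$, so by \autoref{prop1.14} the lifted admissible functions $F_\alpha$ on $(M^3_\alpha, \rho_\alpha^{-2} g^\alpha_{ij})$ are regular on the preimage of $(-\ell,\ell)$ for $\alpha \gg 1$. \autoref{thm1.2}(B) then produces a locally trivial fiber bundle $N^2_\alpha \to \hat V_{\alpha,X^1} \to \interior(X^1)$ whose fiber $N^2_\alpha$ is a closed surface.

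To pin down $N^2_\alpha$ topologically, I would rescale by $\varepsilon_\alpha := \diam_{g^\alpha}(F_\alpha^{-1}(x_\infty))$, which tends to $0$ by the collapsing hypothesis, and pass to a pointed Gromov--Hausdorff limit
\[
((M^3_\alpha, \varepsilon_\alpha^{-2}\rho_\alpha^{-2} g^\alpha_{ij}), x_\alpha) \longrightarrow (Y^s_\infty, y_\infty),
\]
where $Y^s_\infty$ is a complete Alexandrov space of curvature $\ge 0$. The bundle direction persists in the limit and produces a line in $Y^s_\infty$, so by the Toponogov splitting theorem we obtain an isometric splitting $Y^s_\infty = N^2_\infty \times \mathbb{R}$ (in the subcase $\dim Y^s_\infty = 3$), or a further collapsing of $N^2_\alpha$ reducing to \autoref{thm2.1} in the subcase $\dim Y^s_\infty = 2$, where the fiber is forced to be a quotient of $S^1 \times S^1$ or $S^1 \times I$ capped off correctly.

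In the three-dimensional limit subcase, $N^2_\infty$ is a closed surface with a possibly singular metric of non-negative curvature; by \autoref{cor2.7} it is homeomorphic to a quotient of $S^2$ or isometric to a flat quotient of $T^2$. By Perelman's stability theorem applied fiberwise (using that $F_\alpha$ restricted to a small tubular piece is a topological submersion with compact fibers converging in Gromov--Hausdorff distance to $N^2_\infty$), we conclude $N^2_\alpha \cong N^2_\infty$ for all sufficiently large $\alpha$, giving the desired $S^2$- or $T^2$-quotient fiber. In the oriented case, $\mathbb{RP}^2$ and Klein bottle fibers are excluded because an orientable $3$-manifold cannot contain a non-orientable closed surface as a two-sided fiber, leaving only $S^2$ and $T^2$.

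The main obstacle I anticipate is the subcase $\dim Y^s_\infty = 2$, where the rescaled fibers themselves collapse in a second, finer scale and one must argue that the surface fiber $N^2_\alpha$ can still be recovered as a topological quotient. To handle this I would iterate the rescaling argument of \autoref{prop2.2}--\autoref{prop2.3}, use the classification of non-negatively curved $2$-dimensional Alexandrov spaces with boundary (\autoref{cor2.9}), and invoke Perelman's stability theorem with parameters to ensure that the resulting fiber structure over $\interior(X^1)$ is compatible with the outer fibration already produced. The conclusion that $N^2_\alpha$ is a sphere or torus quotient then follows from the same soul-theoretic case analysis, and orientability of $M^3_\alpha$ again eliminates the non-orientable candidates.
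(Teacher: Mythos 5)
Your proposal is correct and follows essentially the same route as the paper: Perelman's fibration theorem applied over $\interior(X^1)$, a rescaling to a pointed Alexandrov limit $Y$ of non-negative curvature, a splitting argument from the two ends, classification of the resulting closed non-negatively curved surface via \autoref{section2}, Perelman's stability theorem to transfer back to $N^2_\alpha$, and orientability to rule out $\mathbb{RP}^2$ and the Klein bottle. The only divergences are minor: the paper rescales by the largest critical value $\lambda_\alpha$ of $r_{x_\alpha}$ rather than by the fiber diameter (the choice you make, which is the one used in the paper's introductory outline), and it settles the $\dim Y=2$ subcase exactly along the lines you sketch --- the $S^1$-fibration machinery of \autoref{section1}--\ref{section2} when $Y^2_\infty$ is a boundaryless flat cylinder (giving $N^2_\alpha\sim T^2/\Gamma$) and the boundary-collapse analysis of \autoref{thm5.4} when $y_\infty\in\partial Y^2_\infty$ (giving $N^2_\alpha\sim S^2$).
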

\begin{proof}
As we pointed out above, since $ \interior (X^1)$ is a
$1$-dimensional space, there exists a fibration
$$N^2_\alpha\to \hat V_{\alpha, \interior (X^1)}\to \interior (X^1)$$
for sufficiently large $\alpha$. It remains to verify that the fiber
is homeomorphic to $S^2, \mathbb{RP}^2, T^2$ or Klein bottle
$T^2/\mathbb Z_2$. For this purpose, we use soul theory for possibly
singular space $Y^k_{\infty}$ with non-negative curvature.

By our discussion, $B_{\rho^{-2}_{\alpha}g^{\alpha}}(x_{\alpha},1)$
is homeomorphic to $N^2_\alpha\times (0,l)$, where $N^2_\alpha$ is a
closed $2$-dimensional manifold. Thus, the distance function
$r_{x_{\alpha}}(x)=\dis_{\rho^{-2}_{\alpha}g^{\alpha}}(x_{\alpha},x)$
has at least one critical point $y_{\alpha}\ne x_{\alpha}$ in
$B_{\rho^{-2}_ {\alpha}g^{\alpha}}(x_{\alpha},1)$, because
$B_{g^{\alpha}}(x_{\alpha},\rho_{\alpha})$ is not contractible. Let
$\lambda_\alpha$ be
$$
\max\{\dis_{\rho^{-2}_{\alpha}g^{\alpha}}(x_{\alpha},y_{\alpha})|
y_{\alpha}\ne x_{\alpha} \text { is a critical point of }
r_{x_{\alpha}} \text { in } B_{\rho^{-2}_ {\alpha}
g^{\alpha}}(x_{\alpha},1)\}.
$$
We claim that $0<\lambda_{\alpha}<1$ and $\lambda_{\alpha}\to 0$ as
$\alpha\to +\infty$. To verify this assertion, we observe that the
distance functions $\{r_{x_\alpha}\}$ are convergent to
$r_{x_{\infty}} : X^1\to \mathbb R$. By Perelman's convergent
theorem, the trajectory of gradient semi-flow
$$
\frac{d^+\varphi}{dt}= \frac{\nabla r_{x_\alpha}}{|\nabla
r_{x_\alpha}|^2}
$$
is convergent to the trajectory in the limit space $X^1$:
\begin{equation}\label{eq4.2}
\frac{d^+\varphi}{dt}= \nabla r_{x_\infty}
\end{equation}
(see \cite{Petr2007} or \cite{KPT2009}).

Clearly, $r_{x_\infty}: X^1\to \mathbb R$ has no critical value in
$(0,\delta_{\infty})$ for $\delta_{\infty}>0$. Thus, for
sufficiently large $\alpha$, the distance function $r_{x_{\alpha}}$
has no critical value in $(\lambda_{\alpha},
\delta_{\infty}-\varepsilon_{\alpha})$, where $\lambda_{\alpha}\to
0$ and $\varepsilon_{\alpha}\to 0$ as $\alpha\to +\infty$.

Let us now re-scale our metrics
$\{\rho^{-2}_{\alpha}g^{\alpha}_{ij}\}$ by $\{\lambda^{-2}_ {\alpha}
\}$ again. Suppose $\tilde g^{\alpha}_{ij}= \frac{1}
{\lambda^2_{\alpha} \rho^2_{\alpha}}g^{\alpha}_{ij}$. Then a
subsequence of the sequence of the pointed spaces
$\{(M^3_{\alpha},\tilde g^{\alpha}_{ij} ), x_{\alpha}\}$ will
converge to $(Y^k_{\infty}, y_{\infty})$. The curvature of
$Y^k_{\infty}$ is greater than or equal to $0$, because
$\curv_{\frac{1} {\lambda^2_{\alpha} \rho^2_{\alpha}}
g^{\alpha}_{ij}} \ge -{\lambda^2_{\alpha}}\to 0$ as $\alpha\to
+\infty$.

Since the distance $r_{y_{\infty}}(y)=\dis(y, y_{\infty})$ has a
critical point $z_{\infty}\ne y_{\infty}$ with $\dis(z_{\infty},
y_{\infty})\le 1$. Thus $\dim(Y^k_{\infty})>1$.

When $\dim(Y^k_{\infty})=3$, we observe that $Y^3_{\infty}$ has
exactly two ends in our case. Thus $Y^3_{\infty}$ admits a line and
hence its metric splits. A soul $N^2_\infty$ of $Y^3_{\infty}$ must
be of $2$-dimensional. Thus $Y^3_{\infty}$ is isometric to
$N^2_\infty \times \mathbb R$. It follows that the soul $N^2_\infty$
of $Y^3_{\infty}$ has non-negative curvature. By Perelman's
stability theorem, $N^2_\alpha$ is homeomorphic to $N^2_\infty$ for
sufficiently large $\alpha$. A closed possibly singular surface
$N^2_\infty$ of non-negative curvature has been classified in
\autoref{section2}: $S^2, \mathbb{RP}^2, T^2$ or Klein bottle
$T^2/\mathbb Z_2$.

Let us consider the case of $\dim (Y^k_{\infty})=2$. We may assume
that the limiting space $Y^2_{\infty}$ has exactly two ends. When
$Y^2_\infty$ has no boundary, then the limit space is isometric to
$S^1\times \mathbb R$, because $Y^2_{\infty}$ has exactly two ends.
By our discussion in \S 1-2, we have fibration structure
\begin{equation}\label{eq4.3}
S^1\to M^3_{\alpha}\to Y^2_{\infty}
\end{equation}
for sufficiently large $\alpha$.

When $Y^2_\infty$ has non-empty boundary (i.e., $\partial
Y^2_{\infty} \neq \varnothing$), there are two subcases. If
$y_\infty \in \interior(Y^2_\infty)$, by our discussion in
\autoref{section2}, we still have $ N^2_\alpha \sim T^2/\Gamma$. If
$y_\infty \in \partial Y^2_\infty$, using \autoref{thm5.4} below, we
see that $N^2_\alpha \sim [\partial B_{M^3_\alpha}(x_\alpha, r)]
\sim S^2$. This completes the proof of our theorem for all cases.
\end{proof}

In next section, we will discuss the end points or $\partial X^1$
when $X^1\cong [0, l]$ is an interval. In addition, we also discuss
the $2$-dimensional boundary $\partial X^2$ when $X^2$ is a surface
with convex boundary.

\section{Collapsing to boundary points or endpoints of the limiting
space}\label{section5}

Suppose that a sequence of $3$-manifolds is collapsing to a lower
dimensional space $X^s$. In previous sections we showed that there
is (possibly singular) fibration
$$
N^{3-s}_\alpha \to B_{M^3_{\alpha}} (x_\alpha, r) \xrightarrow{G-H}
\interior (X^s).
$$
In this section we will consider the points on the boundary of $X^s$,
we will divided our discussion into two cases: namely (1) when $s=1$
and $X^1$ is a closed interval; and (2) when $s=2$ and $X^2$ is a
surface with boundary.

\subsection{Collapsing to a surface with boundary} \

\smallskip

Since $X^2$ is a topological manifold with boundary, without loss of
generality, we can assume that $\partial X^2=S^1$. First we provide
two examples to demonstrate that the collapsing could occur in many
different ways.

\begin{example}\label{ex 5.1}
Let $M^3_1 =D^2 \times S^1$ be a solid torus, where $D^2 = \{ (x, y)
| x^2 + y^2 < 1 \}$. We will construct a family of metrics
$g_\varepsilon$ on the disk so that $D^2_\varepsilon = (D^2,
g_\varepsilon) $ is converging to the interval $[0, 1)$. It follows
that if $M^3_{\frac{1}{\varepsilon}} = D^2_\varepsilon \times S^1$
then the sequence of $3$-spaces is converging to a finite cylinder:
i.e., $M^3_{\frac{1}{\varepsilon}} \to \big([0, 1) \times S^1 \big)$
as $\varepsilon \to 0$.

For this purpose, we let
$$
D^2_\varepsilon=\{(x, y, z)\in \mathbb R^3|z^2=\tan ( \frac{1}{
\varepsilon })[ x^2 + y^2],z\ge 0, x^2 + y^2 + z^2 < 1 \}
$$
We further make a smooth perturbation around the vertex $(0, 0,
0)$, while keeping curvature non-negative. Clearly, as $\varepsilon
\to 0$, the family of conic surfaces $\{ D^2_\varepsilon \}$
collapses to an interval $[0, 1)$. Let $X^2 = [0, 1) \times S^1$ be
the limiting space. As $\varepsilon \to 0$, our $3$-manifolds $
M^3_{\frac{1}{\varepsilon}}= D^2_\varepsilon \times S^1$ collapsed
to $X^2$ with $\partial X^2 \neq \varnothing$.

In this example, for every point $p\in
\partial X^2$, the space of direction $N_p(X^2)$ is a closed
half circle with diameter $\pi$.
\end{example}

\begin{example}\label{ex5.2}
In this example, we will construct the limiting surface with
boundary corner points. Let us consider the unit circle $S^1 =
\mathbb R/\mathbb Z$ and let $\varphi: \mathbb R^1 \to \mathbb R^1$
be an involution given by $\varphi(s) = -s $. Then its quotient
$S^1/\langle \varphi \rangle$ is isometric to $[0, \frac 12]$. Our
target limiting surface will be $ X^2 = [0, 1) \times [0, \frac
12]$. Clearly, $X^2$ has boundary corner point with total angle
$\frac{\pi}{2}$.

To see that $X^2$ is a limit of smooth Riemannian $3$-manifolds $\{
\bar{M}^3_{ \frac{1}{ \varepsilon } } \}$ while keeping curvatures
non-negative, we proceed as follows. We first viewed a 2-sheet cover
$M^3$ as an orbit space of $N^4$ by a circle action. Let $N^4 = D^2
\times \big (\mathbb R^2/(\mathbb Z \oplus \mathbb Z ) \big) $ and
let $\{(re^{i\theta},s,t)|0\le\theta\le 2\pi,0\le s\le 1,0\le t\le 1
\}$ be a parametrization of $D^2 \times \mathbb R^2$ and hence for
its quotient $N^4$. There is a circle action $\psi_\lambda: N^4 \to
N^4$ given by $\psi_\lambda ( re^{i\theta}, s, t) = ( re^{i(\theta +
2\pi \lambda)}, s, t+ \lambda)$ for each $e^{i2\pi\lambda} \in S^1$.
We also define an involution $ \tau: N^4 \to N^4$ by
$\tau(re^{i\theta}, s, t) = (re^{i\theta}, -s, t + \frac 12)$. It
follows that $ \tau \circ \tau = id $. Let $S^1 \ltimes \mathbb Z_2$
be subgroup generated by $\{\tau, \psi_\lambda\}_{\lambda \in S^1
}$. We introduce a family of metrics:
$$
g_\varepsilon = dr^2 + rd\theta^2 + ds^2 + \varepsilon^2 dt^2.
$$
The transformations $\{\tau, \psi_\lambda\}_{\lambda \in S^1 }$
remain isometries for Riemannian manifolds $(N^4, g_\varepsilon)$.
Thus, $\bar{M}^3_{ \frac{1}{\varepsilon} } = (N^4, g_\varepsilon)/ (
S^1 \ltimes \mathbb Z_2)$ is a smooth Riemannian $3$-manifold with
non-negative curvature. It is easy to see that $\bar{M}^3_{
\frac{1}{\varepsilon} }$ is homeomorphic to a solid torus. As
$\varepsilon \to 0$, our Riemannian manifolds $\{ \bar{M}^3_{
\frac{1}{\varepsilon} } \}$ collapse to a lower dimensional space
$X^2 = [0, 1) \times [0, \frac 12] = \{ (r, s) | 0 \le r < 1, 0 \le
s \le \frac 12 \}$. The surface $X^2$ has a corner point with total
angle $\frac{\pi}{2}$.
\end{example}

In above two examples, if we set $\alpha = \frac{1}{\varepsilon}$,
then there exist an open subset $U_\alpha \subset M^3_\alpha$ and a
continue map $G_\alpha: U_\alpha \to X^2$ such that $G_\alpha^{-1}(
A ) = U_\alpha $ is homeomorphic to a solid torus, where $A$ is an
annular neighborhood of $\partial X^2$ in $X^2$.

Let us recall an observation of Perelman on the distance function
$r_{\partial X^2} $ from the boundary $\partial X^2$.

\begin{lemma}\label{lem5.3}
Let $X^2$ be a compact Alexandrov surface of curvature $\ge c$ and
with non-empty boundary $\partial X^2$, $\Omega_{- \varepsilon}=\{ p
\in X^2 | \dis(p,
\partial X^2) \ge \varepsilon\}$ and $A_\varepsilon = [X^2 -
\Omega_{- \varepsilon}]$. Then, for sufficiently small
$\varepsilon$, the distance function $r_{\partial X^2}
=\dis_{X^2}(\cdot, \partial X^2)$ from the boundary has no critical
point in $A_\varepsilon$.
\end{lemma}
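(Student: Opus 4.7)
The plan is a first-variation argument at each $p\in A_\varepsilon\setminus\partial X^2$. Set $\rho:=r_{\partial X^2}(p)$, choose a nearest point $q\in\partial X^2$, and take a unit-speed minimizing geodesic $\gamma\colon[0,\rho]\to X^2$ with $\gamma(0)=q$, $\gamma(\rho)=p$. I would show that the outward direction $\xi:=\gamma'(\rho)\in\Sigma_p(X^2)$ satisfies $d_p r_{\partial X^2}(\xi)>0$ as soon as $\rho$ is smaller than some uniform $\varepsilon>0$, which is enough to conclude that $p$ is not a critical point of $r_{\partial X^2}$. By the first variation formula,
\[
  d_p r_{\partial X^2}(\xi)\;=\;-\max\bigl\{\cos\measuredangle_p(\xi,\eta):\eta\in\Uparrow_p^{\partial X^2}\bigr\},
\]
so the task reduces to verifying $\measuredangle_p(\xi,\eta)>\pi/2$ for every initial direction $\eta$ of a minimizing geodesic from $p$ to $\partial X^2$.

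The chosen $q$ contributes $-\gamma'(\rho)\in\Uparrow_p^{\partial X^2}$, which already pairs with $\xi$ at angle exactly $\pi$. For any other nearest point $q'\in\partial X^2$ with $\dist(p,q')=\rho$ and associated direction $\eta=[pq']$, I would first show that $\dist(q,q')\to 0$ uniformly as $\rho\to 0$. The extrinsic bound $\dist(q,q')\le 2\rho$ is trivial from the triangle inequality, and since $\partial X^2$ is a one-dimensional Alexandrov space that is locally quasi-geodesic in $X^2$, its intrinsic and induced metrics are locally bi-Lipschitz equivalent, so the set of nearest boundary points clusters about $q$. Then the hinge comparison for $\curv\ge c$ applied to the triangle $(p,q,q')$---two equal sides of length $\rho$ and third side tending to $0$---forces $\measuredangle_p(-\gamma'(\rho),\eta)\to 0$ as $\rho\to 0$. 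Consequently $\measuredangle_p(\xi,\eta)\ge\pi-o(1)>\pi/2$, as required.

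The main obstacle is producing a uniform $\varepsilon>0$ valid over the entire compact boundary, especially at corner (extremal) points $q_0\in\partial X^2$ such as those illustrated in \autoref{ex5.2}, where $\Sigma_{q_0}(X^2)$ is a short arc and the local bi-Lipschitz constant used above degenerates. At such a $q_0$ I would argue by rescaling: the pointed surfaces $(\tfrac{1}{\rho}X^2,q_0)$ converge to the tangent cone $T^{-}_{q_0}(X^2)$, which is a flat Euclidean sector of angle $\le\pi$, and the distance to its boundary has no interior critical points by direct inspection. Convergence of gradients via \autoref{prop1.14} then transports non-vanishing of $\nabla r_{\partial X^2}$ back to $X^2$ at every sufficiently small scale. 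Since Alexandrov extremal points of $\partial X^2$ are isolated---by the Conic Lemma \autoref{thm0.6} applied inside the doubled surface $\dbl(X^2)$, which is an Alexandrov space of the same curvature bound by Petrunin's gluing \cite{Petr2007}---finiteness of such corners together with compactness of $\partial X^2$ lets one patch the smooth-boundary estimate and the corner estimate on a finite open cover, giving a single uniform $\varepsilon$.
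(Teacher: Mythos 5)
Your argument has a genuine gap at its central quantitative step. You claim that since $\dist(q,q')\le 2\rho\to 0$, the hinge comparison in the triangle $(p,q,q')$ forces $\measuredangle_p(-\gamma'(\rho),\eta)\to 0$ as $\rho\to 0$. But the comparison angle at $p$ is controlled by the \emph{ratio} $\dist(q,q')/\rho$, not by $\dist(q,q')$ alone: with two sides of length $\rho$ and a base of order $\rho$, the angle at $p$ stays bounded away from $0$. Concretely, in a flat square take $p=(\rho,\rho)$; the nearest boundary points are $q=(\rho,0)$ and $q'=(0,\rho)$, so $\dist(q,q')=\sqrt2\,\rho$, the two directions in $\Uparrow_p^{\partial X^2}$ make angle $\pi/2$ at $p$, and your witness $\xi=\gamma'(\rho)$ gives $d_pr_{\partial X^2}(\xi)=0$, not $>0$, even though $p$ is regular (the diagonal direction is a witness). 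More generally, in the tangent-cone model at a boundary point of total angle $\theta=\diam\Sigma_x(X^2)$, points admitting two nearest boundary points see $\measuredangle_p(\xi,\eta)=\theta$; so your chosen direction only works when $\theta>\pi/2$, and the justification you offer (angle tending to $0$) is false even then. Deferring the finitely many extremal points to a blow-up does not repair this, because the failure occurs at \emph{interior} points $p$ in whole cone-like neighborhoods of such corners, and in the ``good'' region what saves the argument is precisely a lower bound on the boundary angle, not a hinge estimate with vanishing base. (Your corner blow-up also quietly needs uniform semiconcavity of the rescaled distance-to-boundary functions to invoke \autoref{prop1.14}, which deserves justification near $\partial X^2$.)

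For comparison, the paper proves \autoref{lem5.3} without any pointwise case analysis: by Perelman's first-variation computation, at every boundary point $x$ one has $|\nabla r_{\partial X^2}|(x)=\sin\bigl(\theta(x)/2\bigr)>0$, since convexity of $\partial X^2$ gives $0<\theta(x)\le\pi$ (this covers corners, extremal or not, in one stroke); Petrunin's lower semicontinuity of $|\nabla f|$ along converging sequences together with compactness of $X^2$ then yields a uniform collar $A_\varepsilon$ free of critical points. If you want to salvage your approach, replace the hinge step by this boundary-angle estimate (or run your rescaling argument at \emph{every} boundary point, at which stage you are essentially reproving the semicontinuity statement the paper quotes).
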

\begin{proof}
We will use a calculation of Perelman and a result of Petrunin to
complete the proof. By the definition, if $X^2$ has curvature $\ge
c$, then $\partial X^2$ must be convex. For any $x \in \partial
X^2$, we let $\theta(x) = \diam[\Sigma_x(X^2)]$ be the total tangent
angle of the convex domain $X^2$. It follows from the convexity $0 <
\theta(x) \le \pi$.

We consider the distance function $f(y) = \dis_{X^2}(y, \partial
X^2)$ for all $y \in X^2$. Perelman (cf. \cite{Per1991} page 33,
line 1) calculated that
$$
|\nabla f(x)| = \sin \left(\frac{\theta(x)}{2}\right) > 0,
$$
for all $x \in \partial X^2$. (Perelman stated his formula for
spaces with non-negative curvature, but his proof using the first
variational formula is applicable to our surface $X^2$ with
curvature $\ge -1$).

Corollary 1.3.5 of \cite{Petr2007} asserts that if there is a
converging sequence $\{ x_n\} \subset X^2 $ with $x_n \to x \in
\partial X^2$ as $n \to \infty$ then
$$\liminf_{n \to \infty}|\nabla f (x_n) |\ge
|\nabla f (x) |.
$$
Since $X^2$ is compact, it follows from the above discussion that
there is an $\varepsilon >0$ such that $|\nabla f (y) | > 0 $ for $y
\in A_\varepsilon$.
\end{proof}

We now recall a theorem of Shioya-Yamaguchi with our own proof. The
proof of Shioya-Yamaguchi used a version of Margulis Lemma, which we
will use our \autoref{thm1.17} in \S1 instead.

\begin{theorem}[\cite{SY2000} page 2]\label{thm5.4}
Let $\{(M^3_\alpha, p_\alpha) \}$ be a sequence of collapsing
$3$-manifolds as in Theorem 0.1'. Suppose that $\{ (B_{M^3_\alpha}(
p_\alpha, r), p_\alpha) \} \to (X^2, p_\infty) $ with $p_\infty \in
\partial X^2$ and $\partial X^2 $ is homeomorphic to $S^1$. Then
there is $\delta_1 > 0$ such that $B_{M^3_\alpha}(p_\alpha,
\delta_1) $ is homeomorphic to $D^2 \times I \cong D^3$ for all
sufficiently large $\alpha$.

Moreover, there exist an $\varepsilon > 0$ and a sequence of closed
curves $\varphi_\alpha: S^1 \to M^3_\alpha $ with $\{\varphi_\alpha
(S^1)\} \to \partial X^2$ as $ B_{M^3_\alpha}( p_\alpha, r)
\stackrel{G-H}{\longrightarrow} X^2$ such that a
$\varepsilon$-tubular neighborhood $ U_{\varepsilon}[\varphi_\alpha
(S^1)] $ of $\varphi_\alpha (S^1)$ in $M^3_\alpha$ is homeomorphic
to a solid tori $D^2 \times S^1$ for sufficiently large $\alpha$.
\end{theorem}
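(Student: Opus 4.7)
The plan is to combine Perelman's fibration and stability theorems with \autoref{lem5.3}, together with a rescaling argument modeled on \autoref{prop2.2}--\autoref{prop2.3} and soul theory (\autoref{thm2.11}), to identify the local topology around $p_\alpha$ and around an approximating circle $\varphi_\alpha(S^1)$.

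First, I would construct $\varphi_\alpha: S^1 \to M^3_\alpha$ as a piecewise-geodesic closed loop obtained by lifting a fine net of points on $\partial X^2 \cap B_{X^2}(p_\infty, r')$ through the Gromov--Hausdorff approximations, so that $\varphi_\alpha(S^1) \to \partial X^2$ as $\alpha \to \infty$. Pick $p'_\alpha \in \varphi_\alpha(S^1)$ with $p'_\alpha \to p_\infty$, and let $\lambda_\alpha$ be the extrinsic diameter of $\varphi_\alpha(S^1)$ in $(M^3_\alpha, \hat g^\alpha)$; the collapse $M^3_\alpha \to X^2$ forces $\lambda_\alpha \to 0$. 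Rescale $\tilde g^\alpha = \lambda_\alpha^{-2}\hat g^\alpha$ and extract a subsequence to obtain $((M^3_\alpha, \tilde g^\alpha), p'_\alpha) \to (Y_\infty, y_\infty)$, a pointed complete non-negatively curved Alexandrov space. The assertion $\dim Y_\infty = 3$ is forced by the convex-function argument of \autoref{prop2.3} adapted to a basepoint on $\partial X^2$, using the $S^1$ of directions approximating $\partial X^2$ together with transverse Busemann rays. The rescaled $\varphi_\alpha(S^1)$ converges to a closed geodesic $\sigma_\infty \subset Y_\infty$ of unit length whose lift to the universal cover is a line, so the splitting theorem together with \autoref{thm2.11}(1) identifies $Y_\infty$ with a quotient of $\mathbb R^2 \times \mathbb R$ by a $\mathbb Z$-action, i.e., topologically the open solid torus $D^2 \times S^1$ (the orientability hypothesis built into \autoref{thm0.1} rules out twisted quotients).

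For assertion (a), observe that $B_{Y_\infty}(y_\infty, R) \cong D^3$ whenever $R$ is less than half the length of $\sigma_\infty$; Perelman's stability theorem then gives $B_{\tilde g^\alpha}(p'_\alpha, R) \cong D^3$ for large $\alpha$, equivalently $B_{M^3_\alpha}(p_\alpha, R\lambda_\alpha) \cong D^3$, and taking $\delta_1 = R\lambda_\alpha$ gives the claim. For assertion (b), fix $\varepsilon > 0$ small. \autoref{lem5.3} gives that $r_{\partial X^2}$ has no critical points on the collar $\{0 < r_{\partial X^2} < \varepsilon\}$; by \autoref{prop1.14}, the lifted function $f_\alpha(x) = \dist_{M^3_\alpha}(x, \varphi_\alpha(S^1))$ inherits the absence of critical points on $A_{M^3_\alpha}(\varphi_\alpha(S^1), \varepsilon_\alpha, \varepsilon)$ for large $\alpha$ with $\varepsilon_\alpha \to 0$. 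Perelman's Fibration Theorem (\autoref{thm1.2}) then presents this annulus as a trivial bundle over $(\varepsilon_\alpha, \varepsilon]$; combined with the transverse $D^2$-slice structure coming from the splitting of $Y_\infty$ established above, this exhibits $U_\varepsilon(\varphi_\alpha(S^1))$ as an orientable disk bundle over $S^1$, necessarily $D^2 \times S^1$.

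The main obstacle in this plan is identifying $Y_\infty$ correctly, specifically ruling out $\dim Y_\infty = 2$. Adapting the convex-function argument of \autoref{prop2.3} to a basepoint lying on $\partial X^2$ requires choosing the averaging net so that both tangential directions along $\partial X^2$ and transversal directions into $\interior(X^2)$ are sampled, producing at least three mutually non-collinear rescaled geodesic directions at $y_\infty$, which is incompatible with $\dim \Sigma_{y_\infty}(Y_\infty) = 1$. Once $\dim Y_\infty = 3$ is secured, the existence of the line $\sigma_\infty$, the splitting, and the identification of the soul with $S^1$ follow from the circular topology of $\partial X^2 \cong S^1$ together with the orientability hypothesis.
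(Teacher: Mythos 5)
Your rescaling step contains a genuine error that the rest of the argument rests on. You set $\lambda_\alpha$ equal to the extrinsic diameter of $\varphi_\alpha(S^1)$ in $(M^3_\alpha,\hat g^\alpha)$ and claim the collapse forces $\lambda_\alpha \to 0$. It does not: since $B_{M^3_\alpha}(p_\alpha,r) \to X^2$ in the Gromov--Hausdorff sense and $\varphi_\alpha(S^1) \to \partial X^2$, the diameter of $\varphi_\alpha(S^1)$ converges to $\diam_{X^2}(\partial X^2) > 0$ (the collapse happens in the fiber direction transverse to $X^2$, not along $\partial X^2$, which is a circle of definite size in the $2$-dimensional limit). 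Hence rescaling by $\lambda_\alpha^{-2}$ changes nothing essential, the limit at that scale is still the $2$-dimensional $X^2$, and there is no $3$-dimensional $Y_\infty$ containing a ``unit-length closed geodesic'' $\sigma_\infty$. Everything downstream fails with it: the splitting/soul identification of $Y_\infty$ as an open solid torus, the stability argument giving $B_{\tilde g^\alpha}(p'_\alpha,R)\cong D^3$, and the ``transverse $D^2$-slice structure'' used for the tubular neighborhood. Moreover, even if a blow-up limit existed, your choice $\delta_1 = R\lambda_\alpha$ is $\alpha$-dependent, whereas the statement requires a single $\delta_1>0$ working for all large $\alpha$; under your own (incorrect) claim $\lambda_\alpha\to 0$ this would only give shrinking balls homeomorphic to $D^3$, which is a much weaker and essentially vacuous conclusion.

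The correct mechanism, and the one the paper uses, is different in where the rescaling enters. One subdivides the approximating curve $\varphi_\alpha(S^1)$ into finitely many short arcs $A^i_{\alpha,s}$ (placing the finitely many extremal points of $\partial X^2$ at vertices, via \autoref{prop1.7}), and proves that a tubular neighborhood of \emph{fixed} size of each arc is homeomorphic to $D^3$. The regularity inputs are the distance function to the arc's endpoint and the distance to $\varphi_\alpha(S^1)$ (here \autoref{lem5.3}, \autoref{prop1.14} and \autoref{thm1.17} are used to show the fiber has circle boundary and hence that $\partial[U_{\varepsilon_i}(A^i_{\alpha,s})]\cong S^2$). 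The possible failure of $U_{\varepsilon_i}(A^i_{\alpha,s})\cong D^3$ is then excluded by rescaling by the \emph{largest critical value} $\lambda_\alpha$ of $r_\alpha(\cdot)=\dis_{M^3_\alpha}(\cdot,\varphi_\alpha(S^1))$ inside the neighborhood --- this quantity does tend to $0$ under the contradiction hypothesis --- and classifying the soul of the resulting noncompact $3$-dimensional limit via \autoref{thm2.11}: a circle soul forces $T^2$ boundary (contradicting $S^2$), a $2$-dimensional soul is incompatible with the structure at infinity, so the soul is a point, $Y^3_\infty\cong\mathbb R^3$, and Perelman's stability gives $D^3$. The solid torus in the second assertion is then obtained by gluing these $D^3$ pieces cyclically along the curve, not from a global splitting of a blow-up limit. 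If you want to salvage your outline, you must replace your diameter-based rescaling by a critical-value-based one (or some other quantity that genuinely tends to zero) and supply the boundary identification $\partial[U_{\varepsilon_i}(A^i_{\alpha,s})]\cong S^2$ that drives the case analysis.
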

\begin{proof}
By conic lemma (cf. \autoref{prop1.7}), the number of points $p\in
\partial X^2$ with $\diam (\Sigma_p)\le \frac{\pi}{2}$ is finite,
denoted by $\{b_1, \cdots, b_s\}$. Since $\partial X^2 \sim S^1$ is
compact, it follows from \autoref{prop1.15} that there is a common
$\ell >0$ such that (1) the distance function $r_p(y) = \dis(p, y)$
has not critical point in $[B_{X^2}(p, \ell) -\{p\}]$; and (2)
$B_{X^2}(p, \ell)$ is homeomorphic to the upper half disk $D^2_+$,
for all $p \in \partial X^2$.

Since $\partial X^2 $ is homeomorphic to $S^1$, we can approximate
$\partial X^2$ by a sequence of closed broken geodesics $\{
\sigma_\infty^{(j)} \}$ with vertices $\{ q_1, \cdots, q_j\} \subset
\partial X^2$. We may assume that $\{b_1, \cdots, b_s\}$ is a
subset of $\{ q_1, \cdots, q_j\}$ for all $j \ge s$. We also require
that the distance between two consecutive vertices is less than
$c/j$ (i.e., $\dis_{\partial X^2}(q_i, q_{i+1}) \le \frac{c}{j} <
\frac{\ell}{8}$), for sufficiently large $j$.

We now choose a sequence of finite sets $\{ p_1^\alpha, \cdots,
p_j^\alpha\}$ such that $p^\alpha_i \to q_i$ as $\alpha \to \infty$
and $\{ p_1^\alpha, \cdots, p_j^\alpha\}$ span an embedded broken
geodesic $\sigma_\alpha^{(j)}$ in $M^3_\alpha$. It follows that
\begin{equation}\label{eq5.1}
\sigma_\alpha^{(j)} \to \sigma_\infty^{(j)}
\end{equation}
as $\alpha \to \infty$. Therefor, we may assume that there is a
sequence of smooth embedded curves $\varphi_\alpha: S^1 \to
M^3_\alpha$ such that
\begin{equation}\label{eq5.2}
\varphi_\alpha(S^1) \to \partial X^2
\end{equation}
as $M^3_\alpha\to X^2$. Thus, when $\{z(B_{M^3_\alpha}( p_\alpha,
r), p_\alpha) \} \to (X^2, p_\infty) $, the corresponding closed
curves $\varphi_\alpha(S^1) \stackrel{G-H}{\longrightarrow}
\partial X^2$ in the Gromov-Hausdorff topology, as $M^3_\alpha \to X^2$.

We now choose a sufficiently large $j_0 $ and divide our closed
curve $\varphi_\alpha: S^1 \to M^3_\alpha$ into $j_0$-many arcs of
constant speed:
$$
\varphi_{\alpha, i}: [a_i, a_{i+1}] \to M^3_\alpha
$$
for $i = 0, 1,\cdots, j_0$, where $\varphi_\alpha(a_{j_0+1}) =
\varphi_\alpha(a_0)$.

For a fixed $i$, we let
$$
A^i_{\alpha, s} = \varphi_\alpha([a_i + s, a_{i+1} -s])
$$
and $q_{\alpha, i} = \varphi_\alpha(a_i)$. We will show that there
is an $\varepsilon_i $ such that
\begin{equation}\label{eq5.3}
U_{\varepsilon_i}( A^i_{\alpha, 0}) = B_{M^3_\alpha}(q_{\alpha, i},
\varepsilon_i ) \cup U_{\varepsilon_i}( A^i_{\alpha, s} ) \cup
B_{M^3_\alpha}(q_{\alpha, i+1}, \varepsilon_i )
\end{equation}
is homeomorphic to the unit $3$-ball $D^3$. Our theorem will follow
from \eqref{eq5.3}. It remains to establish \eqref{eq5.3}.

We will show that each of $\{ B_{M^3_\alpha}(q_{\alpha, i},
\varepsilon_i ), U_{\varepsilon_i}( A^i_{\alpha, s} ),
B_{M^3_\alpha}(q_{\alpha, i+1}, \varepsilon_i )\}$ is homeomorphic
to $D^3$.

For $U_{\varepsilon_i}( A^i_{\alpha, s} )$, we first observe that
$\partial X^2$ is convex. Thus, the boundary arc $\varphi_\infty
([a_i + s, a_{i+1} -s]) \subset \partial X^2$ is a
Perelman-Sharafutdinov semi-gradient curve of the distance function
$r_{q_{\infty, i}}(y) = \dis_{X^2}(y, q_{\infty, i})$. We already
choose $\ell$ sufficiently small and $j_0$ sufficiently large so
that $r_{q_{\infty, i}}$ has no critical point on $[B_{X^2}(
q_{\infty, i}, \ell) -\{q_{\infty, i} \}]$. By our construction, we
have $r_{q_{\alpha, i}}(\cdot) \to r_{q_{\infty, i}}(\cdot)$, as
$\alpha \to \infty$. It follows from \autoref{prop1.14} that the
distance function has no critical points on $U_{\varepsilon_i}(
A^i_{\alpha, s} )$. Using Perelman's fibration theorem, we obtain a
fibration
$$
N^2_{\alpha, i} \to U_{\varepsilon_i}( A^i_{\alpha, s} )
\stackrel{r_{q_{\alpha, i}}}{\longrightarrow} (s, \ell_{\alpha,
i}-s).
$$
It follows that $ U_{\varepsilon_i}( A^i_{\alpha, s} ) \sim
[N^2_{\alpha, i} \times (s, \ell_{\alpha, i}-s)]$. We will first use
\autoref{thm1.17} and its proof to show that $\partial N^2_{\alpha,
i} \sim S^1$.

Let $\varepsilon_0$ be given by \autoref{lem5.3} and $r_{\alpha}( y)
=\dis_{M^3_\alpha}(y, \varphi_\alpha(S^1))$. By the proof of
\autoref{thm1.17}, the map $F_\alpha( z) = (r_{q_{\alpha, i}}(z),
r_{\alpha}(z)) $ is regular on $Z^3_{\alpha, i}(s/2, \varepsilon_0)
= [ U_{\varepsilon_0}( A^i_{\alpha, s} ) - U_{s/2}( A^i_{\alpha, s}
) - B_{M^3_\alpha}(q_{\alpha, i}, \varepsilon_i ) -
B_{M^3_\alpha}(q_{\alpha, i+1}, \varepsilon_i )]$. There is a circle
fibration $ S^1 \to Z^3_{\alpha, i}(\frac s2, \varepsilon_0)
\stackrel{F_\alpha}{\longrightarrow} \mathbb R$. This proves that
$\partial N^2_{\alpha, i} \sim S^1$. It follows that
$H^2_{\alpha, i} = \{\partial [ U_{\varepsilon_i}( A^i_{\alpha, s})]
- B_{M^3_\alpha}(q_{\alpha, i}, \varepsilon_i ) -
B_{M^3_\alpha}(q_{\alpha, i+1}, \varepsilon_i )\} $ is homeomorphic
to a cylinder $S^1 \times (s, \ell_{\alpha, i}-s)$.
 Using two points
suspension of the cylinder $H^2_{\alpha, i}$, we can further show
that
$$
\partial [ U_{\varepsilon_i}( A^i_{\alpha, s})] \sim S^2
$$
It remains to show that $ U_{\varepsilon_i}( A^i_{\alpha, s} ) \sim
D^3$ for sufficiently small $\varepsilon_i >0$. Suppose contrary, we
argue as follows. Using \autoref{prop1.14}, we see that $r_\alpha$
has no critical points in $ [U_{\varepsilon_0}( A^i_{\alpha, s} )-
U_{\varepsilon_0/2}( A^i_{\alpha, s} )]$. Let $\lambda_\alpha$ be
the largest critical value of $r_{\alpha}$ in $U_{\varepsilon_0}(
A^i_{\alpha, s} )$. By our assumption, $\lambda_\alpha \to 0$ as
$\alpha \to \infty$. We now consider a sequence of re-scaled spaces
$\{(\frac{1}{\lambda_\alpha}M^3_\alpha, q_{\alpha, i} ) \}$. Its
sub-sequence converges to a limiting space $(Y^s_\infty, \bar
q_{\infty, i})$. Recall that $\dim(X^2)= 2$. For the same reason as
in the proof of \autoref{prop2.3}, we can show that
$\dim(Y^s_\infty) = 3$ and that $Y^3_\infty$ has no boundary. Let
$\bar N^s_\infty$ be the soul of $Y^3_\infty$. There are three
possibilities.

(1) If the soul $\bar N^s_\infty$ is a point, then by
\autoref{thm2.11} we obtain that $Y^3_\infty \sim \mathbb R^3$. It
follows that $U_{\varepsilon_i}( A^i_{\alpha, s} ) \sim D^3$ by
Perelman's stability theorem, we are done.

(2) If the soul $\bar N^s_\infty$ is a circle, then $Y^3_\infty$ (or
its double cover) is isometric to $S^1 \times Z^2$, where $Z^2 $ is
homeomorphic to $\mathbb R^2$. Let $\bar \varphi_\infty ( \mathbb R
) $ be the limit curve in the re-scaled limit space $Y^3_\infty$.
Since $Y^3_\infty$ (or its double cover) is isometric to $S^1 \times
Z^2$, we have $U_r(\bar \varphi_\infty ([-R, R]))$ is homeomorphic
to $ S^1 \times D^2$. By Perelman's stability theorem, we would have
$\partial [ U_{\varepsilon_i}(A^i_{\alpha, s})]\sim \partial [
S^1\times D^2 ]\sim T^2$, which contradicts to the assertion $
\partial [U_{\varepsilon_i}(A^i_{\alpha,s})]\sim S^2$.

(3) If the soul $\bar N^s_\infty$ of $Y^3_\infty$ has dimension $2$,
then it follows from that the infinity of $Y^3_\infty$ would have at
most two points. However, since $\dim(X^2) =2$, the infinity of
$Y^3_\infty$ has an arc, a contradiction. This completes the proof
of $U_{\varepsilon_i}( A^i_{\alpha, s} ) \sim D^3$ for sufficiently
large $\alpha$.

With extra efforts, we can also show that $B_{M^3_\alpha}(q_{\alpha,
i}, \varepsilon) \sim D^3$ for sufficiently large $\alpha$. Hence, $
U_{\varepsilon}[\varphi_\alpha (S^1)] \sim \cup_i D^3_i \sim [D^2
\times S^1]$. This completes our proof. \end{proof}

\subsection{Collapsing to a closed interval}\label{section5.2} \

\smallskip

Since all our discussions in this sub-section are semi-local, we may
have the following setup:
\begin{equation}\label{eq5.4}
\lim_{\alpha\to+\infty}(M^3_\alpha, p_\alpha) = (I, O)
\end{equation}
in the pointed Gromov-Hausdorff distance, and $I = [0, \ell]$ is an
interval, $O\in I$ is an endpoint of $I$. We will study the topology
of $B_{M^3_\alpha}(p_\alpha, r)$ for a given small $r$.

We begin with four examples to illustrate how smooth Riemannian
$3$-manifolds $M^3_\alpha$ collapse to an interval $[0, \ell]$ with
curvature bounded from below. Collapsing manifolds in these
manifolds are homeomorphic to one of the following: $\{ D^3,
[\mathbb {RP}^3 -D^3], S^1 \times D^2, K^2 \ltimes [0,\frac 12]\}$,
where $D^3 = \{ x \in \mathbb R^3 | |x | < 1\}$ and $K^2$ is the
Klein bottle.

\begin{example}\label{ex5.5}
($ M^3_\alpha$ is homeomorphic to $D^3$). For each $\varepsilon >
0$, we consider a convex hypersurface in $\mathbb R^4$ as follows.
We glue a lower half of the $3$-sphere
$$
S^{3}_{\varepsilon, -} = \{ (x_1, x_2, x_3, x_4) \in \mathbb R^4 | 0
\le x_4 \le \varepsilon, x_1^2 + x_2^2 + x_3^2 + (x_4 -
\varepsilon)^2 = \varepsilon^2\}
$$
to a finite cylinder
$$
S^2_\varepsilon \times [\varepsilon, 1] = \{(x_1, x_2, x_3, x_4) \in
\mathbb R^4 |\varepsilon \le x_4 \le 1, x_1^2 + x_2^2 + x_3^2 =
\varepsilon^2 \}.
$$
Our $3$-manifold $M^3_\varepsilon = S^{3}_{\varepsilon, -} \cup
\big( S^2_\varepsilon \times [\varepsilon, 1] \big)$ collapse to the
unit interval as $\varepsilon \to 0$.
\end{example}

For other cases, we consider the following example.

\begin{example}\label{ex5.6}
($ M^3_\alpha$ homeomorphic to $S^1 \times D^2 $). Let us glue a
lower half of $2$-sphere
$$
S^{2}_{\varepsilon, -} = \{ (x_1, x_2, x_3) \in \mathbb R^3 | 0 \le
x_3 \le \varepsilon, x_1^2 + x_2^2 + (x_3 - \varepsilon)^2 =
\varepsilon^2\}
$$
to a finite cylinder $S^1_\varepsilon \times [\varepsilon, 1]$. The
resulting disk
$$
D^2_\varepsilon = S^{2}_{\varepsilon, -} \cup \big( S^1_\varepsilon
\times [\varepsilon, 1] \big)
$$
is converge to unit interval, as $\varepsilon \to 0$. We could
choose $M^3_{\frac{1}{\varepsilon}} = S^1_{ \varepsilon^2 } \times
D^2_\varepsilon$. It is clear that $M^3_{\frac{1}{\varepsilon}} \to
[0, 1]$ as $\varepsilon \to 0$.
\end{example}

\medskip

We now would like to consider the remaining cases. Of course, two
un-oriented surfaces $\mathbb {RP}^2_\varepsilon = S^2_\varepsilon
/\mathbb Z^2$ and $K^2 = T^2/\mathbb Z_2$ would converge to a point,
as $\varepsilon \to 0$. However, the {\it twisted } $I$-bundle over
$\mathbb {RP}^2$ (or $K^2$) is homeomorphic to an oriented manifold
$ \mathbb {RP}^2 \ltimes [0, \frac 12] = [\mathbb {RP}^3 -D^3]$ (or
$K^2 \ltimes [0, \frac 12] = M$\"o$\ltimes S^1$), where $M$\"o is
the M\"obius band.

\begin{example}\label{ex5.7}($ M^3_\alpha$ homeomorphic to
$\mathbb {RP}^2 \ltimes [0, \frac 12] $ or $M$\"o$ \ltimes S^1$).

Let us first consider round sphere $S^2_\varepsilon = \{ (x_1, x_2,
x_3) \mathbb R^3 | |x| = \varepsilon\}$. There is an orientation
preserving involution $\tau: \mathbb R^3 \times [-\frac 12, \frac
12]$ given by $\tau (x, t) = (-x, -t)$. Suppose that $\langle\tau
\rangle = \mathbb Z_2$ is the subgroup generated by $\tau$. Thus,
the quotient of $S^2_\varepsilon \times [-\frac 12, \frac 12]$ is an
orientable manifold $\mathbb {RP}^2 \ltimes [0, \frac 12] = [\mathbb
{RP}^3 -D^3]$.

Similarly, we can consider the case of $K^2_\varepsilon \ltimes [0,
\frac 12] \to [0, \frac 12]$, where $K^2_\varepsilon$ is a Klein
bottle.
\end{example}

Shioya and Yamaguchi showed that the above examples exhausted all
cases up to homeomorphisms.

\begin{theorem}[Theorem 0.5 of \cite{SY2000}]\label{thm5.8}
Suppose that $\lim_{\alpha\to+\infty}(M^3_\alpha, p_\alpha)= (I, O)$
with curvature $\ge -1$ and $I = [0, \ell]$. Then $M^3_\alpha$ is
homeomorphic to a gluing of $C_0$ and $C_\ell$ to $N^2 \times (0,
1)$, where $C_0$ and $C_\ell$ are homeomorphic to one of $\{ D^3,
[\mathbb {RP}^3 - D^3], S^1 \times D^2, K^2 \ltimes [0, \frac 12]\}$
and $N^2$ is a quotient of $T^2$ or $S^2$.
\end{theorem}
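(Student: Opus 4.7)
The plan is to decompose $M^3_\alpha$ into a ``middle'' piece that fibers over the open interval $(0,\ell)$ and two ``end caps'' $C_0$, $C_\ell$ coming from small metric balls around points converging to the endpoints of $I$, and then to classify each piece using the machinery already developed in the paper.

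First I would apply \autoref{thm4.3} to interior points of $I$: choosing $x_\alpha \in M^3_\alpha$ converging to interior points of $(0,\ell)$ produces an open set $\hat V_{\alpha,\interior(I)} \subset M^3_\alpha$ fibering over $(0,\ell)$ with fiber $N^2_\alpha$ homeomorphic to a quotient of $S^2$ or $T^2$. By taking a small collar away from both endpoints of $I$, one obtains an open set homeomorphic to $N^2 \times (0,1)$ (here one uses Perelman's stability theorem to show that the homeomorphism type of $N^2_\alpha$ is eventually constant along $(0,\ell)$, which is connected). This leaves two complementary compact submanifolds $C_0^\alpha$ and $C_\ell^\alpha$ sitting inside small balls $B_{M^3_\alpha}(p_\alpha^0, r)$ and $B_{M^3_\alpha}(p_\alpha^\ell, r)$ with $p_\alpha^0 \to 0$, $p_\alpha^\ell \to \ell$.

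The main work is classifying the end cap $C_0$ (the analysis of $C_\ell$ is identical). I would run the rescaling scheme of \autoref{prop2.2}: choose averaged distance-type functions to find suitable basepoints $x_\alpha'$ near $p_\alpha^0$, locate the furthest critical point $z_\alpha$ of the distance from $x_\alpha'$ inside $B_{M^3_\alpha}(x_\alpha',\delta_\alpha)$ at scale $\lambda_\alpha \to 0$, and rescale by $\lambda_\alpha^{-2}$. By the argument of \autoref{prop2.3} (now adapted to the one-ended situation where $X = I$ has a boundary endpoint), the pointed limit $(Y^s_\infty, y_\infty)$ is a complete, non-compact, non-negatively curved Alexandrov space; the presence of the critical point $z_\infty$ rules out $\dim Y^s_\infty = 1$, while the one-endedness of $C_0$ rules out $\dim Y^s_\infty = 2$ (a two-dimensional limit would be homeomorphic to a half-plane or half-strip, which by \autoref{thm1.17} forces two toral ends rather than one). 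Hence $\dim Y^s_\infty = 3$ and $Y^3_\infty$ has a single end corresponding to the rescaled fiber $N^2$.

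Now I invoke the soul classification \autoref{thm2.11} for open oriented non-negatively curved $3$-manifolds with possibly singular metrics, and read off $C_0$ by pulling back via Perelman's stability theorem. There are four cases corresponding to the dimension of the soul $N^s$ and the $\mathbb{Z}_2$-action: (i) if $\dim N^s = 0$, then $Y^3_\infty \sim \mathbb{R}^3$, so $C_0 \sim D^3$; (ii) if $\dim N^s = 1$, then $Y^3_\infty$ (or its double cover) splits as $\mathbb{R} \times \mathbb{R}^2$, and we get $C_0 \sim S^1 \times D^2$; (iii) if $\dim N^s = 2$ with soul $S^2$ or $\mathbb{RP}^2$, then $Y^3_\infty$ is a line bundle over the soul and $C_0 \sim D^3$ or $\mathbb{RP}^3 - D^3$; (iv) if $\dim N^s = 2$ with soul $T^2$ or $K^2$, then $C_0 \sim T^2 \times I \cong S^1 \times D^2$ (absorbed into the middle piece) or $C_0 \sim K^2 \ltimes [0,\tfrac12]$. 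In each case the boundary of $C_0$ is a torus, Klein bottle, sphere, or projective plane, matching the fiber type of $N^2 \times (0,1)$ after possibly adjusting which cross-section is taken as the gluing locus.

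The hard part will be case (iv), namely ruling out exotic twisted $I$-bundle possibilities and ensuring the boundary of the end cap matches the fiber $N^2$ of the middle piece. This requires the detailed ``normal line bundle'' analysis already carried out in the proof of \autoref{thm2.11}, Case 2, combined with a gluing step: one must verify that the boundary $\partial C_0^\alpha \subset \hat V_{\alpha,\interior(I)}$ is isotopic to a fiber of the $N^2$-fibration, so that the decomposition $M^3_\alpha = C_0^\alpha \cup (N^2 \times (0,1)) \cup C_\ell^\alpha$ is actually a gluing along fiber surfaces. This uses the compatibility of the Perelman--Sharafutdinov flow from $C_0^\alpha$ with the fibration map of \autoref{thm4.3} in the overlap region, together with Perelman's stability theorem to transport the splitting $Y^3_\infty \sim N^2 \times \mathbb{R}$ (or its $\mathbb{Z}_2$-quotient) back to $M^3_\alpha$ for large $\alpha$.
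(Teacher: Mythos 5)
Your overall decomposition (middle piece fibered over $\interior(I)$ via \autoref{thm4.3}, plus two end caps analyzed by rescaling at the largest critical value of the distance function and passing to a non-negatively curved limit) matches the paper's strategy, but there is a genuine gap at the crucial dimension step: you claim that one-endedness of the cap rules out $\dim Y^s_\infty = 2$, asserting that a two-dimensional limit would be a half-plane or half-strip and ``by \autoref{thm1.17} forces two toral ends.'' This is false. A half-plane $[0,\infty)\times\mathbb R$ has one end, and when the rescaled limit has non-empty boundary the collapse near $\partial Y^2$ is with interval (not circle) fiber, so the cross-section can be a sphere rather than a torus; the paper's own examples in \autoref{section4} and \autoref{ex5.2} (fibers collapsing at two different speeds) show that $\dim Y = 2$ genuinely occurs at an endpoint. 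The paper's proof devotes Subcases 1.b and 2.b precisely to this situation: there one must show $\partial Y^2 \neq \varnothing$, locate an interior extremal point, invoke the bound on essential singularities (\autoref{thm5.9}) together with \autoref{thm5.4}, and then identify the cap via the gluing $(D^2\times S^1)\cup_{\psi_{\mathbb Z_2}}(S^1\times D^2)\cong \mathbb{RP}^3$, which is how the $[\mathbb{RP}^3 - D^3]$ cap arises from an exceptional orbit of multiplicity $2$. By discarding the two-dimensional limit you skip exactly the part of the argument that requires the new machinery of \autoref{section5}, and your remaining three-dimensional soul analysis cannot recover it.

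Two smaller points in your case analysis of $\dim Y^s_\infty = 3$: the souls $S^2$ and $T^2$ are excluded not ``absorbed,'' because $Y^3_\infty \cong N^2\times\mathbb R$ would then have two ends while $\partial B_{M^3_\alpha}(p_\alpha,\delta)$ has a single component; and your assertion $T^2\times I\cong S^1\times D^2$ is not a homeomorphism ($T^2\times I$ has two torus boundary components, $S^1\times D^2$ has one). With the two-dimensional case restored along the lines of the paper's Subcases 1.b/2.b, and the soul case list corrected accordingly, your outline would align with the actual proof.
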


For the proof of \autoref{thm5.8}, we need to establish two
preliminary results (see \autoref{thm5.9} - \ref{thm5.10} below).
Let us consider possible exceptional orbits in the Seifert fibration
\begin{equation}\label{eq5.5}
S^1 \to M^3_\alpha \to \interior (Y^2)
\end{equation}
for sufficiently large $\alpha$. We emphasize that the topological
structure of $M^3_\alpha$ depends on the number of extremal points
(or called essential singularities) of $Y^2$ in this case. Moreover,
the topological structure of $M^3_\alpha$ also depends on the type
of essential singularity of $Y^2$, when we glue a pair of solid tori
together, (see \eqref{eq5.14} below). Therefore, we need the
following theorem with a new proof.

\begin{theorem}[Compare with Corollary 14.4 of \cite{SY2000}]\label{thm5.9}
Let $Y^2$ be a connected, non-compact and complete surface with
non-negative curvature and with possible boundary. Then the
following is true.

\begin{enumerate}[{\rm (i)}]
\item If $Y^2$ has no boundary, then $Y^2$ has at most two extremal points
(or essential singularities). When $Y^2$ has exactly two extremal
points, $Y^2$ is isometric to the double ${\rm dbl}( [0, \ell]
\times [0, \infty))$ of the flat half strip.
\item If $Y^2$ has non-empty boundary $\partial Y^2 \neq 0$, then $Y^2$
has at most one interior essential singularity.
\end{enumerate}
\end{theorem}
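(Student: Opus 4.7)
The plan is to combine the Cheeger-Gromoll-Perelman convex exhaustion (\autoref{thm2.4}), the classification of non-negatively curved surfaces (\autoref{thm2.6}, \autoref{cor2.9}), and a first-variational analysis of extremal points, together with a doubling trick to reduce the boundary case of part~(ii) to the no-boundary case of part~(i). The key observation is that at an extremal point $z \in Y^2$ the space of directions $\Sigma_z(Y^2)$ is contained in an arc of diameter at most $\pi/2$, and by a first-variational argument for semi-concave functions this forces $z$ to be a critical point of the Cheeger-Gromoll function $f$ and hence to lie inside the minimum set $\Omega_{a_0}$.

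For part~(i), I would reduce via \autoref{thm2.6} to the two subcases where $Y^2$ is either isometric to a flat cylinder (which has no extremal points at all) or homeomorphic to $\mathbb R^2$. In the latter case I would run the Cheeger-Gromoll exhaustion $\Omega_{a_0} \supset \Omega_{a_1} \supset \cdots \supset \Omega_{a_m}$ with soul $\Omega_{a_m}$ a single point, confining every extremal point to $\Omega_{a_0}$, and then case-split on $\dim\Omega_{a_0}$. When $\dim\Omega_{a_0} = 0$, only one extremal point is possible. When $\dim\Omega_{a_0} = 1$, the subset $\Omega_{a_0}$ is a totally geodesic segment whose interior points admit two antipodal directions in $\Sigma_z(Y^2)$ and hence violate $\diam\Sigma_z \le \pi/2$, leaving at most the two endpoints. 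When $\dim\Omega_{a_0} = 2$, I would iterate the Perelman-Sharafutdinov flow along $r_{\partial\Omega_{a_0}}$ to push interior extremal points of $Y^2$ into $\Omega_{a_1}$ and inductively into the soul, while extremal points on the convex boundary curve $\partial\Omega_{a_0}$ are bounded via its one-dimensional structure. For the rigidity statement with two extremal points $z_1, z_2$, the above analysis forces $\dim\Omega_{a_0} = 1$ with $z_1, z_2$ as endpoints of a geodesic segment of length $\ell$; at each interior point $p \in \Omega_{a_0}$, two unit directions orthogonal to $\Omega_{a_0}$ spawn infinite geodesic rays along which $f$ grows linearly, and the splitting theorem applied on each side of $\Omega_{a_0}$ produces a flat half-strip $[0, \ell] \times [0, \infty)$, whose gluing along $\Omega_{a_0}$ yields the desired isometry $Y^2 \cong \dbl([0, \ell] \times [0, \infty))$.

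For part~(ii), I would reduce to part~(i) by doubling across the boundary. Let $\dbl(Y^2) = Y^2 \cup_{\partial Y^2} Y^2$ equipped with the length metric; Petrunin's doubling theorem guarantees that $\dbl(Y^2)$ is a complete, connected, non-compact, non-negatively curved Alexandrov surface without boundary. Each interior essential singularity $z \in \interior(Y^2)$ lifts to two distinct preimages $z_\pm$ in $\dbl(Y^2)$ with $\Sigma_{z_\pm}(\dbl(Y^2)) = \Sigma_z(Y^2)$, so both lifts are extremal in $\dbl(Y^2)$. By part~(i) applied to $\dbl(Y^2)$, at most two extremal points exist, whence $Y^2$ has at most one interior essential singularity.

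The main obstacle is the subcase $\dim\Omega_{a_0} = 2$ in part~(i): one must rule out interior extremal points living in the open 2-dimensional stratum by iterating the Perelman-Sharafutdinov reduction, and simultaneously control extremal points on the 1-dimensional convex boundary curve $\partial\Omega_{a_0}$, where the gradient of $r_{\partial\Omega_{a_0}}$ degenerates. Closing this case requires showing that any such extremal configuration forces the convex stratum to drop in dimension, reducing to the cases $\dim\Omega_{a_0} \le 1$ already handled; a secondary subtlety is the rigidity argument, where one must extract an isometric splitting from just two extremal points by carefully analyzing the normal directions to the segment $\Omega_{a_0}$ and invoking the splitting theorem on each half of $Y^2 \setminus \Omega_{a_0}$.
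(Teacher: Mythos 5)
Your overall skeleton matches the paper's: confine extremal points to the minimum set $A=\Omega_{a_0}$ of the Cheeger--Gromoll convex function (your route, extremal $\Rightarrow$ critical for the convex $f$ $\Rightarrow$ global minimum, is essentially the paper's use of \autoref{prop2.5}), handle the cases $\dim\Omega_{a_0}=0,1$ as you do, and obtain (ii) from (i) by Petrunin doubling, exactly as in the paper. But there are two genuine gaps. First, your treatment of the case $\dim\Omega_{a_0}=2$ does not close, and the mechanism you propose cannot close it: the Perelman--Sharafutdinov iteration only confines extremal points that lie in the \emph{relative interior} of each convex set $\Omega_{a_i}$ (there they are critical for the concave function $\dis(\cdot,\partial\Omega_{a_i})$ and hence lie in its maximum set $\Omega_{a_{i+1}}$), whereas extremal points sitting on the boundary curves $\partial\Omega_{a_i}$ are minima, not maxima, of these functions and are never pushed inward. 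This is not a peripheral case: in the model $\dbl([0,\ell]\times[0,\infty))$ the two extremal points are precisely the endpoints, i.e.\ boundary points, of $\Omega_{a_0}$. Your phrase ``bounded via its one-dimensional structure'' is not an argument --- a priori a convex boundary curve could carry many points with cone angle $\le\pi$ unless one invokes something like total curvature/Gauss--Bonnet, which this paper is deliberately avoiding. (The paper sidesteps the whole issue by asserting $\dim\Omega_{a_0}\le 1$ and then treating only the point and segment cases; you would either need to prove that assertion or genuinely handle boundary extremal points, and your write-up concedes this is open.)

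Second, the rigidity step is not correct as sketched. You cannot ``apply the splitting theorem on each side of $\Omega_{a_0}$'': the splitting theorem requires a geodesic line in a complete space without boundary, each side of $Y^2\setminus\Omega_{a_0}$ is neither complete nor boundaryless, and the target space $\dbl([0,\ell]\times[0,\infty))$ has one end and contains no line at all (the concatenation of the two normal rays through an interior point of the segment is not globally minimizing --- one can shortcut around a side edge), so no splitting statement can be the engine here. The paper's argument is different: it takes the two rays $\psi_0,\psi_\ell$ issuing from the two extremal points, considers $w_\pm(u)=\dis_{\Omega_\pm}(\psi_0(u),\psi_\ell(u))$ on the two components cut out by $\sigma_0\cup\psi_0\cup\psi_\ell$, proves $w_\pm(u)\ge\ell$ using the distance-nonincreasing Perelman--Sharafutdinov semi-flow of the Busemann function, proves $w_\pm(u)\le\ell$ by quadrangle (multi-step triangle) comparison, and extracts flat rectangles from the equality case, which yields the two flat half-strips directly. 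You would need to replace your splitting appeal by an argument of this kind (or a doubling of each side followed by a careful rigidity analysis), so as written the equality characterization in (i) is not established.
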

\begin{proof}
For (i), we will use the multi-step Perelman-Sharafutdinov
semi-flows to carry out the proof. For the assertion (i), we
consider the Cheeger-Gromoll type Busemann function
\begin{equation}\label{eq5.6}
f(y) = \lim_{t \to \infty}[t - \dis(y, \partial B_{Y^2}(y_0, t))].
\end{equation}
In \autoref{section2}, we already showed that $\Omega_c =
f^{-1}((-\infty, c])$ is compact for any finite $c$. Let $a_0 = \inf
\{f(y)| y \in Y^2\}$. Then the level set $\Omega_{a_0} = F^{-1}(
a_0)$ has dimension at most $1$. Recall that $\Omega_{a_0}$ is
convex by the soul theory. Thus, $\Omega_{a_0}$ is either a point or
isometric to a length-minimizing geodesic segment: $\sigma_0: [0,
\ell] \to Y^2$.

{\it Case a.} If $\Omega_{a_0} = \{ z_0 \}$ is a point soul of
$Y^2$, by an observation of Grove \cite{Grv1993} we see that the
distance function $r(y) = \dis(y, z_0)$ has no critical point in
$[Y^2-\{ z_0\}]$. It follows that $z_0$ is only possible extremal
point of $Y^2$.

{\it Case b.} If $\Omega_{a_0} = \sigma_0 ([0, \ell])$, then for
$s\in (0, \ell)$ Petrunin \cite{Petr2007}] showed that
$T^-_{\sigma_0(s)}(Y^2) = \mathbb R^2$. Hence, two endpoints
$\sigma_0(0)$ and $\sigma_0(\ell)$ are only two possible extremal
points of $Y^2$.

Suppose that $Y^2$ has exactly two extremal points $\sigma_0(0)$ and
$\sigma_0(\ell)$. We choose two geodesic rays $\psi_0: [0, \infty)
\to Y^2$ and $\psi_\ell: [0, \infty) \to Y^2$ from two extremal
points $\sigma_0(0)$ and $\sigma_0(\ell)$ respectively. The broken
geodesic $\Gamma = \sigma_0 \cup \psi_0 \cup \psi_\ell$ divides our
space $Y^2$ into two connected components:
\begin{equation}\label{eq5.7}
[Y^2 - \Gamma] = \Omega_- \cup \Omega_+.
\end{equation}
We now consider the distance function
\begin{equation}\label{eq5.8}
w_\pm(u) = \dis_{\Omega_\pm}(\psi_\ell(u), \psi_0(u)).
\end{equation}

Let us consider the Perelman-Sharafutdinov semi-gradient flow
$\frac{d^+w}{dt} = \nabla (-f)|_w$ for Busemann function $-f(w)$.
Recall that the semi-flow is distance non-increasing, since the
curvature is non-negative. Hence, we see that $t \to w_\pm(u-t)$ is
a non-increasing function of $t \in [0, u]$. It follows that
\begin{equation}\label{eq5.9}
w_\pm(u) \ge w_\pm(0) = \ell
\end{equation}
for $u \ge 0$. On other hand, we could use multi-step geodesic
triangle comparison theorem as in \cite{CMD2009} to verify that
\begin{equation}\label{eq5.10}
w_\pm(u) \le w_\pm(0) = \ell
\end{equation}
with equality holds if and only if four points $\{ \sigma_0(0),
\sigma_0(\ell), \psi_\ell(u), \psi_0(u)\}$ span a flat rectangle in
$\Omega_\pm$. Therefore, $\Omega_\pm $ is isometric to $[0, \ell]
\times [0, \infty)$. It follows that $Y^2$ is isometric to the
double ${\rm dbl}( [0, \ell] \times [0, \infty))$.

The second assertion (ii) follows from (i) by using $\text{dbl}
(Y^2)$.
\end{proof}

For compact surfaces $X^2$ of non-negative curvature, we use an
observation of Perelman (\cite{Per1991} page 31) together with
multi-step Perelman-Sharafutdinov semi-flows, in order to estimate
the number of extremal points in $X^2$.

\begin{theorem}\label{thm5.10}
{\rm (1)} Suppose that $X^2$ is a compact and oriented surface with
non-negative curvature and with non-empty boundary $\partial X^2
\neq \varnothing$. Then $X^2$ has at most two interior extremal
points. When $X^2$ has two interior extremal points, then $X^2$ is
isometric to a gluing of two copies of flat rectangle along their
three corresponding sides.

{\rm (2)} Suppose that $X^2$ is a closed and oriented surface with
non-negative curvature. Then $X^2$ has at most four extremal points.
\end{theorem}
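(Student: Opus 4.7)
For part (1), the strategy is to apply the multi-step Perelman-Sharafutdinov construction of \autoref{thm2.4} to the concave distance-from-the-boundary function $f(x) = \dis(x, \partial X^2)$, obtaining a chain of totally convex subdomains that terminates in a soul $N \subset X^2$ of dimension $0$ or $1$. The key point is that, by \autoref{prop2.5} applied at each stage, all critical points of the distance functions used in the construction lie in the final soul $N$; and an interior extremal point $z$ with $\diam(\Sigma_z(X^2)) \le \pi/2$ is automatically a critical point of every distance function, since no direction at $z$ is more than $\pi/2$ away from any other. Hence every interior extremal point of $X^2$ lies in $N$. If $N$ is a single point, at most one interior extremal point. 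If $N$ is a closed geodesic circle, the fact of Petrunin that $T^-_x(X^2) = \mathbb{R}^2$ at every interior point of a $1$-dimensional totally convex subset rules out extremality along $N$, giving zero. If $N$ is a geodesic segment $\sigma:[0,\ell]\to X^2$, the same Petrunin fact leaves only the two endpoints $\sigma(0), \sigma(\ell)$ as possible extremal points, giving at most two.

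In the equality case of part (1) I would mimic the rigidity step from the proof of \autoref{thm5.9}(i). Issue two Sharafutdinov gradient curves $\psi_0$ and $\psi_\ell$ from the extremal endpoints $p=\sigma(0)$ and $q=\sigma(\ell)$ out to $\partial X^2$; the broken curve $\psi_0 \cup \sigma \cup \psi_\ell$ then splits $X^2$ into two quadrilateral-like pieces $\Omega_\pm$. On each $\Omega_\pm$, the distance-non-increasing property of the semi-flow of $-f$ gives an upper bound on the width function across the piece, while a multi-step triangle comparison of the kind appearing in the proof of \autoref{thm5.9}(i) gives a matching lower bound. Forcing these two bounds to coincide and invoking the equality case of Toponogov then yields that each $\Omega_\pm$ is isometric to a flat rectangle; gluing the two rigid rectangles along the three common sides $\psi_0, \sigma, \psi_\ell$ produces the asserted isometric description of $X^2$.

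For part (2), \autoref{cor2.7} leaves exactly two possibilities: $X^2$ is a flat torus (which is locally Euclidean everywhere, hence has no extremal points) or $X^2$ is homeomorphic to $S^2$. In the latter case the plan is to reduce to part (1) by cutting along a simple closed geodesic. Since extremal points are isolated by the Conic Lemma (\autoref{prop1.7} and \autoref{cor1.12}), only finitely many exist, and a Birkhoff-Lyusternik-Fet min-max argument over embedded loops on $S^2$ produces a simple closed geodesic $\gamma \subset X^2$ that can be chosen to avoid this finite set. Then $\gamma$ separates $X^2$ into two topological disks $D_\pm$ sharing boundary $\gamma$; since $\gamma$ is a geodesic, its inner angle is $\pi$ on each side, so both $D_+$ and $D_-$ are compact oriented non-negatively curved surfaces with convex boundary, satisfying the hypotheses of part (1). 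Each therefore contributes at most two interior extremal points, yielding at most $2+2=4$ extremal points on $X^2$ in total. I expect the main obstacle to be the rigidity step of part (1), where the soft distance-non-increase of the Sharafutdinov flow and the general Toponogov inequality have to be squeezed into a sharp isometric splitting; a secondary technical issue in part (2) is guaranteeing the existence of a simple closed geodesic disjoint from the finite extremal set, which is enabled by the isolated nature of extremal points.
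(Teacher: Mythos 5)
Your part (1) is in substance the paper's own argument: work with the concave function $\dis(\cdot,\partial X^2)$, whose superlevel sets are totally convex; observe that an interior extremal point is a critical point of every distance function and must therefore lie in the maximum set, which has dimension at most one (a point, a segment, or a closed geodesic), and use Petrunin's fact $T_x^-=\mathbb R^2$ to exclude interior points of the one-dimensional piece; the rigidity statement is then obtained exactly as in the equality case of \autoref{thm5.9}. One small wording issue: if you literally iterate the soul construction to its end, the claim ``all critical points lie in the final soul'' is false (when the maximum set is a segment, the final soul is its midpoint, while the candidate extremal points are the two endpoints); what you actually need, and actually use, is confinement to the first-stage maximum set, which is what the paper does.

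Part (2), however, has a genuine gap. You want a \emph{simple closed geodesic} on the non-negatively curved Alexandrov sphere $X^2$ that avoids the finite set of extremal points, so that you can cut along it and apply part (1) to each half. Such a geodesic need not exist. By Gauss--Bonnet, a simple closed geodesic must split the total curvature $4\pi$ of $X^2$ into exactly $2\pi+2\pi$ on the two sides; on the boundary of a generic convex polytope (a perfectly good non-negatively curved Alexandrov sphere) the curvature is concentrated at the vertices and no sub-collection of vertex curvatures sums to exactly $2\pi$, so there is no closed geodesic at all. A Birkhoff/Lyusternik--Schnirelmann min-max on such a singular sphere produces only closed \emph{quasi-geodesics} (Pogorelov), and these are in general forced to pass through the conical points --- which are precisely the extremal points --- so they cannot be ``chosen to avoid this finite set''; and once the cutting curve contains extremal points, part (1), which only bounds \emph{interior} extremal points of a surface with convex boundary, no longer yields the bound $4$ without a separate estimate for extremal points on the cut. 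The paper's route avoids this issue: it joins two extremal points $p,q$ by a geodesic segment $N$, proves that $\dis(\cdot,N)$ is concave on $X^2\setminus N$ (the case where the foot point is an endpoint is exactly where extremality, $\diam\Sigma_p\le\pi/2$, is used to force the angle to equal $\pi/2$), and then runs the part-(1) maximum-set argument for this function to get at most two further extremal points, hence at most four in total. If you wish to keep a cut-and-apply-(1) structure, you would have to cut along a quasi-geodesic and control the extremal points lying on it, which your plan does not address.
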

\begin{proof}
(1) We consider the double ${\rm dbl}(X^2)$ of $X^2$. If ${\rm
dbl}(X^2)$ is not simply-connected and if it is an oriented surface
with non-negative curvature, then we already showed that ${\rm
dbl}(X^2)$ is a flat torus.

We may assume that $X^2$ is homeomorphic to a disk: $X^2 \cong D^2$
and $\partial X^2=S^1$. Let $X_{-\varepsilon}=\{x\in X^2 | \dis(x,
\partial X^2)\ge\varepsilon\}$. Perelman \cite{Per1991} already
showed that $X_{-\varepsilon}$ remains to be convex, (see also
\cite{CMD2009}). If $a_0=\max\{\dis(x, \partial X^2) \}$, then $X_{-
a_0}$ is either a geodesic segment or a single point set. Thus,
$X^2$ has at most two interior extremal points. The rest of the
proof is the same as that of \autoref{thm5.9} with minor
modifications.

(2) Suppose that $X^2$ has 2 distinct extremal points $p$ and $q$.
Let $N$ be a geodesic segment connecting $p$ and $q$. We need to
show for the function $f(x)=\dis_N(x)=\dis(N, x)$ is concave for all
$x\in X^2\setminus N$. Clearly for $x\in X^2\setminus N$ there
exists $x_N\in N$ such that $|xx_N|=\dis(x,N)$. There are two
possibilities:
\begin{enumerate}[(i)]
\item
$x_N$ is in the interior of $N$, then proof of the concavity of $f$
is exactly same as the one of Theorem 6.1 in \cite{Per1991} (see
also \cite{Petr2007} p156 and \cite{CMD2009}).

\item
$x_N$ is one of the endpoints, say $p$, then by first variational
formula we have
\begin{equation}\label{eq:5.10.1}
\dis_{\Sigma_p}(\Uparrow_p^x, \Uparrow_p^q)\ge \frac{\pi}{2}
\end{equation}
where $\Uparrow_p^x$ denotes the set of directions of geodesics from
$p$ to $x$ in $\Sigma_p$. On the other hand, by our assumption $p$
is an interior extremal point so
\begin{equation}\label{eq:5.10.2}
\diam \Sigma_p\le \frac{\pi}{2}
\end{equation}
\end{enumerate}
Combine \eqref{eq:5.10.1} and \eqref{eq:5.10.2} we have
$$\dis_{\Sigma_p}(\Uparrow_p^x, \Uparrow_p^q) = \frac{\pi}{2}$$
The rest of the proof is same as the one of Theorem 6.1 in
\cite{Per1991} (or \cite{Petr2007}, \cite{CMD2009}).

\begin{figure*}[ht]
\includegraphics[width=250pt]{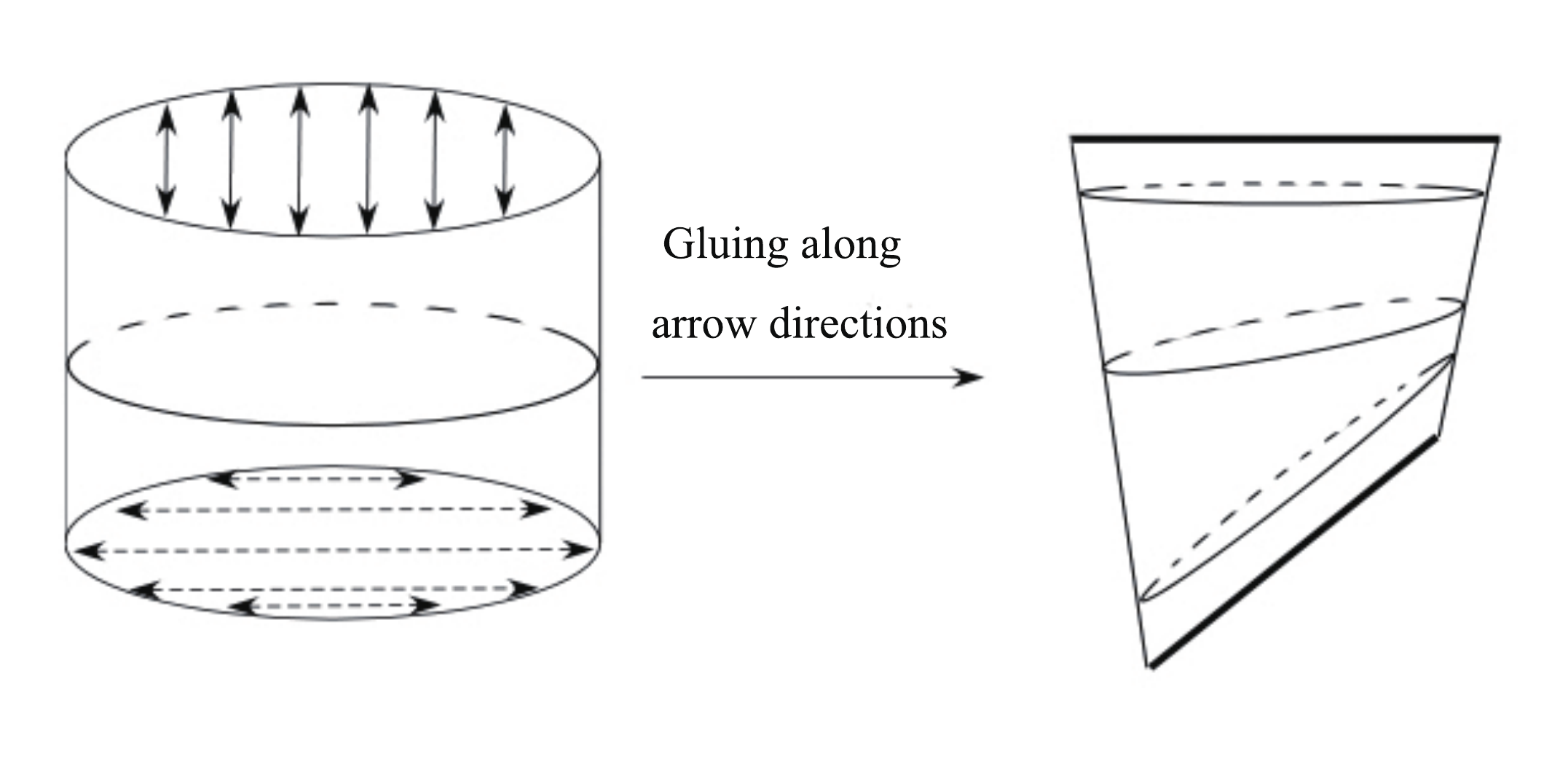}\\
\caption{A $2$-sphere $S^2$ with $4$ essential singularities}\label{fig:5.0}
\end{figure*}

Hence, we have shown that $f$ is concave function on $X^2\setminus
N$. Let $A$ be the maximum set. Then $\Omega_1$ is either a geodesic
segment or one point. Thus, $X^2\setminus N$ contains at most 2
extremal points by our proof of (1). Therefore, the number of total
extremal points is at most 4.
\end{proof}

The case of exactly 4 extremal points on a topological $2$-sphere
can be illustrated in \autoref{fig:5.0}. Let us now complete the
proof of \autoref{thm5.8}.

\bigskip
\noindent \textbf{Proof of \autoref{thm5.8}:}
\bigskip

The proof is due to Shioya-Yamaguchi \cite{SY2000}. Our new
contribution is the simplified proof of \autoref{thm5.9}, which will
be used in the study of subcases (1.b) and (2.b) below. For the
convenience of readers, we provide a detailed argument here.

Recall that in \autoref{section4}, we already constructed a
fibration
$$
N^2_\alpha \to U_\alpha \to \interior (X^1)
$$
with shrinking fiber $N^2_\alpha$ is homeomorphic to either $S^2$ or
$T^2$ for sufficiently large $\alpha$.

When $\{ p_\alpha \} \to p_\infty \in \partial X^1$, since the
fibers $\{N^2_\alpha\}$ are shrinking, we may assume that
\begin{equation}\label{eq5.11}
\partial B_{M^3_\alpha}(p_{\alpha},\delta) = S^2 \quad \quad \text{or}\quad \quad
\partial B_{M^3_\alpha}(p_{\alpha},\delta) = T^2
\end{equation}
for sufficiently large $\alpha$, where $X^1 = [0, \ell]$ and $\delta
= \ell/2$.

If there exists an $\varepsilon_0$ such that $r_\alpha(x) =
\dis_{M^3_\alpha}(x, p_\alpha)$ has no critical points on
the punctured ball $[B_{M^3_\alpha}(p_{\alpha},\varepsilon_0) -
\{p_{\alpha}\}]$ for sufficiently large $\alpha$, then, using the
proof of \autoref{prop1.7}, we can show that
$B_{M^3_\alpha}(p_{\alpha},\varepsilon_0) \sim D^3$ for sufficiently
large $\alpha$. Thus, conclusion of \autoref{thm5.8} holds for this
case.

Otherwise, there exists a subsequence $\{ \varepsilon_\alpha \} \to
0$ such that $r_\alpha(x) = \dis_{M^3_\alpha}(x, p_\alpha)$ has a
critical point $q_\alpha$ with $\dis(p_\alpha, q_\alpha) =
\varepsilon_\alpha$. It follows that $
\left(\frac{1}{\varepsilon_{\alpha}}M^3_{\alpha},
\bar{p}_{\alpha}\right)\to (Y^k, \bar{p}_{\infty})$. Using a similar
argument as in the proof of \autoref{prop2.3}, we can show that the
limiting space $Y^k$ has non-negative curvature and $\dim(Y^k) = k
\ge 2$, where $\bar{p}_{\alpha}$ is the image of $p_\alpha$ under
the re-scaling. There are two cases described in \eqref{eq5.11}
above.

\medskip
{\bf Case 1.} $\partial B_{M^3_\alpha}(p_{\alpha},\delta) = S^2$.

If $B_{M^3_\alpha}(p_{\alpha},\delta)$ is homeomorphic to $ D^3$,
then no further verification is needed. Otherwise, we may assume
$B_{M^3_\alpha}(p_{\alpha},\delta)\ne D^3$ and $\partial
B_{M^3_\alpha}(p_{\alpha},\delta)=S^2$.

Under these two assumptions, we would like to verify that
$B_{M^3_\alpha}(p_{\alpha},\delta)$ is homeomorphic to $\mathbb
{RP}^2\ltimes I$. There are two sub-cases:

\medskip

{\it Subcase 1.a. } If $\dim Y=3$, then $Y^3$ is non-negatively
curved, open and complete. Let $N^k$ be a soul of $Y^3$.

Using Perelman's stability theorem, we claim that the soul $N^k$
cannot be $S^1$. Otherwise the boundary of
$B_{M^3_\alpha}(p_{\alpha},\delta)$ would be $T^2$, a contradiction.
For the same reason, the soul $N^k$ cannot be $T^2$ or $K^2$.
Otherwise the boundary $\partial B_{M^3_\alpha}(p_{\alpha},\delta)$
would be homeomorphic to $T^2$, contradicts to our assumption.
Because the boundary of $B_{M^3_\alpha}(p_{\alpha},\delta)$ only
consists of one component, the soul of $N^k$ of $Y$ cannot be $S^2$;
otherwise we would have $Y^3 \sim S^2 \times \mathbb R$ and $
\partial B_{M^3_\alpha}(p_{\alpha},\delta) \sim S^2 \cup S^2$, a
contradiction.

Therefore, we have demonstrated that the soul $N^k$ must be either a
point or $\mathbb {RP}^2$. It follows that either $B_Y(N^k,R)\cong
D^3\text{ or } \mathbb {RP}^2\ltimes I$ by soul theorem. Hence, we
conclude that $B_Y(N^k,R)\cong \mathbb {RP}^2\ltimes I$ for
sufficiently large $R$. Using Perelman's stability theorem, we
conclude that $B_{M^3_\alpha} (p_{\alpha},\delta)$ is homeomorphic
to $\mathbb {RP}^2\ltimes I$ for this sub-case.
\medskip

{\it Subcase 1.b. } If $\dim Y=2$, i.e. $Y$ is a surface with
possibly non-empty boundary. First we claim $\partial Y\ne
\varnothing$. For any fixed $r>0$, we have
\begin{equation}\label{eq5.12}
\partial B_{M^3_\alpha}(p_\alpha, r)\cong S^2
\end{equation}
by our assumption that the regular fiber is $S^2$. Suppose contrary,
$\partial Y^2 = \varnothing$. We would have $Y\cong \mathbb R^2$ (or
a M\"obius band) by \autoref{thm2.6}, because $Y^2$ has one end.
Thus, for sufficiently large $R$ we have $\partial B_Y(\bar
p_\infty, R)= S^1$. Applying fibration theorem for the collapsing to
the surface case, we would further have
$$
\partial B_{M^3_\alpha}(\bar{p}_{\alpha}, R)= T^2
$$
which contradicts to our boundary condition \eqref{eq5.12}. Hence,
$Y$ has non-empty boundary: $\partial Y^2 \neq \varnothing$.

Since $Y^2$ has one end and $\partial Y^2 \neq \varnothing$, by
\autoref{cor2.9} we know that $Y^2$ is homeomorphic to either
upper-half plane $[0, \infty) \times \mathbb R$ or isometric a half
cylinder. By previous argument, $Y^2$ can not be isometric to a half
cylinder. Hence, we have
\begin{equation}\label{eq5.13}
Y^2 \cong [0, \infty) \times \mathbb R
\end{equation}
and that $\partial Y$ is a non-compact set.

We further observe that if $\bar{p}_\infty$ is a boundary point of
$Y^2$, then, by \autoref{thm5.4}, we would have
$$
B_{M^3_\alpha}(p_\alpha, r) \cong [D^2_+ \times I] \cong D^3
$$
where $D^2_+$ is closed half disk, a contradiction.

Thus we may assume that $\bar{p}_\infty$ is an interior point of
$Y^2$. Let us consider the Seifert fiber projection
$B_{\frac{1}{\lambda_\alpha}M^3_{\alpha}}(\bar{p}_{\alpha},R)\to
B_Y(\bar{p}_\infty,R)$ for some large $R$. If
$B_Y(\bar{p}_\infty,R)$ contains no interior extremal point, then by
the proof of \autoref{thm5.4} one would have
$B_{\frac{1}{\varepsilon_\alpha}M^3_{\alpha}}(\bar{p}_{\alpha},R)\cong
D^2\times I\cong D^3$, a contradiction. Therefore, there exists an
extremal point inside $B(\bar{p}_\infty, R)$. Without loss of
generality, we may assume this extremal point is $O$.

By \autoref{thm5.9} and the fact that $\partial Y^2 \neq
\varnothing$, we observe that $Y^2$ has at most one interior
extremal point. In our case, $Y^2$ is isometric to the following
flat surface with a singularity. Let $\Omega = [0, h] \times [0,
\infty)$ be a half flat strip and $\Gamma = \{ (x, y) \in \Omega |
xy = 0\} \subset \partial \Omega $. Our singular flat surface $Y^2$
is isometric to a gluing two copies of $\Omega$ along the curve
$\Gamma$.

The picture of $B_Y(O,R)$ for large $R$ will look like
\autoref{figure5.1}, where the bold line denotes $\partial Y\cap
B_Y(O,R)$.
\begin{figure}[ht]
\includegraphics[width=150pt]{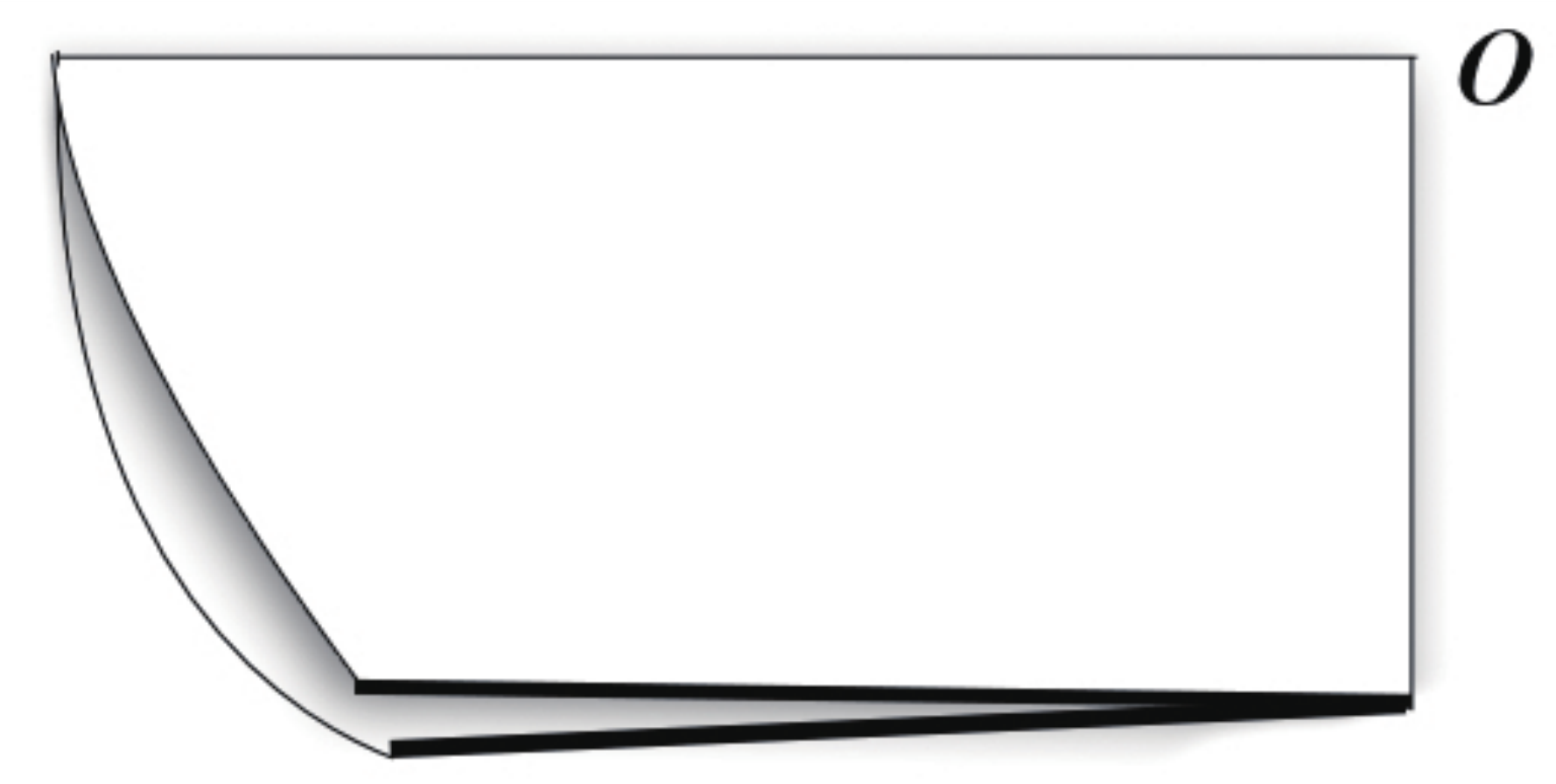}\\
\caption{A metric disk $B_{Y^2}(O,R)$ in $Y^2$ for large $R$}\label{figure5.1}
\end{figure}
By our assumption, we have $\partial B(\bar{p}_{\alpha}, R) \cong
S^2$. Thus we can glue a $3$-ball $D^3$ to $B(\bar{p}_{\alpha}, R)$
along $\partial B(\bar{p}_{\alpha}, R) \cong S^2$ to get a new
closed $3$-manifold $\hat M^3_\alpha$.

Recall that we have a (possibly singular) fibration
$$
S^1 \to B(\bar{p}_{\alpha}, R)
\stackrel{G_\alpha}{\longrightarrow}\interior (Y^2).
$$
By \autoref{thm2.1}, we have $ G_\alpha^{-1}(B_{Y^2}(O, h/2)) \cong
D^2 \times S^1. $ With some efforts, we can further show that $
[\hat M^3_\alpha - G_\alpha^{-1}(B_{Y^2}(O, h/4))] \cong S^1 \times
D^2. $

The exceptional orbit $G_\alpha^{-1}(O)$ is corresponding to the
case of $m_0=2$ in \autoref{ex2.0}. Finally we conclude that $ \hat
M^3_\alpha $ is homeomorphic to real projective $3$-space:
\begin{equation}\label{eq5.14}
\hat M^3_\alpha \cong (D^2\times S^1)\cup_{\psi_{\mathbb Z_2}}
(S^1\times D^2)\cong \mathbb {RP}^3.
\end{equation}

Therefore, by our construction, we have
$$B_{M^3_\alpha}(p_{\alpha},\delta) = [\hat M^3_\alpha \setminus D^3 ]
\cong [\mathbb{RP}^3\setminus D^3] \cong \mathbb{RP}^2\ltimes I.$$

This completes the first part of our proof for the case of $\partial
B_{M^3_\alpha}(p_{\alpha},\delta)= S^2$.

\medskip
{\bf Case 2. } If $\partial B_{M^3_\alpha}(p_{\alpha},\delta)=T^2$,
our discussion will be similar to the previous case with some
modifications. In fact, there are still two sub-cases:

\medskip

{\it Subcase 2.a.} If $\dim Y=3$, then the $Y$ has a soul $N^k$. We
assert that $N^k$ cannot be a point nor $S^2$ nor $\mathbb {RP}^2$
because the boundary $\partial B_{M^3_\alpha}(p_{\alpha},\delta) =
T^2$. In addition, we observe that $N^k$ cannot be $T^2$ since the
boundary of $B_{M^3_\alpha}(p_{\alpha},\delta)$ consists only one
component. Therefore, there are two remaining cases: $N^k$ is
homeomorphic to either $S^1$ or Klein bottle $K^2$. If the soul is
$S^1$ then $B_{M^3_\alpha} (p_{\alpha},\delta)\cong D^2\times S^1$.
Similarly, if soul is $K^2$, then
$B_{M^3_\alpha}(p_{\alpha},\delta)\cong K^2\ltimes I$.

\medskip

{\it Subcase 2.b.} $\dim Y=2$. It is clear that our limiting space
$Y^2$ is non-compact.

If $\partial Y^2 = \varnothing$, we proceed as follows. (i) When the
soul of $Y^2$ is $S^1$, by the connectedness of $\partial
B_{M^3_\alpha}(p_{\alpha}, \delta)$ we know that $Y$ is a open
M\"{o}bius band. Therefore, $B_{M^3_\alpha}(p_{\alpha},\delta)$ is
homeomorphic to product of M\"{o}bius band and $S^1$, i.e. a twist
$I$-bundle over $K^2$. (ii) When the soul of $Y^2$ is a single
point, $Y=\mathbb R^2$, which is non-compact. By \autoref{thm5.9},
we see that the number $k$ of extremal points in $Y^2$ is at most
$2$. Recall that there is (possibly singular) fibration: $S^1 \to
B_{M^3_\alpha}(p_{\alpha},\delta)
\stackrel{G_\alpha}{\longrightarrow} \interior (Y^2)$. If $k \le 1$,
we have $B_{M^3_\alpha}(p_{\alpha},\delta)\cong S^1\times D^2$. If
$k =2$, by \autoref{thm5.9}, we can further show that
$B_{M^3_\alpha}(p_{\alpha},\delta)\cong K^2\ltimes I$.

Let us now handle the remaining subcase when $\partial Y^2 \neq
\varnothing$ is not empty. By a similar argument as in subcase 1.b
above, we conclude that $B_{M^3_\alpha}(p_{\alpha},\delta)$ is
homeomorphic to either $\mathbb{RP}^2\ltimes I$ or $D^3$, which
contradicts to our assumption $\partial
B_{M^3_\alpha}(p_{\alpha},\delta) \cong T^2$.

This completes the proof of \autoref{thm5.8} for all cases. \qed

\section{Gluing local fibration structures and Cheeger-Gromov's compatibility
condition}\label{section6}
\setcounter{theorem}{-1}

In this section, we complete the proof of Perelman's collapsing
theorem for $3$-manifolds (Theorem 0.1'). In previous five sections,
we made progress in decomposing each collapsing $3$-manifold
$M^3_{\alpha}$ into several parts:
\begin{equation}\label{eq6.1}
M^3_{\alpha}= V_{\alpha,X^0}\cup V_{\alpha,X^1}\cup V_{\alpha,
\interior (X^2)}\cup W_{\alpha}
\end{equation}
where $V_{\alpha,X^0}$ is a union of closed smooth $3$-manifolds of
non-negative sectional curvature, $V_{\alpha,X^1}$ is a union of
fibrations over $1$-dimensional spaces with spherical or toral
fibers and $V_{\alpha, \interior (X^2)}$ admits locally defined
almost free circle actions.

Extra cares are needed to specify the definition of $ V_{\alpha,
X^i}$ for each $\alpha$. For example, we need to choose specific
parameters for collapsing $3$-manifolds $\{M^3_\alpha \}$ so that
the decomposition in \eqref{eq6.1} becomes well-defined. The choices
of parameters can be made in a similar way as in \autoref{section3}
of \cite{SY2005}.

\begin{theorem}\label{thm6.0}
Suppose that $\{ (M^3_\alpha, g^\alpha)\}$ be as in
\autoref{thm0.1} and that $M^3_\alpha$ has no connected components
which admit metrics of non-negative curvature. Then there are two
small constants $\{c_1, \varepsilon_1\}$ such that
$$
V_{\alpha,X^1} = \{x \in M^3_\alpha \quad | \quad \dis_{GH}
(B_{(M^3_\alpha, \rho^{-2}_\alpha g_{ij}^\alpha)}(x, 1) ,
[0,\ell])\le \varepsilon_1 \}
$$
and
$$
V_{\alpha, X^2} = \left\{x \in M^3_\alpha | \vol(B_{g^\alpha} (x,
\rho_\alpha(x))) \le c_1 \varepsilon_1 [\rho_\alpha(x)]^3 \right\}
$$
which are described as in \eqref{eq6.1} above and $ \{
\rho_\alpha(x) \}$ satisfy inequality \eqref{eq6.3} below.
\end{theorem}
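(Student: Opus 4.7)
The plan is to exhibit constants $c_1, \varepsilon_1$ so that the sets $V_{\alpha, X^1}$ and $V_{\alpha, X^2}$ defined in the statement, together with a residual piece $W_\alpha$ consisting of solid tori and solid cylinders, constitute an admissible decomposition in the sense of \autoref{def3.6}. Since by hypothesis no connected component of $M^3_\alpha$ admits a non-negatively curved metric, the piece $V_{\alpha, X^0}$ is empty, so only the identification of $V_{\alpha, X^1}$, $V_{\alpha, X^2}$ and the residual $W_\alpha$ must be carried out. The scale function $\rho_\alpha(x)$ is chosen as in hypothesis (1) of \autoref{thm0.1}, so each rescaled ball $B_{\rho_\alpha^{-2} g^\alpha}(x, 1)$ has curvature $\ge -1$; the referenced inequality \eqref{eq6.3} provides the slowly-varying property of $\rho_\alpha$ needed below.

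The first step is a covering argument: show that every $x \in M^3_\alpha$ lies in $V_{\alpha, X^1} \cup V_{\alpha, X^2}$ once $\varepsilon_1, c_1$ are small enough. Given $x$, pass to a subsequence so that the pointed rescaled balls $(B_{\rho_\alpha^{-2} g^\alpha}(x, 1), x)$ Gromov--Hausdorff converge to a pointed Alexandrov space $(X^k, x_\infty)$ with $\curv \ge -1$ and $k \in \{0, 1, 2\}$ (by the collapsing hypothesis, $k < 3$). If $k = 0$ the sequence is diameter-collapsing on a component, which, combined with the non-existence of non-negatively curved metrics on that component, can be ruled out by the rescaling argument of \autoref{prop2.3} followed by the soul theorem \autoref{thm2.11} (which would force a component structure contradicting the hypothesis). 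If $k = 1$, then by choosing $\varepsilon_1$ smaller than the GH distance threshold separating 1-dim and 2-dim limits, the defining inequality for $V_{\alpha, X^1}$ holds. If $k = 2$, then $\vol(B_{\rho_\alpha^{-2} g^\alpha}(x, 1)) \to 0$ and the volume bound defining $V_{\alpha, X^2}$ holds for any fixed $c_1 > 0$ and all sufficiently large $\alpha$, with $\varepsilon_1$ absorbed into $c_1 \varepsilon_1$.

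Next, one must recognize the local topological structure on each piece, invoking the results of Sections 1--5. On $V_{\alpha, X^1}$, interior points are handled by \autoref{thm4.3}, yielding fibrations with spherical or toral fibers over intervals; endpoint and boundary behavior is handled by \autoref{thm5.8}, which caps off each such interval component with one of $\{D^3,\, [\mathbb{RP}^3 - D^3],\, S^1 \times D^2,\, K^2 \ltimes I\}$. On $V_{\alpha, X^2}$, the circle fibration over the regular part of the limit surface is furnished by \autoref{thm1.17}, while \autoref{thm2.1} produces solid-torus neighborhoods around exceptional circle orbits; boundary points of the limit surface are handled by \autoref{thm5.4}. The residual piece $W_\alpha$ is then exactly the disjoint union of these solid tori around exceptional orbits and solid cylinders capping spherical boundary components of $V_{\alpha, X^1}$, meeting conditions (1)--(6) of \autoref{thm3.5}.

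The main obstacle will be verifying the Cheeger--Gromov compatibility condition (3.2) on the overlap $V_{\alpha, X^1} \cap V_{\alpha, X^2}$, where a torus-fiber structure (over an interval limit) must be reconciled with a circle-fiber structure (over a surface limit). I would address this by noting that the scale-function inequality \eqref{eq6.3} guarantees that along the overlap the limit geometry is a product $S^1 \times \mathbb{R}$ type neighborhood, so that \autoref{prop1.14} applied to the two families of defining distance functions (from the interval-limit basepoint and from the surface-limit basepoint) shows that, after lifting to a finite normal cover and rescaling, the two local almost-free torus/circle actions commute up to reparametrization, fulfilling (3.1) and (3.2). A final perturbation of $\partial V_{\alpha, X^2}$ (following the technique of \cite{SY2005} and \cite{MT2008}) arranges that the boundary tori are saturated by the circle-fibration, so that condition (5) of \autoref{thm3.5} is met; invoking \autoref{thm3.5} then concludes that $M^3_\alpha$ is a graph-manifold, completing the verification that the chosen constants $\{c_1, \varepsilon_1\}$ produce the asserted admissible decomposition.
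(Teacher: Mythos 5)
Your overall shape (split according to the dimension of the pointed limit of $B_{\rho_\alpha^{-2}g^\alpha}(x,1)$ and quote the structure results of Sections 1--5) matches the paper, but the two steps that actually carry the paper's proof are missing. First, in the $1$-dimensional case the paper does not merely record membership in $V_{\alpha,X^1}$: it covers the thin unit ball by roughly $4\ell/\varepsilon_1$ balls of radius $2\varepsilon_1$ and applies Bishop--Gromov to obtain $\vol[B_{\rho_\alpha^{-2}g^\alpha}(x,1)]\le c_0^{*}\ell\varepsilon_1^{2}$; this is what ties the volume threshold $c_1\varepsilon_1$ to the Gromov--Hausdorff threshold $\varepsilon_1$ and makes the two defining inequalities consistent with the decomposition \eqref{eq6.1}. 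Second, and more seriously, your treatment of the $2$-dimensional case (``$\vol\to 0$, so the bound holds for any fixed $c_1$ and large $\alpha$'') proves too much and too little: by hypothesis (1) of \autoref{thm0.1} every point of $M^3_\alpha$ satisfies $\vol(B_{g^\alpha}(x,\rho_\alpha))\le\omega_\alpha\rho_\alpha^{3}$, so on your reading $V_{\alpha,X^2}$ would be all of $M^3_\alpha$ as soon as $\omega_\alpha<c_1\varepsilon_1$ --- including the interval-collapsed regions with spherical fibers, where the locally defined almost free circle actions required by \eqref{eq6.1} do not exist. The paper avoids this by splitting the surface case into the thin-disk subcase (the area of $B_{X^2}(x_\infty,1)$ is small), where a triangle-inequality estimate shows the unit ball is within $\varepsilon_1$ of an interval and the point is deliberately placed in both $V_{\alpha,X^1}$ and $V_{\alpha,X^2}$, and the fat-disk subcase, where the almost-metric-submersion property coming from \autoref{prop1.14} yields $\vol\le c_1\varepsilon_1 r^{3}$ together with the uniqueness of the free homotopy class of the collapsing fiber in the annular region $A_{(M^3_\alpha,\rho_\alpha^{-2}g^\alpha)}(x_\alpha,\tfrac14,1)$ --- the latter being exactly what later makes the local circle actions gluable. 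None of this quantitative bookkeeping appears in your proposal, and without it the assertion that the sets defined by $\varepsilon_1$ and $c_1\varepsilon_1$ ``are described as in \eqref{eq6.1}'' is not established.

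A secondary point: the second half of your proposal (Cheeger--Gromov compatibility on the overlap, saturation of boundary tori, invoking \autoref{thm3.5} to conclude that $M^3_\alpha$ is a graph-manifold) is not what \autoref{thm6.0} asserts; that material is the content of \autoref{prop6.3}--\ref{prop6.5} and of the final proof of \autoref{thm0.1}, which come after this theorem and take its decomposition as input, so leaning on it here inverts the logical order of the section. Also, the $0$-dimensional limit is excluded in the paper by the one-line observation that $\rho_\alpha(x)$ does not exceed the diameter, so the rescaled limit has $\diam(X^k)\ge 1$; your detour through \autoref{prop2.3} and \autoref{thm2.11} is harmless but unnecessary.
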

\begin{proof}

Recall that if the pointed Riemannian manifolds $\{((M^3_\alpha,
\rho^{-2}_\alpha g_{ij}^\alpha), x_\alpha) \}$ converge to $(X^k,
x_\infty)$, then by our assumption we have $\diam(X^k) \ge 1$ and
hence
$$
1 \le \dim[X^k] \le 2.
$$

Therefore, there are two cases: (1) $\dim [X^k] = 1$ and (2)
$\dim[X^k] = 2$.

\medskip
\noindent {\it Case 1.} $\dim[X^k] = 1$. By the proof of
\autoref{thm5.8}, there exists $\varepsilon_1>0$ such that if
\begin{equation}\label{eq:6.0.1}
\dis_{GH}(B_{(M^3_\alpha, \rho^{-2}_\alpha g_{ij}^\alpha)}(x, 1) ,
[0, \ell])\le \varepsilon_1
\end{equation}

then $B_{(M^3_\alpha, \rho^{-2}_\alpha g_{ij}^\alpha)}(x, 1) $
admits a possibly singular fibration over $[0,1]$, where the
curvature of the $ \rho^{-2}_\alpha g_{ij}^\alpha$ is bounded below
by $-1$. Clearly, $\ell$ must satisfy $1 - \varepsilon_1 \le \ell
\le 2 + \varepsilon_1$.

If the inequality \eqref{eq:6.0.1} holds, then the unit metric ball
is very thin and can be covered by at most
$\frac{4\ell}{\varepsilon_1 }$ many small metric balls $\{
B_{(M^3_\alpha, \rho^{-2}_\alpha g_{ij}^\alpha)}(y_j,
2\varepsilon_1) \}$. By Bishop volume comparison theorem, we have
a volume estimate:
\begin{equation}
\vol[B_{(M^3_\alpha, \rho^{-2}_\alpha g_{ij}^\alpha)}(x, 1)] \le
\frac{4\ell}{\varepsilon_1 } c_0 [\sinh( 2\varepsilon_1)]^3 \le
c_0^* \ell \varepsilon_1^2
\end{equation}

In this case, we set $x \in V_{\alpha, X^1}$.

\noindent {\it Case 2.} $\dim[X^k] = 2$. There are two subcases for

\begin{equation}
\dis_{GH}(B_{(M^3_\alpha, \rho^{-2}_\alpha g_{ij}^\alpha)}(x, 1),
B_{X^2}(x_\infty, 1))\le \varepsilon_2
\end{equation}

\smallskip
\noindent
{\it Subcase 2a.} The metric disk $B_{X^2}(x_\infty, 1)$ has small area.

In this subcase, one can choose constant $\varepsilon_2>0$ such that
if a metric disk in $X^2$ with radius $r$ satisfies $1/2\le r\le 1$
and area $\le \varepsilon_2$ then by comparison theorems one can
prove
\begin{equation}
\dis_{GH}(B_{X^2}(x_\infty, 1), [0,\ell])\le \varepsilon_1/3
\end{equation}
for some $\ell>0$. It follows that
\begin{equation*}
\begin{split}
\dis_{GH}(B_{(M^3_\alpha, \rho^{-2}_\alpha g_{ij}^\alpha)
}(x_\alpha, 1),[0,\ell])& \le \dis_{GH}(B_{(M^3_\alpha,
\rho^{-2}_\alpha g_{ij}^\alpha)}(x_\alpha,
1),B_{X^2}(x_\infty, 1) )\\
&\quad+ \dis_{GH}(B_{X^2}(x_\infty, 1), [0,\ell])\\
&\le\frac{\varepsilon_1}{3}+\frac{\varepsilon_1}{3}<\varepsilon_1
\end{split}
\end{equation*}

We can now view the ball $B_{\rho^{-2}_\alpha g_{ij}^\alpha
}(x_\alpha, 1)$ is fibred over $[0,\ell]$, instead of fibred over
the disk $B_{X^2}(x_\infty, 1)$.

In this subcase, we let $x_\alpha$ be in both $ V_{\alpha, X^1}$ and
$V_{\alpha, X^2}$.

\smallskip
\noindent {\it Subcase 2b.} The metric disk $B_{X^2}(x_\infty, 1)$
is very fat (thick).

In this subcase, we may assume that the length of metric circle
$\partial B_{X^2}(x_\infty, r)$ is 100 times greater than
$\varepsilon_1$ for $r \in [\frac 12, 1]$. Hence, the length of
collapsing fiber $F^{-1}_\alpha(z)$ for $ z \in \partial
B_{X^2}(x_\infty, r)$. Thus, the metric spheres $\{ \partial
B_{(M^3_\alpha, \rho^{-2}_\alpha g_{ij}^\alpha) }(x_\alpha, r)\}$
collapse in only one direction. Because $G_\alpha$ is an almost
metric submersion due to Perelman's semi-flow convergence theorem,
(cf. \autoref{prop1.14} above), volumes of metric balls collapse at
an order $o( \varepsilon_1)$:
\begin{equation}
\vol[ B_{(M^3_\alpha, \rho^{-2}_\alpha g_{ij}^\alpha)}(x, r) ] \le
c_1 \varepsilon_1 r^3
\end{equation}
for $r \in [\frac 12, 1]$. Moreover, the free homotopy class of
collapsing fibers is {\it unique} in the annular region
$A_{(M^3_\alpha, \rho^{-2}_\alpha g_{ij}^\alpha) }(x_\alpha, \frac
14, 1)$.

By the discussion above, we have the following decomposition of the
manifold $M^3_\alpha$ for $\alpha$ large. More precisely, for
sufficiently large $\alpha$, $M^3_\alpha$ has decomposition
$$
M^3_\alpha = V_{\alpha,X^1}\cup V_{\alpha, \interior (X^2)} =
V_{\alpha, \interior (X^1) }\cup V_{\alpha, \interior (X^2)}\cup
W_{\alpha}
$$
where $W_\alpha$ contains collapsing parts near $\partial X^1$ and
$\partial X^2$ described in \autoref{section5} above. This completes
the proof.
\end{proof}

Let us now recall an observation of Morgan-Tian about the choices of
functions $\{\rho_{\alpha}(x)\}$ and volume collapsing factors
$\{w_{\alpha}\}$.

\begin{prop}[Morgan-Tian \cite{MT2008}]\label{prop6.1}
Let $M^3_{\alpha}, w_{\alpha}$ and $\rho_{\alpha}(x)$ be as in
Theorem 0.1'. Suppose that none of connected components of
$M^3_{\alpha}$ admits a smooth Riemannian metric of non-negative
sectional curvature. Then, by changing $w_{\alpha}$ by a factor
$\hat c$ independent of $\alpha$, we can choose $\rho_{\alpha}$ so
that
\begin{equation}\label{eq6.2}
\rho_{\alpha}(x)\le \dim(M^3_{\alpha})
\end{equation}
and
\begin{equation}\label{eq6.3}
\rho_{\alpha}(y)\le \rho_{\alpha}(x)\le 2\rho_{\alpha}(y)
\end{equation}
for all $y\in B_{g_{\alpha}}(x,\frac{\rho_{\alpha}}{2})$
\end{prop}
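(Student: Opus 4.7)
The plan is to replace the original selection of $\rho_\alpha$ from Theorem~0.1' by a refined one defined as a supremum of admissible radii, and to absorb all resulting constants into a single factor $\hat c$ multiplying $\omega_\alpha$.

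For each $x\in M^3_\alpha$, set
\[
\rho_\alpha(x)=\sup\Big\{\rho\in(0,3]\;:\;\curv_{g_\alpha}|_{B_{g_\alpha}(x,\rho)}\ge -\rho^{-2},\ \vol\big(B_{g_\alpha}(x,\rho)\big)\le \hat c\,\omega_\alpha\,\rho^3\Big\},
\]
with $\hat c\ge 1$ to be fixed. The hypothesis of Theorem~0.1' provides a competitor so (provided the cap at $3$ is compatible --- see below) the set is nonempty, and the supremum is attained by lower semi-continuity. The bound $\rho_\alpha(x)\le 3=\dim(M^3_\alpha)$ is then automatic.

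The regularity inequality follows from ball inclusions. Given $y\in B_{g_\alpha}(x,\rho_\alpha(x)/2)$, set $\rho'=\rho_\alpha(x)-\dis_{g_\alpha}(x,y)\ge\rho_\alpha(x)/2$. Then $B_{g_\alpha}(y,\rho')\subset B_{g_\alpha}(x,\rho_\alpha(x))$, which forces $\curv_{g_\alpha}|_{B(y,\rho')}\ge -\rho_\alpha(x)^{-2}\ge -(\rho')^{-2}$ together with
\[
\vol\big(B_{g_\alpha}(y,\rho')\big)\le \vol\big(B_{g_\alpha}(x,\rho_\alpha(x))\big)\le \hat c\,\omega_\alpha\,\rho_\alpha(x)^3\le 8\hat c\,\omega_\alpha\,(\rho')^3.
\]
Enlarging $\hat c$ by the absolute factor $8$ absorbs this loss, so $\rho'$ is admissible at $y$ and hence $\rho_\alpha(y)\ge \rho_\alpha(x)/2$, which gives $\rho_\alpha(x)\le 2\rho_\alpha(y)$. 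The opposite direction $\rho_\alpha(y)\le \rho_\alpha(x)$ does not follow from a ball inclusion in the other sense, and must be obtained by passing to an upper regularization $\tilde\rho_\alpha(x)=\sup\{\rho_\alpha(z):\dis_{g_\alpha}(x,z)\le \rho_\alpha(x)/2\}$ and iterating; because $\rho_\alpha$ is already capped at $3$ and each iteration only enlarges it by a bounded factor, the process stabilizes after finitely many rounds, with all admissibility losses again absorbed into a uniform enlargement of $\hat c$.

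The main obstacle is verifying that the cap $\rho_\alpha(x)\le 3$ is never binding, so that an admissible $\rho\in(0,3]$ does exist at every $x$ after $\omega_\alpha$ is replaced by $\hat c\,\omega_\alpha$. If it were binding along some subsequence, the uncapped admissible radii would exceed $3$ at points $x_\alpha\in M^3_\alpha$; rescaling by the inverse squared radius, the pointed manifolds $(M^3_\alpha,\rho_\alpha^{-2}g_\alpha,x_\alpha)$ would converge in pointed Gromov--Hausdorff topology to a complete non-negatively curved Alexandrov space, exactly as in the proof of \autoref{prop2.3}. Applying the $3$-dimensional soul classification (\autoref{thm2.11}) together with Perelman's stability theorem, one sees that the corresponding connected component of $M^3_\alpha$ carries, for large $\alpha$, a smooth Riemannian metric of non-negative sectional curvature, contradicting the standing hypothesis. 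Thus the cap $3$ is never binding, and the construction produces the required $\rho_\alpha$ after a single absolute enlargement of $\omega_\alpha$ by $\hat c$.
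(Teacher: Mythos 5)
There is a genuine gap, and it sits exactly where your construction differs from the standard one: you build the volume constraint into the definition of $\rho_\alpha$, and then try to ``absorb'' constants into $\hat c$. But the function you are estimating depends on $\hat c$. Your ball-inclusion argument at $y\in B_{g_\alpha}(x,\rho_\alpha(x)/2)$ only shows that $\rho'=\rho_\alpha(x)-\dis(x,y)$ satisfies the volume bound with constant $8\hat c$, i.e.\ it proves $\rho^{8\hat c}_\alpha(y)\ge \tfrac12\rho^{\hat c}_\alpha(x)$, a comparison between two \emph{different} functions; enlarging $\hat c$ redefines both sides and the same factor reappears, so no single function is shown to satisfy \eqref{eq6.3}. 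The same loss (a factor $(4/3)^3$) blocks the reverse direction, and the proposed fix --- the ``upper regularization'' $\tilde\rho_\alpha$ and iteration --- is not a proof: $\tilde\rho_\alpha(x)$ takes its value from a nearby point $z$, so neither the curvature bound nor the volume bound is known on $B_{g_\alpha}(x,\tilde\rho_\alpha(x))$, and ``stabilizes after finitely many rounds'' is not substantiated; yet the new $\rho_\alpha$ must still verify hypothesis (1) of Theorem 0.1$'$ (with $\hat c\,\omega_\alpha$), or the proposition is vacuous. Finally, the cap discussion is both unnecessary and incorrect as argued: admissible radii merely exceeding $3$ give, after rescaling by $\rho^{-2}$, only $\curv\ge -1$, not a non-negatively curved limit (you would need the radii to diverge), so the appeal to \autoref{thm2.11} and stability does not go through.

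The missing idea is Bishop--Gromov relative volume comparison, which is how the paper (following Morgan--Tian) decouples the two constraints. Define $\rho^*_\alpha(x)$ as the \emph{maximal} radius $r$ with $\curv_{g_\alpha}\ge -r^{-2}$ on $B_{g_\alpha}(x,r)$ --- finite precisely because no component of $(M^3_\alpha,g_\alpha)$ has everywhere non-negative curvature, and $\ge\rho_\alpha(x)$ by hypothesis. Since the curvature condition is pointwise, both Harnack directions in \eqref{eq6.3} follow from ball inclusions and maximality with \emph{no} constant loss (if $\rho^*_\alpha(y)>2\rho^*_\alpha(x)$, then $B_{g_\alpha}(x,\rho^*_\alpha(y)-\dis(x,y))$ would beat the maximality of $\rho^*_\alpha(x)$). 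The volume bound at the new, larger radius is then not assumed but \emph{deduced}: monotonicity of $\vol(B(x,s))/V_{\rm hyp}(s)$ under $\curv\ge -r^{-2}$ gives $\vol(B_{g_\alpha}(x,\rho^*_\alpha))\le \hat C\,\omega_\alpha(\rho^*_\alpha)^3$ with $\hat C=C_0V_{\rm hyp}(1)$ absolute, as in \eqref{eq6.4}; this single comparison is the only place the factor change of $\omega_\alpha$ enters, which is exactly what the statement allows. I would rework your argument along these lines rather than keeping the volume condition inside the supremum.
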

\begin{proof}
(cf. Proof of Lemma 1.5 of \cite{MT2008}) Since there is a typo in
Morgan-Tian's argument, for convenience of readers, we reproduce
their argument with minor corrections.

Let us choose
$$C_0=\max_{0\le s\le 1}\left\{\frac{s^3}{4 \pi\int_0^s(\sinh
u)^2du}\right\}$$
For each connected component $N^3_{\alpha,j}$ of
$M^3_{\alpha}$, the Riemannian sectional curvature of
$N^3_{\alpha,j}$ can not be everywhere non-negative. Thus, for each
$x\in N^3_{\alpha,j}$, there is a maximum $r=r_{\alpha}(x)\ge
\rho_{\alpha}(x)$ such that sectional curvature of $g_{\alpha}$ on
$B_{g_{\alpha}}(x,r)$ is greater than or equal to $-\frac{1}{r^2}$.
Consequently, curvature of $\frac{1}{r^2}g_{\alpha}$ on
$B_{\frac{1}{r^2}g_{\alpha}}(x,1)$ is $\ge -1$. By Bishop-Gromov
relative comparison theorem and our assumption
$\vol(B_{g_{\alpha}}(x,\rho_{\alpha}))\le
w_{\alpha}\rho_{\alpha}^3$, we have
\begin{equation} \label{eq6.4}
\begin{split}
& \vol(B_{g_{\alpha}}(x,r))=r^3\vol [B_{\frac{1}{r^2}g_{\alpha}}(x,1)]
\le r^3\frac{V_{\rm hyp}(1)}{V_{\rm hyp}(\frac{\rho_\alpha}{r})}
\vol(B_{\frac{1}{r^2}g_{\alpha}}(x,\frac{\rho_\alpha}{r})) \\
& = \frac{V_{\rm hyp}(1)}{V_{\rm hyp}(\frac{\rho_\alpha}{r})}
\vol(B_{g_{\alpha}}(x, \rho_\alpha ))
\le \frac{V_{\rm hyp}(1)}{V_{\rm hyp}(\frac{\rho_\alpha}{r})} w_{\alpha}\rho_{\alpha}^3 \\
& = \frac{V_{\rm hyp}(1)}{V_{\rm hyp}(\frac{\rho_\alpha}{r})}
(\frac{\rho_{\alpha}}{r})^3\frac{1}{(\frac{\rho_{\alpha}}{r})^3} w_{\alpha}\rho_{\alpha}^3
\le C_0 V_{\rm hyp}(1) w_{\alpha}r^3
\end{split}
\end{equation}
where $V_{\rm hyp}(s)$ is the volume of the ball $B_{\mathbb
H^3}(p_0,s)$ of radius $s$ in $3$-dimensional hyperbolic space with
constant negative curvature $-1$.

Let $
\hat C=C_0 V_{\rm hyp}$
be a constant number independent of $\alpha$, and $Rm_g$ be the
sectional curvature of the metric $g$. We now replace
$\rho_\alpha(x)$ by
\begin{equation}\label{eq6.6}
\rho_\alpha^*(x)=\max\{r|
Rm_{g_{\alpha}}|_{B_{g_{\alpha}}(x,r)}\ge-\frac{1}{r^2}\}
\end{equation}
Our new choice $\rho_\alpha^*(x)$ clearly satisfies

\begin{equation} \label{eq6.7}
\left\{ \begin{aligned}
       \rho_\alpha^*(x) &\le \diam(M^3_{\alpha}) \\
                  \frac{1}{2}\rho_\alpha^*(x)\le&\rho_\alpha^*(y)\le 2\rho_\alpha^*(x)
                          \end{aligned} \right.
                          \end{equation}
for all $y\in B_{g_{\alpha}}(x,\frac{\rho_{\alpha}^*}{2})$ and
$x\in M^3_{\alpha}.$
\end{proof}

Our next goal in this section is to show that we can perturb our
decomposition above along their boundaries so that the new
decomposition admits an F-structure in the sense of Cheeger-Gromov,
and hence $M^3_{\alpha}$ is a graph manifold for sufficiently large
$\alpha$. Let us begin with a special case.

\subsection{Perelman's collapsing theorem for a special case} \

\medskip
\
In this subsection, we prove Theorem 0.1' for a special case when
Perelman's fibrations are assumed to be circle fibrations or toral
fibrations. In next sub-section, we reduce the general case to the
special case, (i.e. the case when no spherical fibration occurred).

As we pointed out earlier, for the proof of \autoref{thm0.1} it is
sufficient to verify that $M^3_{\alpha}$ admits an F-structure of
positive rank in the sense of Cheeger-Gromov for sufficiently large
$\alpha$, (cf. \cite{CG1986}, \cite{CG1990}, \cite{R1993}).

Recall that an F-structure on a $3$-manifold $M^3$ is a collection
of charts $\{(U_i,V_i,T^{k_i})\}$ such that $T^{k_i}$ acts on a
finite normal cover $V_i$ of $U_i$ and the $T^{k_i}$-action on $V_i$
commutes with deck transformation on $V_i$. Moreover the tori-group
actions satisfy a compatibility condition on any possible overlaps.

Let us state compatibility condition as follows.
\begin{definition}[Cheeger-Gromov's compatibility condition]\label{def6.2}
$\quad$\\ Let $\{(U_i,V_i,T^{k_i})\}$ be a collection of charts as
above. If, for any two charts $(U_i,V_i,T^{k_i})$ and $ (U_j,V_j,
T^{k_j})$ with non-empty intersection $U_i\cap U_j\ne \varnothing$
and with $k_i\le k_j$, the $T^{k_i}$ actions commutes with the
$T^{k_j}$-actions on a finite normal cover of $U_i\cap U_j$ after
re-parametrization if needed, then the collection $\{(U_i,V_i,
T^{k_i}) \}$ is said to satisfy Cheeger-Gromov's compatibility
condition.
\end{definition}

For the purpose of this paper, since our manifolds under
consideration are $3$-dimension, the choice of free tori $T^{k_i}$
actions must be either circle action or $2$-dimensional torus
action. Thus we only have to consider following three possibilities:
\begin{enumerate}[(i)]

\item Both overlapping charts $ (U_i,V_i, S^1)$ and $ (U_j,V_j, S^1)$
admit almost free circle actions;

\item
Both overlapping charts $ (U_i,V_i, T^2)$ and $ (U_j,V_j, T^2)$
admit almost free torus actions;
\item
There are a circle action $(U_i,V_i, S^1)$ and a torus-action $
(U_j,V_j, T^2)$ with non-empty intersection $U_i\cap U_j\ne
\varnothing$.
\end{enumerate}

Let us begin with the sub-case (ii).

\begin{prop}\label{prop6.3}
Let $U_{\alpha,i_1}$ and $U_{\alpha,i_2}$ be two overlapping open
subsets in $U_{\alpha, X^1}$ with toral fibers $F^2_{i_1}\cong F^2
_{i_2} \cong T^2$ described in \autoref{section4}. Suppose that $
U_{\alpha,i_1}\cap U_{\alpha,i_2}\ne\varnothing$. Then we can modify
charts $(U_{\alpha,i_1},V_{\alpha,i_1},T^2)$ so that the perturbed
torus-actions on modified charts satisfy the Cheeger-Gromov's
compatibility condition.
\end{prop}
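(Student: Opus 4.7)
The plan is to exploit the rigidity of the $T^2$-fibration structure over a $1$-dimensional base: both charts $(U_{\alpha,i_1}, V_{\alpha,i_1}, T^2)$ and $(U_{\alpha,i_2}, V_{\alpha,i_2}, T^2)$ arise, via \autoref{thm4.3} and the Perelman--Yamaguchi fibration theorem, from collapses to a $1$-dimensional Alexandrov space. On the overlap, the two limiting intervals come from the same portion of $X^1$, and hence must determine the same ``collapsing $2$-plane field'' up to arbitrarily small error as $\alpha \to \infty$.

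First I would lift both charts to their finite normal covers $V_{\alpha, i_j}$ on which the $T^2$-actions are free. This gives, on each $V_{\alpha, i_j}$, a pair of commuting almost-Killing vector fields $X_1^{(j)}, X_2^{(j)}$ spanning the fiber distribution $\mathcal{V}_j \subset TV_{\alpha, i_j}$. The key estimate, which follows from Perelman's semi-flow convergence theorem (\autoref{prop1.14}) together with the almost-metric-submersion property of the maps $G_{\alpha, i_j}: U_{\alpha, i_j} \to X^1_{i_j}$, is that on a lift of the overlap one obtains an angle bound $\angle(\mathcal{V}_1(p), \mathcal{V}_2(p)) \le \varepsilon_\alpha$ with $\varepsilon_\alpha \to 0$, because both planes coincide with the kernel of the same nearly-metric-submersion onto a common $1$-dimensional Gromov--Hausdorff limit.

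Given this $C^0$-closeness of the two integrable $2$-plane distributions, I would then construct an isotopy $\Phi_\alpha$, $C^1$-close to the identity and supported in a collar of the overlap, carrying $\mathcal{V}_1$ to $\mathcal{V}_2$ leaf-by-leaf. This is the standard averaging-and-exponentiating step of the Cheeger--Fukaya--Gromov nilpotent Killing structure theory; it is simpler in our dimension because the ``transverse'' direction is only $1$-dimensional, so the leaf-to-leaf identification reduces to matching holonomy along a single transverse arc. A cutoff function supported in an annular collar then makes $\Phi_\alpha$ the identity away from a small neighborhood of the overlap, while keeping it equal to the fiber-matching diffeomorphism on the core of the overlap.

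Finally, I would define the modified $T^2$-action on $V_{\alpha, i_1}$ by conjugating the original action by $\Phi_\alpha$ (after first averaging $\Phi_\alpha$ over the deck group of $V_{\alpha, i_1} \to U_{\alpha, i_1}$ to guarantee descent). One then checks that the new action coincides with the old one outside the collar, and that on a lift of the full overlap it shares its orbits with the $T^2$-action on $V_{\alpha, i_2}$. Two free $T^2$-actions with identical orbits differ by an automorphism of $T^2$, which can be absorbed into a reparametrization, yielding the desired commuting compatibility. The main technical obstacle is precisely the equivariance-under-deck-groups step: ensuring that $\Phi_\alpha$, its cutoff, and the resulting modified action all descend correctly from the covers $V_{\alpha, i_j}$ to $U_{\alpha, i_j}$ without disturbing the commuting structure produced on the overlap, which forces the averaging to be performed before, rather than after, the cutoff.
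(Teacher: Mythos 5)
Your plan diverges from the paper's (the paper glues at the level of the defining admissible functions rather than at the level of the actions), but as written it has two genuine gaps. First, the machinery you invoke is not available in this setting. The charts $(U_{\alpha,i_j},V_{\alpha,i_j},T^2)$ of \autoref{section4} come from Perelman's fibration theorem applied to admissible (distance-type) maps under only a \emph{lower} curvature bound: the fibrations and torus structures are topological/bi-Lipschitz objects, and \autoref{prop1.14} controls gradient curves of semiconcave functions, not tangent planes of fibers. So your opening step --- commuting almost-Killing fields spanning smooth vertical distributions $\mathcal{V}_j$ together with an angle bound $\angle(\mathcal{V}_1(p),\mathcal{V}_2(p))\le\varepsilon_\alpha$ --- has no foundation here, and the ``averaging-and-exponentiating'' step of the Cheeger--Fukaya--Gromov nilpotent Killing theory, as well as the averaging of $\Phi_\alpha$ over the deck group (a center-of-mass construction), are developed under two-sided curvature bounds, which is precisely what \autoref{thm0.1} does not assume. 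To run your argument you would first have to smooth the fibrations with quantitative $C^1$ control, which the Alexandrov-geometry tools of this paper do not supply.

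Second, the closing step is false as stated: two free $T^2$-actions with identical orbits need not differ by an automorphism of $T^2$ and need not commute after reparametrization. Already on a single orbit, conjugating the translation action of $T^2$ on itself by a homeomorphism fixing the identity that is not affine produces a free transitive action with the same orbit that does not commute with translations; equality of orbit foliations is the right intermediate goal, but one must still straighten the action along each orbit (or replace both local actions by a single one) to get the Cheeger--Gromov condition of \autoref{def6.2}. The paper's proof sidesteps both difficulties: since the two charts fiber over intervals $X_{i_1},X_{i_2}$ inside the same one-dimensional limit, it blends the two defining admissible functions with a partition of unity on the base, $\hat f=\lambda_1 f_{i_1}\pm\lambda_2 f_{i_2}$, checks (with a careful choice of sign) that $\hat f$ remains regular in Perelman's sense with no critical points, lifts it to $U_{\alpha,i_1}\cup U_{\alpha,i_2}$, and applies Perelman's fibration theorem (\autoref{thm1.2}) to obtain one $T^2$-fibration over an interval covering the union; compatibility on the overlap is then automatic rather than enforced after the fact.
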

\begin{proof}
Recall that in \autoref{section4} we worked on the following
diagrams
\begin{diagram}
U_{\alpha,i_1}&\rTo^{f_{i_1}}&\mathbb{R}&&U_{\alpha,i_2}&\rTo^{f_{i_2}}&\mathbb{R}\\
\dTo_{\text{G-H}}&&\dCorresponds&{\quad {\rm and} \quad}&\dTo_{\text{G-H}}&&\dCorresponds\\
X^1_{i_1}&\rTo^{r_{\rho_{i_1}}}&\mathbb{R}&&X^1_{i_2}&\rTo^{r_{\rho_{i_2}}}&\mathbb{R}
\end{diagram}

Without loss of generality, we may assume that
$X_{i_1}=(a_{i_1},b_{i_1})$ and $X_{i_2}=(a_{i_2},b_{i_2})$ with
non-empty intersection $X_{i_1}\cap X_{i_2}=(a_{i_2},b_{i_1})\ne
\varnothing$, where $a_{i_1}< a_{i_2}< b_{i_1}< b_{i_2}$. Let
$\{\lambda_1,\lambda_2\}$ be a partition of unity of
$[a_{i_1},b_{i_2}]$ corresponding to the open cover
$\{(a_{i_1},b_{i_1}),(a_{i_2},b_{i_2}))\}$. After choosing $\pm$
sign and $\lambda_1, \lambda_2$ carefully, we may assume that
$$
\hat f(t)=\lambda_1(t)f_{i_1}(t)\pm \lambda_2(t)f_{i_2}(t)
$$
will not have any critical point $t\in [a_{i_1},b_{i_2}]$. We also
require that $\hat f: X_{i_1}\cup X_{i_2}\to \mathbb R$ is a {\it
regular} function in the sense of Perelman (i.e. $\hat f$ is a
composition of distance function given by
\autoref{def1.9}-\ref{def1.10}). Thus, we can lift the admissible
function $\hat f$ to a function
$$f_{\alpha}: U_{\alpha,i_1}\cup U_{\alpha,i_2}\to \mathbb R $$
such that
$$\nabla f_{\alpha}|_{y}\ne 0$$
for $y\in [U_{\alpha,i_1}\cup U_{\alpha,i_2}]$, $f_{\alpha}\cong
f_{\alpha,i_1}$ on $[U_{\alpha, i_1}^*-U_{\alpha,i_2}^*]$ and
$f_{\alpha}\cong f_{\alpha, i_2}$ on
$[U_{\alpha,i_2}^*-U_{\alpha,i_1}^*]$, where $U_{\alpha, i_1}^*$ is
a perturbation of $U_{\alpha, i_1}$ and $U_{\alpha, i_2}^*$ is a
perturbation of $U_{\alpha, i_2}$. Thus there is a new perturbed
torus fibration.
$$T^2\to U_{\alpha,i_1}\cup U_{\alpha,i_2}\to Y^1. $$
This completes the proof.
\end{proof}

Similarly, we can modify admissible maps to glue two circle actions
together.

\begin{prop}\label{prop6.4}
Let $U_{\alpha,i_1}$ and $U_{\alpha,i_2}$ be two open sets contained
in $U_{\alpha,X^2_{reg}}$ corresponding to a decomposition of
$M^3_\alpha$ described in \autoref{section1}. Suppose that two
charts $\{(U_{\alpha,i_1}, V_{\alpha,i_1}, S^1),(U_{\alpha,i_2},
V_{\alpha,i_2}, S^1)\}$ have non-empty overlap $U_{\alpha,i_1}\cap
U_{\alpha,i_2}\ne \varnothing$. Then the union $U_{\alpha,i_1}\cup
U_{\alpha,i_2}$ admits a global circle action after some
modifications when needed.
\end{prop}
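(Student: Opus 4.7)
The plan is to adapt the one-dimensional gluing argument of \autoref{prop6.3} to the two-dimensional base $X^2_{reg}$. By \autoref{thm1.17}, each chart $(U_{\alpha,i_j}, V_{\alpha,i_j}, S^1)$ arises as the fibration defined by a regular admissible map $F_{\alpha,i_j}: U_{\alpha,i_j} \to \mathbb{R}^2$, whose fibers are precisely the orbits of the $S^1$-action and whose Gromov-Hausdorff limit $F_{\infty,i_j}$ is a regular bi-Lipschitz homeomorphism onto an open subset of $X^2$. On the overlap $U_{\alpha,i_1} \cap U_{\alpha,i_2}$, both limit maps carry the common region $\Omega \subset X^2$ to $\mathbb{R}^2$, so there is a bi-Lipschitz transition $\phi = F_{\infty,i_2} \circ F_{\infty,i_1}^{-1}$ defined on $F_{\infty,i_1}(\Omega)$.

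First I would replace $F_{\alpha,i_2}$ by $\phi^{-1} \circ F_{\alpha,i_2}$, which is still admissible by \autoref{def1.9} since $\phi^{-1}$ is a bi-Lipschitz homeomorphism of $\mathbb{R}^2$; after this normalization both maps have comparable targets and their Gromov-Hausdorff limits agree on $\Omega$. Next, fix a smooth partition of unity $\{\lambda_1, \lambda_2\}$ on $X^2$ subordinate to the cover $\{F_{\infty,i_1}(U_{\alpha,i_1}),\, F_{\infty,i_2}(U_{\alpha,i_2})\}$, lift it to functions $\{\mu_{1,\alpha}, \mu_{2,\alpha}\}$ on $M^3_\alpha$ via the almost-submersion $M^3_\alpha \to X^2$, and form the blended map
\[
\hat F_\alpha = \mu_{1,\alpha}\, F_{\alpha,i_1} + \mu_{2,\alpha}\, (\phi^{-1} \circ F_{\alpha,i_2}),
\]
which coincides with $F_{\alpha,i_1}$ outside $U_{\alpha,i_2}$ and with $\phi^{-1} \circ F_{\alpha,i_2}$ outside $U_{\alpha,i_1}$.

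The critical step is to verify that $\hat F_\alpha$ is regular on all of $U_{\alpha,i_1} \cup U_{\alpha,i_2}$ for sufficiently large $\alpha$. Since $F_{\alpha,i_1}$ and $\phi^{-1} \circ F_{\alpha,i_2}$ share the same Gromov-Hausdorff limit on $\Omega$, applying \autoref{prop1.14} componentwise forces their semi-gradients to converge to a common limiting vector field; hence the angular defect between the gradient frames of the two admissible maps tends uniformly to zero on the overlap. The regularity condition of \autoref{def1.10}(1) (linear independence of $\{\nabla f_1, \nabla f_2\}$) is open and stable under $C^0$-small perturbations of admissible components, so the blended map $\hat F_\alpha$ inherits regularity from its two constituents. \autoref{thm1.2} then yields a globally defined $S^1$-fiber bundle
\[
S^1 \hookrightarrow U_{\alpha,i_1} \cup U_{\alpha,i_2} \xrightarrow{\hat F_\alpha} \hat F_\alpha(U_{\alpha,i_1} \cup U_{\alpha,i_2}) \subset \mathbb{R}^2,
\]
which extends the original circle actions and, by construction, satisfies Cheeger-Gromov's compatibility condition on the overlap.

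The main obstacle is precisely this regularity verification: a convex combination of distance-type admissible functions is not itself manifestly admissible, so one must exploit the \emph{almost-metric-submersion} character of Perelman's fibration (furnished by \autoref{prop1.14}) to guarantee that the blended gradient frame stays uniformly inside the open cone of regular configurations. A secondary technical point is upgrading the local $S^1$-bundle produced by \autoref{thm1.2} to an actual circle action on a finite normal cover $V_\alpha$, which is standard once the fibration exists, since the structure group reduces to $SO(2)$ on the orientable $3$-manifold $M^3_\alpha$.
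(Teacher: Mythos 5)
Your overall scheme is the paper's scheme: correct one chart so that the two admissible maps agree asymptotically on the overlap (the paper uses slowly varying $2\times 2$ matrix fields $A_{i_1}(x),A_{i_2}(x)$ close to the identity where you use the limit transition $\phi=F_{\infty,i_2}\circ F_{\infty,i_1}^{-1}$), blend with a partition of unity, argue that the blend is regular for large $\alpha$, and invoke \autoref{thm1.2}. The difference that matters is how the regularity of the blend is justified, and here your argument has a genuine gap. You derive it from the claim that, because $F_{\alpha,i_1}$ and $\phi^{-1}\circ F_{\alpha,i_2}$ have the same Gromov--Hausdorff limit on $\Omega$, \autoref{prop1.14} forces their gradients to converge to a common field, so the angular defect between the two frames goes to zero. \autoref{prop1.14} does not say this: it gives convergence of gradient curves and lower semicontinuity of $|\nabla f|$ along one convergent sequence of functions, and in a collapsing direction two sequences of functions can share a limit while one of them retains a gradient component of definite size along the shrinking fiber. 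What actually makes the two frames align is the geometric fact that on the overlap both fibrations collapse the \emph{same} circle direction, i.e.\ their fibers lie in the one free homotopy class of short loops; this is exactly the input the paper secures before blending, by first reducing to the case where both local limits are ``fat'' $2$-dimensional disks (if one limit ball is thin, the chart is reassigned to $V_{\alpha,X^1}$ via \autoref{thm6.0} and handled by \autoref{prop6.3} or \autoref{prop6.5}), which guarantees uniqueness of the free homotopy class of the collapsing fiber in the shell region, and by using the isolation of exceptional orbits to arrange that none meet the overlap.

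Without these two reductions your construction can fail: if one of the two balls is also Gromov--Hausdorff close to an interval, the collapsing directions are not unique and the two charts may fiber the overlap in genuinely different directions, so the blended map need not be regular no matter how large $\alpha$ is; and if an exceptional orbit sits in the overlap, the conclusion has to be phrased as a Seifert fibration rather than the free circle action your last step asserts. Your secondary concern (a pointwise convex combination with variable coefficients is not manifestly in Perelman's admissible class) is real but is left at the same level of sketchiness in the paper itself; the fat/thin reduction and the exceptional-orbit remark are the substantive ingredients your proposal is missing.
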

\begin{proof}
We may assume that, in the limiting processes of $U_{\alpha,i_1} \to
X^2_1 $ and $U_{\alpha,i_1} \to X^2_1 $, both limiting surfaces
$X^2_1$ and $X^2_2$ are fat (having relatively large area growth).
For the remaining cases when either $X^2_1$ and $X^2_2$ are
relatively thin, by the proof of \autoref{thm6.0} we can view either
$ U_{\alpha,i_1} $ or $ U_{\alpha,i_1}$ is a portion of $V_{\alpha,
X^1}$ instead. These remaining situations can be handled by either
\autoref{prop6.3} above or \autoref{prop6.5} below respectively.

As we pointed out in \autoref{section1}-\ref{section2} the
exceptional orbits are isolated. Without loss of generality, we may
assume that there is no exceptional circle orbits occurs in the
overlap $U_{\alpha,i_1}\cap U_{\alpha,i_2}$.

Since both limiting surfaces $X^2_1$ and $X^2_2$ are very {\it fat},
the lengths of metric circles $\partial B_{X^2_i}(x_{\infty, i}, r)$
is at least 100 times larger than $\varepsilon_1$, which is great
than the length of collapsing fibers, where $ r \in [\frac 12, 1] $.
Hence, on the overlapping region $U_{\alpha,i_1}\cap
U_{\alpha,i_2}$, two collapsing fibres are freely homotopic each
other in the shell-type region $ A_{(M^3_\alpha, \rho^{-2}_\alpha
g_{ij}^\alpha) }(x_\alpha,\frac 14, 1)$.

Let us consider the corresponding diagrams again.
\begin{diagram}
U_{\alpha,i_1} &\rTo^{\psi_{i_1}}&\mathbb{R}^2&&U_{\alpha,i_2} &\rTo^{\psi_{i_2}}&\mathbb{R}^2\\
\dTo_{\text{G-H}}&&\dCorresponds&{\quad {\rm and} \quad}&\dTo_{\text{G-H}}&&\dCorresponds\\
X^2_{i_1}&\rTo&\mathbb{R}^2&&X^2_{i_2}&\rTo&\mathbb{R}^2
\end{diagram}

We can use $2\times 2$ matrix-valued function $A(x)$ to construct a
new {\it regular} map

$$\psi_{\alpha}: U_{\alpha,i_1}\cup U_{\alpha,i_2} \to \mathbb R^2$$
from $\{\psi_{\alpha,i_1},\psi_{\alpha,i_2}\}$ as follows. Let
$$\psi_{\alpha}=\lambda_1 A_1\psi_{i_1}+\lambda_2 A_2\psi_{i_2}$$
where $\{\lambda_1,\lambda_2\}$ is a partition of unity for
$U_{i_1}, U_{i_2}$, $A_{i_1}(x)$ and $A_{i_2}(x)$ are $2\times 2$
matrix-valued functions such that
\begin{enumerate}[(i)]
\item $A_{i_1}(x)\cong \left(
                         \begin{array}{cc}
                           1 & 0 \\
                           0 & 1 \\
                         \end{array}
                       \right)$ is close to the identity on
                       $[U^*_{i_1}-U^*_{i_2}]$.

\item Similarly, $A_{i_2}(x)\cong \left(
                         \begin{array}{cc}
                           1 & 0 \\
                           0 & 1 \\
                         \end{array}
                       \right)$ is close to the identity on
                       $[U^*_{i_2}-U^*_{i_1}]$.

\end{enumerate}
where $U^*_{i_1}$ is a perturbation of $U_{i_1}$ and $U^*_{i_2}$ is
a perturbation of $U_{i_2}$. We leave the details to readers.
\end{proof}

We now discuss relations between circle actions and torus action.

\begin{prop}\label{prop6.5}
Let $U_{\alpha,i_1}\subset U_{\alpha,X^1}$ and
$U_{\alpha,i_2}\subset U_{\alpha,X^2_{reg}}$, where
$\{U_{\alpha,X^1}$, $U_{\alpha,X^2_{reg}}, U_{\alpha,X^0},
W_{\alpha}\}$ is a decomposition of $M^3_{\alpha}$ as in
\autoref{section1}-\ref{section5}. Suppose that $(U_{\alpha,i_1},
V_{\alpha,i_1}, T^2)$ and $(U_{\alpha,i_2}, V_{\alpha,i_2}, S^1)$
have an interface $U_{\alpha,i_1}\cap U_{\alpha,i_2}\ne
\varnothing$. Then, after a perturbation if need, the circle orbits
are contained in torus orbits on the overlap.
\end{prop}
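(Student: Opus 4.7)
The plan is to realize the $S^1$-orbits coming from the $U_{\alpha,i_2}$-chart as orbits of a sub-circle action sitting inside the $T^2$-action on $U_{\alpha,i_1}$, after a small perturbation supported in a collar of the interface $U_{\alpha,i_1}\cap U_{\alpha,i_2}$. The underlying geometric picture is an iterated collapse: on the interface the two-dimensional limit $X^2_{i_2}$ is itself becoming one-dimensional (it is converging into $X^1_{i_1}$), so one direction of the $T^2$-fiber should be identified with the $S^1$-fiber over $X^2_{i_2}$ while the other direction corresponds to the shrinking ``thin'' direction of $X^2_{i_2}$ itself.

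\medskip

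First, I would pass to the finite normal cover $V_{\alpha,i_1}\to U_{\alpha,i_1}$ on which the $T^2$-action is defined and free, and pull back the $S^1$-fibration from $U_{\alpha,i_2}\cap U_{\alpha,i_1}$. Because $U_{\alpha,i_2}\subset U_{\alpha,X^2_{reg}}$, the circle orbits there are \emph{primitive} collapsing fibers; on the interface they have length $o(1)$ and lie inside the shrinking $T^2$-fibers of $V_{\alpha,i_1}$. Using \autoref{prop1.14} and the stability of Perelman-Sharafutdinov flows under Gromov-Hausdorff convergence, each such orbit is freely homotopic to a short closed curve $\gamma_\alpha$ in a single $T^2$-fiber, and hence determines a primitive homology class $[\gamma_\alpha]\in H_1(T^2;\mathbb Z)\cong\mathbb Z^2$. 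This class singles out an $S^1$-subgroup $S^1_{[\gamma_\alpha]}\hookrightarrow T^2$ whose orbits, with the almost-flat structure on the collapsing $T^2$-fibers, are nearly parallel to the original $S^1$-orbits on the overlap.

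\medskip

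Second, I would interpolate between the original $S^1$-action on $U_{\alpha,i_2}$ and the $S^1_{[\gamma_\alpha]}$-action on $V_{\alpha,i_1}$ by the same partition-of-unity device used in \autoref{prop6.3} and \autoref{prop6.4}. Concretely, if $\{\lambda_1,\lambda_2\}$ is a partition of unity adapted to a slight enlargement of the cover $\{U^*_{\alpha,i_1},U^*_{\alpha,i_2}\}$, then the vector field $\lambda_1 X_{T^2}+\lambda_2 X_{S^1}$ (with $X_{T^2}$ the infinitesimal generator of $S^1_{[\gamma_\alpha]}\subset T^2$ and $X_{S^1}$ that of the $S^1$-action on $U_{\alpha,i_2}$) can be integrated, after a small reparametrization, to a genuine periodic flow, because the two generators are $o(1)$-close on the overlap. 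This yields a modified $S^1$-action on $U^*_{\alpha,i_2}$ that agrees with $S^1_{[\gamma_\alpha]}$ on a neighborhood of the overlap and with the original action outside a slightly larger neighborhood. On the overlap every $S^1$-orbit is then by construction contained in a $T^2$-orbit, verifying the Cheeger-Gromov compatibility condition of \autoref{def6.2}.

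\medskip

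The principal obstacle is the second step: interpolating two almost-free circle actions so that the resulting vector field still integrates to a genuine $S^1$-action rather than to an $\mathbb R$-action with merely nearly-closed orbits. This is where the equivariant averaging of Cheeger-Gromov (cf.\ \cite{CG1986}, \cite{CG1990}) enters, and it uses crucially that both generators are nearly Killing for the collapsed metric on the overlap --- an $o(1)$ statement provable via \autoref{prop1.14} applied to both the $T^2$- and $S^1$-fibration structures. A secondary technical point, which I would handle separately, is when $U_{\alpha,i_2}$ comes near an exceptional orbit of the sort described in \autoref{ex2.0}: by \autoref{thm2.1} and the discussion in \autoref{section2.3} such orbits are isolated, and one may shrink $U_{\alpha,i_2}$ slightly to push the interface away from them before carrying out the interpolation above.
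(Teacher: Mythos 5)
Your route is genuinely different from the paper's, and its crucial second step has a real gap. A convex combination $\lambda_1 X_{T^2}+\lambda_2 X_{S^1}$ of the infinitesimal generators of two nearby circle actions is not, in general, the generator of any circle action: in the transition region where both $\lambda_i$ are nonzero its orbits need not close, and no ``small reparametrization'' turns a non-periodic flow into a periodic one --- closeness of the generators only gives orbits that \emph{nearly} close, which is exactly the problem you name and then defer. The tool that would actually close this step is the Cheeger--Gromov/Grove--Karcher center-of-mass argument that two $C^1$-close (almost) free circle actions are conjugate by a small diffeomorphism, so that one replaces the $S^1$-action near the interface by the sub-circle action $S^1_{[\gamma_\alpha]}\subset T^2$ and glues by \emph{conjugation}, not by interpolating vector fields; since you only cite ``equivariant averaging'' without carrying it out, and since that is the entire content of the step, the argument is incomplete as written. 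A secondary point: \autoref{prop1.14} controls convergence of gradient curves and gradients, not free homotopy classes, so it does not by itself give that each $S^1$-orbit on the overlap is freely homotopic into a single $T^2$-fiber; for that you need the elementary observation that the orbit is a short curve lying in a product neighborhood $T^2\times I$ (coming from the fibration over the interval) which deformation retracts onto a fiber.

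For comparison, the paper's proof avoids comparing group actions altogether and is much lighter: both fibrations are produced by Perelman-regular maps, the $T^2$-fibration over the interval by a regular function $f_{\alpha,i_1}\colon U_{\alpha,i_1}\to\mathbb R$ whose level sets are the torus fibers, and the $S^1$-fibration by a regular map $\psi_{\alpha,i_2}=(f_{\alpha,i_2},g_{\alpha,i_2})\colon U_{\alpha,i_2}\to\mathbb R^2$ whose level sets are the circle fibers. One perturbs (as in \autoref{prop6.3} and \autoref{prop6.4}, with regularity preserved via \autoref{prop1.14}) so that the first component $f^*_{\alpha,i_2}$ coincides with $f^*_{\alpha,i_1}$ on the overlap; then every circle fiber $(f^*,g^*)^{-1}(c,d)$ is automatically contained in the torus fiber $(f^*)^{-1}(c)$, which is precisely the compatibility required by \autoref{def6.2}. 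If you want to salvage your action-theoretic route, you must either supply the conjugation/averaging argument in full or reformulate the gluing at the level of the defining admissible maps as the paper does.
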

\begin{proof}
Let $f_{\alpha,i_1}: U_{\alpha,i_1}\to \mathbb R$ be the regular
function which induces the chart $(U_{\alpha,i_1}, V_{\alpha,i_1},
T^2)$. Suppose that
$\psi_{\alpha,i_1}=(f_{\alpha,i_2},g_{\alpha,i_2}):
U_{\alpha,i_2}\to \mathbb R^2$ is the corresponding regular map.
Since any component of a regular map must be regular, after
necessary modifications, we may assume that $f^*_{\alpha,i_1}:
U^*_{\alpha,i_1}\to \mathbb R$ is equal to $f^*_{\alpha,i_2}$ in the
modified regular map
$\psi^*_{\alpha,i_2}=(f^*_{\alpha,i_2},g^*_{\alpha,i_2}):
U^*_{\alpha,i_2}\to \mathbb R^2$ on the overlap
$U^*_{\alpha,i_1}\cap U^*_{\alpha,i_2}$. It follows that the
modified $S^1$-orbits are contained in the newly perturbed
$T^2$-orbits. Thus, the modified charts $(U^*_{\alpha,i_1},
V^*_{\alpha,i_1}, T^2)$ and $(U^*_{\alpha,i_2}, V^*_{\alpha,i_2},
S^1)$ satisfy the Cheeger-Gromov's compatibility condition.
\end{proof}

We now conclude this sub-section by a special case of Perelman's
collapsing theorem.

\begin{theorem}\label{thm6.6}
Suppose that $\{(M^3_{\alpha}, g_{\alpha})\}$ satisfies all
conditions stated in \autoref{thm0.1}. Suppose that all Perelman
fibrations are either (possible singular) circle fibrations or toral
fibrations. Then $M^3_\alpha$ must admits a F-structure of positive
rank in the sense of Cheeger-Gromov for sufficiently large $\alpha$.
Consequently, $M^3_\alpha$ is a graph manifold for sufficiently
large $\alpha$.
\end{theorem}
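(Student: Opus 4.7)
The plan is to assemble the local fibration data into an F-structure of positive rank by a finite covering argument, then to invoke Rong's classification (cf.\ \cite{R1993}) identifying $3$-manifolds with such structures as graph-manifolds. Start from the admissible decomposition provided by \autoref{thm6.0} and \autoref{thm3.5}:
\[
M^3_\alpha \;=\; V_{\alpha,X^0}\cup V_{\alpha,X^1}\cup V_{\alpha,\interior(X^2)}\cup W_\alpha,
\]
where, by hypothesis, each Perelman fibration carried by $V_{\alpha,X^1}$ has toral fiber, each Perelman fibration carried by $V_{\alpha,\interior(X^2)}$ has circle fiber (and $V_{\alpha,X^0}$ is empty under the ``no non-negatively curved component'' reduction in \autoref{prop6.1}). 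Cover each such piece by finitely many charts $\{(U_{\alpha,i},V_{\alpha,i},T^{k_i})\}$ coming from the constructions of Sections~\ref{section1}, \ref{section2} and \ref{section4}, where $k_i=2$ on the $V_{\alpha,X^1}$-side and $k_i=1$ on the $V_{\alpha,\interior(X^2)}$-side. The exceptional $S^1$-orbits from \autoref{thm2.1} and the solid tori produced in \autoref{thm5.8} are subsumed into the $k_i=1$ charts, since in a small neighbourhood of an exceptional orbit the free $S^1$-action exists only on a finite normal cover, which is exactly the data of an $F$-chart.

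Next I would verify Cheeger--Gromov's compatibility \autoref{def6.2} pairwise on all non-empty overlaps $U_{\alpha,i}\cap U_{\alpha,j}$. By the three-case enumeration just before \autoref{prop6.3}, each overlap falls in one of the three regimes already handled: a $T^2$--$T^2$ overlap is made compatible by \autoref{prop6.3}, an $S^1$--$S^1$ overlap by \autoref{prop6.4}, and a $T^2$--$S^1$ overlap by \autoref{prop6.5}. In each case the argument uses a partition of unity on the image in $\mathbb{R}$ or $\mathbb{R}^2$ to blend the admissible maps $f_{\alpha,i}$ (resp.\ $\psi_{\alpha,i}$) into a single admissible map that is still regular in the sense of \autoref{def1.10}, so that the lifted fibration over the blended region agrees with each original fibration away from the overlap. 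The key point is that, by the choices in \autoref{thm6.0}, the regions $U^\ast_{\alpha,i}$ where modifications are allowed are uniform (of size comparable to $\rho_\alpha$), so the perturbations are legitimate.

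The remaining step is to upgrade pairwise compatibility to the simultaneous compatibility required by an atlas. Here I would proceed by induction on a chain $U_{\alpha,i_1},\ldots,U_{\alpha,i_N}$ of charts ordered by increasing rank $k_{i_\ell}$ (so all $T^2$-charts come first, then all $S^1$-charts), shrinking each $U_{\alpha,i_\ell}$ slightly before passing to the next so that previously-arranged compatibilities are preserved. Because $M^3_\alpha$ is compact and covered by finitely many charts, the induction terminates; because the only interactions are of the three types above, each inductive step invokes \autoref{prop6.3}, \autoref{prop6.4} or \autoref{prop6.5}. Finally, the boundary pieces $W_\alpha$ from \autoref{thm3.5}(6) are either solid tori attached along a torus boundary of $V_{\alpha,\interior(X^2)}$ (where the $S^1$-fibration extends as the product circle action on $D^2\times S^1$), or solid cylinders $D^2\times I$ attached to $V_{\alpha,X^1}$ (where the $T^2$-action on the boundary extends to a $T^1$-action on the solid cylinder by collapsing one circle factor to a point and retaining the other); in both cases the extension is compatible with the adjacent chart, so the atlas extends over $W_\alpha$.

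Having produced an atlas $\{(U_{\alpha,i},V_{\alpha,i},T^{k_i})\}$ with $k_i\ge 1$ everywhere and Cheeger--Gromov compatibility on all overlaps, we obtain an F-structure of positive rank on $M^3_\alpha$ in the sense of \cite{CG1986,CG1990}. Applying \autoref{prop3.1} (i.e., Rong's theorem \cite{R1993}) concludes that $M^3_\alpha$ is a graph-manifold for all sufficiently large $\alpha$. The main obstacle is the inductive globalisation in the last step: the pairwise perturbations in \autoref{prop6.3}--\ref{prop6.5} are local, and one must control their cumulative effect so that the final atlas is simultaneously regular on every multiple overlap. This is handled by choosing shrinkings $U^\ast_{\alpha,i}\subset U_{\alpha,i}$ small enough that the cumulative perturbation at each point of $M^3_\alpha$ involves at most a bounded number of charts, an estimate that follows from the uniform lower bound on $\rho_\alpha$ provided by \autoref{prop6.1}.
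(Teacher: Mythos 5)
Your proposal follows essentially the same route as the paper's proof: cover $M^3_\alpha$ by the charts produced in the earlier sections, use the isolatedness of exceptional orbits to keep them off the overlaps, make the torus/circle actions compatible by the perturbations of Propositions 6.3--6.5, and conclude via Proposition 3.1 (Rong); your inductive globalisation of the pairwise perturbations is simply a more explicit rendering of the paper's ``applying Propositions 6.3--6.5 when needed.'' The one inaccurate aside is the treatment of solid cylinders $D^2\times I$ in $W_\alpha$ (the boundary of a solid cylinder is a $2$-sphere, so there is no $T^2$-action on it to extend, and the core obstructs a positive-rank chart on a single cylinder --- this is exactly why the general case chains cylinders into solid tori), but under the hypothesis that no spherical fibrations occur such cylinders do not arise, so the remark is vacuous and your argument stands.
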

\begin{proof}
By our assumption and \autoref{prop6.1}, there is a collection of
charts $\{(U_{\alpha,i}, V_{\alpha,i}, T^{k_i})\}_{i=1}^n$ such that
\begin{enumerate}[(i)]
\item $1\le k_i\le 2$;
\item $\{U_{\alpha,i}\}_{i=1}^n$ is an open cover of $M^3_\alpha$;
\item $(U_{\alpha,i}, V_{\alpha,i}, T^{k_i})$ satisfies condition
(3.1).
\end{enumerate}

It remains to verify that our collection
$$
\{(U_{\alpha,i}, V_{\alpha,i}, T^{k_i})\}_{i=1}^n
$$
satisfies the Cheeger-Gromov compatibility condition, after some
modifications. Since exceptional circle orbits with non-zero Euler
number are isolated, we may assume that on any possible overlap
$$U_{\alpha,i_1}\cap\cdots \cap U_{\alpha,i_j}\ne \varnothing$$
there is no exceptional circular orbits. Applying
\autoref{prop6.3}-\ref{prop6.5} when needed, we can perturb our
charts so that the modified collection of charts $\{(U^*_{\alpha,i},
V^*_{\alpha,i}, T^{k_i})\}_{i=1}^n$ satisfy the Cheeger-Gromov
compatibility condition. It follows that $M^3_\alpha$ admits an
F-structure $\mathscr{F}^*$ of positive rank. Therefore,
$M^3_\alpha$ is a graph manifold for sufficiently large $\alpha$.
\end{proof}

\subsection{Perelman's collapsing theorem for general case} \

\medskip

\

Our main difficulty is to handle a Perelman fibration with possible
spherical fibers:
$$S^2\to U_{\alpha,i}\to \interior (X^1) $$
because the Euler number of $S^2$ is non-zero and $S^2$ does not
admit any free circle actions.

\begin{prop}\label{prop6.7}
Let $\{(M^3_{\alpha}, g_{\alpha})\}$ be a sequence of Riemannian
$3$-manifolds as in \autoref{thm0.1}. If there is a Perelman
fibration
$$N^2\to U_{\alpha,X^1}\to (a,b)$$
with spherical fiber $N^2\cong S^2$, then $U_{\alpha, X^1}$
must be contained in the interior of $M^3_\alpha$ when $M^3_\alpha$
has non-empty boundary $\partial M^3_\alpha\ne \varnothing$.
\end{prop}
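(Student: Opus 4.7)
The plan is to combine a direct topological observation with the near-hyperbolic collar hypothesis (2) of \autoref{thm0.1}.

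First, I would identify the global topology of $U_{\alpha,X^1}$. By Perelman's fibration theorem (\autoref{thm1.2}), the Perelman fibration $S^2 \to U_{\alpha,X^1} \to (a,b)$ is a locally trivial topological fiber bundle over the contractible open interval $(a,b)$, hence globally trivial: $U_{\alpha,X^1} \cong S^2 \times (a,b)$. Two consequences follow immediately. First, $U_{\alpha,X^1}$ is open in $M^3_\alpha$, since $(a,b)$ is open and the bundle projection is locally a product map. Second, $U_{\alpha,X^1}$ is a $3$-manifold \emph{without boundary}, because $S^2$ has no boundary and $(a,b)$ is an open interval.

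Next, I would invoke the standard fact that an open subset $U$ of a topological manifold-with-boundary $M$ inherits a manifold-with-boundary structure in which $\partial U = U \cap \partial M$: a point $p \in U \cap \partial M$ has a half-space neighborhood in $M$, which (intersected with the open set $U$) remains a half-space neighborhood in $U$, making $p$ a boundary point of $U$. Combining this with the previous step, namely $\partial U_{\alpha,X^1} = \varnothing$, forces $U_{\alpha,X^1} \cap \partial M^3_\alpha = \varnothing$, which is precisely $U_{\alpha,X^1} \subset \interior(M^3_\alpha)$.

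To reinforce this conclusion geometrically, I would use hypothesis (2) of \autoref{thm0.1}: a collar $C_\alpha \cong T^2_\alpha \times [0,1]$ of each boundary torus carries sectional curvatures in $[-\tfrac14-\varepsilon,\,-\tfrac14+\varepsilon]$, so the local geometry is that of a near-hyperbolic cusp. After appropriate rescaling, the Gromov-Hausdorff limit of any ball centered in $C_\alpha$ is modeled on a piece of the standard hyperbolic cusp, whose collapsed cross-section is a $2$-torus. Hence, via \autoref{prop1.14} and \autoref{thm1.17}, any Perelman fibration over an interval obtained from points of $C_\alpha$ must have toral fibers, not spherical ones, so a fibered region with $S^2$-fibers cannot even meet the collar $C_\alpha$.

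The only subtlety, which turns out to be benign, is verifying that ``Perelman fibration over $(a,b)$'' really supplies an open subset of $M^3_\alpha$ equipped with a locally trivial topological $S^2$-bundle, so that the topological inheritance argument applies verbatim; this is exactly the content of \autoref{thm1.2}, so no additional analysis is required.
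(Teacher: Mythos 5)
Your main argument is correct in substance but takes a different route from the paper. The paper's proof is a one-liner resting entirely on hypothesis (2) of \autoref{thm0.1}: each boundary component of $M^3_\alpha$ comes with a topologically trivial collar diffeomorphic to $T^2\times[0,1)$, so the region adjacent to $\partial M^3_\alpha$ is fibered by tori (this is exactly \eqref{eq4.1}), and a piece whose Perelman fibration has $S^2$ fibers therefore cannot meet the boundary. You instead argue purely topologically: by \autoref{thm1.2} the fibration is a locally trivial $S^2$-bundle over $(a,b)$, so $U_{\alpha,X^1}$ is intrinsically a $3$-manifold without boundary, and a subset of this kind sitting inside a manifold-with-boundary must avoid $\partial M^3_\alpha$. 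This is a legitimately more elementary argument (it would work for toral fibers too, since it never uses which closed surface the fiber is), whereas the paper's argument records the geometric fact that actually matters later in \autoref{section6}, namely that the boundary-adjacent interval-collapsed pieces carry toral, not spherical, fibers; your ``reinforcing'' paragraph recovers exactly that point, though the reference for toral fibers over an interval is \eqref{eq4.1} in \autoref{section4} rather than \autoref{thm1.17}, which concerns circle fibrations over a surface.

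One step deserves tightening: you justify openness of $U_{\alpha,X^1}$ in $M^3_\alpha$ by saying the base $(a,b)$ is open and the projection is locally a product, but that inference is not valid in general (a lower-dimensional bundle, e.g.\ an annulus $S^1\times(a,b)$ inside $M^3_\alpha$, satisfies the same hypotheses without being open). Here the conclusion holds either because $U_{\alpha,X^1}$ is open by construction in \autoref{section4}, or, intrinsically, by invariance of domain: the inclusion of the boundaryless $3$-manifold $S^2\times(a,b)$ into $M^3_\alpha$, read in a half-space chart, is an injective continuous map whose image must be open in $\mathbb R^3$ and hence disjoint from $\partial M^3_\alpha$. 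With that fix, your interior/boundary-point argument via local structure (equivalently, local homology) goes through, and the conclusion $U_{\alpha,X^1}\subset\interior(M^3_\alpha)$ follows as you state.
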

\begin{proof}
According to condition (2) of \autoref{thm0.1}, for each boundary
component $N^2_{\alpha,j}\subset \partial M^3_{\alpha}$, there
is a topologically trivial collar $V_{\alpha,j}$ of length one such
that $V_{\alpha,j}$ is diffeomorphic to $T^2\times [0,1)$. Thus, we
have $U_{\alpha,j}\cap
\partial M^3_\alpha=\varnothing$
\end{proof}

We need to make the following elementary but useful observation.
\begin{prop}\label{cor6.8}
(1) Let $A=\{(x_1,x_2,x_3)\in \mathbb R^3\big |\ \ |\vec x|=1,
|x_3|\le \frac12\}$ be an annulus. The product space $S^2\times
[0,1]$ has a decomposition
$$\big(D^2_+\times [0,1]\big) \cup \big( A\times [0,1] \big) \cup
\big( D^2_-\times [0,1]\big), $$
 where $D^2_{\pm}=\{(x_1,x_2,x_3)\in S^2(1)|
\pm x_3\ge \frac12\}$.

(2) If $\{(M^3_{\alpha}, g_{\alpha})\}$ satisfies conditions of
\autoref{thm0.1}, then, for sufficiently large $\alpha$,
$M^3_\alpha$ has a decomposition $\{U_{\alpha,j}\}_{j=1}^{m_\alpha}$
such that

(2.a) either a finite normal cover $V_{\alpha,j}$ of
$U_{\alpha,j}$ admits a free $T^{k_i}$-action with $k_i=1,2$.

(2.b) or $U_{\alpha,j}$ is homeomorphic to a finite solid cylinder
$D^2\times [0,1]$ with $U_{\alpha,j}\cap \partial M^3_\alpha=
\varnothing$.

(3) If $U_{\alpha,j}$ is a finite cylinder as in (2.b), then it must
be contained in a chain $\{U_{\alpha,j_1},\cdots, U_{\alpha,j_m}\}$
of finite solid cylinders such that their union
$$\hat W_{\alpha,h_j}=\bigcup_{i=1}^m U_{\alpha,j_i}$$
is homeomorphic to a solid torus $D^2\times S^1$.
\end{prop}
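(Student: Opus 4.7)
The plan is to address the three parts in sequence, building on the structural decomposition established in Sections 1--5 and \autoref{thm6.0}.

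Part (1) is an elementary piece of spherical geometry. I would simply exhibit the splitting $S^2 = D^2_+ \cup A \cup D^2_-$ where $D^2_\pm = \{x \in S^2(1) \mid \pm x_3 \ge 1/2\}$ and $A$ is the equatorial band, then take products with $[0,1]$. Crucially, the band $A \times [0,1]$ carries an obvious free $S^1$-action rotating around the $x_3$-axis, so it contributes a chart of rank $1$, while the two polar products $D^2_\pm \times [0,1]$ are the solid cylinders that later feed into case (2.b).

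For part (2), my strategy is to combine \autoref{thm6.0} with the earlier pointwise analyses. The pieces $V_{\alpha,X^0}$ admit $F$-structures by \autoref{ex3.2}. The regular surface part $V_{\alpha,\interior(X^2_{reg})}$ admits an almost-free circle action by \autoref{thm1.17} and \autoref{thm2.1}. The toral-fiber part of $V_{\alpha,\interior(X^1)}$ admits a torus action by \autoref{thm4.3}, while the pieces of $W_\alpha$ arising from collapsing to $\partial X^1$ and $\partial X^2$ are classified by \autoref{thm5.4} and \autoref{thm5.8} as solid tori, $D^3$, $\mathbb{RP}^3 - D^3$, or $K^2 \ltimes I$---each of which, except $D^3$, supports a free torus action on a finite normal cover. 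The only components not immediately fitting case (2.a) come from spherical-fiber components of $V_{\alpha,X^1}$. Here I apply part (1) fiberwise: the annular middle cylinder $A \times I$ contributes a rank-$1$ chart that can be matched with the adjacent $V_{\alpha,\interior(X^2_{reg})}$ chart using \autoref{prop6.4}, while the two polar cylinders $D^2_\pm \times I$ become the solid cylinders of (2.b). By \autoref{prop6.7} the spherical components are disjoint from $\partial M^3_\alpha$, so $U_{\alpha,j}\cap\partial M^3_\alpha=\varnothing$ holds for these pole caps.

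For part (3), the key idea is to chain pole caps across adjacent pieces of the decomposition. Each solid cylinder $U_{\alpha,j} \cong D^2 \times [0,1]$ in case (2.b) has two end-disks $D^2 \times \{0,1\}$ that must glue to something in $M^3_\alpha$. Because the spherical fiber collapses as an indivisible $S^2$, these end-disks cannot glue to toral-fiber pieces (whose boundaries are tori) nor to the annular middle (which has annular boundary). They must therefore glue either to another pole-cap cylinder along $\partial X^1$ (extending the chain), or to a $W_\alpha$-type piece with $S^2$-boundary---by \autoref{thm5.8} such a piece is homeomorphic to $D^3$ or $\mathbb{RP}^3 - D^3$, and in each case the two polar disks meeting that end are joined through the cap. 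Following the resulting chain of cylinders and caps and using that $M^3_\alpha$ is compact with only finitely many decomposition pieces, the chain must eventually close into a loop $\hat W_{\alpha,h_j}=\bigcup U_{\alpha,j_i}$. A straightforward gluing argument, using that each junction identifies two $D^2$'s across their boundary circles, shows this loop is homeomorphic to $D^2\times S^1$.

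The main obstacle will be verifying rigorously in part (3) that the pole-disks always match up consistently at the $W_\alpha$-ends, particularly in the $\mathbb{RP}^3 - D^3$ case where the identification between the two polar disks passes through a non-trivial involution. I expect to handle this by using the orientability of $M^3_\alpha$ (hypothesis of \autoref{thm0.1}) together with the explicit local topology from \autoref{thm5.8}: for each endpoint of the collapsing interval $(a,b)$ we read off which pairs of pole disks are glued inside the $D^3$ or $\mathbb{RP}^3-D^3$ cap, and this data defines a fixed-point-free involution on the set of pole-disk ends, whose orbits are precisely the chains. Counting via this involution ensures chain closure, and the explicit gluing exhibits each $\hat W_{\alpha,h_j}$ as a solid torus rather than a more complicated piece.
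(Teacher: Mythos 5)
Your parts (1) and (2) are mostly in line with what the paper does (it treats them as immediate consequences of \S 1--5), but the proposal contains one outright error and, in part (3), a gap at exactly the point you flag as the main obstacle. The error in part (2): you assert that each of the cap pieces other than $D^3$, in particular $[\mathbb{RP}^3-D^3]\cong \mathbb{RP}^2\ltimes I$, ``supports a free torus action on a finite normal cover.'' It does not. Its only nontrivial finite cover is $S^2\times I$, which admits no free circle (let alone torus) action: a free $S^1$-action would exhibit it as a circle bundle over a compact surface with boundary, hence with infinite fundamental group, while $S^2\times I$ is simply connected; equivalently, its $S^2$-boundary admits no foliation by circles. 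So $\mathbb{RP}^3-D^3$ cannot be taken as a chart of type (2.a); it must itself be decomposed further, e.g.\ as the $I$-bundle over $\mathbb{RP}^2$ split along $\mathbb{RP}^2=(\text{M\"obius band})\cup D^2$ into a twisted $I$-bundle over the M\"obius band (a solid torus, which carries the circle structure coming from the exceptional orbit in the proof of \autoref{thm5.8}) glued along an annulus to an $I$-bundle over the disk, which is a genuine solid cylinder $D^2\times I$ whose two end disks are the two polar disks of the boundary $2$-sphere.

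This is precisely what is missing in your part (3). The statement requires the \emph{union of the solid cylinders themselves} to be a solid torus, so consecutive cylinders in a chain must actually share end disks; your chain instead alternates cylinders with ``caps'' that merely ``join'' two pole disks, and the caps never enter the union as cylinders. If they are left out, the union of your $U_{\alpha,j_i}$ is a disjoint collection of cylinders separated by the caps; if a whole $\mathbb{RP}^3-D^3$ cap is thrown in, the union has $2$-torsion in $\pi_1$ and is certainly not $D^2\times S^1$. The repair is to feed the caps into the chain as cylinders: a $D^3$ cap \emph{is} a solid cylinder with its lateral annulus on the equatorial band and its two end disks the polar disks, and an $\mathbb{RP}^3-D^3$ cap contributes only its $D^2\times I$ piece from the splitting above, the complementary solid torus becoming a separate chart. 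Once every end disk of every cylinder is matched disk-to-disk in this way, finiteness forces each chain to close into a $D^2$-bundle over $S^1$, and orientability of $M^3_\alpha$ (as you say) rules out the solid Klein bottle; your proposed ``fixed-point-free involution on pole-disk ends'' only records the combinatorics of the chains and by itself gives no control on the homeomorphism type of $\hat W_{\alpha,h_j}$. With that correction your argument matches the paper's (very terse) proof, which reduces (3) to the fact that each $W_\alpha$-cylinder meets $V_{\alpha,X^1}$ exactly in $D^2\times\partial I$ and then closes up the chains.
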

\begin{proof}
The first two assertions are trivial. It remains to verify the third
assertion. By our construction, if $U_{\alpha,j}$ is a finite
cylinder homeomorphic to $D^2\times I$, then $U_{\alpha,j}$ meets
$V_{\alpha,X^1}$ exactly in $D^2\times \partial I$. Moreover, such a
finite solid cylinder $U_{\alpha,j}$ is contained in a chain
$\{U_{\alpha,j_1},\cdots,U_{\alpha,j_m}\}$ of solid cylinders. It is
easy to see that the union $\hat W_{\alpha, h_j}$ is homotopic to
its core $S^1$. i.e. $\hat W_{\alpha,h_j}$ is homeomorphic to a
solid torus $D^2\times S^1$.
\end{proof}

We are ready to complete the proof of Perelman's collapsing theorem.

\begin{proof}[{\bf Proof of \autoref{thm0.1}}]

Using \autoref{prop6.1} we can choose $\rho_{\alpha}$ so that
$$
\rho_{\alpha}(x)\le \dim(M^3_{\alpha})
$$
and
$$
\rho_{\alpha}(y)\le \rho_{\alpha}(x)\le 2\rho_{\alpha}(y)
$$
for all $y\in B_{g_{\alpha}}(x,\frac{\rho_{\alpha}}{2})$.
Therefore, it is sufficient to verify Theorem 0.1' instead.

By our discussion above, for sufficiently large $\alpha$, our
$3$-manifold $M^3_\alpha$ admits a collection of (possibly singular)
Perelman fibration. Thus $M^3_\alpha$ has a decomposition
$$
M^3_{\alpha}= V_{\alpha,X^0}\cup V_{\alpha,X^1}\cup V_{\alpha,
\interior (X^2)} \cup W_\alpha
$$
For each chart in $ V_{\alpha,X^0}\cup V_{\alpha, \interior (X^2)}$,
it admits a Seifert fibration. However, remaining charts could be
homeomorphic to $S^2\times I$, $[\mathbb {RP}^3-D^3]$ or a solid
cylinder $D^2\times I$. It follows from \autoref{cor6.8}(3) that
$M^3_\alpha$ has a more refined decomposition
$$M^3_\alpha=\bigcup_{i=1}^m U_{\alpha,j}$$
such that
\begin{enumerate}[(i)]
\item either a finite normal cover $V_{\alpha,j}$ of $U_{\alpha,j}$
admits a free $T^{k_j}$-action with $k_j=1,2$;
\item or $U_{\alpha,j}$ is homeomorphic to a solid torus $D^2\times
S^1$, which is obtained by a chain of solid cylinders.
\end{enumerate}

Observe that possibly exceptional circular orbits are isolated.
Moreover if $\{U_{\alpha,j}\}$ are of type (2.b) in
\autoref{cor6.8}, these cores $\{0\}\times S^1$ are isolated as
well.

It remains to verify that our collection of charts
$\{(U_{\alpha,j},V_{\alpha,j}, T^{k_j})\}$ satisfy Cheeger-Gromov
compatibility condition. By observations on exceptional orbits and
cores of solid cylinders, we may assume that on possibly overlap
$$U_{\alpha,j_1}\cap\cdots\cap U_{\alpha,j_k}\ne \varnothing$$
there is no exceptional orbits nor cores of solid cylinders.

We require that $V_{\alpha, \interior (X^2)}$ meets $S^2$-factors as
annular type $A_\alpha$. Thus, if $U_{\alpha, j} \cong S^1 \times
D^2$, we can introduce $T^2$-actions on $\partial U_{\alpha, j}
\cong S^1 \times S^1$ which are compatible with $S^1$-actions on
$A_\alpha$. After modifying our charts as in proofs of
\autoref{prop6.3}-\ref{prop6.5}, we can obtain a new collection of
charts $\{(U^*_{\alpha,j},V^*_{\alpha,j}, T^{k_j})\}$ satisfying the
Cheeger-Gromov's compatibility condition. Therefore, $M^3_\alpha$
admits an F-structure $\mathscr{F}_{\alpha}$ of positive rank. It
follows that $M^3_\alpha$ is a $3$-dimensional graph manifold, (cf.
\cite{R1993}).
\end{proof}

\noindent {\bf Acknowledgement:} Both authors are grateful to
Professor Karsten Grove for teaching us the modern version of
critical point theory for distance functions. Professor Takashi
Shioya carefully proofread every sub-step of our simple proof in the
entire paper, pointing out several inaccurate statements in an
earlier version. He also generously provided us a corrected proof of
\autoref{prop2.2}. We are very much indebted to Professor Xiaochun
Rong for supplying \autoref{thm6.0} and its proof. Hao Fang and
Christina Sormani made useful comments on an earlier version.
Finally, we also appreciate Professor Brian Smyth's generous help
 the exposition in our paper. We thank the referee for his (or her)
 suggestions, which led many improvements.

\bibliographystyle{amsalpha}

\bigskip

\noindent
{\small Jianguo Cao and Jian Ge}\\
{\small Department of Mathematics} \\
{\small University of Notre Dame}\\
{\small Notre Dame, IN 46556, USA} \\
{\small jcao@nd.edu and jge@nd.edu} \\

\end{document}